\apptocmd{\thebibliography}{\raggedright}{}{}
\numberwithin{equation}{section}
\theoremstyle{plain}
\newtheorem{theorem}{Theorem}[section]
\newtheorem{maintheorem}{Theorem}
\newtheorem{proposition}[theorem]{Proposition}
\newtheorem{lemma}[theorem]{Lemma}
\newtheorem{corollary}[theorem]{Corollary}
\newtheorem*{claim}{Claim}
\newtheorem{claims}{Claim}
\newtheorem{casea}{Case}
\theoremstyle{definition}
\newtheorem{defn}[theorem]{Definition}
\theoremstyle{remark}
\newtheorem{rmk}[theorem]{Remark}
\newenvironment{remark}[1][]{\begin{rmk}[#1] \pushQED{}}{\popQED \end{rmk}}
\newtheorem{eg}[theorem]{Example}
\newenvironment{example}[1][]{\begin{eg}[#1] \pushQED{\qed}}{\popQED \end{eg}}
\DeclareMathOperator{\Hom}{Hom}
\DeclareMathOperator{\Mod}{Mod}
\newcommand\Torelli{\ensuremath{{\mathcal I}}}
\DeclareMathOperator{\Sp}{Sp}
\DeclareMathOperator{\GL}{GL}
\newcommand\Z{\ensuremath{\mathbb{Z}}}
\newcommand\Field{\ensuremath{\mathbb{F}}}
\DeclareMathOperator{\HH}{H}
\newcommand\RH{\ensuremath{\widetilde{\HH}}}
\newcommand\RZ{\ensuremath{\widetilde{\Z}}}
\DeclareMathOperator{\Interior}{Int}
\newcommand\Set[2]{\ensuremath{\left\{\text{#1 $|$ #2}\right\}}}
\newcommand\Surf{\ensuremath{\tt Surf}}
\newcommand\PSurf{\ensuremath{\tt PSurf}}
\newcommand\cL{\ensuremath{\mathcal{L}}}
\newcommand\cB{\ensuremath{\mathcal{B}}}
\newcommand\cP{\ensuremath{\mathcal{P}}}
\newcommand\TS{\ensuremath{\mathcal{TS}}}
\newcommand\Sur{\ensuremath{\mathcal{S}}}
\newcommand\TL{\ensuremath{\mathcal{TL}}}
\newcommand\DTL{\ensuremath{\mathcal{DTL}}}
\newcommand\MTL{\ensuremath{\mathcal{MTL}}}
\newcommand\ODTL{\ensuremath{\mathcal{ODTL}}}
\DeclareMathOperator{\rank}{rk}
\DeclareMathOperator{\Fib}{Fib}
\DeclareMathOperator{\DP}{DP}
\newcommand\tmu{\ensuremath{\widetilde{\mu}}}
\newcommand\tsigma{\ensuremath{\widetilde{\sigma}}}
\newcommand\hiota{\ensuremath{\widehat{\iota}}}
\newcommand\hcL{\ensuremath{\widehat{\cL}}}
\newcommand\hisect{\ensuremath{\widehat{\isect}}}
\newcommand\hSigma{\ensuremath{\widehat{\Sigma}}}
\newcommand\hx{\ensuremath{\widehat{x}}}
\newcommand\hcP{\ensuremath{\widehat{\cP}}}
\newcommand\hmu{\ensuremath{\widehat{\mu}}}
\newcommand\isect{\ensuremath{\mathfrak{i}}}
\newcommand{\p}[1]{{\bf #1.}}
\title{\vspace{-40pt}Partial Torelli groups and homological stability\vspace{-15pt}}
\author{Andrew Putman\thanks{Supported in part by NSF grant DMS-1811322}}
\date{}
\begin{document}

\vspace{-10pt}
\maketitle

\vspace{-18pt}
\begin{abstract}
\noindent
We prove a homological stability theorem for the subgroup of the mapping class group
acting as the identity on some fixed portion of the first homology group of the surface.
We also prove a similar theorem for the subgroup of the mapping class group preserving a
fixed map from the fundamental group to a finite group, which can be viewed as a mapping
class group version of a theorem of Ellenberg--Venkatesh--Westerland about braid groups.
These results require studying various simplicial complexes formed by subsurfaces of the surface,
generalizing work of Hatcher--Vogtmann.
\end{abstract}

\section{Introduction}
\label{section:introduction}

Let $\Sigma_g^b$ be an oriented genus $g$ surface with $b$ boundary components.
The {\em mapping class group} $\Mod(\Sigma_g^b)$ is the group
of isotopy classes of orientation-preserving homeomorphisms of $\Sigma_g^b$ that
fix $\partial \Sigma_g^b$ pointwise.  Harer \cite{HarerStable} proved 
that $\Mod(\Sigma_g^b)$ satisfies homological stability.  More precisely, an 
orientation-preserving embedding 
$\Sigma_g^b \hookrightarrow \Sigma_{g'}^{b'}$ induces a map
$\Mod(\Sigma_g^b) \rightarrow \Mod(\Sigma_{g'}^{b'})$ that
extends mapping classes by the identity, and Harer's theorem says that the induced
map
$\HH_k(\Mod(\Sigma_g^b)) \rightarrow \HH_k(\Mod(\Sigma_{g'}^{b'}))$
is an isomorphism if $g \gg k$.

\p{Torelli}
The group $\Mod(\Sigma_g^b)$ acts on $\HH_1(\Sigma_g^b)$.  
For $b \leq 1$, the algebraic intersection
pairing on $\HH_1(\Sigma_g^b)$ is a $\Mod(\Sigma_g^b)$-invariant symplectic form.  
We thus get a map
$\Mod(\Sigma_g^b) \rightarrow \Sp_{2g}(\Z)$ whose kernel $\Torelli(\Sigma_g^b)$ is
the {\em Torelli group}.  The group $\Torelli(\Sigma_g^b)$ is not homologically stable;
indeed, Johnson \cite{JohnsonAbel} showed that $\HH_1(\Torelli(\Sigma_g^b))$ does
not stabilize.  Church--Farb's work on representation stability \cite{ChurchFarbRepStability}
connects this to the $\Sp_{2g}(\Z)$-action on $\HH_k(\Torelli(\Sigma_g^b))$
induced by the conjugation action of $\Mod(\Sigma_g^b)$.
Much recent work on $\HH_k(\Torelli(\Sigma_g^b))$ focuses on this action; see \cite{BoldsenDollerup, KassabovPutman, MillerPatztWilson}.

\p{Partial Torelli}
We show that homological stability can be restored by enlarging the Torelli group
to the group acting trivially on some fixed portion of homology.  As an illustration
of our results, we begin by describing a very special case of them.  Fix a symplectic
basis $\{a_1,b_1,\ldots,a_g,b_g\}$ for $\HH_1(\Sigma_g^1)$ in the usual way:\\
\centerline{\psfig{file=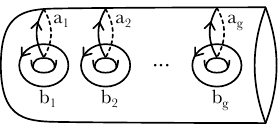,scale=100}}
For $0 \leq h \leq g$, define $\Torelli(\Sigma_g^1,h)$ to be the subgroup of $\Mod(\Sigma_g^1)$ fixing
all elements of $\{a_1,b_1,\ldots,a_h,b_h\}$.  These groups interpolate between
$\Mod(\Sigma_g^1)$ and $\Torelli(\Sigma_g^1)$ in the sense that
\[\Torelli(\Sigma_g^1) = \Torelli(\Sigma_g^1,g) \subset \Torelli(\Sigma_g^1,g-1) \subset \Torelli(\Sigma_g^1,g-2) \subset \cdots \subset \Torelli(\Sigma_g^1,0) = \Mod(\Sigma_g^1).\]
They were introduced by Bestvina--Bux--Margalit \cite{BestvinaBuxMargalitTorelli}; see especially
\cite[Conjecture 1.2]{BestvinaBuxMargalitTorelli}.
For a fixed $h \geq 1$, we have an increasing chain of groups
\begin{equation}
\label{eqn:stablechain}
\Torelli(\Sigma_h^1,h) \subset \Torelli(\Sigma_{h+1}^1,h) \subset \Torelli(\Sigma_{h+2}^1,h) \subset \cdots,
\end{equation}
where $\Torelli(\Sigma_g^1,h)$ is embedded in $\Torelli(\Sigma_{g+1}^1,h)$ as follows:\\
\centerline{\psfig{file=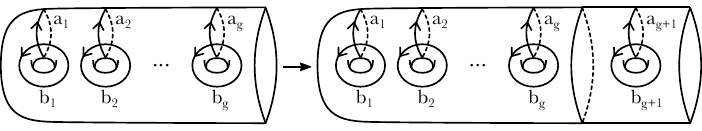,scale=100}}
Our main theorem shows that \eqref{eqn:stablechain} satisfies homological stability: for $h,k \geq 1$, we
have
\[\HH_k(\Torelli(\Sigma_g^1,h)) \cong \HH_k(\Torelli(\Sigma_{g+1}^1,h))\]
for $g \geq (2h+2)k + (4h+2)$.

\p{Homology markings}
To state our more general result, we need the notion of a homology marking.  Let $A$ be a finitely generated
abelian group.  An {\em $A$-homology marking} on $\Sigma_g^1$ is
a homomorphism $\mu\colon \HH_1(\Sigma_g^1) \rightarrow A$.  Associated to this is a {\em partial
Torelli group}
\[\Torelli(\Sigma_g^1,\mu) = \Set{$f \in \Mod(\Sigma_g^1)$}{$\mu(f(x))=\mu(x)$ for all $x \in \HH_1(\Sigma_g^1)$}.\]

\begin{example}
If $A = \HH_1(\Sigma_g^1)$ and $\mu = \text{id}$, then 
$\Torelli(\Sigma_g^1,\mu) = \Torelli(\Sigma_g^1)$.
\end{example}

\begin{example}
If $A = \HH_1(\Sigma_g^1;\Z/\ell)$ and $\mu\colon \HH_1(\Sigma_g^1) \rightarrow A$
is the projection, then $\Torelli(\Sigma_g^1,\mu)$ is the {\em level-$\ell$
subgroup} of $\Mod(\Sigma_g^1)$, i.e., the kernel of the action of $\Mod(\Sigma_g^1)$ on
$\HH_1(\Sigma_g^1;\Z/\ell)$.
\end{example}

\begin{example}
Let $A$ be a {\em symplectic subspace} of $\HH_1(\Sigma_g^1)$, i.e., a subspace with
$\HH_1(\Sigma_g^1) = A \oplus A^{\perp}$, where $\perp$ is defined via the intersection
form.  Such an $A$ is of the form $A \cong \Z^{2h}$ for some $h \geq 0$ called the {\em genus}
of $A$.  If $\mu\colon \HH_1(\Sigma_g^1) \rightarrow A$ is the projection, 
then 
\[\Torelli(\Sigma_g^1,\mu) = \Set{$f \in \Mod(\Sigma_g^1)$}{$f(x)=x$ for all $x \in A$}.\]
If $A$ has genus $h$, then $\Torelli(\Sigma_g^1,\mu) \cong \Torelli(\Sigma_g^1,h)$.
\end{example}

\p{Stability}
Our first main theorem is a homological stability theorem for the groups $\Torelli(\Sigma_g^1,\mu)$.
Define the {\em stabilization to $\Sigma_{g+1}^1$} 
of an $A$-homology marking $\mu$ on $\Sigma_g^1$
to be the following $A$-homology marking $\mu'$ on $\Sigma_{g+1}^1$.
Embed $\Sigma_g^1$ in $\Sigma_{g+1}^1$ just like we did above:\\
\centerline{\psfig{file=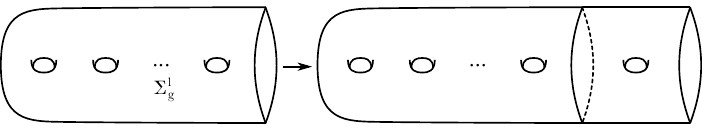,scale=100}}
This identifies $\HH_1(\Sigma_g^1)$ with a symplectic subspace of $\HH_1(\Sigma_{g+1}^1)$, so
$\HH_1(\Sigma_{g+1}^1) = \HH_1(\Sigma_g^1) \oplus \HH_1(\Sigma_g^1)^{\perp}$.  Let
$\mu'\colon \HH_1(\Sigma_{g+1}^1) \rightarrow A$ be the composition
\[\HH_1(\Sigma_{g+1}^1) = \HH_1(\Sigma_g^1) \oplus \HH_1(\Sigma_g^1)^{\perp} \longrightarrow \HH_1(\Sigma_g^1) \stackrel{\mu}{\longrightarrow} A,\]
where the first arrow is the orthogonal projection.  The map
$\Mod(\Sigma_g^1) \rightarrow \Mod(\Sigma_{g+1}^1)$ induced by the above embedding
restricts to a map $\Torelli(\Sigma_g^1,\mu) \rightarrow \Torelli(\Sigma_{g+1}^1,\mu')$
called the {\em stabilization map}.  Our main theorem is as follows.  For a
finitely generated
abelian group $A$, let $\rank(A)$ denote the minimal size of a generating set\footnote{Equivalently,
$\rank(A)$ is the maximal $n \geq 0$ such that $A$ is the direct sum of $n$ cyclic subgroups.  There
are several different commonly used definitions of the rank of an abelian group, and we emphasize
that our $\rank(A)$ is {\em not} the maximal $n$ such that $A$ contains a subgroup isomorphic to $\Z^n$.
In particular, $\rank(\Z/\ell) = 1$ for $\ell \geq 2$, and $\rank(A) = 0$ if and only if $A = 0$.}
for $A$.

\begin{maintheorem}
\label{maintheorem:stable}
Let $A$ be a finitely generated abelian group, let $\mu$ be an $A$-homology marking on $\Sigma_g^1$, and let
$\mu'$ be its stabilization to $\Sigma_{g+1}^1$.  The map
$\HH_k(\Torelli(\Sigma_g^1,\mu)) \rightarrow \HH_k(\Torelli(\Sigma_{g+1}^1,\mu'))$
induced by the stabilization map 
$\Torelli(\Sigma_g^1,\mu) \rightarrow \Torelli(\Sigma_{g+1}^1,\mu')$
is an isomorphism if $g \geq (\rank(A)+2)k + (2\rank(A)+2)$ and
a surjection if $g = (\rank(A)+2)k+(2\rank(A)+1)$.
\end{maintheorem}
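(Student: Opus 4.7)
I would follow the Quillen--Harer strategy, adapted to partial Torelli groups: produce a highly connected semi-simplicial set $X_g$ on which $\Torelli(\Sigma_g^1,\mu)$ acts, arrange that the stabilizer of a $p$-simplex is (conjugate to) a partial Torelli group on a genus $(g-p-1)$ subsurface equipped with a naturally restricted marking, and compare the resulting spectral sequence with its counterpart for $g+1$ via the stabilization map. The induction is on $k$ and proceeds uniformly over all markings $\mu$ with target an abelian group of a given rank; the base case $k=0$ is immediate since the groups are connected.

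\textbf{The complex.} Generalizing Hatcher--Vogtmann, I would take $X_g$ to be the semi-simplicial set whose $p$-simplices are ordered $(p+1)$-tuples of disjointly embedded tethered loops $(\gamma_0,\ldots,\gamma_p)$, where each $\gamma_i$ consists of a simple closed curve together with a tethering arc to $\partial\Sigma_g^1$, and each satisfies $\mu([\gamma_i])=0$ in $\HH_1(\Sigma_g^1)$. Cutting along a regular neighborhood of $\gamma_i$ produces a copy of $\Sigma_{g-1}^1$ on which $\mu$ induces a new marking $\tmu$ with the same target $A$, and the stabilizer of $\gamma_i$ in $\Torelli(\Sigma_g^1,\mu)$ is precisely $\Torelli(\Sigma_{g-1}^1,\tmu)$; iterating gives the description of the stabilizer of a $p$-simplex. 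The stabilization embedding $\Sigma_g^1 \hookrightarrow \Sigma_{g+1}^1$ canonically produces a $\mu'$-trivial vertex of $X_{g+1}$ inside the new handle, yielding a map of actions (and hence of spectral sequences) that is compatible with the restricted markings.

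\textbf{Main obstacle.} The technical heart of the argument is showing that $X_g$ is at least $\lfloor (g-(2\rank(A)+2))/(\rank(A)+2)\rfloor$-connected. The classical Hatcher--Vogtmann surgery argument does not survive verbatim because a generic surgery move will change $[\gamma_i]$ and violate the constraint $\mu([\gamma_i])=0$. The remedy I would pursue is to build a tower of auxiliary simplicial complexes of $\mu$-trivial arcs, $\mu$-trivial tethered chains, and $\mu$-trivial partial symplectic bases, prove high connectivity of each by an interlocking induction, and then use these as scaffolding to ``repair'' each surgery so that the $\mu$-constraint is restored. Each repair costs roughly one extra handle per generator of $A$, which is precisely what produces the slope $\rank(A)+2$ and the additive constant $2\rank(A)+2$. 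This step will presumably occupy the bulk of the paper.

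\textbf{Finishing the argument.} Once the connectivity bound is in place, the standard spectral sequence for the action of $\Torelli(\Sigma_g^1,\mu)$ on $X_g$ takes the form
\[ E^1_{p,q}=\HH_q(\text{stabilizer of a }p\text{-simplex})\Longrightarrow \HH_{p+q}(\Torelli(\Sigma_g^1,\mu)) \]
and converges in the range $p+q\le \mathrm{conn}(X_g)+1$. Comparing with the analogous spectral sequence for $\Sigma_{g+1}^1$ via the stabilization map, the stabilizer identifications reduce the $E^1$-comparison to a statement about partial Torelli groups on surfaces of genus strictly less than $g$ with markings whose target still has rank at most $\rank(A)$. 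The inductive hypothesis, applied uniformly over these markings, identifies the relevant entries of $E^1$ in the claimed range, and the abutments then agree. The expected one-step gap between the surjection and isomorphism ranges falls out of the standard comparison-of-spectral-sequences lemma.
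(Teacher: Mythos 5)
Your overall strategy (highly connected complex with partial-Torelli stabilizers, fed into the Quillen--Harer/Hatcher--Vogtmann machine) is the same as the paper's, but your choice of complex breaks the argument at the stabilizer step. Cutting $\Sigma_g^1$ along a regular neighborhood of a \emph{tethered loop} (curve plus tether plus $\partial\Sigma_g^1$) does not produce $\Sigma_{g-1}^1$: cutting along the nonseparating curve gives genus $g-1$ with three boundary circles, and cutting along the tether merges only two of them, leaving $\Sigma_{g-1}^2$. So the stabilizer of a vertex is a partial-Torelli-type subgroup of $\Mod(\Sigma_{g-1}^2)$, which is not one of the groups $\Torelli(\Sigma_{g'}^1,\mu_{g'})$ in your stabilization sequence; making sense of partial Torelli groups on multi-boundary surfaces, and of when maps between them are homology isomorphisms, is exactly the delicate content of the paper's later sections (and the appendix shows such comparisons genuinely fail in some cases). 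There is a second problem even before that: with only the condition $\mu([\gamma_i])=0$, the marking does not descend to the cut surface in such a way that $\mu$ is a stabilization of the restricted marking --- for that you need $\mu$ to vanish on a full genus-one symplectic subspace, not just on the class of one curve (the dual curve's class survives into the complement only as a relative/arc class). The paper avoids both issues by using tethered genus-$1$ \emph{subsurfaces} whose entire $\HH_1$ is killed by $\mu$ (the complexes $\TS_1(\Sigma_g^1,I,\mu)$): then the stabilizer subsurface is honestly $\Sigma_{g-1}^1$ and the stabilizer is $\Torelli(\Sigma_{g-1}^1,\mu')$ with $\mu$ the stabilization of $\mu'$. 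Loop-based complexes do appear in the paper, but only with double tethers and extra conditions (vanishing of the tether-arc class and an intersection-surjectivity condition), and only for the multi-boundary double-boundary stabilization, not for Theorem A.

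The remaining heart of the matter --- high connectivity of the vanishing complex --- is left as an acknowledged gap in your proposal, and the ``repair each surgery'' plan you sketch is not how the bound $(\rank(A)+2)k+(2\rank(A)+2)$ actually arises. The paper first proves connectivity of the unmarked complexes of tethered genus-$h$ subsurfaces (Theorem B, by induction on $h$ using a fiber lemma and bad-simplex/link arguments), and then obtains the vanishing version by a retraction trick: it forms an auxiliary complex whose vertices are the $\mu$-vanishing tethered genus-$1$ surfaces together with \emph{unconstrained} tethered genus-$(\rank(A)+1)$ surfaces, shows this auxiliary complex is highly connected by a link argument, and retracts it onto the vanishing subcomplex using the algebraic destabilization lemma that any marking restricted to a genus-$(\rank(A)+1)$ symplectic subspace vanishes on some genus-$1$ symplectic subspace. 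That same destabilization statement is also what lets one place the single map $\Torelli(\Sigma_g^1,\mu)\rightarrow\Torelli(\Sigma_{g+1}^1,\mu')$ into an increasing sequence of groups, something your ``uniform induction over markings'' would still need to make precise. Finally, you never address transitivity of $\Torelli(\Sigma_g^1,\mu)$ on simplices (nor the machine's conjugation condition on edges); in the paper transitivity is a separate lemma whose proof itself relies on the connectivity theorem, so it cannot simply be taken for granted.
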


\p{Closed surface trouble}
Harer's stability theorem implies that the map $\Mod(\Sigma_g^1) \rightarrow \Mod(\Sigma_g)$ arising
from gluing a disc to $\partial \Sigma_g^1$ induces an isomorphism on $\HH_k$ for $g \gg k$.  One might
expect a similar result to hold for the partial Torelli groups.  Unfortunately, this is 
completely false.  In \S \ref{section:closed}, we will prove that it fails
even for $\HH_1$ for $A$-homology markings satisfying a mild nondegeneracy condition
called {\em symplectic nondegeneracy}.  One special case of this is the following.
For $1 \leq h \leq g$, define $\Torelli(\Sigma_g,h)$ just like $\Torelli(\Sigma_g^1,h)$,
so we have a surjection $\Torelli(\Sigma_g^1,h) \rightarrow \Torelli(\Sigma_g,h)$.

\begin{maintheorem}
\label{maintheorem:closed}
For $h \leq g$ with $g \geq 3$ and $h \geq 2$, the map
$\HH_1(\Torelli(\Sigma_g^1,h)) \rightarrow \HH_1(\Torelli(\Sigma_g,h))$
is not an isomorphism.
\end{maintheorem}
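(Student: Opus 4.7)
The capping map $\Torelli_h(\Sigma_g^1) \twoheadrightarrow \Torelli_h(\Sigma_g)$ is surjective, so it suffices to construct a homomorphism $\tau\colon \Torelli_h(\Sigma_g^1) \to A$ to some abelian group $A$ that is nontrivial on an element of the kernel of this surjection.

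The Birman exact sequence, restricted to the partial Torelli groups (valid because the boundary twist and all point-pushings act trivially on $\HH_1(\Sigma_g)$ and so lie in $\Torelli_h$), yields
$$1 \longrightarrow K \longrightarrow \Torelli_h(\Sigma_g^1) \longrightarrow \Torelli_h(\Sigma_g) \longrightarrow 1,$$
where $K$ is a central extension $1 \to \Z \to K \to \pi_1(\Sigma_g) \to 1$ with the $\Z$ generated by $T_{\partial\Sigma_g^1}$ and the $\pi_1(\Sigma_g)$-piece consisting of point-pushing maps $f_\gamma$. It therefore suffices to detect $f_\gamma$ for a suitable $\gamma$.

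I construct $\tau$ as a partial Johnson-type homomorphism. Set $W = \Span{a_1, b_1, \ldots, a_h, b_h}$ and write $H := \HH_1(\Sigma_g^1) = W \oplus V$ symplectically; elements of $\Torelli_h(\Sigma_g^1)$ act trivially on $W$ while preserving $V$. Fix a basepoint $\ast \in \partial \Sigma_g^1$ and put $F = \pi_1(\Sigma_g^1, \ast) \cong F_{2g}$. For $f \in \Torelli_h(\Sigma_g^1)$ and a basis element $w \in W$, pick any lift $\tilde w \in F$; since $f$ fixes $w$ in $H$, we have $f(\tilde w) = \tilde w \cdot c_w(f)$ with $c_w(f) \in [F,F]$, whose class in $[F,F]/\gamma_3 F \cong \Lambda^2 H$ projects to a well-defined element of $\Lambda^2 W$. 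Set
$$\tau\colon \Torelli_h(\Sigma_g^1) \longrightarrow \Hom(W, \Lambda^2 W), \qquad \tau(f)(w) := [c_w(f)]_{\Lambda^2 W}.$$
Because $\Torelli_h$ acts trivially on $\Lambda^2 W$, the crossed-homomorphism defect $f_* c_w(g) - c_w(g)$ that appears in the classical Johnson construction vanishes on this quotient, so $\tau$ is a genuine group homomorphism to an abelian group and is independent of the choice of lifts.

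A direct computation, using that $f_\gamma$ acts on $F$ by conjugation by a lift $\tilde\gamma$ of $\gamma$ up to a central boundary-twist correction harmless modulo $\gamma_3 F$, gives $c_{a_1}(f_\gamma) \equiv a_1 \wedge [\gamma] \pmod{\gamma_3 F}$. Taking $\gamma$ with $[\gamma] = b_2$, which is available because $h \geq 2$, we obtain $\tau(f_\gamma)(a_1) = a_1 \wedge b_2 \neq 0$ in $\Lambda^2 W$. Hence $\tau$ is nontrivial on an element of $K$, so $\HH_1(\Torelli_h(\Sigma_g^1)) \to \HH_1(\Torelli_h(\Sigma_g))$ is not injective. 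The principal obstacle is the justification in the construction of $\tau$: one must verify carefully that projecting $\Lambda^2 H \to \Lambda^2 W$ simultaneously kills the ambiguity in choosing lifts and the potentially nontrivial $\Torelli_h$-action on the $(W\otimes V) \oplus \Lambda^2 V$-components of $c_w(f)$, so that $\tau$ really is a well-defined homomorphism rather than merely a crossed homomorphism into a twisted module.
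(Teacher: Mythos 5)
Your strategy is essentially the paper's: the kernel of $\Torelli_h(\Sigma_g^1)\twoheadrightarrow\Torelli_h(\Sigma_g)$ is the disc-pushing subgroup $K\cong\pi_1(U\Sigma_g)$, and one detects it by a Johnson-type homomorphism defined on the partial Torelli group; this forces $h\geq 2$, exactly as the paper's symplectic nondegeneracy condition does. The difference is that the paper imports the homomorphism from Broaddus--Farb--Putman, while you build the needed piece by hand as the $\wedge^2 W$-truncation of the Johnson cocycle. That construction does work, for the reason you flag as the principal obstacle: an element $f\in\Torelli_h(\Sigma_g^1)$ fixes $W$ pointwise and therefore preserves $V=W^{\perp}$, so $f_{\ast}-1$ maps $\wedge^2 H$ into $(W\otimes V)\oplus\wedge^2 V$; composing with the projection to $\wedge^2 W$ therefore kills both the ambiguity in the lift $\tilde w$ and the crossed-homomorphism defect, and $\tau$ is a genuine homomorphism to an abelian group. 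This is a reasonable self-contained substitute for the citation.

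The one step whose justification is wrong as written is the evaluation on the point-push. The disc-push $f_\gamma$ does \emph{not} act on $F=\pi_1(\Sigma_g^1,\ast)$ by conjugation by a lift $\tilde\gamma$ up to a central boundary-twist: conjugation by $\tilde\gamma$ does not fix the boundary word $\zeta=\prod_i[a_i,b_i]$, so it is not induced by any boundary-fixing mapping class at all. The true automorphism agrees with conjugation by $\tilde\gamma$ only modulo the normal closure of $\zeta$, and those correction terms are \emph{not} harmless modulo $\gamma_3 F$: each conjugate of $\zeta^{\pm 1}$ contributes $\pm\omega=\pm\sum_i a_i\wedge b_i$ in $\gamma_2 F/\gamma_3 F\cong\wedge^2 H$, and $\omega$ projects to $\omega_W=\sum_{i\leq h}a_i\wedge b_i\neq 0$ in $\wedge^2 W$. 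Indeed, Johnson's formula (the one the paper records for the restriction of $\tau$ to $\DP(\partial)$) gives, up to sign conventions, $[c_x(f_\gamma)]=x\wedge[\gamma]+\langle[\gamma],x\rangle\,\omega$ where $\langle\cdot,\cdot\rangle$ is the intersection pairing; for $x=a_2$, $[\gamma]=b_2$ your naive formula is off by a copy of $\omega$. Your argument survives for two reasons: your particular choice $x=a_1$, $[\gamma]=b_2$ has $\langle[\gamma],x\rangle=0$, so the value really is $\pm a_1\wedge b_2$; and even an undetermined correction $m\,\omega_W$ cannot cancel it, since $a_1\wedge b_2+m\,\omega_W\neq 0$ in $\wedge^2 W$ whenever $h\geq 2$ (the $a_1\wedge b_2$ coordinate is $1$). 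So replace the appeal to ``conjugation up to a central correction'' either by quoting the formula for $\tau$ on point-pushes or by the observation that all corrections lie in $\Z\omega$ and cannot kill the $a_1\wedge b_2$ term; with that repair the proof is correct.
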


The proof uses an extension of the Johnson homomorphism to the partial Torelli groups
that was constructed by Broaddus--Farb--Putman \cite{BroaddusFarbPutman}.

\p{Multiple boundary components}
In addition to Theorem \ref{maintheorem:stable} concerning surfaces with one boundary component, we also
have a theorem for surfaces with multiple boundary components.  The correct statement here is a bit
subtle since the phenomenon underlying Theorem \ref{maintheorem:closed} also obstructs many obvious
kinds of generalizations.  The purpose of having a generalization like this is to understand
how the partial Torelli groups restrict to subsurfaces, which turns out to be fundamental in
the author's forthcoming work on the cohomology of the moduli space of curves with level
structures \cite{PutmanStableLevel}.  Here is an example of the kind of result we prove; in fact,
this is precisely the special case needed in \cite{PutmanStableLevel}.  

\begin{example}
Consider an $A$-homology marking $\mu$ on $\Sigma_g^1$.  For some $h \geq 1$, let
$\mu'$ be its stabilization to $\Sigma_{g+h}^1$.  Consider the following
subsurfaces $S \cong \Sigma_g^{1+h}$ and $S' \cong \Sigma_g^{1+2h}$ of $\Sigma_{g+h}^1$:\\
\centerline{\psfig{file=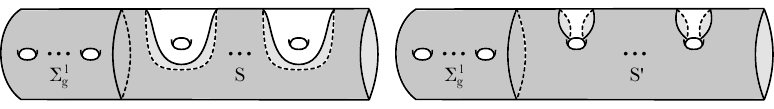,scale=100}}
Both $S$ and $S'$ include the entire shaded subsurface (including $\Sigma_g^1$).
The inclusions $S \hookrightarrow \Sigma_{g+h}^1$ and $S' \hookrightarrow \Sigma_{g+h}^1$ induce homomorphisms
$\phi\colon \Mod(S) \rightarrow \Mod(\Sigma_{g+h}^1)$ and $\psi\colon \Mod(S') \rightarrow \Mod(\Sigma_{g+h}^1)$;
define $\Torelli(S,\mu') = \phi^{-1}(\Torelli(\Sigma_{g+h}^1,\mu'))$ and 
$\Torelli(S',\mu') = \psi^{-1}(\Torelli(\Sigma_{g+h}^1,\mu'))$.  Be warned: while it turns out that
$\Torelli(S,\mu')$ can be defined using the action of $\Mod(S)$ on $\HH_1(S)$, the group $\Torelli(S',\mu')$ cannot be
defined using only $\HH_1(S')$.
Then our theorem will show that the
map
\[\HH_k(\Torelli(S,\mu')) \longrightarrow \HH_k(\Torelli(S',\mu'))\]
is an isomorphism if the genus of $S$ (namely $g$) is at least $(\rank(A)+2)k + (2\rank(A)+2)$.  However,
except in degenerate cases the maps
\[\HH_1(\Torelli(\Sigma_g^1,\mu)) \longrightarrow \HH_1(\Torelli(S,\mu')) \quad \text{and} \quad
\HH_1(\Torelli(S,\mu')) \rightarrow \HH_1(\Torelli(\Sigma_{g+h}^1,\mu'))\]
are never isomorphisms no matter how large $g$ is. 
\end{example}

In the above example, we defined the partial Torelli groups on surfaces with multiple boundary components in 
an ad-hoc way.  Correctly formulating our theorem requires a more intrinsic definition, and we define
a category of ``homology-marked surfaces'' with multiple boundary components that
is inspired by the author's work on the Torelli group on surfaces with
multiple boundary components in \cite{PutmanCutPaste}.

\p{Nonabelian markings}
We also have a theorem for nonabelian markings, whose definition is as follows.\footnote{I am not sure
who first defined this concept.  Related things appear, e.g., in \cite{DunfieldThurston, Ivanov}.}
Fix a basepoint $\ast \in \partial \Sigma_g^1$.  For a group $\Lambda$, a {\em $\Lambda$-marking} on
$\Sigma_g^1$ is a group homomorphism $\mu \colon \pi_1(\Sigma_g^1,\ast) \rightarrow \Lambda$.
If $\Lambda$ is abelian, then this is equivalent to a $\Lambda$-homology marking on $\Sigma_g^1$.  Given a $\Lambda$-marking
$\mu \colon \pi_1(\Sigma_g^1,\ast) \rightarrow \Lambda$, define the associated partial
Torelli group via the formula
\[\Torelli(\Sigma_g^1,\mu) = \Set{$f \in \Mod(\Sigma_g^1)$}{$\mu(f(x))=\mu(x)$ for all $x \in \pi_1(\Sigma_g^1,\ast)$}.\]
Again, this reduces to our previous definition if $\Lambda$ is abelian.

\p{Nonabelian stabilization}
Let $\mu$ be a $\Lambda$-marking on $\Sigma_g^1$.  Due to basepoint issues, stabilizing 
$\mu$ to $\Sigma_{g+1}^1$ is a little more complicated than the case of homology markings.
Let $\ast \in \partial \Sigma_g^1$ and $\ast' \in \partial \Sigma_{g+1}^1$ be the basepoints.
Embed $\Sigma_g^1$ into $\Sigma_{g+1}^1$ as in the following figure:\\
\centerline{\psfig{file=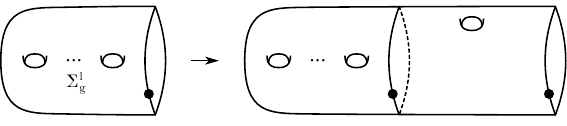,scale=100}}
Let $\lambda$ and $\eta$ and $S \cong \Sigma_1^1$ be as in the following figure:\\
\centerline{\psfig{file=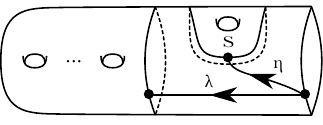,scale=100}}
Letting $\ast'' \in \partial S$ be the basepoint of $S$ as in that figure, the paths
$\lambda$ and $\eta$ induce injective homomorphisms
\[\pi_1(\Sigma_g^1,\ast) \hookrightarrow \pi_1(\Sigma_{g+1}^1,\ast') \quad \text{and} \quad \pi_1(S,\ast'') \hookrightarrow \pi_1(\Sigma_{g+1}^1,\ast')\]
taking $x \in \pi_1(\Sigma_g^1,\ast)$ to $\lambda \cdot x \cdot \lambda^{-1} \in \pi_1(\Sigma_{g+1}^1,\ast')$ and
$y \in \pi_1(S,\ast'')$ to $\eta \cdot y \cdot \eta^{-1} \in \pi_1(\Sigma_{g+1}^1,\ast')$.  Identifying
$\pi_1(\Sigma_g^1,\ast)$ and $\pi_1(S,\ast'')$ with the corresponding subgroups of $\pi_1(\Sigma_{g+1}^1,\ast')$, we
have a free product decomposition
\[\pi_1(\Sigma_{g+1}^1,\ast') = \pi_1(\Sigma_g^1,\ast) \star \pi_1(S,\ast'').\]
Define the stabilization $\mu'\colon \pi_1(\Sigma_{g+1}^1,\ast') \rightarrow \Lambda$ of
$\mu\colon \pi_1(\Sigma_g^1,\ast) \rightarrow \Lambda$ to be the composition
\[\pi_1(\Sigma_{g+1}^1,\ast') = \pi_1(\Sigma_g^1,\ast) \star \pi_1(S,\ast'') \longrightarrow \pi_1(\Sigma_g^1,\ast) \stackrel{\mu}{\longrightarrow} \Lambda,\]
where the first arrow quotients out by the normal closure of $\pi_1(S,\ast'')$.  
Just like in the abelian setting, the map $\Mod(\Sigma_g^1) \rightarrow \Mod(\Sigma_{g+1}^1)$ 
induced by our embedding 
$\Sigma_g^1 \hookrightarrow \Sigma_{g+1}^1$ restricts to a map 
$\Torelli(\Sigma_g^1,\mu) \rightarrow \Torelli(\Sigma_{g+1}^1,\mu')$ that we will call the 
{\em stabilization map}.

\p{Nonabelian stability}
Our main theorem about this is as follows.  It can be viewed as an analogue for the mapping
class group of a theorem of Ellenberg--Venkatesh--Westerland \cite[Theorem 6.1]{EllenbergVenkateshWesterland} 
concerning braid groups and Hurwitz spaces.

\begin{maintheorem}
\label{maintheorem:nonabelianstable}
Let $\Lambda$ be a finite group, let $\mu$ be a $\Lambda$-marking on $\Sigma_g^1$, and let $\mu'$ be its
stabilization to $\Sigma_{g+1}^1$.  The map
$\HH_k(\Torelli(\Sigma_g^1,\mu)) \rightarrow \HH_k(\Torelli(\Sigma_{g+1}^1,\mu'))$
induced by the stabilization map
$\Torelli(\Sigma_g^1,\mu) \rightarrow \Torelli(\Sigma_{g+1}^1,\mu')$
is an isomorphism if $g \geq (|\Lambda|+2)k + (2|\Lambda|+2)$ and a surjection
if $g = (|\Lambda|+2)k+(2|\Lambda|+1)$.
\end{maintheorem}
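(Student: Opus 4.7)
The plan is to mimic the strategy underlying Theorem \ref{maintheorem:stable}: construct a semisimplicial complex $\mathcal{X}(\Sigma_g^1, \mu)$ acted on by $\Torelli(\Sigma_g^1, \mu)$, identify the stabilizers of its simplices with partial Torelli groups of cut-up subsurfaces equipped with induced markings, prove the complex is highly connected, and run the standard spectral-sequence argument comparing this action with the one on $\Sigma_{g+1}^1$. The natural complex to try is the following: a $p$-simplex is an ordered $(p+1)$-tuple $(c_0, \ldots, c_p)$ of isotopy classes of pairwise disjoint non-separating simple closed curves in $\Sigma_g^1$ whose homology classes are linearly independent and whose conjugacy classes in $\pi_1(\Sigma_g^1, \ast)$ lie in $\ker(\mu)$ (a well-defined condition since $\ker(\mu)$ is normal). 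The condition that each $c_i$ be $\mu$-trivial ensures that cutting along $c_0 \cup \cdots \cup c_p$ yields a subsurface to which $\mu$ restricts as a $\Lambda$-marking, and the stabilizer of the simplex is (essentially) the partial Torelli group of this cut subsurface with the induced marking.

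The key step is to show that $\mathcal{X}(\Sigma_g^1, \mu)$ is $n$-connected once $g$ exceeds roughly $(|\Lambda|+2)(n+1)$. I would approach this by a Hatcher--Vogtmann-style surgery argument: given a map $S^n \to \mathcal{X}(\Sigma_g^1, \mu)$, extend it to $B^{n+1}$ in the larger (highly connected) complex of all non-separating curve systems, then perturb each newly added vertex to satisfy the $\mu$-triviality condition. The perturbation is the source of the $|\Lambda|$ factor: given a non-separating curve $c$ with $\mu([c]) = \lambda \in \Lambda$, a band-sum along an arc into an auxiliary handle $(a,b)$ disjoint from the rest of the simplex alters $\mu([c])$ by multiplication by an element of $\Lambda$ determined by the arc and the handle. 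Since $|\Lambda|$ is finite, a pigeonhole argument over $|\Lambda|$ independent candidate moves forces some combination to drive $\mu([c])$ to the identity, provided the surface has a reservoir of roughly $|\Lambda|$ auxiliary handles per simplex vertex. This cost is precisely what translates into the coefficient $|\Lambda|+2$ in the stability range.

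The main obstacle is this connectivity argument, and it is genuinely more delicate than its counterpart for Theorem \ref{maintheorem:stable}. In the abelian case, $\mu([c])$ depends only on the homology class of $c$, and surgeries can be analyzed by linear algebra over $A$, so the relevant complexity is $\rank(A)$. In the nonabelian case, $\mu([c])$ depends on the based conjugacy class of $c$, which is not captured by homology, so the surgery moves act on $\Lambda$ as a set with no better quantitative bound than $|\Lambda|$ for the number of moves needed. Carefully organizing these surgeries --- especially ensuring that perturbations at different vertices of a simplex do not interfere with each other, and that the cut surface still carries the correct marking after each move --- is the principal technical challenge. Once the complex is $n$-connected in the required range, the stabilizer identification (using the multi-boundary category of marked surfaces referenced in the introduction) together with the standard double-complex spectral-sequence argument produces the stated range, exactly paralleling the proof of Theorem \ref{maintheorem:stable}.
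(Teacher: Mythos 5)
Your proposal diverges from the paper's argument in a way that creates two genuine gaps, both traceable to the choice of complex. First, the stabilizers. If a $p$-simplex is an ordered tuple of $p+1$ disjoint nonseparating curves with independent homology classes, then cutting along them produces a surface of genus $g-p-1$ with $2(p+1)+1$ boundary components, not $\Sigma_{g-p-1}^1$, so the simplex stabilizer is not the previous group $\Torelli(\Sigma_{g-p-1}^1,\mu_{g-p-1})$ in the stabilization sequence, which is exactly what the machine (Theorem \ref{theorem:stabilitymachine}) requires. Worse, in the nonabelian setting the induced marking on the cut surface is not pinned down by the condition that each $c_i$ lie in $\ker(\mu)$ up to conjugacy: restricting a $\Lambda$-marking to a subsurface with several boundary components involves choices of connecting arcs and is only well defined up to conjugation at each boundary component, and the multi-boundary category you invoke (\S\ref{section:marked}) is constructed only for abelian homology markings; the paper explicitly declines to set up a nonabelian analogue precisely because of these basepoint issues. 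The paper's solution is to take vertices to be \emph{tethered genus-one subsurfaces} $\iota\colon \tau(\Sigma_1^1)\to\Sigma_g^1$ whose $\pi_1$-image dies under $\mu$: the tether forces stabilizers to fix simplices vertexwise, the complement of a simplex is again a one-boundary surface $\Sigma_{g-k-1}^1$, and $\mu$ descends to it cleanly because $\mu$ kills $\pi_1$ of the complement of the stabilizer subsurface (Lemma \ref{lemma:vanishtetheredstab}); transitivity is Lemma \ref{lemma:vanishtetheredtran}.

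Second, the connectivity argument. The band-sum-and-pigeonhole step does not work as stated: band-summing $c$ along an arc through an auxiliary handle multiplies $\mu([c])$ by $\mu$ of a loop supported there, and a pigeonhole over $|\Lambda|$ candidate moves only produces two moves with equal effect, whose ``difference'' is not itself an available embedded move in a nonabelian group; nothing in the sketch forces the value $\mu([c])^{-1}$ to be realized by a loop disjoint from the rest of the simplex. The paper's actual mechanism is different: it first proves Theorem \ref{maintheorem:subsurfacescon} for tethered subsurfaces of \emph{every} genus $h$ (this is why the general-$h$ case is needed even for $h=1$ stability), then uses the Dunfield--Thurston destabilization result (Proposition \ref{proposition:destabilizeone}) to show that any genus $|\Lambda|+1$ tethered subsurface contains a genus-one subsurface on which the marking vanishes, and packages this as a retraction from an auxiliary complex containing $\TS_{|\Lambda|+1}(\Sigma_g^1,I)$ onto $\TS_1(\Sigma_g^1,I,\mu)$ (Theorem \ref{theorem:vanishsurfacescon}), combined with a link argument via Corollary \ref{corollary:avoidsubcomplex}. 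So the $|\Lambda|$ in the stability range enters through the genus needed to destabilize a $\Lambda$-marking, not through counting surgery moves on individual curves. To repair your approach you would either have to develop a nonabelian multi-boundary marking formalism with controlled stabilizers, or switch to the tethered-subsurface complexes and the destabilization/retraction argument as in the paper.
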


\begin{remark}
Ellenberg--Venkatesh--Westerland's main application in \cite{EllenbergVenkateshWesterland}
of their stability result concerns point-counting in Hurwitz spaces via the Weil conjectures.
Unfortunately, the vast amount of unknown unstable cohomology precludes such applications
here.
\end{remark}

\begin{remark}
If $\Lambda$ is a finite abelian group, then Theorems \ref{maintheorem:stable} and
\ref{maintheorem:nonabelianstable} give a similar kind of stability, but the bounds
in Theorem \ref{maintheorem:stable} are much stronger.
\end{remark}

\begin{remark}
Because of basepoint issues, stating a version of Theorem \ref{maintheorem:nonabelianstable} on
surfaces with multiple boundary components would be rather technical, and unlike for
Theorem \ref{maintheorem:stable} we do not know any potential applications of such a result.  We
thus do not pursue this kind of generalization of Theorem \ref{maintheorem:nonabelianstable}.
\end{remark}

\p{Proof techniques}
There is an enormous literature on homological stability theorems, starting
with unpublished work of Quillen on $\GL_n(\Field_p)$.  A standard
proof technique has emerged that first appeared in its modern formulation
in \cite{VanDerKallen}.  Consider a sequence of groups
\begin{equation}
\label{eqn:homstab}
G_0 \subset G_1 \subset G_2 \subset \cdots
\end{equation}
that we want to prove enjoys homological stability, i.e., $\HH_k(G_{n-1}) \cong \HH_k(G_{n})$
for $n \gg k$.  To compute $\HH_k(G_n)$, we would need a
contractible simplicial complex on which $G_n$ acts freely.  Since we are only
interested in the low-degree homology groups, we can weaken contractibility
to high connectivity.  The key insight for homological stability
is that since we only want to compare $\HH_k(G_n)$ with the homology of previous
groups in \eqref{eqn:homstab}, what we want is not a free
action but one whose stabilizer subgroups are related to the previous groups.

\p{Machine}
There are many variants on the above machine.  For proving homological stability for
the groups $G_n$ in \eqref{eqn:homstab}, the easiest version
requires simplicial complexes $X_n$ upon which $G_n$ acts with the following three properties:
\setlength{\parskip}{0pt}
\begin{compactitem}
\item The connectivity of $X_n$ goes to $\infty$ as $n \mapsto \infty$.
\item For $0 \leq k \leq n-1$, the $G_n$-stabilizer of a $k$-simplex of $X_n$ is conjugate
to $G_{n-k-1}$.
\item The group $G_n$ acts transitively on the $k$-simplices of $X_n$ for all $k \geq 0$.
\end{compactitem}
Some additional technical hypotheses are needed as well; we will review these
in \S \ref{section:stabilitymachine}.
Hatcher--Vogtmann \cite{HatcherVogtmannTethers} constructed such $X_n$ for the mapping 
class group.  Our proof of Theorem \ref{maintheorem:stable} is inspired by their work,
so we start by describing a variant of it.\setlength{\parskip}{\baselineskip}

\p{Subsurface complex}
For $h \geq 1$, the {\em complex of genus $h$ subsurfaces} of $\Sigma_g^b$, denoted
$\Sur_{h}(\Sigma_g^b)$, is the simplicial complex whose $k$-simplices are sets
$\{\iota_0,\ldots,\iota_k\}$ of isotopy classes of orientation-preserving embeddings 
$\iota_i\colon \Sigma_{h}^1 \rightarrow \Sigma_g^b$ that can be isotoped such that
for $0 \leq i < j \leq k$, the subsurfaces $\iota_i(\Sigma_{h}^1)$ and
$\iota_j(\Sigma_{h}^1)$ are disjoint.  The group $\Mod(\Sigma_g^b)$ acts
on $\Sur_{h}(\Sigma_g^b)$.  However, it turns out that this is not quite
the right complex for homological stability.

\p{Tethered subsurfaces}
Let $\tau(\Sigma_{h}^1)$ be the result of gluing the interval $[0,1]$ to $\Sigma_h^1$ by identifying
$1 \in [0,1]$ with a point of $\partial \Sigma_h^1$.
The subset $[0,1] \subset \tau(\Sigma_{h}^1)$
is the {\em tether} and $0 \in [0,1] \subset \tau(\Sigma_{h}^1)$ the {\em initial point}
of the tether.  Let $I \subset \partial \Sigma_g^b$ be a finite disjoint union of open intervals.
An {\em $I$-tethered genus $h$ subsurface}
of $\Sigma_g^b$ is an embedding $\iota\colon \tau(\Sigma_{h}^1) \rightarrow \Sigma_g^b$
taking the initial point of the tether to a point of $I$
whose restriction to $\Sigma_{h}^1$ preserves the orientation.  For instance, here
is an $I$-tethered genus $2$ subsurface:\\
\centerline{\psfig{file=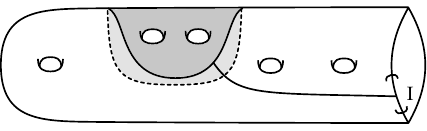,scale=100}}

\p{Tethered subsurface complex}
The {\em complex of $I$-tethered genus $h$ subsurfaces} of $\Sigma_g^b$, 
denoted $\TS_{h}(\Sigma_g^b,I)$,
is the simplicial complex whose $k$-simplices are collections
$\{\iota_0,\ldots,\iota_k\}$ of isotopy classes of $I$-tethered genus
$h$ subsurfaces of $\Sigma_g^b$ that can be realized disjointly.  These
isotopies are allowed to move the images of the initial points of the tethers within $I$,
so the tethers can be slid past each other and made disjoint.
For instance, here is a $2$-simplex in 
$\TS_{1}(\Sigma_5^1,I)$:\\
\centerline{\psfig{file=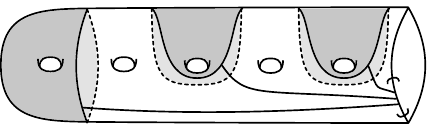,scale=100}}

\p{High connectivity}
The complexes $\Sur_1(\Sigma_g^b)$ and $\TS_1(\Sigma_g^b,I)$ 
are closely related to
complexes that were introduced by Hatcher--Vogtmann 
\cite{HatcherVogtmannTethers}, and it follows easily from their work that they
are $\frac{g-3}{2}$-connected (see \cite[proof of Theorem 6.25]{PutmanSamLinear} for details). 
We generalize this as follows:

\begin{maintheorem}
\label{maintheorem:subsurfacescon}
Consider $g \geq h \geq 1$ and $b \geq 0$.
\setlength{\parskip}{0pt}
\begin{compactitem}
\item The complex $\Sur_{h}(\Sigma_g^b)$ is $\frac{g-(2h+1)}{h+1}$-connected.
\item Assume that $b \geq 1$, and let $I \subset \partial \Sigma_g^b$ be a finite disjoint
union of open intervals.
The complex $\TS_{h}(\Sigma_g^b,I)$ is $\frac{g-(2h+1)}{h+1}$-connected.
\setlength{\parskip}{\baselineskip}
\end{compactitem}
\end{maintheorem}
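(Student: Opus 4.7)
The plan is to prove both parts by an induction on $g$ with $h$ fixed, setting $n(g) = \lfloor (g-(2h+1))/(h+1)\rfloor$ and proving the tethered statement first, since it provides the technical engine for the subsurface statement. The base case $g = 2h+1$ gives $n(g) = 0$ and reduces to showing nonemptiness and path-connectedness of both complexes, which I would check by direct construction inside the surface.

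For the tethered complex $\TS_h(\Sigma_g^b,I)$, the key structural observation is that the link of a vertex $\iota \in \TS_h(\Sigma_g^b,I)$ is isomorphic to $\TS_h(\Sigma',I')$, where $\Sigma'$ is the surface of genus $g-h$ obtained by cutting $\Sigma_g^b$ along a regular neighborhood of $\iota(\tau(\Sigma_h^1))$, and $I'$ is the appropriately modified collection of open intervals. The elementary arithmetic inequality $n(g-h) \geq n(g)-1$ combined with induction shows that every vertex link is $(n(g)-1)$-connected. The main work is converting link connectivity into connectivity of the whole complex: given a simplicial map $f\colon S^n \to \TS_h(\Sigma_g^b,I)$ with $n \leq n(g)$, I would extend $f$ to $D^{n+1}$ by a Hatcher--Vogtmann style surgery scheme — fix an auxiliary tethered subsurface $\iota_0$ and iteratively modify $f$ to push it into $\operatorname{star}(\iota_0)$, filling in along the way using link connectivity. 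A coloured-link or semisimplicial packaging makes the induction clean.

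To pass from $\TS_h$ to $\Sur_h$ when $b \geq 1$, I would study the forgetful simplicial map $\TS_h(\Sigma_g^b,I) \to \Sur_h(\Sigma_g^b)$ for $I$ a single open interval in $\partial\Sigma_g^b$. Its fibre over a simplex $\{\iota_0,\ldots,\iota_k\}$ is a join of complexes of tethering arcs from $I$ to each $\iota_i(\Sigma_h^1)$ in the complement of the others; these arc complexes are contractible by standard Hatcher surgery on arcs. A Quillen Theorem A style argument then transfers $n(g)$-connectivity from $\TS_h$ to $\Sur_h$. For the closed case $b = 0$, I would reduce to $b = 1$ by a link argument: condition on a nonseparating simple closed curve $c \subset \Sigma_g$, identify the subcomplex of subsurfaces disjoint from $c$ with a copy of $\Sur_h(\Sigma_{g-1}^2)$, and assemble using Harer's classical high connectivity of the complex of nonseparating simple closed curves.

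The principal obstacle is the surgery step in the proof of tethered connectivity for $h > 1$. For $h = 1$ one can employ intersection-number estimates against an arc as in the original Hatcher--Vogtmann argument, but for genus $h$ subsurfaces the surgery must move entire two-dimensional subsurfaces past one another, and the geometric complexity grows. The key technical lemma will be that, given a fixed tethered $\iota_0$ and a simplex $\{\iota_1,\ldots,\iota_k\}$ of total removed genus $(k+1)h$ well within the available budget, one can isotope the configuration to be disjoint from $\iota_0$ while remaining in a bounded-dimensional subcomplex of $\TS_h$ — a claim that reduces, via the link identification above, to the inductive hypothesis applied in the complement of $\iota_0$.
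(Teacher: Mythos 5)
The central step of your plan --- the Hatcher--Vogtmann style surgery that pushes a simplicial sphere $f\colon S^n \rightarrow \TS_h(\Sigma_g^b,I)$ into the star of a fixed vertex $\iota_0$ --- is exactly the point that does not go through for $h>1$, and your proposed key technical lemma does not fill it. The inductive hypothesis applied in the complement of $\iota_0$ only gives information about configurations that are \emph{already} disjoint from $\iota_0(\tau(\Sigma_h^1))$: these are the simplices of the link, which you correctly identify with $\TS_h(\Sigma',I')$ for $\Sigma'$ of genus $g-h$. It says nothing about how to homotope a sphere whose vertices are tethered genus-$h$ subsurfaces \emph{meeting} $\iota_0$ so as to remove those intersections. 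For $h=1$ (and for arcs and curves) this is done by cut-and-paste surgery that strictly decreases intersection numbers, but there is no analogous canonical surgery on an embedded genus-$h$ subsurface: one cannot resolve an essential intersection of two genus-$h$ pieces and remain inside the complex. So the claim that one can ``isotope the configuration to be disjoint from $\iota_0$ while remaining in a bounded-dimensional subcomplex'' is precisely the unproved content of the theorem, not a consequence of the link identification, and your own flagging of it as the principal obstacle is accurate: as stated, the proposal has a genuine gap at its core.

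For comparison, the paper avoids any surgery on higher-genus subsurfaces by inducting on $h$ rather than on $g$. A tethered genus-$(h+1)$ surface is re-encoded as a tethered genus-$h$ surface with a tethered torus attached along an extra ``attaching tether'' (the space $\tau(\Sigma_h^1,\Sigma_1^1)$), and one introduces an auxiliary complex $X$ in which distinct vertices may share their genus-$h$ part. The restriction map $X \rightarrow \TS_h(\Sigma_g^b,I)$ has relative fibers isomorphic to $\TS_1$ of cut-open surfaces, so Lemma \ref{lemma:fiberlemma} and Corollary \ref{corollary:fibers}, together with the known $h=1$ case (from \cite{HatcherVogtmannTethers} via \cite{PutmanSamLinear}), give high connectivity of $X$; then the bad-simplex Proposition \ref{proposition:avoidbad}, with an inner induction on the connectivity degree and a genus count exploiting that bad simplices share genus-$h$ parts, transfers this to $\TS_{h+1} \subset X$. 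All surgery is thereby confined to the genus-one case, which is already known. Your remaining steps (deducing $\Sur_h$ from $\TS_h$ via a forgetful map with arc-complex fibers, and reducing $b=0$ to $b=1$ through the nonseparating curve complex) also diverge from the paper, which simply runs the same induction for $\Sur_h$ in parallel; those steps look repairable, but they are secondary to the missing surgery input.
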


\begin{remark}
Our convention is that a space is $(-1)$-connected if it is nonempty.  Using
this convention, the genus bounds for $(-1)$-connectivity and $0$-connectivity
in Theorem \ref{maintheorem:subsurfacescon} are sharp.  We do not know whether they
are sharp for higher connectivity.
\end{remark}

\begin{remark}
Hatcher--Vogtmann's proof in \cite{HatcherVogtmannTethers} that
$\Sur_1(\Sigma_g^b)$ and $\TS_1(\Sigma_g^b,I)$ are $\frac{g-3}{2}$-connected
is closely connected to their proof that the separating curve complex
is $\frac{g-3}{2}$-connected.  Looijenga \cite{LooijengaSep} later
showed that the separating curve complex is $(g-3)$-connected.
Unfortunately, his techniques do not appear to give an improvement to
Theorem \ref{maintheorem:subsurfacescon}.
\end{remark}

\begin{remark}
In applications to homological stability, we will only use complexes made out
of genus $1$ subsurfaces.  However, the more general result of 
Theorem \ref{maintheorem:subsurfacescon} will be needed for the proof even of the $h=1$
case of Theorem \ref{maintheorem:vanishtetheredconone} below.
\end{remark}

\p{Mod stability}
Consider the groups
\begin{equation}
\label{eqn:modstab}
\Mod(\Sigma_1^1) \subset \Mod(\Sigma_2^1) \subset \Mod(\Sigma_3^1) \subset \cdots
\end{equation}
Let $I \subset \partial \Sigma_g^1$ be an open interval.
The group $\Mod(\Sigma_g^1)$ acts on $\TS_1(\Sigma_g^1,I)$, and this complex has all three properties
needed by the machine to prove homological stability for \eqref{eqn:modstab}:
\setlength{\parskip}{0pt}
\begin{compactitem}
\item As we said above, $\TS_1(\Sigma_g^1,I)$ is $\frac{g-3}{2}$-connected.
\item The $\Mod(\Sigma_g^1)$-stabilizer of a $k$-simplex $\{\iota_0,\ldots,\iota_k\}$ of
$\TS_1(\Sigma_g^1,I)$ is the mapping class group of the complement of a regular neighborhood of
\[\partial \Sigma_g^1 \cup \iota_0\left(\tau\left(\Sigma_{1}^1\right)\right) \cup \cdots \cup \iota_k\left(\tau\left(\Sigma_1^1\right)\right).\]
See here:
\end{compactitem}
\centerline{\psfig{file=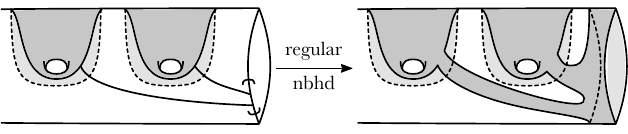,scale=100}}
\begin{compactitem}
\item[] This complement is homeomorphic to $\Sigma_{g-k-1}^1$, so this stabilizer is isomorphic to 
$\Mod(\Sigma_{g-k-1}^1)$.  All such subsurface mapping class groups are conjugate; this follows
from the ``change of coordinates principle'' from \cite[\S 1.3.2]{FarbMargalitPrimer}.
\item Another application of the ``change of coordinates principle'' shows that
$\Mod(\Sigma_g^1)$ acts transitively on the $k$-simplices of $\TS_1(\Sigma_g^1,I)$.
\end{compactitem}
\setlength{\parskip}{\baselineskip}

\p{Partial Torelli problem}
A first idea for proving homological stability for the partial Torelli 
groups $\Torelli(\Sigma_g^1,\mu)$ is to consider their actions on $\TS_1(\Sigma_g^1,I)$.
Unfortunately, this does not work.  The fundamental problem is that
$\Torelli(\Sigma_g^1,\mu)$ does not act transitively on the $k$-simplices
of $\TS_1(\Sigma_g^1,I)$; indeed, it does not even act transitively on the vertices.
For $A$-homology markings $\mu$, the issue is that for an $I$-tethered torus 
$\iota\colon \tau(\Sigma_1^1) \rightarrow \Sigma_g^1$ and $f \in \Torelli(\Sigma_g^1,\mu)$,
the compositions
\[\HH_1(\Sigma_1^1) \cong \HH_1(\tau(\Sigma_1^1)) \stackrel{\iota_{\ast}}{\longrightarrow} \HH_1(\Sigma_g^1) \stackrel{\mu}{\longrightarrow} A \quad \text{and} \quad
\HH_1(\Sigma_1^1) \cong \HH_1(\tau(\Sigma_1^1)) \stackrel{(f \circ \iota)_{\ast}}{\longrightarrow} \HH_1(\Sigma_g^1) \stackrel{\mu}{\longrightarrow} A\]
will be the same, but the functions 
$\mu \circ \iota_{\ast} \colon \HH_1(\tau(\Sigma_1^1)) \rightarrow A$ need not be the same
for different tethered tori.  A similar issue arises in the nonabelian setting.
To fix this, we use a subcomplex of $\TS_1(\Sigma_g^1,I)$
that is adapted to $\mu$.

\begin{remark}
The stabilizers are also wrong, but fixing the transitivity will
also fix this.  
\end{remark}

\p{Vanishing surfaces}
For an $A$-homology marking $\mu$ on $\Sigma_g^1$, define
$\TS_{h}(\Sigma_g^1,I,\mu)$ to be the full 
subcomplex of $\TS_{h}(\Sigma_g^1,I)$ spanned by
vertices $\iota$ such that the composition
\[\HH_1(\tau(\Sigma_{h}^1)) \stackrel{\iota_{\ast}}{\longrightarrow} \HH_1(\Sigma_g^1) \stackrel{\mu}{\longrightarrow} A\]
is the zero map.  We will show that $\Torelli(\Sigma_g^1,\mu)$ acts transitively
on the $k$-simplices of $\TS_{1}(\Sigma_g^1,I,\mu)$ (at least for $k$ not too large).
However, there is a problem: a priori the subcomplex
$\TS_{1}(\Sigma_g^1,I,\mu)$ of $\TS_{1}(\Sigma_g^1,I)$ might not be highly connected.  Our third 
main theorem says that in fact it is 
$\frac{g-(2\rank(A)+3)}{\rank(A)+2}$-connected.
More generally, we prove the following:

\begin{maintheorem}
\label{maintheorem:vanishtetheredconone}
Let $A$ be a finitely generated abelian group, let $\mu$ be an $A$-homology marking on $\Sigma_g^1$, and let 
$I \subset \partial \Sigma_g^1$ be a finite disjoint union of open intervals.  Then
the complex $\TS_{h}(\Sigma_g^1,I,\mu)$ is $\frac{g-(2\rank(A)+2h+1)}{\rank(A)+h+1}$-connected.
\end{maintheorem}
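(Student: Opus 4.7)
The plan is to induct on $r = \rank(A)$, with base case $r = 0$. When $r = 0$ the group $A$ is trivial, so the vanishing condition $\mu \circ \iota_* = 0$ is automatic and $\TS_h(\Sigma_g^1, I, \mu) = \TS_h(\Sigma_g^1, I)$; the claimed bound $\frac{g - (2h + 1)}{h+1}$ is then precisely Theorem \ref{maintheorem:subsurfacescon}. For the inductive step with $r \geq 1$, I would choose a surjection $q \colon A \twoheadrightarrow A'$ whose kernel is a nontrivial cyclic group $\langle a \rangle$ with $\rank(A') = r - 1$, obtained by peeling off a cyclic summand from a minimal generating set of $A$. Setting $\mu' = q \circ \mu$, the complex $\TS_h(\Sigma_g^1, I, \mu)$ is a full subcomplex of $\TS_h(\Sigma_g^1, I, \mu')$, and a vertex $\iota$ of the larger complex is \emph{bad} (i.e.\ absent from the smaller one) exactly when $\mu \circ \iota_*$ is nonzero with image in $\langle a \rangle$.

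Given $f \colon S^k \to \TS_h(\Sigma_g^1, I, \mu)$ with $k \leq \frac{g - (2r + 2h + 1)}{r + h + 1}$, the inductive hypothesis for $\mu'$ gives connectivity $\frac{g - (2r + 2h - 1)}{r + h}$, strictly greater than $k$, so $f$ extends to $F \colon D^{k+1} \to \TS_h(\Sigma_g^1, I, \mu')$. The task reduces to homotoping $F$ rel boundary so as to eliminate bad vertices from its image, which I would carry out by a bad-simplex argument of Hatcher--Vogtmann type \cite{HatcherVogtmannTethers, PutmanCutPaste}. Processing maximal simplices $\sigma$ of $F(D^{k+1})$ consisting entirely of bad vertices: cutting $\Sigma_g^1$ along the subsurfaces represented by $\sigma$ yields a subsurface $S_\sigma$ of genus $g - (d+1)h$ (where $d = \dim \sigma$), and the link of $\sigma$ in $\TS_h(\Sigma_g^1, I, \mu')$ is naturally identified with $\TS_h(S_\sigma, I_\sigma, \mu'|_{S_\sigma})$ for an appropriate union of intervals $I_\sigma \subset \partial S_\sigma$. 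The surgery replaces the star of $\sigma$ by a cone over a good disk in the link, which is possible as soon as $\TS_h(S_\sigma, I_\sigma, \mu|_{S_\sigma})$ is $(k - d - 1)$-connected; the denominator $r + h + 1$ in the target bound is calibrated so that each genus drop of $h$ costs exactly one unit of $k$-connectivity, so this follows from the inductive hypothesis applied to $S_\sigma$.

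The main obstacle is the link analysis. One must verify that the restricted marking $\mu|_{S_\sigma}$ remains $A$-valued of rank at most $r$, so the inductive hypothesis applies unchanged to each link, and check that the surgery does not introduce new bad simplices of higher dimension than the ones already processed. A further subtlety, flagged in the remark after Theorem \ref{maintheorem:subsurfacescon}, is that the link analysis genuinely requires Theorem \ref{maintheorem:subsurfacescon} at all $h \geq 1$, not only $h = 1$, even when proving only the $h = 1$ case of Theorem \ref{maintheorem:vanishtetheredconone}: intermediate steps of the induction naturally involve complexes of tethered subsurfaces of higher genus when multiple tethered tori must be combined to cancel $\mu$-classes valued in $\langle a \rangle$, so the higher-$h$ connectivity estimates are essential to close the induction.
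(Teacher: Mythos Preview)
Your induction on $r=\rank(A)$ has a genuine gap in the link analysis.  After you fill a $k$-sphere in $\TS_h(\Sigma_g^1,I,\mu')$ and try to push bad simplices off, the good link of a bad $d$-simplex $\sigma$ is $\TS_h(S_\sigma,I_\sigma,\mu|_{S_\sigma})$, where $\mu|_{S_\sigma}$ is the \emph{restriction of $\mu$}, not of $\mu'$.  This is still an $A$-marking with the same $\rank(A)=r$; nothing about $\sigma$ being bad forces the image of $\mu|_{S_\sigma}$ to have smaller rank.  So your inductive hypothesis, which concerns markings of rank $r-1$, does not apply to this link.  The sentence ``this follows from the inductive hypothesis applied to $S_\sigma$'' is therefore unjustified.

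You might try to rescue this with a secondary induction on genus (or on the connectivity degree $n$), using the rank-$r$ statement on smaller surfaces.  But the numerics fail: $S_\sigma$ has genus $g-(d+1)h$, so the connectivity you would obtain is
\[
\frac{g-(d+1)h-(2r+2h+1)}{r+h+1}=n-\frac{(d+1)h}{r+h+1},
\]
whereas you need the good link to be $(n-d)$-connected.  This requires $d(r+1)\geq h$, which fails for $d=0$ (and more generally whenever $d<h/(r+1)$).  In particular, a single bad vertex already breaks the argument.

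The paper takes a completely different route and does \emph{not} induct on $\rank(A)$.  It compares $\TS_h(\Sigma_g^1,I,\mu)$ directly to the \emph{unmarked} complex $\TS_{\rank(A)+h}(\Sigma_g^1,I)$, which by Theorem~\ref{maintheorem:subsurfacescon} already has exactly the target connectivity $\frac{g-(2\rank(A)+2h+1)}{\rank(A)+h+1}$.  An auxiliary complex $X$ containing both is shown to have this connectivity via Corollary~\ref{corollary:avoidsubcomplex} (here the link computation works because the comparison complex carries no marking), and then one builds a simplicial retraction $X\to\TS_h(\Sigma_g^1,I,\mu)$: each vertex $\iota\in\TS_{\rank(A)+h}(\Sigma_g^1,I)$ is sent to a $\mu$-vanishing genus-$h$ tethered subsurface inside $\iota(\Sigma_{\rank(A)+h}^1)$, whose existence is guaranteed by Proposition~\ref{proposition:destabilizeone} (equivalently Lemma~\ref{lemma:destabilizeabelianalgebra}).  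This is precisely where the higher-$h$ case of Theorem~\ref{maintheorem:subsurfacescon} is used --- not in a link estimate inside a bad-simplex argument, but as the connectivity of the unmarked auxiliary complex itself.
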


We also prove a similar theorem in the nonabelian setting.

\p{Outline}
We start in \S \ref{section:subsurfacescon} 
by proving Theorem \ref{maintheorem:subsurfacescon}.
We then prove Theorems \ref{maintheorem:stable}, \ref{maintheorem:nonabelianstable}, and \ref{maintheorem:vanishtetheredconone}
in \S \ref{section:stableone}.
Next, in \S \ref{section:marked} we define a category of homology-marked surfaces
with multiple boundary components.  
In \S \ref{section:maintheorem}, we use our
category to state and prove Theorem \ref{theorem:stableboundary}, which
generalizes Theorem \ref{maintheorem:stable} to surfaces with multiple boundary
components.  This proof depends on a stabilization result which is proved in
\S \ref{section:doubleboundarystabilization}.
We close with \S \ref{section:closed}, which proves Theorem \ref{maintheorem:closed}.

\p{Conventions}
Throughout this paper, $A$ denotes a fixed finitely generated abelian group and
$\Lambda$ is a fixed finite group.  We also fix a basepoint $\ast \in \partial \Sigma_g^1$.

\p{Acknowledgments}
I want to thank Jordan Ellenberg for a useful correspondence and Allen Hatcher for pointing out a confusing typo.  I also want to thank the referee for their very careful reading of the paper and many helpful suggestions.

\section{The complex of subsurfaces}
\label{section:subsurfacescon}

This section is devoted to the proof of Theorem \ref{maintheorem:subsurfacescon},
which asserts that $\Sur_{h}(\Sigma_g^b)$ and $\TS_{h}(\Sigma_g^b,I)$
are highly connected.  There are three parts: \S \ref{section:conprop} contains
a technical result about fibers of maps, \S \ref{section:linkarguments} discusses ``link arguments'', and
\S \ref{section:surfacescon} proves Theorem \ref{maintheorem:subsurfacescon}.

\subsection{Fibers of maps}
\label{section:conprop}

Our proofs will require a technical tool.  

\p{Homotopy theory conventions}
A space $X$ is said to be $n$-connected if for $k \leq n$, all maps
$S^k \rightarrow X$ extend to maps $D^{k+1} \rightarrow X$.  Since
$S^{-1} = \emptyset$ and $D^0$ is a single point, a space is $(-1)$-connected
precisely when it is nonempty.  A map $\psi\colon X \rightarrow Y$ of spaces
is an $n$-homotopy equivalence if for all $0 \leq k \leq n$, the induced map
$[S^k,X] \rightarrow [S^k,Y]$ on unbased homotopy classes of maps out of $S^k$
is a bijection.  This is equivalent to saying that the induced map
on $\pi_k$ is a bijection for each choice of basepoint.

\p{Relative fibers}
If $\psi\colon X \rightarrow Y$
is a map of simplicial complexes, $\sigma$ is a simplex of $Y$, and $\sigma'$ is
a face of $\sigma$, then denote by $\Fib_{\psi}(\sigma',\sigma)$ the subcomplex
of $X$ consisting of all simplices $\eta'$ of $X$ with the following properties:
\setlength{\parskip}{0pt}
\begin{compactitem}
\item $\psi(\eta')$ is a face of $\sigma'$, and
\item there exists a simplex $\eta$ of $X$ such that $\eta'$ is a face of
$\eta$ and $\psi(\eta) = \sigma$.
\end{compactitem}
For instance, consider the following map, where $\psi$ takes each $1$-simplex $\sigma'_i$
to $\sigma'$ (with the specified orientation) and each $2$-simplex $\sigma_i$ to $\sigma$:\\
\centerline{\psfig{file=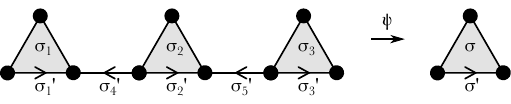,scale=100}}
The relative fiber $\Fib_{\psi}(\sigma',\sigma)$ then consists of $\sigma_1'$ and
$\sigma_2'$ and $\sigma_3'$ (but not $\sigma_4'$ or $\sigma_5'$). \setlength{\parskip}{\baselineskip}

\p{Fiber lemma}
With these definitions, we have the following lemma.

\begin{lemma}
\label{lemma:fiberlemma}
Let $\psi\colon X \rightarrow Y$ be a map of simplicial complexes.  For some
$n \geq 0$, assume the space $\Fib_{\psi}(\sigma',\sigma)$ is $n$-connected
for all simplices $\sigma$ of $Y$ and all faces $\sigma'$ of $\sigma$.
Then $\psi$ is an $n$-homotopy equivalence.
\end{lemma}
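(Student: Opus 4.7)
The plan is to prove the lemma by simplicial approximation combined with a simplex-by-simplex construction of lifts, the key input being the $n$-connectivity hypothesis on the relative fibers. By standard homotopy-theoretic arguments, to show $\psi$ is an $n$-homotopy equivalence it suffices to establish the following lifting property: for every finite simplicial pair $(K, L)$ with $\dim K \leq n+1$, every simplicial map $\beta \colon K \to Y$, and every simplicial map $\alpha \colon L \to X$ with $\psi\alpha = \beta|_L$, there exists a simplicial map $\tilde\beta \colon K' \to X$ extending $\alpha$ (where $K'$ is a subdivision of $K$ relative to $L$) such that $\psi\tilde\beta$ is homotopic to $\beta$ rel $L$. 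A standard simplicial approximation reduces the general lifting problem to this simplicial setting.

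Next, I would construct $\tilde\beta$ inductively, extending the lift over simplices $\tau$ of $K$ not in $L$ in order of increasing dimension. The inductive hypothesis I would maintain is that for every already-processed simplex $\tau'$, the restriction $\tilde\beta|_{\tau'}$ lands inside a chosen ``witness'' simplex $\eta_{\tau'}$ of $X$ with $\psi(\eta_{\tau'}) = \beta(\tau')$, and that these witnesses are chosen compatibly along face relations so that when we come to add a new simplex $\tau$ with $\sigma := \beta(\tau)$, the boundary lift $\tilde\beta|_{\partial\tau}$ takes values inside $\Fib_\psi(\sigma, \sigma)$.

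Given this setup, the extension step proceeds as follows: since $\dim\partial\tau \leq n$ and $\Fib_\psi(\sigma, \sigma)$ is $n$-connected by hypothesis, the map $\tilde\beta|_{\partial\tau} \colon \partial\tau \to \Fib_\psi(\sigma, \sigma)$ extends to a continuous map $\tau \to \Fib_\psi(\sigma, \sigma)$. Applying simplicial approximation rel $\partial\tau$ to this extension produces a simplicial map on a subdivision of $\tau$ landing in $X$, together with a new witness simplex $\eta_\tau$ mapping onto $\sigma$, reinstating the inductive hypothesis. Once $\tilde\beta$ is fully constructed, both $\psi\tilde\beta$ and $\beta$ agree on $L$ and send every simplex of the refined triangulation of each $\tau$ into the single target simplex $\sigma = \beta(\tau)$ of $Y$; a simplex-wise affine straight-line homotopy within each $\sigma$ furnishes the required homotopy $\psi\tilde\beta \simeq \beta$ rel $L$.

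The main obstacle I anticipate is the careful bookkeeping of witness simplices required to ensure that the boundary of each new simplex always lies in the correct relative fiber. This is precisely what forces the hypothesis to be phrased in terms of $\Fib_\psi(\sigma', \sigma)$ depending on both $\sigma'$ and a containing simplex $\sigma$, rather than as a naive preimage of $\sigma'$: the $n$-connectivity of the fibers provides extendibility in isolation, but the induction requires that previously constructed lifts remain usable as boundary data when extending over larger simplices.
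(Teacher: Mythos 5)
Your overall reduction (an $n$-homotopy equivalence follows from a lifting property for simplicial pairs $(K,L)$ with $\dim K \leq n+1$, up to subdivision and a homotopy of $\psi\tilde\beta$ rel $L$) is fine, but the inductive construction of the lift has a genuine gap, and it sits exactly at the point you flag as "the main obstacle." First, the invariant you propose to maintain cannot be maintained: once you extend over a simplex $\tau$ by mapping it into the $n$-connected subcomplex $\Fib_{\psi}(\sigma,\sigma)$ and applying simplicial approximation, the image of $\tau$ is spread over many simplices of that fiber, so there is no single witness simplex $\eta_{\tau}$ of $X$ with $\psi(\eta_\tau)=\sigma$ containing $\tilde\beta(\tau)$; the claim that the extension step "produces a new witness simplex" is unjustified. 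Second, and more fundamentally, even if you weaken the bookkeeping to recording, for each simplex of the subdivided lift, a simplex of $X$ it lies in, the increasing-dimension induction over the source needs the following: when you later process a coface $\tau$ of $\tau'$ with $\beta(\tau)=\sigma$, every simplex in the image of $\tilde\beta|_{\tau'}$ must be a face of a simplex of $X$ mapping \emph{onto} $\sigma$, i.e.\ must lie in $\Fib_{\psi}(\beta(\tau'),\sigma)$. What your construction actually guarantees at the time $\tau'$ is processed is only membership in $\Fib_{\psi}(\beta(\tau'),\beta(\tau'))$ (or in the fiber relative to whichever single coface you had in mind), and this is strictly weaker: the whole point of the relative fiber is that being a face of a simplex mapping onto $\sigma'$ does not imply being a face of one mapping onto $\sigma\supsetneq\sigma'$ (compare the simplices $\sigma_4'$, $\sigma_5'$ in the illustration following the definition). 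Worse, $\tau'$ may have several cofaces $\tau_1,\ldots,\tau_r$ in $K$ with distinct images $\sigma_1,\ldots,\sigma_r$ in $Y$, so the lift over $\tau'$ would have to lie in the intersection $\Fib_{\psi}(\beta(\tau'),\sigma_1)\cap\cdots\cap\Fib_{\psi}(\beta(\tau'),\sigma_r)$; the hypothesis of the lemma controls each relative fiber separately and says nothing about such intersections, which a priori could fail to be connected or even nonempty. Your sketch offers no mechanism (e.g.\ via the allowed homotopy of $\beta$ or a different processing order) to resolve this, so the induction does not close.

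For contrast, the paper avoids this entirely by inducting over the \emph{target} rather than lifting over the source: one reduces to finite-dimensional $Y$, attaches one top-dimensional simplex $\sigma$ at a time to a subcomplex $Y'$, and writes $X=\psi^{-1}(Y')\cup\Fib_{\psi}(\sigma,\sigma)$, whose intersection is exactly $\bigcup_{\sigma'\subset\partial\sigma}\Fib_{\psi}(\sigma',\sigma)$ and maps to $\partial\sigma$ by a map whose relative fibers are again of the hypothesized form; induction on $\dim Y$ plus a Mayer--Vietoris/gluing argument then yields the $n$-homotopy equivalence. Because each gluing step only ever involves the fibers relative to the single simplex $\sigma$ being attached, no control of intersections of fibers over different target simplices is needed. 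If you want to salvage a source-side lifting argument, you would need an additional idea playing this role; as written, the proposal does not prove the lemma.
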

\begin{proof}
Replacing $Y$ by its $(n+1)$-skeleton $Y_{n+1}$ and $X$ by $\psi^{-1}(Y_{n+1})$, we can
assume that $Y$ is finite-dimensional.  The proof will be by induction on 
$m=\dim(Y)$.  The base case $m=0$ is trivial since in that case $Y$
is a discrete set of points and our assumptions imply that the fiber over
each of these points is $n$-connected.  Assume now that $m \geq 1$.  The
key step in the proof is the following claim.

\begin{claim}
Assume that $Y$ is the union of a subcomplex $Y'$ and an $m$-simplex $\sigma$ with
$\sigma \cap Y' = \partial \sigma$.
Define $X' = \psi^{-1}(Y')$, and assume that
$\psi\colon X \rightarrow Y$ restricts 
to an $n$-homotopy equivalence $\psi'\colon X' \rightarrow Y'$.  Then
$\psi$ is an $n$-homotopy equivalence.
\end{claim}
\begin{proof}[Proof of claim]
Let $X'' = \Fib_{\psi}(\sigma,\sigma)$.  In other words, $X''$ consists of all simplices of $X$ mapping surjectively onto $\sigma$, along with
their faces.  We thus have $X = X' \cup X''$.  By assumption, $X''$ is $n$-connected,
which implies that $\psi$ restricts to an $n$-homotopy equivalence 
$\psi''\colon X'' \rightarrow \sigma$.  Define $Z = X' \cap X''$.  The map
$\psi$ restricts to a map $\psi_Z\colon Z \rightarrow \partial \sigma$.

We now come to the key observation: the space $Z$ is precisely
the subcomplex of $X$ consisting of the union of the subcomplexes
$\Fib_{\psi}(\sigma',\sigma)$ as $\sigma'$ ranges over all simplices of $\partial \sigma$.  
Moreover, for all simplices $\sigma'$ of $\partial \sigma$ and all faces $\sigma''$
of $\sigma'$, we have $\Fib_{\psi_Z}(\sigma'',\sigma') = \Fib_{\psi}(\sigma'',\sigma)$,
and thus by assumption $\Fib_{\psi_Z}(\sigma'',\sigma')$ is $n$-connected.  We can
therefore apply our inductive hypothesis to see that 
$\psi_Z\colon Z \rightarrow \partial \sigma \cong S^{m-1}$ is an $n$-homotopy equivalence.

Summing up, we have $X = X' \cup X''$ and $Y = Y' \cup \sigma$.  The map $\psi$
restricts to $n$-homotopy equivalences
\[\psi'\colon X' \rightarrow Y' \quad \text{and} \quad \psi''\colon X'' \rightarrow \sigma \quad \text{and} \quad \psi_Z\colon X' \cap X'' = Z \rightarrow \partial \sigma = Y' \cap \sigma.\]
The map $\psi$ induces a map between the Mayer--Vietoris exact sequences
associated to the decompositions $X = X' \cup X''$ and $Y = Y' \cup \sigma$:
\[\begin{CD}
\cdots @>>> \HH_k(X' \cap X'') @>>> \HH_k(X') \oplus \HH_k(X'') @>>> \HH_k(X) @>>> \cdots\\
@.          @VVV                    @VVV                             @VVV          @.\\
\cdots @>>> \HH_k(Y' \cap \sigma) @>>> \HH_k(Y') \oplus \HH_k(\sigma) @>>> \HH_k(Y) @>>> \cdots\\
\end{CD}\]
Other than the maps $\HH_k(X) \rightarrow \HH_k(Y)$, the vertical maps in this commutative diagram are isomorphisms for $k \leq n$, so by
the five-lemma the maps $\HH_k(X) \rightarrow \HH_k(Y)$ are also isomorphisms for $k \leq n$.
This implies in particular that the map $X \rightarrow Y$ is $0$-connected, and thus
induces a bijection between path-components.  If $n \geq 1$, then a similar argument on each path component
using the Seifert--van Kampen theorem shows that the map $\psi\colon X \rightarrow Y$ induces
an isomorphism on $\pi_1$ for each choice of basepoint.  This allows us to identify
local coefficient systems on $Y$ with local coefficient systems on $X$, and for each local 
coefficient system $A$ on $Y$
we can run the above Mayer--Vietoris argument on homology with coefficients in $A$
to prove that the map $\psi\colon \HH_k(X;A) \rightarrow \HH_k(Y;A)$ is an isomorphism
for $k \leq n$.  Applying the non-simply-connected version of
Whitehead's theorem \cite[Theorem 6.71]{DavisKirk}, 
we deduce that the map $X \rightarrow Y$ is an $n$-homotopy equivalence, as desired.
\end{proof}

Repeatedly applying this claim, we see that the lemma holds for $m$-dimensional
$Y$ with finitely many $m$-simplices.  

The general case reduces to the case
where $Y$ has finitely many $m$-simplices as follows.  Consider some $0 \leq k \leq n$.
Our goal is to prove that the map $[S^k,X] \rightarrow [S^k,Y]$ induced by
$\psi$ is a bijection.  The proofs that it is injective and surjective are
similar compactness arguments, so we give the details for surjectivity and
leave injectivity to the reader.

Consider a map $f\colon S^k \rightarrow Y$.  By compactness, $f(S^k)$ lies in a subcomplex
of $Y'$ of $Y$ containing the $(m-1)$-skeleton and finitely many $m$-simplices.  Letting
$X' = \psi^{-1}(Y')$, we know that the map $[S^k,X'] \rightarrow [S^k,Y']$ is a bijection,
so there exists some $\widetilde{f}\colon S^k \rightarrow X'$ such that 
$\psi \circ \widetilde{f} \colon S^k \rightarrow Y'$ is homotopic to $f$.  It follows
that the map $[S^k,X] \rightarrow [S^k,Y]$ induced by $\psi$ is surjective, as desired.
\end{proof}

\begin{corollary}
\label{corollary:fibers}
Let $\psi\colon X \rightarrow Y$ be a map of simplicial complexes.  For some
$n \geq 0$, assume that the following hold.
\setlength{\parskip}{0pt}
\begin{compactitem}
\item $Y$ is $n$-connected.
\item All $(n+1)$-simplices of $Y$ are in the image of $\psi$.
\item For all simplices $\sigma$ of $Y$ whose dimension is at most $n$ and
all faces $\sigma'$ of $\sigma$, the space $\Fib_{\psi}(\sigma',\sigma)$ is $n$-connected.
\end{compactitem}
Then $X$ is $n$-connected.\setlength{\parskip}{\baselineskip}
\end{corollary}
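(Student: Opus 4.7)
The plan is to invoke Lemma \ref{lemma:fiberlemma} after replacing $X$ by a subcomplex $\bar X$ whose fibers under $\psi$ are $n$-connected in every dimension, not merely in dimensions $\leq n$. The key observation is that hypothesis (ii) lets us pick, for each $(n+1)$-simplex $\sigma$ of $Y$, an $(n+1)$-simplex $\eta_\sigma$ of $X$ with $\psi(\eta_\sigma) = \sigma$ and with $\psi$ restricting to a simplicial isomorphism $\eta_\sigma \to \sigma$; indeed, any simplex of $X$ mapping onto $\sigma$ must contain $n+2$ vertices that $\psi$ sends bijectively to the vertices of $\sigma$, and these span such a face. I then let $\bar X \subseteq X$ be the subcomplex
\[
\bar X = \psi^{-1}(Y_n) \cup \bigcup_{\sigma} \overline{\eta_\sigma},
\]
where the union runs over all $(n+1)$-simplices $\sigma$ of $Y$ and $\overline{\eta_\sigma}$ denotes the closed simplex. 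Let $\bar\psi\colon \bar X \to Y_{n+1}$ be the restriction of $\psi$.

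I then verify the fiber hypothesis of Lemma \ref{lemma:fiberlemma} for $\bar\psi$. When $\dim \sigma \leq n$, any simplex of $X$ whose $\psi$-image equals $\sigma$ lies automatically in $\psi^{-1}(Y_n) \subseteq \bar X$, so $\Fib_{\bar\psi}(\sigma',\sigma) = \Fib_\psi(\sigma',\sigma)$, which is $n$-connected by hypothesis (iii). When $\dim \sigma = n+1$, the only simplex of $\bar X$ mapping onto $\sigma$ is $\eta_\sigma$ itself: a simplex of $\psi^{-1}(Y_n)$ has image in $Y_n$, and a simplex contained in some other $\overline{\eta_{\sigma'}}$ has image a face of $\sigma'$, forcing $\sigma = \sigma'$. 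Since $\psi$ restricts to a simplicial isomorphism on $\overline{\eta_\sigma}$, the fiber $\Fib_{\bar\psi}(\sigma',\sigma)$ equals the closed face of $\eta_\sigma$ corresponding to $\sigma'$, which is contractible. Lemma \ref{lemma:fiberlemma} then gives that $\bar\psi$ is an $n$-homotopy equivalence; since $Y_{n+1}$ is $n$-connected by hypothesis (i) and cellular approximation, so is $\bar X$.

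To conclude, I pass from $\bar X$ to $X$. The $n$-skeleton of $X$ satisfies $X_n \subseteq \psi^{-1}(Y_n) \subseteq \bar X$, and by cellular approximation any map $S^k \to X$ with $k \leq n$ is homotopic to a map landing in $X_k \subseteq \bar X$, so $\pi_k(\bar X) \to \pi_k(X)$ is surjective. Combined with $\pi_k(\bar X) = 0$, this forces $\pi_k(X) = 0$ for all $k \leq n$, which is the desired $n$-connectivity of $X$ (nonemptiness follows from nonemptiness of the fibers over vertices). The main delicate point is the fiber condition in the top dimension $n+1$: we must restrict to a single chosen lift $\eta_\sigma$ per top simplex of $Y$, since the full preimage $\Fib_\psi(\sigma,\sigma)$ could consist of many $(n+1)$-simplices of $X$ glued together with arbitrary topology, whereas the single-simplex fiber inside $\bar X$ is automatically contractible.
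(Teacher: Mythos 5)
Your proof is correct, and it finishes differently from the paper even though both arguments turn on Lemma \ref{lemma:fiberlemma}. The paper restricts $\psi$ to $X' = \psi^{-1}(Y')$ over the $n$-skeleton $Y'$ of $Y$ (where hypothesis (iii) supplies exactly the fibers the lemma needs), concludes that $X' \rightarrow Y'$ is an $n$-homotopy equivalence, and then treats the top dimension by hand: since $Y$ is $n$-connected, $\pi_n(Y')$ is killed by attaching the $(n+1)$-simplices of $Y$, and hypothesis (ii) lets one lift each such simplex to an $(n+1)$-simplex of $X$, so attaching the corresponding simplices of $X$ kills $\pi_n(X') \cong \pi_n(Y')$ and hence $\pi_n(X) = 0$. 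You instead fold the $(n+1)$-simplices into the fiber lemma itself: by adjoining to $\psi^{-1}(Y')$ a single chosen isomorphic lift $\eta_\sigma$ of each $(n+1)$-simplex $\sigma$ of $Y$, the fibers of the restricted map $\bar{X} \rightarrow Y_{n+1}$ over top-dimensional simplices become single closed simplices, so one application of the lemma gives $n$-connectivity of $\bar{X}$, and the containment of the $n$-skeleton of $X$ in $\bar{X}$ transfers this to $X$. The trade-off is that the paper's route needs the explicit $\pi_n$-killing argument but no modification of $X$, while yours avoids that homotopy-group bookkeeping at the cost of the pruning construction and the verification—which you carry out correctly—that pruning does not alter $\Fib_{\psi}(\sigma',\sigma)$ for $\dim \sigma \leq n$; your closing remark is exactly the right point, since over an $(n+1)$-simplex the full fiber $\Fib_{\psi}(\sigma,\sigma)$ is not controlled by the hypotheses, which is why restricting to one lift per top simplex (or, in the paper, bypassing the lemma in that dimension) is necessary.
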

\begin{proof}
Let $Y'$ be the $n$-skeleton of $Y$ and $X' = \psi^{-1}(Y')$, so $X'$ contains the
$n$-skeleton of $X$.  Let $\psi'\colon X' \rightarrow Y'$ be the restriction of
$\psi$ to $X'$.  Our assumptions allow us to apply Lemma \ref{lemma:fiberlemma}
to $\psi'$, so $\psi'$ is an $n$-homotopy equivalence.  Since $Y$ is $n$-connected,
the space $Y'$ is $(n-1)$-connected, so this implies that $X'$ and thus $X$ are
$(n-1)$-connected.  We also know that the induced map 
$\psi'\colon \pi_n(X') \rightarrow \pi_n(Y')$ is an isomorphism.  Since $Y$
is $n$-connected, attaching the $(n+1)$-simplices of $Y$ to $Y'$ kills
$\pi_n(Y')$.  By assumption, for each of these $(n+1)$-simplices $\sigma$ of $Y$
there is an $(n+1)$-simplex $\tsigma$ of $X$ such that $\psi(\tsigma)=\sigma$.
It follows that the element of $\pi_n(Y')$ represented by 
$\partial \sigma \rightarrow Y'$ lifts to the element of $\pi_n(X')$ represented
by $\partial \tsigma \rightarrow X'$.
We conclude that attaching to $X'$ the $(n+1)$-simplices of $X$ that do not already lie
in $X'$ kills $\pi_n(X')$, which implies that $\pi_n(X)=0$, as desired.
\end{proof}

\subsection{Link arguments}
\label{section:linkarguments}

Let $X$ be a simplicial complex and let $Y \subset X$ be a subcomplex.  This section is
devoted to a result of Hatcher--Vogtmann \cite{HatcherVogtmannTethers} that gives
conditions under which the pair $(X,Y)$ is $n$-connected, i.e., $\pi_k(X,Y)=0$ for $0 \leq k \leq n$.
The idea is to identify a collection $\cB$ of ``bad simplices'' of $X$ that characterize
$Y$ in the sense that a simplex lies in $Y$ precisely when none of its faces lie in $\cB$.
We then have to understand the local topology of $Y$ around a simplex of $\cB$.  To that
end, if $\cB$ is a collection of simplices of $X$ and $\sigma \in \cB$, then define $G(X,\sigma,\cB)$ 
to be the subcomplex of $X$ consisting of simplices $\sigma'$ satisfying the following two
conditions:
\setlength{\parskip}{0pt}
\begin{compactitem}
\item The join $\sigma \ast \sigma'$ is a simplex of $X$, i.e., $\sigma'$ is a simplex in the link of $\sigma$.
\item If $\sigma''$ is a face of $\sigma \ast \sigma'$ such that $\sigma'' \in \cB$, then $\sigma'' \subset \sigma$.
\end{compactitem}
Hatcher--Vogtmann's result is then as follows.

\begin{proposition}[{\cite[Proposition 2.1]{HatcherVogtmannTethers}}]
\label{proposition:avoidbad}
Let $Y$ be a subcomplex of a simplicial complex $X$.  Assume that there exists a collection
$\cB$ of simplices of $X$ satisfying the following conditions for some $n \geq 0$:
\setlength{\parskip}{0pt}
\begin{compactitem}
\item[(i)] A simplex of $X$ lies in $Y$ if and only if none of its faces lie in $\cB$.
\item[(ii)] If $\sigma_1, \sigma_2 \in \cB$ are such that $\sigma_1 \cup \sigma_2$ is a simplex of $X$, then
$\sigma_1 \cup \sigma_2 \in \cB$.  Here $\sigma_1$ and $\sigma_2$ might share vertices, so
$\sigma_1 \cup \sigma_2$ might not be the join $\sigma_1 \ast \sigma_2$.
\item[(iii)] For all $k$-dimensional $\sigma \in \cB$, the complex $G(X,\sigma,\cB)$ is $(n-k-1)$-connected.
\end{compactitem}
Then the pair $(X,Y)$ is $n$-connected. \setlength{\parskip}{\baselineskip}
\end{proposition}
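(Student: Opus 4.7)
The plan is to show that every continuous map $f \colon (D^k, S^{k-1}) \to (X, Y)$ with $0 \leq k \leq n$ is homotopic rel $\partial$ to a map with image in $Y$. After applying simplicial approximation and subdividing enough to make $f$ simplexwise injective with respect to some PL triangulation of $D^k$, I would measure the \emph{badness} of $f$ by the lexicographic pair $(d, m)$, where $d$ is the largest dimension of a simplex $\tau \subseteq D^k$ with $f(\tau) \in \cB$ and $m$ is the number of such $d$-dimensional simplices. The base case $d = -\infty$ means no simplex of the image lies in $\cB$, and then condition (i) forces every image simplex to lie in $Y$.

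For the inductive step, pick a top-dimensional simplex $\tau$ with $\sigma := f(\tau) \in \cB$. Since $\sigma$ is a face of itself, condition (i) gives $\sigma \notin Y$; combined with $f(S^{k-1}) \subseteq Y$, this forces $\tau$ to lie in the interior of $D^k$, so $\mathrm{lk}_{D^k}(\tau) \cong S^{k-d-1}$. The key combinatorial claim is that $f$ carries $\mathrm{lk}_{D^k}(\tau)$ into $G(X, \sigma, \cB)$. Indeed, for any $\tau' \subseteq \mathrm{lk}_{D^k}(\tau)$, simplexwise injectivity on $\tau * \tau'$ makes $\sigma * f(\tau') = f(\tau * \tau')$ a vertex-disjoint join and a simplex of $X$. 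Given any bad face $\sigma''$ of $\sigma * f(\tau')$, write $\sigma'' = \sigma_0'' \sqcup \sigma_1''$ with $\sigma_0'' \subseteq \sigma$ and $\sigma_1'' \subseteq f(\tau') \setminus \sigma$; the maximality of $d$ together with condition (ii) rules out $\sigma_1'' \neq \emptyset$, forcing $\sigma'' \subseteq \sigma$.

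By condition (iii), $G(X, \sigma, \cB)$ is $(n - d - 1)$-connected, and $k - d - 1 \leq n - d - 1$, so $f|_{\mathrm{lk}(\tau)}$ extends to a simplicial map $F \colon D^{k-d} \to G(X, \sigma, \cB)$. I would then replace the closed star $\mathrm{star}(\tau) = \tau * \mathrm{lk}(\tau) \cong D^k$ with the retriangulated ball $\partial\tau * D^{k-d}$ glued along the shared boundary $\partial\tau * \mathrm{lk}(\tau)$, defining the new simplicial map using $F$ on the interior half and $f$ elsewhere; the two agree on the boundary, so this gives a homotopy rel $\partial D^k$. New top-simplices have images of the form $\sigma_0 * \sigma_G$ with $\sigma_0 \subsetneq \sigma$ and $\sigma_G \in G(X, \sigma, \cB)$. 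By the defining property of $G$, any bad face of $\sigma_0 * \sigma_G$ is contained in $\sigma$, and since $\sigma_G$ is vertex-disjoint from $\sigma$, this forces any new bad image to lie in $\partial\sigma$ and thus to have dimension at most $d-1$. The simplex $\tau$ is removed from the bad-image locus without any replacement at dimension $d$ or higher, so $(d, m)$ strictly decreases.

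The main obstacle is the link claim. Making the combinatorial argument connecting the maximality of $d$ to the join-closure in condition (ii) watertight, and then verifying that the surgery step introduces no new bad simplices at the top dimension (rather than merely shifting them around), is where the technical weight of the proposition lies; the rest of the argument follows the standard Quillen-style pattern for pair connectivity via a ``bad simplex'' induction.
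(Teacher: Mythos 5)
Your overall skeleton (bad-simplex surgery on a maximal-dimensional simplex of the domain with bad image, filling its link in the good link $G(X,\sigma,\cB)$, retriangulating the star as $\partial\tau \ast D^{k-d}$, and descending a lexicographic badness measure) is the same pattern as Hatcher--Vogtmann's argument, which this paper cites rather than reproves; the closest in-paper analogue is the proof of Lemma \ref{lemma:localinjectivity}. But two steps do not hold up as written. First, you cannot arrange simplexwise injectivity ``by subdividing'': subdividing the domain never repairs a map that collapses a simplex (a constant map stays constant), and homotoping to a locally injective map requires connectivity hypotheses on the links of \emph{all} simplices of the target -- that is exactly what Lemma \ref{lemma:localinjectivity} needs -- which are not among hypotheses (i)--(iii). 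This particular crutch is in fact unnecessary: if $\tau$ has maximal dimension $d$ among domain simplices with bad image, then no vertex $v'$ of $\tau'\in\mathrm{lk}(\tau)$ can map into $\sigma=f(\tau)$, since then $f(\tau\ast v')=\sigma$ would be bad with $\dim(\tau\ast v')>d$; so disjointness comes from maximality alone. But several of your later steps (choice of preimage faces, ``$\sigma_0\subsetneq\sigma$'') lean on an injectivity you have not legitimately obtained.

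Second, and more seriously, the key link claim is not proved, and the tools you invoke do not suffice. Your assertion that ``maximality of $d$ together with condition (ii)'' rules out $\sigma''_1\neq\emptyset$ works only when $\sigma''\cap\sigma=\emptyset$ (then $\sigma\ast\sigma''$ is a simplex, bad by (ii), and is the image of $\tau\ast\tau''$ with $\dim>d$) or when $\sigma\subseteq\sigma''$ (then $\sigma''$ itself is such an image). In the genuinely hard case $\emptyset\neq\sigma''\cap\sigma\subsetneq\sigma$, condition (ii) literally does not apply -- the join $\sigma\ast\sigma''$ is not defined since the simplices overlap -- and badness is not closed under passing to larger simplices, so you cannot conclude that $\sigma\ast\sigma''_1\in\cB$ and no contradiction with maximality results. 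Concretely, nothing in (i)--(iii) forbids $\sigma=\{a,b\}$ and $\sigma''=\{b,c\}$ both being bad while $\{a,b,c\}$ is a simplex that is not bad; then a vertex $\tau'$ mapping to $c$ lies in $\mathrm{lk}(\tau)$ but $f(\tau')\notin G(X,\sigma,\cB)$, so the image of the link need not land in the good link by the reasoning you give. Handling exactly this overlapping configuration is where the real content of the proposition sits, and your write-up explicitly defers it (``the main obstacle is the link claim''). As it stands, the decisive step is missing, so the proposal is a correct outline of the known strategy but not a proof.
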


\setlength{\parskip}{\baselineskip}
As an illustration of how Proposition \ref{proposition:avoidbad} might be used, we use it to prove
the following result (which will in fact be how we use that proposition in all but two cases).

\begin{corollary}
\label{corollary:avoidsubcomplex}
Let $X$ be a simplicial complex and let $Y,Y' \subset X$ be disjoint full subcomplexes such that
every vertex of $X$ lies in either $Y$ or $Y'$.  For some $n \geq 0$, assume that for
all $k$-dimensional simplices $\sigma$ of $Y'$ the intersection of $Y$ with the link of $\sigma$ is $(n-k-1)$-connected.
Then the pair $(X,Y)$ is $n$-connected.
\end{corollary}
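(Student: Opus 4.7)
The plan is to apply Proposition~\ref{proposition:avoidbad} with the collection $\cB$ of ``bad simplices'' taken to be \emph{all} simplices of $Y'$. Then the corollary will follow by checking the three hypotheses (i)--(iii) of that proposition, using the fullness of $Y$ and $Y'$ together with the fact that they partition the vertex set of $X$.

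For hypothesis (i), I use that $Y$ and $Y'$ are full with disjoint vertex sets covering all of $X^{(0)}$. A simplex $\sigma$ of $X$ has a face in $\cB$ if and only if it has a vertex in $Y'$ (that vertex itself is a simplex of $Y'$, hence in $\cB$). Equivalently, all vertices of $\sigma$ lie in $Y$, which by fullness of $Y$ is equivalent to $\sigma \in Y$. For hypothesis (ii), if $\sigma_1, \sigma_2$ are simplices of $Y'$ and their join is a simplex of $X$, then all vertices of $\sigma_1 * \sigma_2$ lie in $Y'$, so by fullness $\sigma_1 * \sigma_2$ is itself a simplex of $Y'$ and hence lies in $\cB$.

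The main step is to identify $G(X,\sigma,\cB)$ with $Y \cap \mathrm{Link}_X(\sigma)$ for each $\sigma \in \cB$; hypothesis (iii) then reads exactly as the connectivity assumption of the corollary. For the inclusion $Y \cap \mathrm{Link}_X(\sigma) \subset G(X,\sigma,\cB)$: if $\sigma'$ lies in the link and all its vertices lie in $Y$, then any face $\sigma''$ of $\sigma * \sigma'$ that lies in $\cB$ must have every vertex in $Y'$, and since the vertices of $\sigma'$ all lie in $Y$, such a face can only use vertices of $\sigma$, i.e.\ $\sigma'' \subset \sigma$. For the reverse inclusion: if $\sigma' \in G(X,\sigma,\cB)$ had a vertex $v$ in $Y'$, then $\{v\} \in \cB$ is a face of $\sigma * \sigma'$ not contained in $\sigma$ (since $v$ is a vertex of $\sigma'$ in the link of $\sigma$), contradicting the defining property of $G(X,\sigma,\cB)$. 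Hence every vertex of $\sigma'$ lies in $Y$, and by fullness of $Y$ we get $\sigma' \in Y \cap \mathrm{Link}_X(\sigma)$.

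I do not anticipate any serious obstacle: the proof is purely a bookkeeping exercise translating the hypotheses into the language of Proposition~\ref{proposition:avoidbad}. The only point requiring any care is the identification of $G(X,\sigma,\cB)$ with the relative link, and here the fullness of both $Y$ and $Y'$ is what makes everything work cleanly.
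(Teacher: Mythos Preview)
Your proposal is correct and follows exactly the same approach as the paper: take $\cB$ to be all simplices of $Y'$, verify hypotheses (i)--(iii) of Proposition~\ref{proposition:avoidbad}, and identify $G(X,\sigma,\cB)$ with $Y \cap \mathrm{Link}_X(\sigma)$. Your verification of (iii) is in fact more detailed than the paper's, which simply asserts that the identification is ``immediate from the definitions.''
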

\begin{proof}
We will verify the hypotheses of Proposition \ref{proposition:avoidbad} for the set $\cB$ of
all simplices of $Y'$.  Since $Y$ is a full subcomplex of $X$ and all vertices of $X$ lie in
either $Y$ or $Y'$, a simplex of $X$ lies in $Y$ if and only if none of its vertices lie in $Y'$.
Hypothesis (i) follows.  Hypothesis (ii) is immediate from the fact that $Y'$ is a full subcomplex of $X$.
As for hypothesis (iii), it is immediate from the definitions that for a simplex $\sigma \in \cB$ the
complex $G(X,\sigma,\cB)$ is precisely the intersection of the link of $\sigma$ with $Y$.
\end{proof}

\subsection{Subsurface complexes}
\label{section:surfacescon}

\setlength{\parskip}{\baselineskip}
We now prove Theorem \ref{maintheorem:subsurfacescon}, which says
that $\Sur_{h}(\Sigma_g^b)$ and $\TS_{h}(\Sigma_g^b,I)$ are
$\frac{g-(2h+1)}{h+1}$-connected.

\begin{proof}[Proof of Theorem \ref{maintheorem:subsurfacescon}]
The proofs that $\Sur_{h}(\Sigma_g^b)$ and $\TS_{h}(\Sigma_g^b,I)$ are
$\frac{g-(2h+1)}{h+1}$-connected are similar.  Keeping track of the tethers
introduces a few complications, so we will give the details for
$\TS_{h}(\Sigma_g^b,I)$ and leave $\Sur_h(\Sigma_g^b)$ to the reader.

The proof that $\TS_{h}(\Sigma_g^b,I)$ is $\frac{g-(2h+1)}{h+1}$-connected
will be by induction on $h$.  The base case $h=1$ is
\cite[Theorem 6.25]{PutmanSamLinear}, which we remark shows how to derive it
from a closely related result of Hatcher--Vogtmann \cite{HatcherVogtmannTethers}.
For the inductive step, assume that $\TS_{h}(\Sigma_g^b,I)$ is
$\frac{g-(2h+1)}{h+1}$-connected.  
We will prove that $\TS_{h+1}(\Sigma_g^b,I)$ is $\frac{g-(2h+3)}{h+2}$-connected.

Let $\tau(\Sigma_{h}^1,\Sigma_1^1)$ be the space obtained from
$\tau(\Sigma_{h}^1) \sqcup \Sigma_1^1$ by gluing in an interval $[0,1]$
with $0$ being attached to a point of $\partial \Sigma_{h}^1$ different
from the attaching point of the tether in $\tau(\Sigma_h^1)$
and $1$ being attached to a point of $\partial \Sigma_1^1$:\\
\centerline{\psfig{file=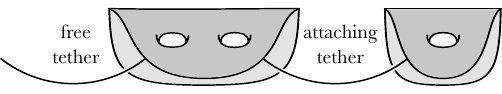,scale=100}}
The tether in $\tau(\Sigma_h^1)$ will be called the {\em free tether} and the
interval connecting $\tau(\Sigma_h^1)$ to $\Sigma_1^1$ will be called
the {\em attaching tether}.  The points $0$ of the two tethers will
be called their {\em initial points} and the points $1$ will be called
their {\em endpoints}.

Given an embedding $\tau(\Sigma_{h}^1,\Sigma_1^1) \rightarrow \Sigma_g^b$ taking
the initial point of the free tether to a point of $I$, thickening
up the attaching tether gives an $I$-tethered $\Sigma_{h+1}^1$:\\
\centerline{\psfig{file=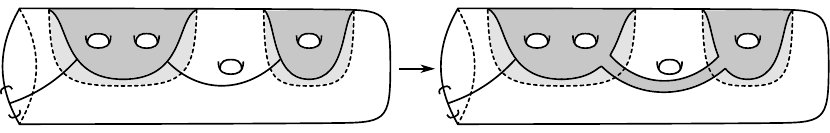,scale=100}}
In fact, there is a bijection between isotopy classes of orientation-preserving $I$-tethered
$\Sigma_{h+1}^1$ in $\Sigma_g^b$ and isotopy classes of embeddings
$\tau(\Sigma_{h}^1,\Sigma_1^1) \rightarrow \Sigma_g^b$ whose restrictions to 
$\Sigma_{h}^1$ and $\Sigma_1^1$ preserve the orientation and which take the initial point of the free
tether to a point of $I$.  For short, we will call these
{\em orientation-preserving $I$-tethered} $\tau(\Sigma_{h}^1,\Sigma_1^1)$
in $\Sigma_g^b$.  We remark that this is slightly awkward terminology since the free
tether is part of $\tau(\Sigma_{h}^1,\Sigma_1^1)$, while on the other
hand we previously talked about $I$-tethered $\Sigma_{h+1}^1$ with the
tether implicit.  By the above, we can regard
$\TS_{h+1}(\Sigma_g^b,I)$ as being the simplicial complex whose $k$-simplices are
collections $\{\iota_0,\ldots,\iota_k\}$ of isotopy classes of orientation-preserving 
$I$-tethered $\tau(\Sigma_h^1,\Sigma_1^1)$ in $\Sigma_g^b$ that can be realized such
that their images are disjoint.

We now define an auxiliary space.  Let $X$ be the simplicial complex
whose $k$-simplices are collections $\{\iota_0,\ldots,\iota_k\}$ of 
isotopy classes of orientation-preserving $I$-tethered $\tau(\Sigma_{h}^1,\Sigma_1^1)$
in $\Sigma_g^b$ that can be realized such that the following hold for all distinct
$0 \leq i, j \leq k$:
\setlength{\parskip}{0pt}
\begin{compactitem}
\item Either $\iota_i|_{\tau(\Sigma_{h}^1)} = \iota_j|_{\tau(\Sigma_{h}^1)}$, or the images
under $\iota_i$ and $\iota_j$ of $\tau(\Sigma_{h}^1)$ are disjoint.
\item If $\iota_i|_{\tau(\Sigma_{h}^1)} = \iota_j|_{\tau(\Sigma_{h}^1)}$, then the 
images under $\iota_i$ and $\iota_j$ of $\Sigma_1^1$ together with the attaching tether
are disjoint except for the initial point of the attaching tether.
\item If the images under $\iota_i$ and $\iota_j$ of $\tau(\Sigma_{h}^1)$ are disjoint, then
the images under $\iota_i$ and $\iota_j$ of $\tau(\Sigma_{h}^1,\Sigma_1^1)$ are disjoint.
\end{compactitem}
For instance, here is a $3$-simplex of $X$ for $g=9$ and $b=1$ and $h=2$:\setlength{\parskip}{\baselineskip}\\
\centerline{\psfig{file=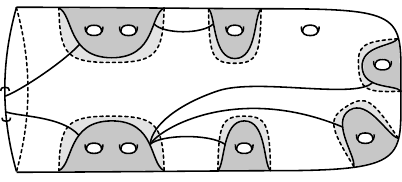,scale=100}}
We have $\TS_{h+1}(\Sigma_g^b,I) \subset X$.  The next claim says that $X$ enjoys
the connectivity property we are trying to prove for $\TS_{h+1}(\Sigma_g^b,I)$.

\begin{claim}
$X$ is $\frac{g-(2h+3)}{h+2}$-connected.
\end{claim}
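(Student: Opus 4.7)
My plan is to construct a forgetful simplicial map $\psi \colon X \to \TS_h(\Sigma_g^b, I)$ sending each $I$-tethered $\tau(\Sigma_h^1, \Sigma_1^1)$ to its underlying $I$-tethered $\Sigma_h^1$ piece, and then apply Corollary \ref{corollary:fibers} with $n = \frac{g-(2h+3)}{h+2}$. The map $\psi$ is simplicially well-defined because the first bullet in the definition of $X$ forces the $\tau(\Sigma_h^1)$-parts of vertices of a simplex to be pairwise either equal or disjoint, so their distinct isotopy classes span a simplex of $\TS_h(\Sigma_g^b, I)$.

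Condition (i) of Corollary \ref{corollary:fibers}, that $\TS_h(\Sigma_g^b, I)$ is $n$-connected, follows from the inductive hypothesis since $\frac{g-(2h+1)}{h+1} \geq \frac{g-(2h+3)}{h+2}$ for all $g \geq -1$. For the fiber condition (iii), fix a $k$-simplex $\sigma = \{\alpha_0,\ldots,\alpha_k\}$ of $\TS_h(\Sigma_g^b, I)$ with $k \leq n$, realize it disjointly, and let $S$ be the complement in $\Sigma_g^b$ of a regular neighborhood of $\bigcup_i \alpha_i(\tau(\Sigma_h^1))$, so $S \cong \Sigma_{g-(k+1)h}^b$ carrying disjoint boundary arcs $J_0,\ldots,J_k \subset \partial S$ where $J_i$ marks where $\alpha_i$ was cut out. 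The key observation is that a vertex of $X$ mapping to $\alpha_i$ under $\psi$ is exactly a $J_i$-tethered torus in $S$, and under this identification the two simplex conditions defining $X$ correspond precisely to the disjointness condition in $\TS_1(S, J_0 \sqcup \cdots \sqcup J_k)$. Consequently, for any face $\sigma'$ of $\sigma$ one should obtain
\[
\Fib_\psi(\sigma',\sigma) \;=\; \TS_1\Bigl(S,\; \bigsqcup_{\alpha_i \in \sigma'} J_i\Bigr),
\]
provided every simplex on the right extends by adding one tethered torus for each $\alpha_i \in \sigma\setminus\sigma'$; a residual-genus count in our numerical range makes this routine. The base case $h=1$ of Theorem \ref{maintheorem:subsurfacescon} (already established) then gives that this fiber is $\frac{g-(k+1)h-3}{2}$-connected, and plugging in $k \leq n$ together with $g \geq n(h+2) + 2h + 3$ yields at least $n + h/2 \geq n$.

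Condition (ii), surjectivity on $(n+1)$-simplices, reduces to choosing, for each $(n+1)$-simplex $\sigma$ of $\TS_h(\Sigma_g^b, I)$, one $J_i$-tethered torus per $\alpha_i$ disjointly in $S$; the same genus count makes this possible. I expect the main obstacle in executing the argument to be the fiber identification itself: carefully checking that the two disjointness conditions in the definition of $X$ amount to the single disjointness condition in $\TS_1(S, \bigsqcup J_i)$, and verifying that the ``extension'' requirement built into the definition of the relative fiber is vacuous in our parameter range. Once these bookkeeping matters are settled, the numerical bounds fall into place as above.
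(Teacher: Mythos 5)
Your overall route is the same as the paper's: the forgetful map $\psi\colon X \rightarrow \TS_h(\Sigma_g^b,I)$, Corollary \ref{corollary:fibers} with $n=\frac{g-(2h+3)}{h+2}$, the identification of $\Fib_{\psi}(\sigma',\sigma)$ with $\TS_1$ of the complement of \emph{all} of $\sigma$ tethered only to the intervals coming from $\sigma'$, and the same genus estimates (your direct construction of preimages of $(n+1)$-simplices replaces the paper's appeal to $\Mod(\Sigma_g^b)$-equivariance and change of coordinates, but it comes down to the same count $(n+2)(h+1)\leq g$). The gap is exactly at the point you yourself flag as the main obstacle, and your proposed fix does not work. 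A ``residual-genus count'' cannot show that every simplex of $\TS_1(S,\bigsqcup_{\alpha_i\in\sigma'}J_i)$ satisfies the extension requirement in the definition of the relative fiber: only $\sigma$ has dimension at most $n$, while the fiber simplices have unbounded dimension, and a maximal simplex of $\TS_1(S,\cdot)$ consists of $g-(k+1)h$ disjoint tethered tori, exhausting the genus of $S$ entirely. If the extra tori needed over the uncovered vertices of $\sigma$ had to lie in $S$ disjointly from everything already present, such maximal simplices would not extend, $\Fib_{\psi}(\sigma',\sigma)$ would only be a proper subcomplex of $\TS_1(S,\bigsqcup J_i)$, and the connectivity input from the $h=1$ case would no longer apply.

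What actually makes the extension requirement harmless is not genus but the deliberately weak disjointness conditions in the definition of $X$: the $\Sigma_1^1$-plus-attaching-tether part of a vertex is required to be disjoint only from the corresponding parts of the other vertices (and, since each vertex is an embedding of $\tau(\Sigma_h^1,\Sigma_1^1)$, from its own $\tau(\Sigma_h^1)$); it is \emph{not} required to avoid the genus-$h$ parts of the other vertices. Consequently, given a simplex of $\TS_1(S,\bigsqcup_{\alpha_i\in\sigma'}J_i)$, the torus needed over an uncovered vertex $\alpha_i$ of $\sigma$ (and note these may include uncovered vertices of $\sigma'$, not just of $\sigma\setminus\sigma'$, since a fiber simplex need only map onto a face of $\sigma'$) can be placed inside $\alpha_j(\Sigma_h^1)$ for some $j\neq i$, with its attaching tether routed around the finitely many genus-one pieces and tethers already present; assigning distinct uncovered vertices to distinct genus-$h$ pieces, no genus of $S$ is consumed and the extension always exists, with no numerical hypothesis. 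Replacing your genus count by this observation repairs the argument, after which your proof coincides with the paper's.
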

\begin{proof}[Proof of claim]
Let $\psi\colon X \rightarrow \TS_{h}(\Sigma_g^b,I)$ be the map that takes
a vertex $\iota\colon \tau(\Sigma_{h}^1,\Sigma_1^1) \rightarrow \Sigma_g^b$ of
$X$ to the vertex $\iota|_{\tau(\Sigma_{h}^1)}\colon \tau(\Sigma_{h}^1) \rightarrow \Sigma_g^b$ 
of $\TS_{h}(\Sigma_g^b,I)$.
We will prove that the map $\psi\colon X \rightarrow \TS_{h}(\Sigma_g^b,I)$ satisfies
the conditions of Corollary \ref{corollary:fibers} for
$n = \frac{g-(2h+3)}{h+2}$.  Once we have done this, Corollary \ref{corollary:fibers}
will show that $X$ is $n$-connected, as desired.

The first condition is that $\TS_{h}(\Sigma_g^b,I)$ is $n$-connected.  In
fact, our inductive hypothesis says that it is $\frac{g-(2h+1)}{h+1}$-connected,
which is even stronger.

The second condition says that all $(n+1)$-simplices of $\TS_{h}(\Sigma_g^b,I)$ are
in the image of $\psi$.  The map $\psi$ is $\Mod(\Sigma_g^b)$-equivariant, and
by the change of coordinates principle from \cite[\S 1.3.2]{FarbMargalitPrimer} the
actions of $\Mod(\Sigma_g^b)$ on $\TS_{h+1}(\Sigma_g^b,I)$ and $\TS_{h}(\Sigma_g^b,I)$
are transitive on $k$-simplices for all $k$.  To prove the second condition, therefore,
it is enough to show that $\TS_{h+1}(\Sigma_g^b,I) \subset X$ contains an $(n+1)$-simplex.
Such a simplex contains $(n+2)$ disjoint copies of $\tau(\Sigma_{h}^1,\Sigma_1^1)$.  Since
\begin{align*}
\left(n+2\right)\left(h+1\right) &= \left(\frac{g-\left(2h+3\right)}{h+2}+2\right)\left(h+1\right) 
= \left(\frac{g-\left(2h+3\right)}{h+2}\right)\left(h+1\right) + 2\left(h+1\right) \\
&< \left(g-\left(2h+3\right)\right) + 2\left(h+1\right) 
= g-1 < g,
\end{align*}
there is enough room on $\Sigma_g^b$ to find these $(n+2)$ disjoint copies of 
$\tau(\Sigma_{h}^1,\Sigma_1^1)$.

The final condition says that for all simplices $\sigma$ of $\TS_{h}(\Sigma_g^b,I)$
whose dimension is at most $n$ and all faces $\sigma'$ of $\sigma$, the space
$\Fib_{\psi}(\sigma',\sigma)$ defined right before Lemma \ref{lemma:fiberlemma}
is $n$-connected.  Recall that $\Fib_{\psi}(\sigma',\sigma)$ is the subcomplex
of $X$ consisting of all simplices $\eta'$ of $X$ with the following properties:
\setlength{\parskip}{0pt}
\begin{compactitem}
\item[(i)] $\psi(\eta')$ is a face of $\sigma'$, and
\item[(ii)] there exists a simplex $\eta$ of $X$ such that $\eta'$ is a face of
$\eta$ and $\psi(\eta) = \sigma$.
\end{compactitem}
Write
\[\sigma' = \{\iota_0,\ldots,\iota_{m'}\} \quad \text{and} \quad \sigma = \{\iota_0,\ldots,\iota_{m'},\ldots,\iota_m\},\]
so $0 \leq m' \leq m \leq n$.  We will illustrate all our constructions here with the following running example,
where $\sigma' = \{\iota_0,\iota_1\}$ and $\sigma = \{\iota_0,\iota_1,\iota_2\}$:\\
\centerline{\psfig{file=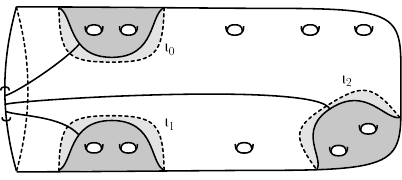,scale=100}}
The left side of the following depicts a $2$-simplex of $X$ lying in $\Fib_{\psi}(\sigma',\sigma)$ and
the right side depicts a $2$-simplex of $X$ that does not lie in $\Fib_{\psi}(\sigma',\sigma)$:\\
\centerline{\psfig{file=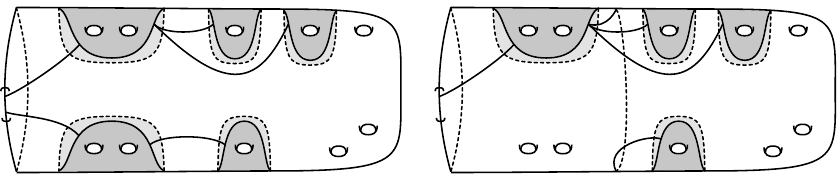,scale=100}}
The issue with the simplex $\eta'$ on the right is that there is not enough genus remaining on the surface\setlength{\parskip}{\baselineskip}
to find a simplex $\eta$ of $X$ satisfying Condition (ii) above.  For the simplex $\eta'$ on the left, the
desired simplex $\eta$ is as follows:\\
\centerline{\psfig{file=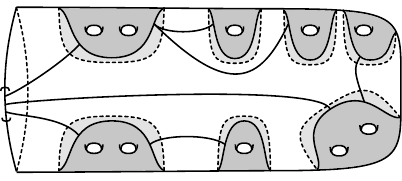,scale=100}}

To understand the connectivity of $\Fib_{\psi}(\sigma',\sigma)$,
we must relate it to a complex we already understand.
Let $\Sigma$ be the surface obtained by first removing the interior of
\[\iota_0\left(\Sigma_{h}^1\right) \cup \cdots \cup \iota_m\left(\Sigma_{h}^1\right)\]
from $\Sigma_g^b$ and then cutting open the result along the images of the tethers.  In our running example,
$\Sigma$ is obtained as follows:\\
\centerline{\psfig{file=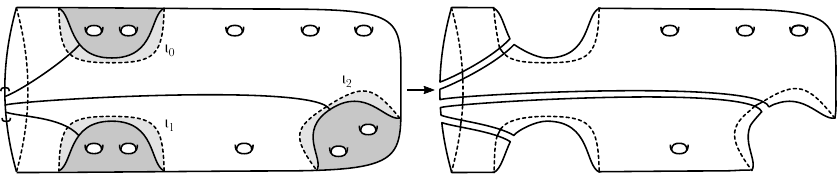,scale=100}}
We thus have $\Sigma \cong \Sigma_{g-(m+1)h}^{b}$.  For $0 \leq i \leq m'$, let
$J_i \subset \partial \Sigma$ be an open interval in 
$\iota_i(\partial \Sigma_{h}^1)$ containing the image of the point
on $\partial \Sigma_h^1$ to which the attaching tether is attached when forming 
$\tau(\Sigma_h^1,\Sigma_1^1)$.  Set $J = J_1 \cup \cdots \cup J_{m'}$.

The complex $\Fib_{\psi}(\sigma',\sigma)$ is isomorphic to a subcomplex $\TS_{1}'(\Sigma,J)$ of $\TS_1(\Sigma,J)$.
In our running example, the simplex of $\Fib_{\psi}(\sigma',\sigma)$ on the left hand side
corresponds to the simplex of $\TS_1(\Sigma,J)$ on the right hand side:\\
\centerline{\psfig{file=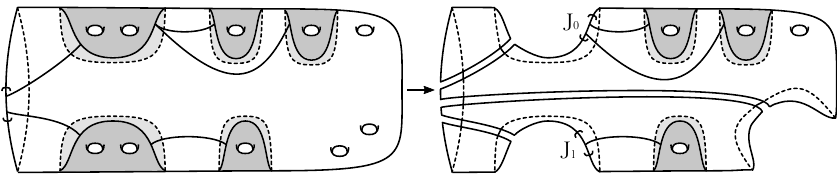,scale=100}}
The different tethers in a simplex of 
$\Fib_{\psi}(\sigma',\sigma) \subset X$ that meet
at a point of $\iota_i(\partial \Sigma_{h}^1)$ are 
``spread out'' in $J_i$ so as to be disjoint.  The reason that $\Fib_{\psi}(\sigma',\sigma)$ is only
isomorphic to a subcomplex $\TS_{1}'(\Sigma,J)$ of $\TS_1(\Sigma,J)$ and not the whole thing
is that it only corresponds to simplices where there is enough genus remaining to ensure Condition (ii)
above holds.

Recall that $m'$ and $m$ satisfy $0 \leq m' \leq m \leq n$.  As we noted in the first paragraph, the connectivity of $\TS_1(\Sigma,J)$
is at least 
\[\frac{g-(m+1)h-3}{2} \geq \frac{g-(n+1)h-3}{2}.\]
To prove that connectivity of the subcomplex $\TS_{1}'(\Sigma,J)$ is at least
$n = \frac{g-(2h+3)}{h+2}$, it is enough to prove the following two
facts:
\setlength{\parskip}{0pt}
\begin{compactitem}
\item $\frac{g-(n+1)h-3}{2} \geq n$, and
\item $\TS_{1}'(\Sigma,J)$ contains the $(n+1)$-skeleton of $\TS_1(\Sigma,J)$.
\end{compactitem}
For the first fact, we calculate as follows: \setlength{\parskip}{\baselineskip}
\begin{align*}
\frac{g-(n+1)h-3}{2} &=
\frac{1}{2}\left(g-\left(\frac{g-(2h+3)}{h+2}+1\right)h-3\right) 
= \frac{g+h^2/2-h-3}{h+2} 
\geq \frac{g-2h-3}{h+2}.
\end{align*}
Here the final inequality follows from the inequality 
$h^2/2-h \geq -2h$, which holds for $h \geq 0$.  

For the second fact, consider
a simplex $\{j_0,\ldots,j_{\ell}\}$ of the $(n+1)$-skeleton of $\TS_1(\Sigma,J)$.
Let $\Sigma'$ be the surface obtained by first removing the interior of
\[j_0\left(\Sigma_{1}^1\right) \cup \cdots \cup j_{\ell}\left(\Sigma_{1}^1\right)\]
from $\Sigma \cong \Sigma_{g-(m+1)h}^{b}$ and then cutting open the result along the images of the tethers.
It follows that
\[\Sigma' \cong \Sigma_{g-(m+1)h-(\ell+1)}^b.\]
In the worst case where the corresponding simplex $\eta'$ of $X$ maps to a vertex of $\sigma'$, 
we need at least $m$ genus remaining to complete $\eta'$ to a simplex mapping to $\sigma$.  In other
words, what we must prove is that
\[g-(m+1)h-(\ell+1) \geq m.\]
Since our simplex lies in the $(n+1)$-skeleton, we have $\ell \leq n+1$.  Also, $m \leq n$.  It follows
that it is enough to prove that
\[g-(n+1)h-(n+2) \geq n.\]
Rearranging, this, we get
\[\frac{g-h-2}{h+2} \geq n.\]
This follows from the fact that $n = \frac{g-(2h+3)}{h+2}$.
\end{proof}

We now use this to prove the desired connectivity property for
$\TS_{h+1}(\Sigma_g^b,I)$.

\begin{claim}
$\TS_{h+1}(\Sigma_g^b,I)$ is $\frac{g-(2h+3)}{h+2}$-connected.
\end{claim}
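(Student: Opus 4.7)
The plan is to prove that the pair $(X, \TS_{h+1}(\Sigma_g^b, I))$ is $(n+1)$-connected, where $n = \frac{g-(2h+3)}{h+2}$, by applying Proposition \ref{proposition:avoidbad}. Combined with the previous claim that $X$ itself is $n$-connected, the long exact sequence of the pair will then deliver the desired $n$-connectivity of $\TS_{h+1}(\Sigma_g^b, I)$.

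I will work by induction on $g$, keeping $h$ fixed (the outer induction on $h$ handles $\TS_h$). For $g$ small enough that $n < 0$, the assertion reduces to nonemptiness of $\TS_{h+1}(\Sigma_g^b, I)$, which holds once $g \geq h+1$. For the inductive step, take the collection of bad simplices to be
\[\cB = \{\sigma = \{\iota_0,\ldots,\iota_k\} \text{ a simplex of } X \mid \iota_i|_{\tau(\Sigma_h^1)} = \iota_j|_{\tau(\Sigma_h^1)} \text{ for some } i < j\}.\]
A simplex of $X$ lies in $\TS_{h+1}(\Sigma_g^b, I)$ precisely when no two of its vertices share their underlying $\tau(\Sigma_h^1)$, which verifies hypothesis (i) of Proposition \ref{proposition:avoidbad}; hypothesis (ii) is immediate, since the existence of a sharing pair passes to any supersimplex.

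For hypothesis (iii), fix $\sigma = \{\iota_0,\ldots,\iota_k\} \in \cB$, so necessarily $k \geq 1$. Let $\tau^{(1)},\ldots,\tau^{(m)}$ enumerate the distinct restrictions $\iota_i|_{\tau(\Sigma_h^1)}$, so $m \leq k$. Cutting $\Sigma_g^b$ open along the subsurfaces $\tau^{(j)}$ (with their free tethers) and along the attaching tethers together with the $\Sigma_1^1$ parts of every vertex of $\sigma$ produces a surface $\Sigma'$ of genus $g' = g - mh - (k+1) \geq g - k(h+1) - 1$, equipped with a natural collection of open intervals $I' \subset \partial \Sigma'$. An inspection of the definitions identifies $G(X, \sigma, \cB)$ with $\TS_{h+1}(\Sigma', I')$: a vertex of $G(X, \sigma, \cB)$ is a tethered $\tau(\Sigma_h^1, \Sigma_1^1)$ in $\Sigma_g^b$ whose $\tau(\Sigma_h^1)$ part is distinct from all $\tau^{(j)}$ and disjoint from everything carried by $\sigma$, and after cutting this is precisely a tethered $\Sigma_{h+1}^1$ in $\Sigma'$ with tether initial point in $I'$. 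By the inner inductive hypothesis this complex is $\frac{g'-(2h+3)}{h+2}$-connected, and combining $g' \geq g - k(h+1) - 1$ with $k \geq 1$ gives $\frac{g'-(2h+3)}{h+2} \geq n - k$, exactly the bound Proposition \ref{proposition:avoidbad} demands to conclude the pair is $(n+1)$-connected.

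The main obstacle will be verifying the identification of $G(X, \sigma, \cB)$ with $\TS_{h+1}(\Sigma', I')$. When several vertices of $\sigma$ share a common $\tau^{(j)}$, their attaching tethers all emanate from the boundary circle of the removed $\tau^{(j)}$, and one must track carefully how these tethers subdivide that boundary circle into the intervals that constitute the relevant portion of $I'$; handling this bookkeeping is what makes passing from $X$ to $\TS_{h+1}$ genuinely subtler than one might initially guess.
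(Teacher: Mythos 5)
Your overall outline (show the pair $(X,\TS_{h+1}(\Sigma_g^b,I))$ is $(n+1)$-connected via Proposition \ref{proposition:avoidbad}, identify the complexes $G(X,\sigma,\cB)$ with tethered-surface complexes of a cut-open surface, and close with an induction) is the same as the paper's, and your induction on $g$ in place of the paper's induction on the connectivity degree $n$ is a harmless variant. But there is a genuine gap in your choice of bad simplices. You take $\cB$ to be all simplices of $X$ containing \emph{at least one} sharing pair. With this $\cB$, the complex $G(X,\sigma,\cB)$ is empty for every $\sigma \in \cB$, not $\TS_{h+1}(\Sigma',I')$: if $\{u_1,u_2\} \subset \sigma$ is a sharing pair and $w$ is any vertex in the link of $\sigma$, then $\{u_1,u_2,w\}$ is a face of $\sigma \ast \{w\}$ that lies in your $\cB$ but is not contained in $\sigma$, so the second condition in the definition of $G(X,\sigma,\cB)$ excludes $w$. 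Consequently hypothesis (iii) of Proposition \ref{proposition:avoidbad} fails whenever $n \geq 0$ (already for bad edges one needs $G$ to be $(n-1)$-connected, in particular nonempty), and the identification you flagged as the ``main obstacle'' is not a bookkeeping issue but simply false for your $\cB$.

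The fix is the paper's definition of $\cB$: a simplex is bad if \emph{every} vertex shares its $\tau(\Sigma_h^1)$-part with some other vertex of the same simplex (equivalently, every vertex lies on an edge of the simplex not in $\TS_{h+1}(\Sigma_g^b,I)$). This $\cB$ still satisfies (i) (a simplex has a bad face if and only if it has a sharing pair, since a bad edge is itself a bad face) and (ii), and now $G(X,\sigma,\cB)$ consists exactly of the link simplices none of whose vertices share a genus-$h$ part with any vertex of $\sigma \ast \sigma'$, which is your intended $\TS_{h+1}(\Sigma',I')$ for the surface obtained by cutting out everything carried by $\sigma$. With this $\cB$ the genus-$h$ pieces of $\sigma$ come in groups of size at least two, so the paper gets the stronger estimate $g' \geq g - \tfrac{k+1}{2}(h+2)$; your weaker count $g' \geq g - k(h+1) - 1$ happens to suffice arithmetically for the bound $n-k$ when $k \geq 1$, so once $\cB$ and the identification of $G$ are corrected, the rest of your argument goes through essentially as in the paper.
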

\begin{proof}[Proof of claim]
We will prove that $\TS_{h+1}(\Sigma_g^b,I)$ is $n$-connected for
$-1 \leq n \leq \frac{g-(2h+3)}{h+2}$ by induction on $n$.  The base
case $n=-1$ simply asserts that $\TS_{h+1}(\Sigma_g^b,I)$ is nonempty when 
$\frac{g-(2h+3)}{h+2} \geq -1$.  This condition is equivalent to
$g \geq h+1$, in which case $\TS_{h+1}(\Sigma_g^b,I) \neq \emptyset$ is obvious.

Assume now that $0 \leq n \leq \frac{g-(2h+3)}{h+2}$ and that for all surfaces
$\Sigma_{g'}^{b'}$ and all finite disjoint unions of open intervals 
$I' \subset \partial \Sigma_{g'}^{b'}$, the space
$\TS_{h+1}(\Sigma_{g'}^{b'},I')$ is $n'$-connected for 
$n' = \min\{n-1, \frac{g'-(2h+3)}{h+2}\}$.
We must prove that $Y:=\TS_{h+1}(\Sigma_g^b,I)$ is $n$-connected.

We know that $X$ is $n$-connected, so to prove that its subcomplex $Y$ is $n$-connected
it is enough to prove that the pair $(X,Y)$ is $(n+1)$-connected.
We will do this using Proposition \ref{proposition:avoidbad}.  For this, we 
must identify a set $\cB$ of ``bad simplices'' of $X$ and verify the three 
hypotheses of the proposition.  Define $\cB$
to be the set of all simplices $\sigma$ of $X$ such that for all vertices $v$ of $\sigma$,
there exists another vertex $v'$ of $\sigma$ such that 
the edge $\{v,v'\}$ of $\sigma$ does not lie in $Y=\TS_{h+1}(\Sigma_g^b,I)$.

We now verify the hypotheses of Proposition \ref{proposition:avoidbad}.  The first two are
easy:
\setlength{\parskip}{0pt}
\begin{compactitem}
\item (i) says that a simplex of $X$ lies in $Y = \TS_{h+1}(\Sigma_g^b,I)$ if and only if none of its faces
lie in $\cB$, which is obvious.
\item (ii) says that if $\sigma_1,\sigma_2 \in \cB$ are such that $\sigma_1 \cup \sigma_2$ is a simplex
of $X$, then $\sigma_1 \cup \sigma_2 \in \cB$, which again is obvious.
\end{compactitem}
The only thing left to check is (iii), which says that for all $k$-dimensional $\sigma \in \cB$, the
complex $G(X,\sigma,\cB)$ has connectivity at least \setlength{\parskip}{\baselineskip}
$(n+1) - k - 1 = n - k$.

Write $\sigma = \{\iota_0,\ldots,\iota_{k}\}$.
Let $\Sigma'$ be the surface obtained by first removing the interiors of
\[\iota_0\left(\Sigma_{h}^1 \sqcup \Sigma_1^1\right) \cup \cdots \cup \iota_{k}\left(\Sigma_{h}^1 \sqcup \Sigma_1^1\right)\]
from $\Sigma_g^b$ and then cutting open the result along the images of the free and
attaching tethers:\\
\centerline{\psfig{file=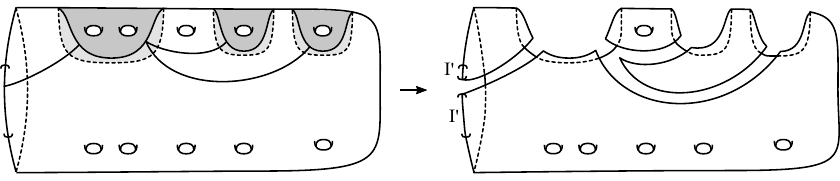,scale=100}}
The surface $\Sigma'$ is connected, and when the surface is cut open along the free and
attaching tethers the open set 
$I \subset \partial \Sigma_g^b$ is divided into a finer collection $I'$ of open 
segments (as in the above example).  Examining its definition in \S \ref{section:linkarguments}, we see that
\[G(X,\sigma,\cB) \cong \TS_{h+1}(\Sigma',I').\]
We must prove that $\TS_{h+1}(\Sigma',I')$ is $(n-k)$-connected.  Let 
$g'$ be the genus of $\Sigma'$.  Since $k \geq 1$, we have $n-k < n$, so
our inductive hypothesis will say that $\TS_{h+1}(\Sigma',I')$ is $(n-k)$-connected
if we can prove that $n-k \leq \frac{g'-(2h+3)}{h+2}$.

This requires estimating $g'$.
The most naive such estimate of $g'$ is
\[g' \geq g-(k+1)(h+1).\]
This is a poor estimate since it does not use the fact that $\sigma \in \cB$,
which implies that every genus $h$ surface contributing to this
estimate is at least double-counted.  Taking this into account, we see that in fact
\[g' \geq g-\left(\frac{k+1}{2}\right)h - \left(k+1\right)
= g-\left(\frac{k+1}{2}\right)\left(h+2\right)\]
This implies that
\begin{align*}
\frac{g'-(2h+3)}{h+2} &\geq
\frac{g-(2h+3)}{h+2} - \frac{\left(\frac{k+1}{2}\right)\left(h+2\right)}{h+2} 
\geq n - \frac{k+1}{2} \geq n-k,
\end{align*}
where the final inequality uses the fact that $k \geq 1$.
\end{proof}

This completes the proof of Theorem \ref{maintheorem:subsurfacescon}.
\end{proof}

\section{Stability for surfaces with one boundary component}
\label{section:stableone}

In this section, we prove Theorems \ref{maintheorem:stable} and \ref{maintheorem:nonabelianstable}.  
The outline is as follows.  In \S \ref{section:stabilitymachine},
we discuss the homological stability machine.  In
\S \ref{section:destabilizemarking} -- \S \ref{section:vanishsurfaces}
we prove a number of preliminary results needed to apply this machine.  Our proof
of Theorem \ref{maintheorem:vanishtetheredconone} (and its nonabelian analogue) is
in \S \ref{section:vanishsubsurfacescon}.  Finally, in \S \ref{section:proofone} we
prove Theorems \ref{maintheorem:stable} and \ref{maintheorem:nonabelianstable}.

\subsection{The stability machine}
\label{section:stabilitymachine}

\setlength{\parskip}{\baselineskip}
We now introduce the standard homological stability machine.  This is discussed in many
places, but the account in \cite[\S 1]{HatcherVogtmannTethers} is particularly convenient
for our purposes.  We remark that the results in this paper could also be proved using
the framework of \cite{Krannich} (which generalizes \cite{RandalWilliamsWahl}), but since
it would not simplify our proofs we decided not to use that framework.

\p{Semisimplicial sets}
The natural setting for the machine is that of semisimplicial sets, whose definition
we now briefly recall.  For more details, see \cite{FriedmanSimplicial}, which
calls them $\Delta$-sets.
Let $\Delta$ be the category with
objects the sets $[k] = \{0,\ldots,k\}$ for $k \geq 0$ and whose morphisms
$[k] \rightarrow [\ell]$ are the strictly increasing functions.  A {\em semisimplicial
set} is a contravariant functor $X$ from $\Delta$ to the category of sets.
The {\em $k$-simplices} of $X$ are the image $X_k$ of $[k] \in \Delta$.
The maps $X_{\ell} \rightarrow X_k$ corresponding to the $\Delta$-morphisms
$[k] \rightarrow [\ell]$ are called the {\em face maps}.

\p{Geometric properties}
A semisimplicial set $X$ has a geometric realization $|X|$
obtained by taking standard $k$-simplices
for each element of $X_k$ and then gluing these simplices together using the
face maps.  Whenever
we talk about topological properties of a semisimplicial set, we are referring
to its geometric realization.  An action of a group $G$ on a semisimplicial set $X$
consists of actions of $G$ on each $X_n$ that commute with the face maps.  This
induces an action of $G$ on $|X|$.

\p{The machine}
The version of the homological stability machine we need is as follows.  In it,
the indexing is chosen such that the complex $X_1$ upon which $G_1$ acts is
connected.

\begin{theorem}
\label{theorem:stabilitymachine}
Let
\[G_0 \subset G_1 \subset G_2 \subset \cdots\]
be an increasing sequence of groups.  For each $n \geq 1$, let $X_n$ be
a semisimplicial set upon which $G_n$ acts.  Assume for some
$c \geq 2$ that the following hold:
\setlength{\parskip}{0pt}
\begin{compactenum}
\item The space $X_n$ is $(n-1)/c$-connected.
\item For all $0 \leq i < n$, the group $G_{n-i-1}$ is the
$G_n$-stabilizer of some $i$-simplex of $X_n$.
\item For all $0 \leq i < n$, the group $G_n$ acts transitively
on the $i$-simplices of $X_n$.
\item For all $n \geq c+1$ and all $1$-simplices $e$ of $X_n$ 
whose boundary consists of vertices
$v$ and $v'$, there exists some $\lambda \in G_n$ such that $\lambda(v) = v'$ and such that
$\lambda$ commutes with all elements of $(G_n)_e$.
\end{compactenum}
Then for $k \geq 1$ the map $\HH_k(G_{n-1}) \rightarrow \HH_k(G_n)$ is an isomorphism for
$n \geq ck+1$ and a surjection for $n = ck$.
\end{theorem}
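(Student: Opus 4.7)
The plan is to apply the standard Quillen-style spectral sequence argument, following \cite{VanDerKallen} and \cite[\S 1]{HatcherVogtmannTethers}. First, consider the equivariant homology spectral sequence for the action of $G_n$ on $|X_n|$,
\[ E^1_{p,q} \;=\; \bigoplus_{[\sigma] \in X_n^{(p)} / G_n} \HH_q((G_n)_\sigma) \;\Longrightarrow\; \HH_{p+q}(G_n), \]
valid in total degrees $p+q \leq (n-1)/c$ because hypothesis (1) makes the augmented chain complex of $X_n$ exact through degree $(n-1)/c$. Since $X_n$ is a semisimplicial complex, each simplex carries a canonical vertex ordering preserved by its stabilizer, so no twisted coefficients intervene. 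By hypotheses (2) and (3), for every $0 \leq p < n$ there is a single orbit of $p$-simplices whose stabilizer is conjugate to $G_{n-p-1}$, so $E^1_{p,q} \cong \HH_q(G_{n-p-1})$ throughout this range.

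The next task is to identify the differential $d^1 \colon E^1_{p,q} \to E^1_{p-1,q}$, which is the alternating sum $\sum_{i=0}^p (-1)^i (\partial_i)_\ast$ where each $(\partial_i)_\ast \colon \HH_q(G_{n-p-1}) \to \HH_q(G_{n-p})$ is induced by the stabilizer inclusion $(G_n)_\sigma \hookrightarrow (G_n)_{\partial_i \sigma}$ after a conjugation identifying both stabilizers with the chosen standard copies. Since inner automorphisms act trivially on group homology, the map is independent of the choice of conjugating element; the crucial claim is that all $p+1$ face maps agree in homology. By transitivity (hypothesis (3)) this reduces to showing that any two faces of a fixed $p$-simplex differ by an element of the ambient stabilizer, which for an edge is precisely hypothesis (4) and for higher $p$ follows by applying (4) iteratively to pairs of vertices. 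Thus every $(\partial_i)_\ast$ equals the single stabilization map $s_\ast \colon \HH_q(G_{n-p-1}) \to \HH_q(G_{n-p})$, and the alternating sum collapses to $d^1 = s_\ast$ when $p$ is even and $d^1 = 0$ when $p$ is odd.

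The conclusion now follows by induction on $k$ and then on $n$. Assume the theorem for all smaller values. Then each $E^2_{p,q}$ with $p < n$ is alternately a cokernel or a kernel of the stabilization map $\HH_q(G_{n-p-1}) \to \HH_q(G_{n-p})$, and so vanishes whenever $n-p-1$ lies in the stable range for $\HH_q$, namely $n-p-1 \geq cq+1$. Under $n \geq ck+1$, every $(p,q)$ with $p+q \leq k$ and $(p,q) \neq (0,k)$ satisfies this inequality, so the only surviving entry in total degree $k$ is $E^\infty_{0,k}$, and a parallel check on total degree $k+1$ controls the kernel. Reading off the resulting filtration of $\HH_k(G_n)$ yields that the edge map $\HH_k(G_{n-1}) \to \HH_k(G_n)$ is an isomorphism for $n \geq ck+1$. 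The surjection boundary $n = ck$ is treated by the same argument with one of the inequalities slackened by one.

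The main obstacle will be the claim in paragraph two that hypothesis (4), stated only for edges, forces all face maps of an arbitrary $p$-simplex to induce the same map in group homology; this is a routine but careful conjugation argument combining transitivity with the triviality of inner automorphisms on group homology. Once this is in hand, the index bookkeeping in paragraph three is essentially forced, and explains why the constant $c$ appears linearly in the connectivity hypothesis (1).
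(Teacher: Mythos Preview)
Your proposal is correct and is exactly the approach the paper takes: the paper's own proof is the single sentence ``This is proved exactly like \cite[Theorem 1.1]{HatcherVogtmannTethers},'' and what you have written is a faithful outline of that Hatcher--Vogtmann argument. You have also correctly flagged the one genuinely delicate point, namely that the edge condition (4) suffices to make all higher face maps coincide on homology.
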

\begin{proof}
This is proved exactly like \cite[Theorem 1.1]{HatcherVogtmannTethers}.  
\end{proof}

\subsection{Stabilizing and destabilizing markings}
\label{section:destabilizemarking}

We next discuss the process of stabilizing and destabilizing markings.  Recall that $A$ is a fixed finitely generated abelian group and $\Lambda$ is a fixed finite group.

\p{Stabilizing and destabilizing, abelian}
If $\mu$ is an $A$-homology marking on $\Sigma_{g}^1$ and
$\Sigma_{g}^1 \hookrightarrow \Sigma_{g'}^1$ is an embedding,
then we can define the stabilization to $\Sigma_{g'}^1$ of $\mu$ just like we did
in the introduction.  Namely, $\HH_1(\Sigma_{g})$ can be identified with
a symplectic subspace of $\HH_1(\Sigma_{g'})$, so
\[\HH_1(\Sigma_{g'}) = \HH_1(\Sigma_{g}^1) \oplus \HH_1(\Sigma_g^1)^{\perp},\]
where the $\perp$ is with respect to the algebraic intersection pairing.  Define 
the {\em stabilization} $\mu'\colon \HH_1(\Sigma_{g'}^1) \rightarrow A$
of $\mu$ to be the composition
\[\HH_1(\Sigma_{g'}^1) = \HH_1(\Sigma_{g}^1) \oplus \HH_1(\Sigma_g^1)^{\perp} \longrightarrow \HH_1(\Sigma_{g}^1) \stackrel{\mu}{\longrightarrow} A,\]
where the first arrow is the orthogonal projection.  We will also say that $\mu$ is a {\em destabilization} of $\mu'$.

\p{Stabilizing and destabilizing, non-abelian}
Now let $\mu$ be a $\Lambda$-marking on $\Sigma_g^1$ and
$\Sigma_{g}^1 \hookrightarrow \Sigma_{g'}^1$ be an embedding.
Defining the stabilization of $\mu$ to $\Sigma_{g'}^1$ is subtle
since there is not a canonical\footnote{In the introduction, we made a very specific
choice when we stabilized a $\Lambda$-marking on $\Sigma_g^1$ to $\Sigma_{g+1}^1$.}
way to stabilize.  We thus need to make some auxiliary choices.

Let $\ast \in \partial \Sigma_g^1$ and $\ast' \in \partial \Sigma_{g'}^1$ be the basepoints.
Let $S$ be a subsurface of $\Sigma_{g'}^1 \setminus \Interior(\Sigma_g^1)$ with
$S \cong \Sigma_{g'-g}^1$.  Choose a basepoint $\ast'' \in \partial S$, and let
$\lambda$ and $\eta$ be embedded paths in $\Sigma_{g'}^1 \setminus \Interior(\Sigma_g^1 \cup S)$ connecting
$\ast$ to $\ast'$ and $\ast''$, respectively.  Assume that $\lambda$ and $\eta$ are disjoint aside from their
initial points:\\
\centerline{\psfig{file=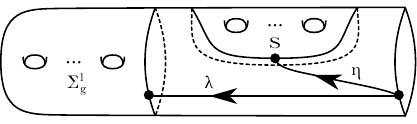,scale=100}}
The paths $\lambda$ and $\eta$ induce injective homomorphisms
\[\pi_1(\Sigma_g^1,\ast) \hookrightarrow \pi_1(\Sigma_{g'}^1,\ast') \quad \text{and} \quad \pi_1(S,\ast'') \hookrightarrow \pi_1(\Sigma_{g'}^1,\ast')\]
taking $x \in \pi_1(\Sigma_g^1,\ast)$ to $\lambda \cdot x \cdot \lambda^{-1} \in \pi_1(\Sigma_{g'}^1,\ast')$ and
$y \in \pi_1(S,\ast'')$ to $\eta \cdot y \cdot \eta^{-1} \in \pi_1(\Sigma_{g'}^1,\ast')$.  Identifying
$\pi_1(\Sigma_g^1,\ast)$ and $\pi_1(S,\ast'')$ with the corresponding subgroups of $\pi_1(\Sigma_{g'}^1,\ast')$, we
have a free product decomposition
\[\pi_1(\Sigma_{g'}^1,\ast') = \pi_1(\Sigma_g^1,\ast) \star \pi_1(S,\ast'').\]
Define $\mu'\colon \pi_1(\Sigma_{g'}^1,\ast') \rightarrow \Lambda$
to be the composition
\[\pi_1(\Sigma_{g'}^1,\ast') = \pi_1(\Sigma_g^1,\ast) \star \pi_1(S,\ast'') \longrightarrow \pi_1(\Sigma_g^1,\ast) \stackrel{\mu}{\longrightarrow} \Lambda,\]
where the first arrow quotients out by the normal closure of $\pi_1(S,\ast'')$.  

A different choice of $\eta$ would change the subgroup $\pi_1(S,\ast'')$ of $\pi_1(\Sigma_{g'}^1,\ast')$ to a conjugate
subgroup, so would not change $\mu'$.  It follows that $\mu'$ only depends on the pair $(S,\lambda)$, and
we will call $\mu'$ the {\em $(S,\lambda)$-stabilization} of $\mu$ to $\Sigma_{g'}^1$.
If we do not
want to specify $(S,\lambda)$, we will just say that $\mu'$ is a stabilization of $\mu$, but
be warned that different choices of $(S,\lambda)$ will lead to different stabilizations. 
We will also say that $\mu$ is a {\em destabilization} of $\mu'$ with {\em destabilization data $(S,\lambda)$}.

\begin{remark}
The choice of $S$ is more important than the choice of $\lambda$.  Indeed, changing $\lambda$ would have
the effect of conjugating $\mu'$ by an element of $\Lambda$.  This would not affect the associated
partial Torelli group $\Torelli(\Sigma_{g'}^1,\mu')$.
\end{remark}

\p{Maps between partial Torelli groups}
Let $\mu$ be either an $A$-homology marking or a $\Lambda$-marking on $\Sigma_g^1$, let $\Sigma_{g}^1 \hookrightarrow \Sigma_{g'}^1$
be an embedding, and let $\mu'$ be a stabilization of $\mu$ to $\Sigma_{g'}^1$.  The embedding
$\Sigma_{g}^1 \hookrightarrow \Sigma_{g'}^1$ induces an injective map $\Mod(\Sigma_g^1) \rightarrow \Mod(\Sigma_{g'}^1)$
on mapping class groups, and from our definitions it is clear that this restricts to a map
$\Torelli(\Sigma_g^1,\mu) \rightarrow \Torelli(\Sigma_{g'}^1,\mu')$ between the associated
partial Torelli groups.  In fact, we have the following:

\begin{lemma}
\label{lemma:asbigaspossible}
Let $\mu$ be either an $A$-homology marking or a $\Lambda$-marking on $\Sigma_g^1$, let $\Sigma_{g}^1 \hookrightarrow \Sigma_{g'}^1$
be an embedding, and let $\mu'$ be a stabilization of $\mu$ to $\Sigma_{g'}^1$.  
Let $\iota\colon \Mod(\Sigma_g^1) \rightarrow \Mod(\Sigma_{g'}^1)$ be the map induced by $\Sigma_{g}^1 \hookrightarrow \Sigma_{g'}^1$.
We then have that
\[\Torelli(\Sigma_g^1,\mu) = \Set{$\phi \in \Mod(\Sigma_g^1)$}{$\iota(\phi) \in \Torelli(\Sigma_{g'}^1,\mu')$}.\]
\end{lemma}
\begin{proof}
Immediate.
\end{proof}

\p{Vanishing surfaces}
Recall that the rank $\rank(A)$ of the finitely generated abelian group
$A$ is the minimal size of a generating set for $A$.
Consider a subsurface $S$ of $\Sigma_g^1$.  For an $A$-homology marking $\mu$ on $\Sigma_g^1$,
we say that {\em $\mu$ vanishes on $S$} if $\mu$ vanishes on the image of $\HH_1(S)$ in
$\HH_1(\Sigma_g^1)$.  Similarly, for a $\Lambda$-marking $\mu$ we say that {\em $\mu$ vanishes
on $S$} if $\mu(x) = 1$ for all $x \in \pi_1(\Sigma_g^1,\ast)$ that are freely homotopic to
a loop in $S$.  Here $\ast \in \partial \Sigma_g^1$ is our fixed basepoint.

\begin{proposition}
\label{proposition:findvanish}
Consider some $g,h \geq 1$.  The following then hold.
\setlength{\parskip}{0pt}
\begin{compactitem}
\item Let $\mu$ be an $A$-homology marking on $\Sigma_g^1$, and assume that
$g \geq \rank(A) + h$.  Then there exists an embedding
$S \hookrightarrow \Sigma_{g}^1$ with $S \cong \Sigma_h^1$ such that $\mu$ vanishes on $S$.
\item Let $\mu$ be a $\Lambda$-marking on $\Sigma_g^1$, and assume that
$g \geq |\Lambda|+h$.  Then there exists an embedding
$S \hookrightarrow \Sigma_{g}^1$ with $S \cong \Sigma_h^1$ such that $\mu$ vanishes on $S$.
\end{compactitem}
\end{proposition}
\setlength{\parskip}{\baselineskip}
\begin{proof}[Proof of Proposition \ref{proposition:findvanish} for $A$-homology markings]
Consider a symplectic subspace $U$ of $\HH_1(\Sigma_g^1)$, i.e., a subgroup such that
$\HH_1(\Sigma_g^1) = U \oplus U^{\perp}$, where the $\perp$ is with respect to the
algebraic intersection pairing.  Such a $U$ is of the
form $U \cong \Z^{2k}$ for an integer $k \geq 0$ called the {\em genus} of $U$.
Every genus $h$ symplectic subspace $U$ of $\HH_1(\Sigma_g^1)$ can be written
as $U = \HH_1(S)$ for some subsurface $S$ of $\Sigma_g^1$
satisfying $S \cong \Sigma_{h}^1$ (see, e.g., \cite[Lemma 9]{JohnsonConjugacy}).
The proposition is thus equivalent to the purely algebraic Lemma
\ref{lemma:destabilizeabelianalgebra} below.
\end{proof}

\begin{lemma}
\label{lemma:destabilizeabelianalgebra}
Let $V \cong \Z^{2g}$ be a free abelian group equipped with a symplectic form
$\omega(-,-)$ and let $\mu\colon V \rightarrow A$ be a group homomorphism.
Assume that $g \geq \rank(A)+h$ for some $h \geq 1$.
There then exists a genus $h$ symplectic subspace $U$ of $V$ such that $\mu|_{U} = 0$.
\end{lemma}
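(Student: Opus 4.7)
The plan is to prove the lemma by induction on $r := \rank(A)$. The base case $r = 0$ forces $A = 0$ and $\mu = 0$, so $W = 0$ works trivially.

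For the inductive step, the idea is to peel off one cyclic summand of $A$ at a time and use a single hyperbolic pair in $V$ to capture it. Using the invariant factor decomposition of $A$, I will write $A = C \oplus A'$ with $C$ cyclic (either $\Z$ or $\Z/m$) and $\rank(A') = r - 1$: peel off a $\Z$-summand if the torsion-free rank of $A$ is positive, and otherwise peel off any invariant-factor summand $\Z/m_i$. Correspondingly decompose $\mu = (\nu, \mu')$ with $\nu\colon V \to C$ and $\mu'\colon V \to A'$.

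The key sub-claim is that there exists a genus-$1$ symplectic subspace $W_0 \subseteq V$ with $\nu|_{W_0^\perp} = 0$. Since $\omega$ is unimodular, the map $V \to \Hom(V, \Z)$ given by $u \mapsto \omega(\cdot, u)$ is an isomorphism; reducing mod $m$ (when $C = \Z/m$) shows that $\nu$ has the form $\nu(\cdot) = \omega(\cdot, u) \bmod C$ for some $u \in V$. Writing $u = n u_0$ with $u_0 \in V$ primitive (or picking any primitive $u_0$ if $\nu = 0$), unimodularity also produces $f \in V$ with $\omega(u_0, f) = 1$. Then $W_0 := \langle u_0, f\rangle$ is genus-$1$ symplectic, and any $v \in W_0^\perp$ satisfies $\omega(v, u_0) = 0$, hence $\omega(v, u) = n \omega(v, u_0) = 0$, hence $\nu(v) = 0$.

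With $W_0$ in hand, $\omega$ restricts to a non-degenerate symplectic form on $W_0^\perp$ (since $W_0$ is symplectic, $V = W_0 \oplus W_0^\perp$ is an orthogonal decomposition). Applying the inductive hypothesis to $\mu'|_{W_0^\perp}\colon W_0^\perp \to A'$ produces a genus-$(r-1)$ symplectic subspace $W' \subseteq W_0^\perp$ with $\mu'$ vanishing on the $W_0^\perp$-orthogonal complement of $W'$. Setting $W := W_0 \oplus W'$ gives a genus-$r$ symplectic subspace of $V$ whose orthogonal complement in $V$ equals the $W_0^\perp$-orthogonal complement of $W'$; both $\nu$ and $\mu'$ vanish there, so $\mu|_{W^\perp} = 0$. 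The main point of care is ensuring the decomposition of $A$ drops the rank by exactly one, which is precisely what the invariant factor decomposition guarantees; the rest is routine.
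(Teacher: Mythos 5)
Your proof is correct and takes essentially the same route as the paper's: induct on $\rank(A)$, use unimodularity of $\omega$ to realize a homomorphism to a cyclic group as $x \mapsto \omega(x,u)$ (mod $m$), and complete $u$ (or a primitive $u_0$ with $u = n u_0$) to a hyperbolic pair whose perp kills that piece of $\mu$. The only cosmetic differences are that the paper first reduces to surjective $\mu$ and splits $A$ by a short exact sequence, applying the inductive hypothesis to both pieces, whereas you peel off one cyclic invariant-factor summand at a time and handle non-surjective cyclic maps directly via the factorization $u = n u_0$.
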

\begin{proof}
Without loss of generality, $\mu$ is surjective and $A \neq 0$.  Also, increasing
$h$ if necessary we can assume that $g = \rank(A)+h$.  We will prove
the ``dual'' statement that there exists a genus $\rank(A)$ symplectic subspace
$W$ of $V$ such that $\mu|_{W^{\perp}} = 0$.  The desired $U$ is then $U = W^{\perp}$.
The proof will be by induction on $\rank(A)$.  The base case is $\rank(A)=1$, so $A$
is cyclic.  We can factor $\mu$ as
\[V \stackrel{\tmu}{\twoheadrightarrow} \Z \rightarrow A.\]
By definition, $\omega(-,-)$ identifies $V$ with its dual $\Hom(V,\Z)$.
There thus exists some $a \in V$ such that $\tmu(x)=\omega(a,x)$ for all
$x \in V$.  Pick $b \in V$ with $\omega(a,b)=1$
and let $W = \langle a,b \rangle$.  Then $W$ is a genus $1$ symplectic subspace and
\[W^{\perp} \subset \ker(\omega(a,-)) = \ker(\tmu) \subset \ker(\mu),\]
as desired.

Now assume that $\rank(A)>1$ and that the lemma is true for all smaller ranks.  We
can then find a short exact sequence
\[0 \longrightarrow A' \longrightarrow A \stackrel{\phi}{\longrightarrow} A'' \longrightarrow 0\]
such that $0<\rank(A') < \rank(A)$ and $\rank(A'') + \rank(A') = \rank(A)$.
By our inductive hypothesis, there
exists a genus $\rank(A'')$ symplectic subspace $W''$ of $V$ such that
$(\phi \circ \mu)|_{(W'')^{\perp}} = 0$.  Set $V' = (W'')^{\perp}$, so $V'$ is a symplectic
subspace of $V$ and the image of $\mu' := \mu|_{V'}$ lies in $A'$.
Our inductive hypothesis implies that there exists a genus $\rank(A')$
symplectic subspace $W'$ of $V'$ such that $\mu'|_{(W')^{\perp}} = 0$.  Setting
$W = W' \oplus W''$, we have that $W$ is a genus $\rank(A') + \rank(A'') = \rank(A)$ symplectic
subspace of $V$ such that $\mu|_{W^{\perp}} = 0$, as desired.
\end{proof}

\begin{proof}[Proof of Proposition \ref{proposition:findvanish} for $\Lambda$-markings]
The proposition is a small variant
of a result of Dunfield--Thurston \cite[Proposition 6.16]{DunfieldThurston} -- the
only difference is that \cite[Proposition 6.16]{DunfieldThurston} is for closed
surfaces, while we need to deal with $\Sigma_{g}^1$.  However, the exact same
proof works, so we omit the details.
\end{proof}

\p{Deeply destabilizing}
Proposition \ref{proposition:findvanish} has the following corollary.

\begin{corollary}
\label{corollary:destabilizeone}
Consider some $g' \geq 1$.  The following hold.
\setlength{\parskip}{0pt}
\begin{compactitem}
\item Let $\mu'$ be an $A$-homology marking on $\Sigma_{g'}^1$.  Assume that $g' > \rank(A)$, and let $g = \rank(A)$.  Then there exists
an embedding $\Sigma_{g}^1 \hookrightarrow \Sigma_{g'}^1$ and
an $A$-homology marking $\mu$ on $\Sigma_{g}^1$ such that $\mu$ is a destabilization of $\mu'$.
\item Let $\mu'$ be a $\Lambda$-marking on $\Sigma_{g'}^1$.  Assume that $g' > |\Lambda|$, and let $g = |\Lambda|$.  Then there exists
an embedding $\Sigma_{g}^1 \hookrightarrow \Sigma_{g'}^1$ and a $\Lambda$-marking $\mu$ on $\Sigma_g^1$
such that $\mu$ is a destabilization of $\mu'$.
\end{compactitem}
\end{corollary}
\setlength{\parskip}{\baselineskip}
\begin{proof}
The proofs for $A$-homology markings and $\Lambda$-markings are similar, so we
give the details for $\Lambda$-markings (which are slightly more complicated).
Let $\ast' \in \partial \Sigma_{g'}^1$ be the basepoint.  By Proposition \ref{proposition:findvanish}, we can find a subsurface
$S \hookrightarrow \Sigma_{g'}^1$ with $S \cong \Sigma_{g'-g}^1$ such that $\mu'$
vanishes on $S$.  Pick the following:
\setlength{\parskip}{0pt}
\begin{compactitem}
\item An embedding $\Sigma_g^1 \hookrightarrow \Sigma_{g'}^1$ that is disjoint
from $S$, as well as a basepoint $\ast \in \partial \Sigma_g^1$.
\item An embedded path $\lambda$ in $\Sigma_{g'}^1 \setminus \Interior(\Sigma_g^1 \cup S)$ 
connecting $\ast'$ to $\ast$.  
\end{compactitem}
Define $\mu\colon \pi_1(\Sigma_g^1,\ast) \rightarrow \Lambda$ via the formula
\[\mu(x) = \mu'(\lambda \cdot x \cdot \lambda^{-1}) \quad \text{for $x \in \pi_1(\Sigma_g^1,\ast)$}.\]
It is immediate from the definitions that $\mu'$ is the $(S,\lambda)$-stabilization of $\mu$ to $\Sigma_{g'}^1$.
\end{proof}

\subsection{Vanishing surfaces}
\label{section:vanishsurfaces}

This section constructs the semisimplicial sets we need to apply Theorem \ref{theorem:stabilitymachine}
to the partial Torelli groups.

\subsubsection{Vanishing surfaces: definition and basic properties}
\label{section:subsurfacesdef}

We define the complexes separately for $A$-homology markings and $\Lambda$-markings.

\p{Vanishing subsurfaces, abelian}
We start by recalling the definition of the complex of vanishing subsurfaces for a homology marking
from the introduction.  Let $\mu$ be an $A$-homology marking on $\Sigma_g^1$.
Define $\Sur_{h}(\Sigma_g^1,\mu)$ to
be the full subcomplex of $\Sur_{h}(\Sigma_g^1)$ spanned by vertices $\iota\colon \Sigma_{h}^1 \rightarrow \Sigma_g^1$ such that $\mu$ vanishes on $\Sigma_h^1$ in the sense
of \S \ref{section:destabilizemarking}.
The group $\Torelli(\Sigma_g^1,\mu)$ acts on $\Sur_{h}(\Sigma_g^1,\mu)$.  Similarly, if $I \subset \partial \Sigma_g^1$
is a finite disjoint union of open intervals, then define $\TS_{h}(\Sigma_g^1,I,\mu)$ to be the
full subcomplex of $\TS_{h}(\Sigma_g^1,I)$
spanned by vertices $\iota\colon \tau(\Sigma_{h}^1) \rightarrow \Sigma_g^1$ whose restriction to $\Sigma_{h}^1$ is a vertex
of $\Sur_{h}(\Sigma_g^1,\mu)$.
Again, the group $\Torelli(\Sigma_g^1,\mu)$ acts on $\TS_{h}(\Sigma_g^1,I,\mu)$.

\p{Vanishing subsurfaces, nonabelian}
Let $\mu$ be a $\Lambda$-marking on $\Sigma_g^1$.  Define $\Sur_{h}(\Sigma_g^1,\mu)$ to
be the full subcomplex of $\Sur_{h}(\Sigma_g^1)$ spanned by vertices 
$\iota\colon \Sigma_{h}^1 \rightarrow \Sigma_g^1$ such that $\mu$ vanishes
on $\Sigma_h^1$ in the sense of \S \ref{section:destabilizemarking}.
The group $\Torelli(\Sigma_g^1,\mu)$ acts on $\Sur_{h}(\Sigma_g^1,\mu)$.  Similarly, if $I \subset \partial \Sigma_g^1$
is a finite disjoint union of open intervals, then define $\TS_{h}(\Sigma_g^1,I,\mu)$ to be the 
full subcomplex of $\TS_{h}(\Sigma_g^1,I)$
spanned by vertices $\iota\colon \tau(\Sigma_{h}^1) \rightarrow \Sigma_g^1$ whose restriction to $\Sigma_{h}^1$ is a vertex
of $\Sur_{h}(\Sigma_g^1,\mu)$.
Again, the group $\Torelli(\Sigma_g^1,\mu)$ acts on $\TS_{h}(\Sigma_g^1,I,\mu)$.

\p{Semisimplicial}
In the rest of this section, let $\mu$ be either an $A$-homology marking or a $\Lambda$-marking on $\Sigma_g^1$ and let $I \subset \partial \Sigma_g^1$ be a single interval.
We claim then 
that $\TS_{h}(\Sigma_g^1,I,\mu)$ is naturally a semisimplicial set.  The
key point here is that its simplices $\{\iota_0,\ldots,\iota_k\}$
possess a natural ordering based on the order their tethers leave $I$.

\p{Stabilizers}
The $\Mod(\Sigma_g^1)$-stabilizers of simplices of $\Sur_{h}(\Sigma_g^1)$ are poorly behaved.
The issue is that mapping classes can permute their vertices arbitrarily (which is not possible for
$\TS_{h}(\Sigma_g^1,I)$ since mapping classes must preserve the order in which the tethers leave
$I$).
This prevents their stabilizers from being
mapping class groups of subsurfaces.  For $\TS_{h}(\Sigma_g^1,I)$, however,
this issue does not occur, and the $\Mod(\Sigma_g^1)$-stabilizer of a simplex
$\{\iota_0,\ldots,\iota_k\}$ of $\TS_{h}(\Sigma_g^1,I)$ equals $\Mod(\Sigma)$, where
$\Sigma$ is the complement of an open regular neighborhood of
\[\partial \Sigma_g^1 \cup \iota_0\left(\tau\left(\Sigma_{h}^1\right)\right) \cup \cdots \cup \iota_k\left(\tau\left(\Sigma_{h}^1\right)\right).\]
We will call the complement of this open neighborhood the {\em stabilizer subsurface}
of the simplex.  See here, where the stabilizer subsurface is the complement of the shaded region:\\
\centerline{\psfig{file=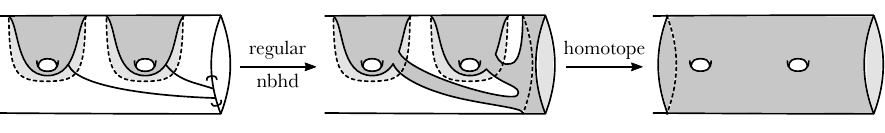,scale=100}}
The $\Torelli(\Sigma_g^1,\mu)$ version of this is the following lemma:

\begin{lemma}
\label{lemma:vanishtetheredstab}
Let $\mu$ be either an $A$-homology marking or a $\Lambda$-marking on $\Sigma_g^1$,
let $I \subset \partial \Sigma_g^1$ be an open interval,
and let $\sigma$ be a $k$-simplex of $\TS_{h}(\Sigma_g^1,I,\mu)$.
Let $\Sigma \cong \Sigma_{g-(k+1)h}^1$ be the stabilizer subsurface of $\sigma$.
Then there exists a marking $\mu_0$ of the same type as $\mu$ (either an $A$-homology marking
or a $\Lambda$-marking) on $\Sigma$ such that $\mu_0$ is a destabilization of $\mu$
and such that the $\Torelli(\Sigma_g^1,\mu)$-stabilizer of $\sigma$ is $\Torelli(\Sigma,\mu_0)$.
\end{lemma}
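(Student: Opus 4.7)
The plan is to define $\mu'$ as the natural restriction of $\mu$ to $\Sigma'$ and then check the two assertions separately.

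Consider first the abelian case. Write $\sigma = \{\iota_0,\ldots,\iota_k\}$, realized so that the images $\iota_i(\tau(\Sigma_{h}^1))$ are pairwise disjoint. Let $W = \bigoplus_{i=0}^{k} (\iota_i)_{\ast}\HH_1(\Sigma_{h}^1) \subset \HH_1(\Sigma_g^1)$. Because the subsurfaces $\iota_i(\Sigma_h^1)$ are pairwise disjoint and each carries a nondegenerate intersection pairing, $W$ is a symplectic subspace and $\HH_1(\Sigma_g^1) = W \oplus W^{\perp}$. Moreover, the inclusion $\Sigma' \hookrightarrow \Sigma_g^1$ identifies $\HH_1(\Sigma')$ with $W^{\perp}$. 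Since $\sigma$ is a simplex of $\TS_h(\Sigma_g^1,I,\mu)$, each vertex is vanishing, so $\mu|_W = 0$ and hence $\mu$ is determined by its restriction to $W^{\perp}$. Define $\mu' := \mu|_{\HH_1(\Sigma')}$. Then $\mu$ factors as the composition of the projection $\HH_1(\Sigma_g^1) \twoheadrightarrow \HH_1(\Sigma')$ with $\mu'$, which is precisely what it means for $\mu$ to be the stabilization of $\mu'$.

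For the stabilizer equality, the $\Mod(\Sigma_g^1)$-stabilizer of $\sigma$ is $\Mod(\Sigma')$ acting by extending by the identity across the regular neighborhood of $\partial \Sigma_g^1 \cup \bigcup_i \iota_i(\tau(\Sigma_h^1))$. An element $f \in \Mod(\Sigma')$ fixes $W$ pointwise (since $W$ is supported away from $\Sigma'$) and acts on $W^{\perp} = \HH_1(\Sigma')$ via its natural action. Therefore $f$ lies in $\Torelli(\Sigma_g^1,\mu)$ precisely when $\mu' \circ f_{\ast} = \mu'$ on $\HH_1(\Sigma')$, that is, when $f \in \Torelli(\Sigma',\mu')$.

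The nonabelian case proceeds along the same lines but requires extra bookkeeping. Choosing basepoints $\ast \in \partial \Sigma_g^1$ and $\ast' \in \partial \Sigma'$ together with a fixed arc between them, I would define $\mu'$ on $\pi_1(\Sigma',\ast')$ by restricting $\mu$ along the induced injection $\pi_1(\Sigma',\ast') \hookrightarrow \pi_1(\Sigma_g^1,\ast)$. To recognize this as the stabilization, pass to the closed surface $\hSigma_g^1$ and collapse everything outside $\Sigma'$ to a point to obtain $\hSigma'$; the vanishing hypothesis on each vertex of $\sigma$, combined with van Kampen applied to the disjoint tethered pieces, ensures that the normal closure of the loops killed by this collapse lies in $\ker(\hmu)$, so $\hmu$ descends to give the required factorization through $\pi_1(\hSigma',\hast')$. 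The stabilizer equality then follows exactly as in the abelian case, using that $\Mod(\Sigma')$ acts trivially on the fundamental group of the collapsed complement.

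The main obstacle is the bookkeeping in the nonabelian setting: keeping the basepoint choices and the ensuing conjugation ambiguities in $\Lambda$ under control while verifying that $\mu'$ is well-defined and that the collapse map on fundamental groups behaves as required. Once the van Kampen decomposition is set up carefully using the disjointness of the $\iota_i(\tau(\Sigma_h^1))$, the rest is a direct check that mirrors the abelian argument.
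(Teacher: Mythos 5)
Your proposal is correct and takes essentially the same route as the paper: the paper's own (very brief) proof, given for $\Lambda$-markings with the abelian case declared ``similar,'' is exactly your collapsing argument---$\mu$ vanishes on the image of $\pi_1(\Sigma_g^1 \setminus \Interior(\Sigma'))$, hence factors through the result of collapsing the complement to a point, which is $\Sigma'$ with $\partial \Sigma'$ collapsed. Your abelian argument via the splitting $\HH_1(\Sigma_g^1) = W \oplus W^{\perp}$ with $W^{\perp} = \HH_1(\Sigma')$ is the natural homological version of that same idea, and the stabilizer verification you give is the routine check the paper likewise leaves implicit.
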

\begin{proof}
The proofs for $A$-homology markings and $\Lambda$-markings are similar, so we will give the details for $\Lambda$-markings.
Let $\ast \in \partial \Sigma_g^1$ and $\ast_0 \in \partial \Sigma$ be basepoints.  
Write $\sigma = \{\iota_0,\ldots,\iota_k\}$.  For $0 \leq i \leq k$, let $S_i = \iota_i(\Sigma_h^1)$.
Let $S$ be a subsurface of $\Sigma_g^1 \setminus \Interior(\Sigma)$ such that $S$ contains each $S_i$ and
$S \cong \Sigma_{(k+1)h}^1$, and let $\lambda$ be an embedded path in $\Sigma_g^1 \setminus \Interior(\Sigma \cup S)$
connecting $\ast$ to $\ast_0$:\\
\centerline{\psfig{file=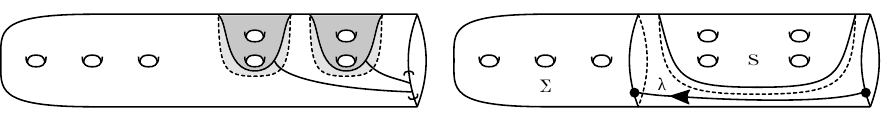,scale=100}}
Define $\mu_0\colon \pi_1(\Sigma,\ast_0) \rightarrow \Lambda$ via the formula
\[\mu_0(x) = \mu(\lambda \cdot x \cdot \lambda^{-1}) \quad \text{for $x \in \pi_1(\Sigma,\ast_0)$}.\]
It follows from the definitions that $\mu$ is the $(S,\lambda)$-stabilization of $\mu_0$.  Since the $\Mod(\Sigma_g^1)$-stabilizer of $\sigma$ is
$\Mod(\Sigma)$, it follows that the $\Torelli(\Sigma_g^1,\mu)$-stabilizer of $\sigma$ is 
$\Mod(\Sigma) \cap \Torelli(\Sigma_g^1,\mu)$, which by Lemma \ref{lemma:asbigaspossible} is $\Torelli(\Sigma,\mu_0)$.
\end{proof}

\subsubsection{Vanishing surfaces: high connectivity}
\label{section:vanishsubsurfacescon}

The following theorem subsumes Theorem \ref{maintheorem:vanishtetheredconone}.  

\begin{theorem}
\label{theorem:vanishsurfacescon}
Fix $g \geq h \geq 1$ and let $I \subset \partial \Sigma_g^1$ be a finite disjoint union
of open intervals.  Then the following hold.
\setlength{\parskip}{0pt}
\begin{compactitem}
\item Let $\mu$ be an $A$-homology marking on $\Sigma_g^1$.  The complexes
$\Sur_h(\Sigma_g^1,\mu)$ and $\TS_h(\Sigma_g^1,I,\mu)$ are both $\frac{g-(2\rank(A)+2h+1)}{\rank(A)+h+1}$-connected.
\item Let $\mu$ be a $\Lambda$-marking on $\Sigma_g^1$.  The complexes
$\Sur_h(\Sigma_g^1,\mu)$ and $\TS_h(\Sigma_g^1,I,\mu)$ are both $\frac{g-(2|\Lambda|+2h+1)}{|\Lambda|+h+1}$-connected.
\end{compactitem}
\setlength{\parskip}{\baselineskip}
\end{theorem}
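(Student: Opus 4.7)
The plan is to prove the tethered statement by a link argument in the spirit of Proposition \ref{proposition:avoidbad}, with the untethered $\Sur_h$ case reducing to it by a tether-forgetting argument. For the nonabelian case, I will run the same proof with $|\Lambda|$ in place of $\rank(A)$, using the Dunfield--Thurston destabilization from Proposition \ref{proposition:destabilizeone} in place of the symplectic destabilization. Throughout, write $r = \rank(A)$ (resp.\ $r = |\Lambda|$) and set $n = \frac{g-(2r+2h+1)}{r+h+1}$.

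The main induction is on $g$, possibly jointly with $r$. For the base case I must show $\TS_h(\Sigma_g^1,I,\mu)$ is nonempty whenever $n \geq -1$, i.e.\ $g \geq r+h$: Proposition \ref{proposition:destabilizeone} factors $\mu$ through a subsurface of genus at most $r$, so $\mu$ vanishes on $\HH_1$ of the complement of genus $\geq g-r \geq h$, inside which we can drop a vanishing tethered $\Sigma_h^1$. For the inductive step, let $X = \TS_h(\Sigma_g^1,I)$, which is $\frac{g-(2h+1)}{h+1}$-connected by Theorem \ref{maintheorem:subsurfacescon}. A short algebraic check shows that this bound exceeds $n$ (and exceeds $n+1$ once $g$ is sufficiently large relative to $r$ and $h$), so it suffices to show $(X,Y)$ is $(n+1)$-connected, where $Y = \TS_h(\Sigma_g^1,I,\mu)$. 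I will apply Proposition \ref{proposition:avoidbad} with $\cB$ the set of \emph{pure non-vanishing} simplices of $X$: those $\sigma = \{\iota_0,\ldots,\iota_k\}$ with $\mu \circ (\iota_i)_\ast \neq 0$ for every $i$. Conditions (i) and (ii) of that proposition are immediate from the fact that $Y$ is the full subcomplex on the vanishing vertices.

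The heart of the argument is verifying condition (iii): for each $k$-dimensional $\sigma \in \cB$, the complex $G(X,\sigma,\cB)$ must be $(n-k)$-connected. One checks that $G(X,\sigma,\cB)$ is the link of $\sigma$ in $Y$, and is isomorphic to $\TS_h(\Sigma'',I'',\mu'')$ where $\Sigma'' \cong \Sigma_{g-(k+1)h}^1$ is (cut open from) the stabilizer subsurface of $\sigma$ as in Lemma \ref{lemma:vanishtetheredstab}, $I''$ is the induced union of intervals, and $\mu''$ is the restriction of $\mu$. A naive application of the inductive hypothesis on $g$ alone gives only $(n - \tfrac{(k+1)h}{r+h+1})$-connectivity, which falls short of $n-k$ precisely when $k(r+1) < h$, and in particular for $k=0$. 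To close this gap, I will strengthen the inductive hypothesis to track the actual rank $r' = \rank(\Image(\mu))$ rather than just $r = \rank(A)$: whenever $\sigma$'s non-vanishing vertices contribute independently to $\Image(\mu)$ the restricted marking $\mu''$ satisfies $\rank(\Image(\mu'')) \leq r - (k+1)$, and joint induction on $(g, \rank(\Image(\mu)))$ then gives the required connectivity.

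The hard part, and the main obstacle, is treating the ``wasteful'' non-vanishing vertices whose image $\mu\circ\iota_\ast$ already lies in the image of $\mu|_{\HH_1(\Sigma'')}$, so that removing $\iota$ does not decrease the rank. Here one has to refine $\cB$ (or apply Proposition \ref{proposition:avoidbad} twice in succession): first use link arguments to replace such ``wasteful'' vertices by homologous subsurfaces drawn from a fixed destabilization of $\mu$ produced by Proposition \ref{proposition:destabilizeone}, thereby pushing the non-vanishing behavior into a fixed genus-$r$ region, and then run the main induction on the complement. Once hypothesis (iii) is verified, Proposition \ref{proposition:avoidbad} gives the required connectivity of the pair $(X,Y)$, and combining with Theorem \ref{maintheorem:subsurfacescon} completes the proof for $\TS_h$. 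The corresponding statement for $\Sur_h$ is deduced either by running the identical bad-simplex argument (simpler, since there are no tethers to track) or by checking that forgetting the tether is highly connected on fibers via Lemma \ref{lemma:fiberlemma}.
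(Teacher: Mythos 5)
Your reduction to Proposition \ref{proposition:avoidbad}, with $\cB$ the non-vanishing simplices of $\TS_h(\Sigma_g^1,I)$, runs into exactly the obstruction you name, and the proposed repair does not close it. For a single non-vanishing vertex ($k=0$) the complex $G(X,\sigma,\cB)$ is $\TS_h(\Sigma'',I'',\mu'')$ with $\Sigma''$ of genus $g-h$, so induction on genus alone only gives connectivity $n-\frac{h}{\rank(A)+h+1}$, short of the required $n$. Your joint induction on $(g,\rank(\Image(\mu)))$ helps only when cutting out the vertex genuinely decreases $\rank(\Image(\mu))$; the ``wasteful'' case, where $\mu\circ\iota_\ast$ already lands in $\mu(\HH_1(\Sigma''))$ and the rank does not drop, is not a marginal case but the generic one, and the suggestion to ``replace wasteful vertices by homologous subsurfaces drawn from a fixed destabilization'' is not an argument: it is not specified what map of complexes does this, nor how it is compatible with the join condition demanded by hypothesis (iii). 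Moreover this repair has no analogue in the $\Lambda$-marking case, where there is no rank to induct on, even though you claim the same proof works with $|\Lambda|$ in place of $\rank(A)$. So hypothesis (iii) is unverified and the proof as written does not go through.

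The paper sidesteps this entirely by a different comparison. Instead of comparing $\TS_h(\Sigma_g^1,I,\mu)$ with $\TS_h(\Sigma_g^1,I)$, it forms an auxiliary complex $X$ whose vertices are the vanishing tethered genus-$h$ surfaces together with \emph{all} tethered genus-$(\rank(A)+h)$ surfaces, with no vanishing condition on the latter. Theorem \ref{maintheorem:subsurfacescon} applied in genus $\rank(A)+h$, combined with Corollary \ref{corollary:avoidsubcomplex} (where the links to be controlled are copies of $\TS_{\rank(A)+h}$ of cut-open surfaces, carrying no marking condition, so the known connectivity suffices), shows $X$ is $\frac{g-(2\rank(A)+2h+1)}{\rank(A)+h+1}$-connected; then Proposition \ref{proposition:destabilizeone} produces, inside every genus-$(\rank(A)+h)$ piece, a genus-$h$ subsurface on which the marking vanishes, yielding a retraction $X\rightarrow\TS_h(\Sigma_g^1,I,\mu)$ that transfers the connectivity. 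The idea you are missing is that the unmarked complex to lean on is the one of genus $\rank(A)+h$ (resp.\ $|\Lambda|+h$): the extra genus is exactly what guarantees a vanishing genus-$h$ subsurface inside each big piece, so no bad-simplex induction with failing numerology is needed; this is also why the general-$h$ case of Theorem \ref{maintheorem:subsurfacescon} is needed even to treat $h=1$ here.
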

\begin{proof}
The proofs for $A$-homology markings and $\Lambda$-markings are identical, so we will give the details for
$\Lambda$-markings.  Also, the proofs that $\Sur_{h}(\Sigma_g^b,\mu)$ and $\TS_{h}(\Sigma_g^b,I,\mu)$ are
$\frac{g-(2|\Lambda|+2h+1)}{|\Lambda|+h+1}$-connected are similar.  Keeping track of the tethers
introduces a few complications, so we will give the details for
$\TS_{h}(\Sigma_g^b,I,\mu)$ and leave $\Sur_h(\Sigma_g^b,\mu)$ to the reader.

We start by defining an auxiliary space.  Let $X$ be the simplicial complex
whose vertices are the union of the vertices of the complexes 
$\TS_{h}(\Sigma_g^1,I,\mu)$ and $\TS_{|\Lambda|+h}(\Sigma_g^1,I)$
and whose simplices are collections $\{\iota_0,\ldots,\iota_k\}$ of vertices
that can be isotoped such that their images are disjoint.  Both $\TS_h(\Sigma_g^1,I,\mu)$
and $\TS_{|\Lambda|+h}(\Sigma_g^1,I)$ are thus full subcomplexes of $X$.

We now prove that $X$ enjoys the connectivity property we are trying to prove
for $\TS_{h}(\Sigma_g^b,I,\mu)$.

\begin{claim}
The space $X$ is $\frac{g-(2|\Lambda|+2h+1)}{|\Lambda|+h+1}$-connected.
\end{claim}
\begin{proof}[Proof of claim]
Set $n = \frac{g-(2|\Lambda|+2h+1)}{|\Lambda|+h+1}$ and
$Y = \TS_{|\Lambda|+h}(\Sigma_g^1,I)$ and $Y' = \TS_{h}(\Sigma_g^1,I,\mu)$.  Theorem
\ref{maintheorem:subsurfacescon} says that $Y$ is $n$-connected, so it is enough
to prove that the pair $(X,Y)$ is $n$-connected.  To do this, we will apply
Corollary \ref{corollary:avoidsubcomplex}.  This requires showing the following.
Let $\sigma$ be a $k$-dimensional simplex of $Y' = \TS_{h}(\Sigma_g^1,I,\mu)$ and
let $L$ be the link of $\sigma$ in $X$.  Then we must show that
$L \cap Y$ is $(n-k-1)$-connected.

Write $\sigma = \{\iota_0,\ldots,\iota_{k}\}$.  Let $\Sigma'$ be the surface obtained by first removing the interiors of
\[\iota_0\left(\Sigma_{h}^1\right) \cup \cdots \cup \iota_{k}\left(\Sigma_{h}^1\right)\]
from $\Sigma_g^1$ and then cutting open the result along the images of the tethers:\\
\centerline{\psfig{file=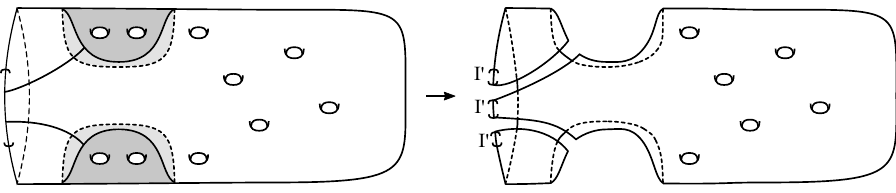,scale=100}}
The surface $\Sigma'$ is connected, and when the surface is cut open along the
tethers the open set
$I \subset \partial \Sigma_g^1$ is divided into a finer collection $I'$ of open
segments (as in the above example).
We then have
\[L \cap Y \cong \TS_{|\Lambda|+h}(\Sigma',I'),\]
so we must prove that $\TS_{|\Lambda|+h}(\Sigma',I')$ is $(n-k-1)$-connected.
Letting $g'$ be the genus of $\Sigma'$, Theorem \ref{maintheorem:subsurfacescon} says that
$\TS_{|\Lambda|+h}(\Sigma',I')$ is $\frac{g'-(2|\Lambda|+2h+1)}{|\Lambda|+h+1}$-connected,
so what we must prove is that
\[n-k-1 \leq \frac{g'-(2|\Lambda|+2h+1)}{|\Lambda|+h+1}.\]
Examining the construction of $\Sigma'$, we see that $g' = g - (k+1)h$.
We now calculate that
\begin{align*}
\frac{g'-(2|\Lambda|+2h+1)}{|\Lambda|+h+1}
&= \frac{g-(2|\Lambda|+2h+1)}{|\Lambda|+h+1} - \frac{(k+1)h}{|\Lambda|+h+1}
\geq n - (k+1).\qedhere
\end{align*}
\end{proof}

To complete the proof, it is enough to construct a retraction
$r\colon X \rightarrow \TS_{h}(\Sigma_g^1,I,\mu)$.  For
a vertex $\iota$ of $X$, we define $r(\iota)$ as follows.  If $\iota$ is a vertex
of $\TS_{h}(\Sigma_g^1,I,\mu)$, then $r(\iota) = \iota$.  If instead
$\iota$ is a vertex of $\TS_{|\Lambda|+h}(\Sigma_g^1,I)$, then
Proposition \ref{proposition:findvanish} implies that we can find
a subsurface $\Sigma_h^1 \hookrightarrow \iota(\Sigma_{|\Lambda|+h})$
such that $\mu$ vanishes on $\Sigma_h^1$.  
Define $r(\iota)$ to be the vertex of $\TS_h(\Sigma_g^1,I,\mu)$ obtained by 
adjoining the tether of $\iota$ and an arbitrary arc in 
$\iota(\Sigma_{|\Lambda|+h}^1)$ connecting the boundary to
$\Sigma_h^1$:\\
\centerline{\psfig{file=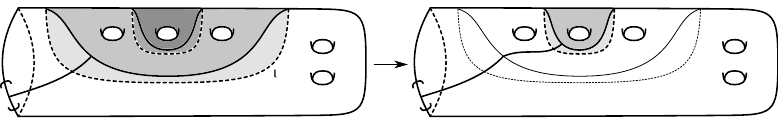,scale=100}}
Of course, $r(\iota)$ depends on various choices, but we simply make an arbitrary choice.
It is clear that this extends over the simplices of $X$ to give a retract
$r\colon X \rightarrow \TS_{h}(\Sigma_g^1,I,\mu)$.
\end{proof}

\subsubsection{Vanishing surfaces: transitivity}
\label{section:vanishsubsurfacestran}

The last fact about the complex of vanishing surfaces we will need is as follows.

\begin{lemma}
\label{lemma:vanishtetheredtran}
Fix $g \geq h \geq 1$ and let $I \subset \partial \Sigma_g^1$ be an open interval.
Then the following hold.
\setlength{\parskip}{0pt}
\begin{compactitem}
\item Let $\mu$ be an $A$-homology marking on $\Sigma_g^1$.  The group
$\Torelli(\Sigma_g^1,\mu)$ acts transitively on the $k$-simplices
of $\TS_h(\Sigma_g^1,I,\mu)$ if $g \geq 2h+2\rank(A)+1+k h$.
\item Let $\mu$ be a $\Lambda$-marking on $\Sigma_g^1$.  The group
$\Torelli(\Sigma_g^1,\mu)$ acts transitively on the $k$-simplices
of $\TS_h(\Sigma_g^1,I,\mu)$ if $g \geq 2h+2|\Lambda|+1+k h$. 
\end{compactitem}
\setlength{\parskip}{\baselineskip}
\end{lemma}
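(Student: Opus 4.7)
The two cases are essentially identical, so I describe the argument for an $A$-homology marking $\mu$.  Given two $k$-simplices $\sigma,\sigma'$ of $\TS_h(\Sigma_g^1,I,\mu)$, my goal is to produce an element of $\Torelli(\Sigma_g^1,\mu)$ taking $\sigma$ to $\sigma'$.  The plan follows the familiar two-step pattern for this kind of transitivity statement: first produce an element of $\Mod(\Sigma_g^1)$ moving $\sigma$ to $\sigma'$, and then correct it within the stabilizer of $\sigma'$ so as to land in $\Torelli(\Sigma_g^1,\mu)$.

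By the change of coordinates principle \cite[\S 1.3.2]{FarbMargalitPrimer}, the group $\Mod(\Sigma_g^1)$ already acts transitively on $k$-simplices of $\TS_h(\Sigma_g^1,I)$, so I can pick some $f\in\Mod(\Sigma_g^1)$ with $f(\sigma)=\sigma'$.  Set $\nu:=\mu\circ f_*^{-1}$, and note that both $\mu$ and $\nu$ vanish on $\sigma'$: for $\mu$ this is by hypothesis, and for $\nu$ it is because the subsurfaces of $\sigma'=f(\sigma)$ pull back under $f$ to the $\mu$-vanishing subsurfaces of $\sigma$.  Lemma \ref{lemma:vanishtetheredstab} then exhibits $\mu$ and $\nu$ as stabilizations of markings $\bar\mu$ and $\bar\nu$ on the stabilizer subsurface $\Sigma''\cong\Sigma_{g-(k+1)h}^1$ of $\sigma'$.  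If I can find $g\in\Mod(\Sigma'')$, extended by the identity across $\sigma'$, such that $g^*\bar\mu=\bar\nu$, then $h:=gf\in\Torelli(\Sigma_g^1,\mu)$ will take $\sigma$ to $\sigma'$ and the proof is complete.

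The heart of the argument is therefore the following intrinsic claim on $\Sigma''$: two markings on $\Sigma''$ whose stabilizations to $\Sigma_g^1$ are $\Mod(\Sigma_g^1)$-equivalent are themselves $\Mod(\Sigma'')$-equivalent, provided the genus of $\Sigma''$ is large enough.  To prove this for $\bar\mu$ and $\bar\nu$, I would first observe that $\mathrm{image}(\bar\mu)=\mathrm{image}(\bar\nu)$ in $A$, since $\nu=\mu\circ f_*^{-1}$ has the same image as $\mu$ and the vanishing subsurfaces of $\sigma'$ contribute nothing.  Next, apply Proposition \ref{proposition:destabilizeone} inside $\Sigma''$ to present $\bar\mu$ and $\bar\nu$ as stabilizations of markings on embedded subsurfaces of genus $\leq\rank(A)$.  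The numerical hypothesis $g\geq 2h+2\rank(A)+1+kh$ translates to $\mathrm{genus}(\Sigma'')\geq 2\rank(A)+h+1$, which provides enough room inside $\Sigma''$ to invoke the change of coordinates principle a second time, now within $\Sigma''$, to move these two destabilizing subsurfaces onto a common copy of $\Sigma_{\rank(A)}^1\subset\Sigma''$.  With both destabilized markings supported on the same small subsurface and having the same image, the surjectivity of $\Mod(\Sigma'')\to\Sp(\HH_1(\Sigma''))$ together with a direct symplectic calculation produces the desired $g$.

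The main obstacle is the intrinsic claim at the top of the previous paragraph.  A priori $\bar\mu$ and $\bar\nu$ are only known to be related through $f\in\Mod(\Sigma_g^1)$, which need not respect the decomposition $\HH_1(\Sigma_g^1)=\HH_1(\Sigma'')\oplus\HH_1(\Sigma_g^1\setminus\Sigma'')$; the genus hypothesis is used precisely to ensure that there is enough room in $\Sigma''$ to absorb the resulting discrepancy by a purely internal change of coordinates.  The nonabelian case follows the same pattern, replacing the abelian destabilization of Proposition \ref{proposition:destabilizeone} by its nonabelian counterpart (based on Dunfield--Thurston) and $\rank(A)$ by $|\Lambda|$ throughout.
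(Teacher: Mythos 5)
Your reduction is set up correctly: finding $f \in \Mod(\Sigma_g^1)$ with $f(\sigma)=\sigma'$ by change of coordinates, and then noting that it suffices to find a mapping class supported on the stabilizer subsurface $\Sigma''$ of $\sigma'$ carrying $\bar\mu$ to $\bar\nu:=\overline{\mu\circ f_*^{-1}}$, is sound. The gap is in the step that is supposed to produce that element. You claim that once $\bar\mu$ and $\bar\nu$ are supported on a common subsurface of genus at most $\rank(A)$ and have the same image in $A$, a ``direct symplectic calculation'' using the surjectivity of $\Mod(\Sigma'')\rightarrow\Sp(\HH_1(\Sigma''))$ finishes the job. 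This is false: the image does not determine the orbit of a marking under precomposition by symplectic automorphisms, in any genus. For instance, take $A=\Z/p\oplus\Z/p$ with generators $x,y$ and a symplectic basis $a_1,b_1,a_2,b_2,\ldots$ of $\HH_1(\Sigma'')$; the markings $\bar\mu(a_1)=x,\ \bar\mu(a_2)=y$ (all other basis vectors to $0$) and $\bar\nu(a_1)=x,\ \bar\nu(b_1)=y$ (rest to $0$) both surject onto $A$ and are both supported on genus $\leq\rank(A)$ subsurfaces, but the element $\sum_i \mu(a_i)\wedge\mu(b_i)\in\wedge^2 A$ (the $\mu$-symplectic element from Appendix A) is invariant under precomposition by $\Sp(\HH_1(\Sigma''))$ and equals $0$ for $\bar\mu$ but $x\wedge y\neq 0$ for $\bar\nu$. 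In your actual situation the two markings are in addition known to be \emph{stably} equivalent through $f$, so this particular invariant does agree; but then what your argument really needs is a cancellation theorem — that equivalence of the stabilized markings on $\Sigma_g^1$ forces $\Mod(\Sigma'')$-equivalence of $\bar\mu$ and $\bar\nu$ in the stated genus range. That is a genuine classification statement about $\Sp_{2g}(\Z)$-orbits of homomorphisms $\HH_1\rightarrow A$ (and, in the $\Lambda$-marking case, about $\Mod$-orbits of maps $\pi_1\rightarrow\Lambda$, which is exactly the Dunfield--Thurston stable classification, with an extra invariant in $\HH_2(\Lambda)$ and with bounds that are not the ones you need). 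You neither prove nor cite such a theorem, and ``same image'' is not a substitute for it.

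The paper avoids this issue altogether, and its route is worth internalizing: it inducts on $k$, and for the base case $k=0$ it uses the high connectivity of $\TS_h(\Sigma_g^1,I,\mu)$ itself (Theorem \ref{theorem:vanishsurfacescon}, which the genus hypothesis makes available) to reduce transitivity on vertices to the case of two vertices joined by an edge. For such a pair, change of coordinates produces a mapping class taking one to the other that is the identity on the stabilizer subsurface of the edge; this mapping class is supported in a region on which $\mu$ vanishes (because both tethered surfaces of the edge are vanishing surfaces), so it lies in $\Torelli(\Sigma_g^1,\mu)$ automatically — no classification of markings up to the symplectic action is ever invoked. The inductive step then matches first vertices, passes to the stabilizer subsurface with the destabilized marking of Lemma \ref{lemma:vanishtetheredstab}, and uses the ordering of the tethers along $I$ to place the remaining $(k-1)$-simplices in a single interval of the cut-open boundary, so the inductive hypothesis applies. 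Your argument could likely be repaired only by proving the cancellation theorem above, which is substantially harder than the lemma itself.
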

\begin{proof}
The proofs for $A$-homology markings and $\Lambda$-markings are identical, so
we will give the details for $\Lambda$-markings.
The proof will be by induction on $k$.  We start with the base case $k=0$.

\begin{claim}
$\Torelli(\Sigma_g^1,\mu)$ acts transitively on the $0$-simplices of $\TS_{h}(\Sigma_g^1,I,\mu)$ if 
$g \geq 2 h+2|\Lambda|+1$.
\end{claim}
\begin{proof}[Proof of claim]
In this case, Theorem \ref{theorem:vanishsurfacescon} says that $\TS_{h}(\Sigma_g^1,I,\mu)$ is connected, so it is enough
to prove that if $\iota_0$ and $\iota_1$ are vertices of $\TS_{h}(\Sigma_g^1,I,\mu)$
that are connected by an edge, then there exists some $f \in \Torelli(\Sigma_g^1,\mu)$ taking
$\iota_0$ to $\iota_1$.  Let $\Sigma$ be the stabilizer subsurface of the edge
$\{\iota_0,\iota_1\}$, and let $S_0 = \iota_0(\Sigma_h^1)$ and $S_1 = \iota_1(\Sigma_h^1)$.  
Let $S$ and $\eta$ and $\eta_0$ and $\eta_1$ be as follows:\setlength{\parskip}{0pt}
\begin{compactitem}
\item $S$ is a subsurface of $\Sigma_g^1 \setminus \Interior(\Sigma)$ containing $S_0$ and $S_1$
and satisfying $S \cong \Sigma_{2h}^1$.
\item $\eta$ is an embedded path in $\Sigma_g^1 \setminus \Interior(\Sigma \cup S)$ connecting a point
of $I$ to a basepoint of $S$ lying in $\partial S$.
\item For $i=0,1$, we have that $\eta_i$ is an embedded arc in $S \setminus \Interior(S_0 \cup S_1)$ connecting the basepoint
in $\partial S$ to a basepoint in $S_i$ lying in $\partial S_i$.
\item For $i=0,1$, the path $\eta \cdot \eta_i$ is isotopic to the tether of $\iota_i$ while keeping its initial
point in $I$ and its terminal point fixed.
\end{compactitem}
See here:\footnote{In this figure, $\eta_0$ and $\eta_1$ are disjoint aside from their initial points.  This can always be achieved, but
is not needed for our proof.}\setlength{\parskip}{\baselineskip}\\
\centerline{\psfig{file=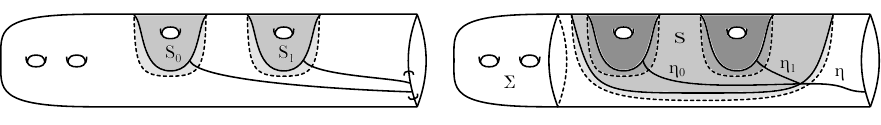,scale=100}}
It follows that there exists a $\Lambda$-marking $\mu_0$ on $\Sigma$ and some $\lambda$ such that
$\mu$ is the $(S,\lambda)$-stabilization of $\mu_0$.

Using the change of coordinates principle from \cite[\S 1.3.2]{FarbMargalitPrimer}, we can find
$F \in \Mod(S)$ taking $S_0 \cup \eta_0$ to something isotopic to $S_1 \cup \eta_1$.  This
isotopy will fix the common initial point of $\eta_0$ and $\eta_1$.  Let $f \in \Mod(\Sigma_g^1)$
be the image of $F$ under the the map $\Mod(S) \rightarrow \Mod(\Sigma_g^1)$.  Since $f$ is supported
on $S$, we have $f \in \Torelli(\Sigma_g^1,\mu)$, and by construction we have $f(\iota_0) = \iota_1$.
\end{proof}

Now assume that $k > 0$ and that the theorem is true for simplices of dimension $k-1$.
For some $g \geq 2h + 2|\Lambda|+1+kh$, let $\mu$ be a $\Lambda$-marking on $\Sigma_g^1$ and
$I \subset \partial \Sigma_g^1$ be an open interval.  Consider $k$-simplices
$\sigma$ and $\sigma'$ of $\TS_{h}(\Sigma_g^1,I,\mu)$.  Enumerate these simplices using
the natural ordering discussed above:
\begin{equation}
\label{eqn:enumerateorder}
\sigma = \{\iota_0,\ldots,\iota_{k}\} \quad \text{and} \quad
\sigma' = \{\iota'_0,\ldots,\iota'_{k}\}.
\end{equation}
We want to find some $f \in \Torelli(\Sigma_g^1,\mu)$ such that $f(\sigma) = \sigma'$.
By the base case $k=0$, there exists some $f_0 \in \Torelli(\Sigma_g^1,\mu)$ such that
$f(\iota_0) = \iota'_0$.  Replacing $\sigma$ by $f(\sigma)$, we can assume that
$\iota_0 = \iota'_0$.  

Define
\[\sigma_1 = \{\iota_1,\ldots,\iota_k\} \quad \text{and} \quad \sigma'_1 = \{\iota'_1,\ldots,\iota'_k\}.\]
Both $\sigma_1$ and $\sigma'_1$ are $(k-1)$-simplices in the link of the vertex $\iota_0$, and our goal
is to find an element $f_1$ in the $\Torelli(\Sigma_g^1,\mu)$-stabilizer of $\iota_0$ such that
$f_1(\sigma_1) = \sigma'_1$.  

Let $\Sigma'$ be the stabilizer subsurface of $\iota_0$ and let
$\mu'$ be the $\Lambda$-marking on $\Sigma'$ given by Lemma \ref{lemma:vanishtetheredstab}, so
the $\Torelli(\Sigma_g^1,\mu)$-stabilizer of $\iota_0$ is $\Torelli(\Sigma',\mu')$.
The surface $\Sigma'$ can be constructed
by removing the interior of $\iota_0(\Sigma_h^1)$ and then cutting open the result along the tether:
\centerline{\psfig{file=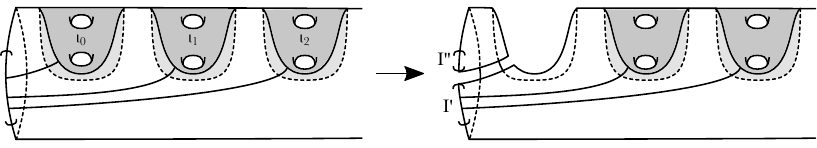,scale=100}}
We thus have $\Sigma' \cong \Sigma_{g-h}^1$.  Cutting along the tether divides the interval $I \subset \partial \Sigma_g^1$
into two disjoint intervals $I',I'' \subset \partial \Sigma'$, and the link of $\iota_0$ in
$\TS_{h}(\Sigma_g^1,I,\mu)$ can be identified with $\TS_h(\Sigma',I' \sqcup I'',\mu')$.  Identifying
$\sigma_1$ and $\sigma'_1$ with simplices in $\TS_h(\Sigma',I' \sqcup I'',\mu')$, the
key observation is that since we enumerated the simplices in \eqref{eqn:enumerateorder} using
the order coming from $I$, we have (possibly flipping $I'$ and $I''$) that
$\sigma_1,\sigma'_1 \subset \TS_h(\Sigma',I',\mu')$.  Since $\Sigma' \cong \Sigma_{g-h}^1$ and
\[g-h \geq (2h + 2|\Lambda|+1+kh) - h = 2h + 2|\Lambda|+1+(k-1)h,\]
we can apply our inductive hypothesis
and find some $f_1 \in \Torelli(\Sigma',\mu')$ with $f_1(\sigma_1) = \sigma_1'$, as desired.
\end{proof}

\subsection{Proof of stability for surfaces with one boundary component}
\label{section:proofone}

We now prove Theorems \ref{maintheorem:stable} and \ref{maintheorem:nonabelianstable}.

\begin{proof}[Proof of Theorem \ref{maintheorem:stable} and \ref{maintheorem:nonabelianstable}]
The proofs of the two theorems are identical, so we will give the details for
Theorem \ref{maintheorem:nonabelianstable}.
We start by recalling the statement and introducing some notation.
Let $\Lambda$ be a nontrivial finite group, let $\mu$ be a $\Lambda$-marking on $\Sigma_g^1$, and let $\mu'$ be
the stabilization of $\mu$ to $\Sigma_{g+1}^1$ in the sense of the introduction.\footnote{This uses a specific
choice of stabilization data $(S,\lambda)$}  Setting
\[c = |\Lambda|+2 \quad \text{and} \quad d = 2|\Lambda|+2,\] 
we want to prove that the map
$\HH_k(\Torelli(\Sigma_g^1,\mu)) \rightarrow \HH_k(\Torelli(\Sigma_{g+1}^1,\mu'))$
induced by the stabilization map
$\Torelli(\Sigma_g^1,\mu) \rightarrow \Torelli(\Sigma_{g+1}^1,\mu')$
is an isomorphism if $g \geq ck + d$ and a surjection
if $g = ck+d-1$.
We will prove this using Theorem \ref{theorem:stabilitymachine}.  This requires fitting 
$\Torelli(\Sigma_g^1,\mu) \hookrightarrow \Torelli(\Sigma_{g+1}^1,\mu')$ into an increasing
sequence of group $\{G_n\}$ and constructing appropriate simplicial complexes.  

Corollary \ref{corollary:destabilizeone} says that there exists an embedding
$\Sigma_{|\Lambda|}^1 \hookrightarrow \Sigma_g^1$ and a $\Lambda$-marking $\mu_{|\Lambda|}$ on
$\Sigma_{|\Lambda|}^1$ such that $\mu_{|\Lambda|}$ is a destabilization of $\mu$.
The embedding $\Sigma_{|\Lambda|}^1 \hookrightarrow \Sigma_g^1$ can be factored
into a sequence of embeddings 
\[\Sigma_{|\Lambda|}^1 \hookrightarrow \Sigma_{|\Lambda|+1}^1 \hookrightarrow \cdots \hookrightarrow \Sigma_g^1,\]
which can then be continued to
\[\Sigma_{|\Lambda|}^1 \hookrightarrow \Sigma_{|\Lambda|+1}^1 \hookrightarrow \cdots \hookrightarrow \Sigma_g^1 \hookrightarrow \Sigma_{g+1}^1 \hookrightarrow \Sigma_{g+2}^1 \hookrightarrow \cdots.\]
As in the following figure, we can break up the destabilization data $(S,\lambda)$ for the destabilization
$\mu_{|\Lambda|}$ of $\mu$ into stabilization data $(S_h,\lambda_h)$ for $|\Lambda|+1 \leq h \leq g$, where
$(S_h,\lambda_h)$ allows us to stabilize from $\Sigma_{h-1}^1$ to $\Sigma_h^1$:\\
\centerline{\psfig{file=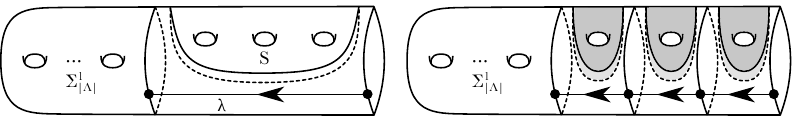,scale=100}}
Starting with $\mu_{|\Lambda|}$, for $h+1 \leq |\Lambda| \leq g$ inductively 
let $\mu_{h}$ be the $(S_h,\lambda_h)$-stabilization of $\mu_{h-1}$ on $\Sigma_{h-1}^1$ to $\mu_h$ on
$\Sigma_h^1$.  By construction, $\mu_g = \mu$.  Continue stabilizing (now using the choice of stabilization data
from the introduction) to define $\mu_h$ on $\Sigma_h^1$ for $h \geq g+1$, so $\mu_{g+1} = \mu'$.

We have thus fit our partial Torelli groups into an increasing sequence of groups
\[\Torelli(\Sigma_{|\Lambda|}^1,\mu_{|\Lambda|}) \subset \Torelli(\Sigma_{|\Lambda|+1}^1,\mu_{|\Lambda|+1}) \subset \Torelli(\Sigma_{|\Lambda|+2}^1,\mu_{|\Lambda|+2}) \subset \cdots.\]
For $h \geq |\Lambda|$, let $I_{h} \subset \partial \Sigma_{h}^1$ be an open interval.  
Theorem \ref{theorem:vanishsurfacescon} says that $\TS_1(\Sigma_h^1,I_h,\mu_h)$ is
$\frac{h-(d+1)}{c}$-connected, where $c$ and $d$ are as defined in the first paragraph.

For $n \geq 0$, let
\[G_n = \Torelli(\Sigma_{d+n},\mu_{d+n}) \quad \text{and} \quad
X_n = \TS_1(\Sigma_{d+n},I_{d+n},\mu_{d+n}).\]
For this to make sense, we must have $d+n \geq |\Lambda|$, which follows from
\[d+n = 2|\Lambda|+2+n \geq |\Lambda|.\]
We thus have an increasing sequence of groups
\[G_0 \subset G_1 \subset G_2 \subset \cdots\]
with $G_n$ acting on $X_n$.  The indexing convention here is chosen such that $X_1$ is $0$-connected and
more generally such that $X_n$ is $\frac{n-1}{c}$-connected, as in Theorem \ref{theorem:stabilitymachine}.
Our goal is to prove that the map $\HH_k(G_{n-1}) \rightarrow \HH_k(G_{n})$ is an isomorphism
for $n \geq ck+1$ and a surjection for $n = ck$, which will follow from Theorem \ref{theorem:stabilitymachine}
once we check its conditions:
\setlength{\parskip}{0pt}
\begin{compactitem}
\item The first is that $X_n$ is $\frac{n-1}{c}$-connected, which follows
from Theorem \ref{theorem:vanishsurfacescon}.
\item The second is that for $0 \leq i < n$, the group $G_{n-i-1}$ is the $G_n$-stabilizer
of some $i$-simplex of $X_n$, which follows from Lemma \ref{lemma:vanishtetheredstab}
via the following picture:
\end{compactitem}
\centerline{\psfig{file=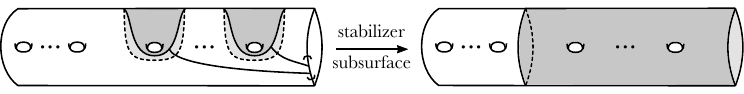,scale=100}}
\begin{compactitem}
\item The third is that for all $0 \leq i < n$, the group $G_n$ acts
transitively on the $i$-simplices of $X_n$, which follows from
Lemma \ref{lemma:vanishtetheredtran}.  For transitively on the $i$-simplices
this lemma requires that the genus $g = d+n$ used to define $G_n = \Torelli(\Sigma_{d+n},\mu_{d+n})$
satisfies $g \geq 3+2|\Lambda|+i$, which follows from the fact that
\[d+n = (2|\Lambda|+2) + n \geq (2|\Lambda|+2) + (i+1) = 3 + 2|\Lambda|+i.\]
\item The fourth is that for all $n \geq c+1$ and all $1$-simplices $e$ of $X_n$
whose boundary consists of vertices $v$ and $v'$, there exists some $\lambda \in G_n$
such that $\lambda(v) = v'$ and such that $\lambda$ commutes with all elements of $(G_n)_e$.
This actually does not require the condition $n \geq c+1$.
Let $\Sigma$ be the stabilizer subsurface of $e$, so by Lemma \ref{lemma:vanishtetheredstab} the
stabilizer $(G_n)_e$ consists of mapping classes supported on $\Sigma$.
The surface $\Sigma_{d+n}^1 \setminus \Interior(\Sigma)$ is diffeomorphic
to $\Sigma_2^2$ (as in the picture above), and in particular is connected.
The ``change of coordinates principle'' from \cite[\S 1.3.2]{FarbMargalitPrimer} implies that
we can find a mapping class $\lambda$ supported on
on $\Sigma_{d+n}^1 \setminus \Interior(\Sigma)$ taking the tethered torus $v$ to $v'$.
This $\lambda$ clearly lies in $G_n$ and commutes with $(G_n)_e$.\qedhere
\end{compactitem}
\setlength{\parskip}{\baselineskip}
\end{proof}

\section{Homology-marked partitioned surfaces}
\label{section:marked}

We now turn to partial Torelli groups on surfaces with multiple boundary components.  
Unfortunately, this introduces genuine
difficulties in the proofs, so quite a bit more technical setup is needed.  This section
contains the categorical framework we will need to even state our result.

Let $\Surf$ be the category whose objects are 
compact connected oriented surfaces with boundary 
and whose morphisms are orientation-preserving embeddings.
There is a functor from $\Surf$ to groups taking 
$\Sigma \in \Surf$ to $\Mod(\Sigma)$ and a morphism 
$\Sigma \hookrightarrow \Sigma'$ to the map
$\Mod(\Sigma) \rightarrow \Mod(\Sigma')$ that extends mapping classes by the identity.
In this section, we augment $\Surf$ to construct a new category $\PSurf$ on which we can
define partial Torelli groups.  This is done in two steps: in \S \ref{section:psurf}
we define the category $\PSurf$ along with a ``partitioned homology functor'', and
in \S \ref{section:mpsurf} we discuss homology markings 
and construct their associated partial Torelli groups.

\subsection{The category \texorpdfstring{$\PSurf$}{PSurf}}
\label{section:psurf}

We start with the partitioned surface category, which was introduced in
\cite{PutmanCutPaste}.

\p{Motivation}
This category captures aspects of the 
homology of a larger surface in which our surface is embedded.  For
instance, consider the following embedding of a genus $3$ surface $\Sigma$ with
$6$ boundary components into $\Sigma_7^1$:\\
\centerline{\psfig{file=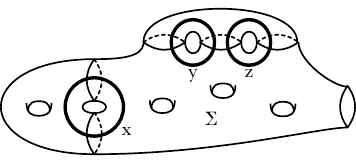,scale=100}}
For $f \in \Mod(\Sigma)$, the action of $f$ on $\HH_1(\Sigma)$ does not determine
the action of $f$ on $\HH_1(\Sigma_7^1)$.  The issue is that we also need
to know the action of $f$ on $[x],[y],[z] \in \HH_1(\Sigma_7^1)$.  The portions
of these homology classes that live on $\Sigma$ are arcs connecting boundary
components, so we must consider relative homology groups that incorporate
such arcs.  However, we do not want to allow all arcs connecting 
boundary components since some of these
cannot be completed to loops in the larger ambient surface.

\p{Category}
To that end, we define a category $\PSurf$ whose objects are pairs $(\Sigma, \cP)$ as follows:
\setlength{\parskip}{0pt}
\begin{compactitem}
\item $\Sigma$ is a compact connected oriented surface with boundary, and
\item $\cP$ is a partition of the components of $\partial \Sigma$.
\end{compactitem}
The partition $\cP$ tells us which boundary components are allowed to be 
connected by arcs.  The morphisms in $\PSurf$ from $(\Sigma,\cP)$ to
$(\Sigma',\cP')$ are orientation-preserving embeddings
$\Sigma \hookrightarrow \Sigma'$ that are compatible with the partitions
$\cP$ and $\cP'$ in the following sense.
For a component $S$ of $\Sigma' \setminus \Interior(\Sigma)$,
let $B_S$ (resp.\ $B_S'$) denote the set of components of $\partial \Sigma$ (resp.\ $\partial \Sigma'$)
that lie in $S$.  In the degenerate case where $S \cong S^1$ (so $S$ is a component of $\partial \Sigma$ and
$\partial \Sigma'$), we have $B_S = B'_S$.
Our compatibility requirements are then:
\setlength{\parskip}{0pt}
\begin{compactitem}
\item each $B_S$ is a subset of some $p \in \cP$, and
\item for all $p' \in \cP'$ and all $\partial_1',\partial_2' \in p'$ such that
$\partial_1' \in B_{S_1}'$ and $\partial_2' \in B_{S_2}'$ with $S_1 \neq S_2$, there exists
some $p \in \cP$ such that $B_{S_1} \cup B_{S_2} \subset p$.
\end{compactitem}
\setlength{\parskip}{\baselineskip}

\begin{example}
Let $\Sigma = \Sigma_0^6$ and 
$\cP = \{\{\partial_1,\partial_2,\partial_3,\partial_4\},\{\partial_5,\partial_6\}\}$
and $\Sigma' = \Sigma_3^3$ and
$\cP' = \{\{\partial'_1,\partial'_2\},\{\partial'_3\}\}$.  Here are two embeddings
$(\Sigma,\cP) \hookrightarrow (\Sigma',\cP')$ that are not $\PSurf$-morphisms
and one that is:\\
\centerline{\psfig{file=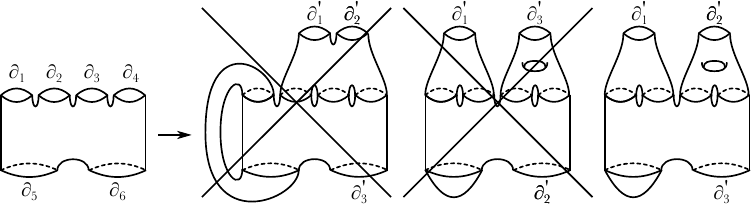,scale=100}}
We remark that the difference between the second and third embedding is the labeling
of the boundary components.
\end{example}

\p{Partitioned homology}
Consider some $(\Sigma,\cP) \in \PSurf$.  Say that components $\partial_1$ and $\partial_2$
of $\partial \Sigma$ are {\em $\cP$-adjacent} if there exists some $p \in \cP$ with
$\partial_1,\partial_2 \in p$.  Define
$\HH_1^{\cP}(\Sigma,\partial \Sigma)$ to be the subgroup
of the relative homology group
$\HH_1(\Sigma,\partial \Sigma)$ spanned by
the homology classes of oriented closed curves and arcs
connecting $\cP$-adjacent boundary components.  The group $\Mod(\Sigma)$ acts on
$\HH_1^{\cP}(\Sigma,\partial \Sigma)$.

\begin{remark}
This is slightly different from the partitioned homology group defined
in \cite{PutmanCutPaste}, which was {\em not} functorial.  
The Torelli groups defined via the above homology groups are thus different from those
in \cite{PutmanCutPaste}.
\end{remark}

\p{Functoriality}
The assignment 
\[(\Sigma,\cP) \mapsto \HH_1^{\cP}(\Sigma,\partial \Sigma)\] 
is a contravariant functor from $\PSurf$ to abelian groups.  To see this, consider
a $\PSurf$-morphism $\iota\colon (\Sigma,\cP) \rightarrow (\Sigma',\cP')$.  Identify
$\Sigma$ with its image under $\iota$.  We then have maps
\[\HH_1\left(\Sigma',\partial \Sigma'\right) 
\longrightarrow \HH_1\left(\Sigma',\Sigma' \setminus \Interior\left(\Sigma\right)\right)
\stackrel{\cong}{\longrightarrow} \HH_1(\Sigma,\partial \Sigma),\]
where the second map is the excision isomorphism.
From the definition of a $\PSurf$-morphism, it follows immediately that this composition
restricts to a map
\[\iota^{\ast}\colon \HH_1^{\cP'}(\Sigma',\partial \Sigma') \rightarrow \HH_1^{\cP}(\Sigma,\partial \Sigma).\]

\begin{example}
Let $\Sigma = \Sigma_0^4$ and $\Sigma' = \Sigma_4^3$.  Let $\cP$ (resp.\ $\cP'$)
be the partition of the components of $\partial \Sigma$ (resp.\ $\partial \Sigma'$)
consisting of a single partition element containing all the boundary components.
The following picture shows a $\PSurf$-morphism $\iota\colon (\Sigma,\cP) \rightarrow (\Sigma',\cP')$
along with $x_1,x_2 \in \HH_1^{\cP'}(\Sigma',\partial \Sigma')$ and
$\iota^{\ast}(x_1),\iota^{\ast}(x_2) \in \HH_1^{\cP}(\Sigma,\partial \Sigma)$:\\
\centerline{\psfig{file=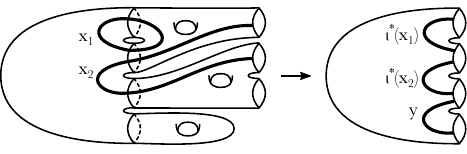,scale=100}}
To simplify the picture we do not indicate the orientations of the curves/arcs.
The above picture also shows an element $y \in \HH_1^{\cP}(\Sigma,\partial \Sigma)$ that
is not in the image of $\iota^{\ast}$.
\end{example}

\begin{example}
Let $\Sigma = \Sigma_0^4$ and $\Sigma' = \Sigma_2$.  Let $\{\partial_1,\ldots,\partial_4\}$
be the boundary components of $\Sigma$, and let $\cP = \{\{\partial_1,\partial_2\},\{\partial_3,\partial_4\}\}$
and $\cP' = \emptyset$.  The following picture shows a $\PSurf$-morphism $\iota\colon (\Sigma,\cP) \rightarrow (\Sigma',\cP')$
along with $x \in \HH_1^{\cP'}(\Sigma',\partial \Sigma')$ and
$\iota^{\ast}(x) \in \HH_1^{\cP}(\Sigma,\partial \Sigma)$:\\
\centerline{\psfig{file=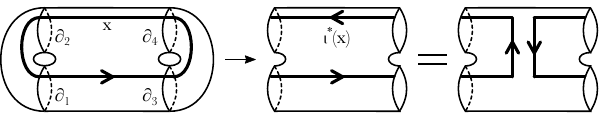,scale=100}}
As it is initially drawn,
$\iota^{\ast}(x)$ does not appear to be in $\HH_1^{\cP}(\Sigma,\partial \Sigma)$ since it
is a pair of arcs connecting boundary components that are not $\cP$-adjacent; however, as the
figure shows this 
pair of arcs in homologous to a pair of arcs connecting boundary components that are $\cP$-adjacent.
\end{example}

\p{Action on partitioned homology}
The mapping class group is a covariant functor from $\Surf$ to groups, while
the partitioned homology group is a contravariant functor from $\PSurf$ to
abelian groups.  They are related by the following ``push-pull'' formula.

\begin{lemma}
\label{lemma:pushpull}
Let $\iota\colon (\Sigma,\cP) \rightarrow (\Sigma',\cP')$ be a $\PSurf$-morphism.  Let
$\iota_{\ast}\colon \Mod(\Sigma) \rightarrow \Mod(\Sigma')$ be the induced map
on mapping class groups and let 
$\iota^{\ast}\colon \HH_1^{\cP'}(\Sigma',\partial \Sigma') \rightarrow \HH_1^{\cP}(\Sigma,\partial \Sigma)$ be the induced map on partitioned homology groups.  Then
\[\iota^{\ast}\left(\iota_{\ast}\left(f\right)\left(x'\right)\right) = f\left(\iota^{\ast}\left(x'\right)\right) \quad \quad \left(f \in \Mod\left(\Sigma\right), x' \in \HH_1^{\cP'}\left(\Sigma',\partial \Sigma'\right)\right).\]
\end{lemma}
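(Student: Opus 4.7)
The plan is to reduce the lemma to the naturality of the excision isomorphism under a specific choice of homeomorphism representing $\iota_{\ast}(f)$. First, identify $\Sigma$ with its image $\iota(\Sigma) \subset \Sigma'$ and recall that $\iota^{\ast}$ is defined as the composition
\[
\HH_1(\Sigma',\partial \Sigma') \longrightarrow \HH_1(\Sigma', \Sigma' \setminus \Interior(\Sigma)) \stackrel{\cong}{\longleftarrow} \HH_1(\Sigma,\partial \Sigma),
\]
where the left arrow is induced by the inclusion of pairs and the right arrow is the excision isomorphism.

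Next, I would choose a particular representative $\phi\colon \Sigma' \rightarrow \Sigma'$ of the mapping class $\iota_{\ast}(f) \in \Mod(\Sigma')$. Since $\iota_{\ast}(f)$ is by definition the ``extension by the identity'' of $f$, we can pick a representative $\tilde f$ of $f$ supported away from a collar of $\partial \Sigma$ and then set $\phi = \tilde f$ on $\Sigma$ and $\phi = \mathrm{id}$ on $\Sigma' \setminus \Sigma$. In particular, $\phi$ preserves $\Sigma$, $\partial \Sigma$, $\Sigma' \setminus \Interior(\Sigma)$, and $\partial \Sigma'$ setwise, and $\phi|_{\Sigma}$ represents $f$.

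Because $\phi$ preserves the triple $(\Sigma', \Sigma' \setminus \Interior(\Sigma), \partial \Sigma')$ and acts as (a representative of) $f$ on the subpair $(\Sigma, \partial \Sigma)$, naturality of the long exact sequence and of excision gives a commutative diagram
\[
\xymatrix{
\HH_1(\Sigma',\partial \Sigma') \ar[r] \ar[d]_{\phi_{\ast}} & \HH_1(\Sigma',\Sigma'\setminus\Interior(\Sigma)) \ar[d]^{\phi_{\ast}} & \HH_1(\Sigma,\partial \Sigma) \ar[l]_-{\cong} \ar[d]^{f_{\ast}} \\
\HH_1(\Sigma',\partial \Sigma') \ar[r] & \HH_1(\Sigma',\Sigma'\setminus\Interior(\Sigma)) & \HH_1(\Sigma,\partial \Sigma) \ar[l]_-{\cong}
}
\]
whose outer rectangle is exactly the equation $\iota^{\ast} \circ (\iota_{\ast}(f))_{\ast} = f_{\ast} \circ \iota^{\ast}$ on the full relative homology group. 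Restricting to the subgroup $\HH_1^{\cP'}(\Sigma',\partial \Sigma')$ (whose image under $\iota^{\ast}$ lies in $\HH_1^{\cP}(\Sigma,\partial \Sigma)$ by the definition of a $\PSurf$-morphism, and which is preserved by $(\iota_\ast(f))_\ast$) yields the stated formula.

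The only real issue is verifying that such a representative $\phi$ of $\iota_{\ast}(f)$ actually exists, i.e.\ that $f$ admits a diffeomorphism representative which is the identity on a collar of $\partial\Sigma$ and hence extends by the identity to $\Sigma'$; this is standard since $\Mod(\Sigma)$ is defined via homeomorphisms fixing $\partial \Sigma$ pointwise, so any representative already equals the identity on $\partial\Sigma$ and can be isotoped to be the identity on a collar. Once this is in hand, the argument is purely a naturality diagram chase and presents no substantive obstacle.
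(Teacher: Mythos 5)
Your argument is correct, and it is exactly the verification the paper leaves implicit: the paper's proof of Lemma \ref{lemma:pushpull} is literally the single word ``Obvious,'' the point being precisely the naturality/diagram chase you carry out with a representative of $\iota_{\ast}(f)$ that restricts to $f$ on $\Sigma$ and is the identity on $\Sigma'\setminus\Interior(\Sigma)$. So you have supplied the standard details behind the paper's assertion rather than a genuinely different route.
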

\begin{proof}
Obvious.
\end{proof}

\subsection{Homology markings on \texorpdfstring{$\PSurf$}{PSurf}}
\label{section:mpsurf}

Recall that $A$ is a fixed finitely generated abelian group.

\p{Markings and partial Torelli groups}
Consider $(\Sigma,\cP) \in \PSurf$.  An {\em $A$-homology marking} 
on $(\Sigma,\cP)$ is a homomorphism 
$\mu\colon \HH_1^{\cP}(\Sigma,\partial \Sigma) \rightarrow A$.  
The associated {\em partial Torelli group} is
\[\Torelli(\Sigma,\cP,\mu) = \Set{$f \in \Mod(\Sigma)$}{$\mu(f(x))=\mu(x)$ for all $x \in \HH_1^{\cP}(\Sigma,\partial \Sigma)$}.\]

\p{Stabilizations}
If $\iota\colon (\Sigma,\cP) \rightarrow (\Sigma',\cP')$ is a $\PSurf$-morphism and
$\mu$ is an $A$-homology marking on $(\Sigma,\cP)$, then the {\em stabilization} of $\mu$
to $(\Sigma',\cP')$ is the composition
\[\HH_1^{\cP'}(\Sigma',\partial \Sigma') \stackrel{\iota^{\ast}}{\longrightarrow} \HH_1^{\cP}(\Sigma,\partial \Sigma) \stackrel{\mu}{\longrightarrow} A.\]
With this definition, we have the following lemma.

\begin{lemma}
\label{lemma:partialtorellifun}
Let $\iota\colon (\Sigma,\cP) \rightarrow (\Sigma',\cP')$ be a $\PSurf$-morphism,
let $\mu$ be an $A$-homology marking on $(\Sigma,\cP)$, and let $\mu'$ be the
stabilization of $\mu$ to $(\Sigma',\cP')$.  
Let $\iota_{\ast}\colon \Mod(\Sigma) \rightarrow \Mod(\Sigma')$ be the induced map.
Then $\iota_{\ast}(\Torelli(\Sigma,\cP,\mu)) \subset \Torelli(\Sigma',\cP',\mu')$.
\end{lemma}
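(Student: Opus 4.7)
The plan is to unfold the definitions and reduce everything to a direct application of the push-pull formula (Lemma \ref{lemma:pushpull}). Concretely, I need to show that for every $f \in \Torelli(\Sigma,\cP,\mu)$ and every $x' \in \HH_1^{\cP'}(\Sigma',\partial \Sigma')$, one has $\mu'(\iota_{\ast}(f)(x')) = \mu'(x')$.

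First I would rewrite both sides using the definition $\mu' = \mu \circ \iota^{\ast}$, so that the desired equality becomes
\[
\mu\bigl(\iota^{\ast}(\iota_{\ast}(f)(x'))\bigr) \;=\; \mu\bigl(\iota^{\ast}(x')\bigr).
\]
Next I would apply Lemma \ref{lemma:pushpull} to the left-hand side to replace $\iota^{\ast}(\iota_{\ast}(f)(x'))$ by $f(\iota^{\ast}(x'))$. Setting $y = \iota^{\ast}(x') \in \HH_1^{\cP}(\Sigma,\partial \Sigma)$, the equality becomes $\mu(f(y)) = \mu(y)$, which holds by the hypothesis $f \in \Torelli(\Sigma,\cP,\mu)$. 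Since $x'$ was arbitrary, this shows $\iota_{\ast}(f) \in \Torelli(\Sigma',\cP',\mu')$.

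There is essentially no obstacle here: the lemma is a formal consequence of the compatibility already packaged into Lemma \ref{lemma:pushpull} together with the definition of stabilization. The only thing worth double-checking is that $\iota^{\ast}$ really does land in $\HH_1^{\cP}(\Sigma,\partial \Sigma)$ (so that the hypothesis on $f$ applies), but this is built into the construction of $\iota^{\ast}$ in \S\ref{section:psurf}. So the proof is just a two-line diagram chase.
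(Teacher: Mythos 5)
Your proof is correct and is essentially identical to the paper's own argument: both unwind the definition $\mu' = \mu \circ \iota^{\ast}$ and apply the push-pull formula of Lemma \ref{lemma:pushpull} to reduce to the hypothesis that $f \in \Torelli(\Sigma,\cP,\mu)$. Nothing further is needed.
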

\begin{proof}
Let $\iota^{\ast}\colon \HH_1^{\cP'}(\Sigma',\partial \Sigma') \rightarrow \HH_1^{\cP}(\Sigma,\partial \Sigma)$
be the induced map.  For $f \in \Torelli(\Sigma,\cP,\mu)$ and 
$x' \in \HH_1^{\cP'}(\Sigma',\partial \Sigma')$, we have
\[\mu'\left(\iota_{\ast}\left(f\right)\left(x'\right)\right) = \mu\left(\iota^{\ast}\left(\iota_{\ast}\left(f\right)\left(x'\right)\right)\right) = \mu\left(f\left(\iota^{\ast}\left(x'\right)\right)\right) = \mu\left(\iota^{\ast}\left(x'\right)\right) = \mu'\left(x'\right).\]
Here the second equality follows from Lemma \ref{lemma:pushpull} and the third from the fact that $f \in \Torelli(\Sigma,\cP,\mu)$.  The lemma follows.
\end{proof}

\section{Stability for surfaces with multiple boundary components}
\label{section:maintheorem}

In this section, we state our stability theorem for the partial Torelli groups on surfaces with multiple boundary
components and reduce this theorem
to a result that will be proved in the next section using the homological
stability machine.  The statement of our result is
in \S \ref{section:statement} and the reductions are in \S \ref{section:reductioncap}, \S \ref{section:boundarystab},
and \S \ref{section:doubleboundarystab}.

\subsection{Statement of result}
\label{section:statement}

To get around the issues with closed surfaces underlying Theorem \ref{maintheorem:closed} from the Introduction,
we will need to impose some conditions on our stabilization maps.

\p{Support}
If $\mu$ is an $A$-homology marking on $(\Sigma,\cP) \in \PSurf$, we say that
$\mu$ is {\em supported on a genus $h$ symplectic subsurface} if there exists
a $\PSurf$-morphism $(\Sigma',\cP') \rightarrow (\Sigma,\cP)$ with $\Sigma' \cong \Sigma_h^1$
and an $A$-homology marking $\mu'$ on $(\Sigma',\cP')$ such that $\mu$ is the stabilization
of $\mu'$ to $(\Sigma,\cP)$.
If there exists some $h \geq 1$ such that $\mu$ is supported on a genus $h$ symplectic
subsurface, then we will simply say that $\mu$ is {\em supported on a symplectic subsurface}.

\begin{remark}
\label{remark:nonsupport}
Not all $A$-homology markings are supported on a symplectic subsurface.
Indeed, letting $\partial_1$ and $\partial_2$ be $\cP$-adjacent boundary components
of $\Sigma$, this condition implies that we can find an arc $\alpha$
connecting $\partial_1$ to $\partial_2$ such that $\mu([\alpha])=0$; see here:\\
\centerline{\psfig{file=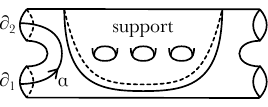,scale=100}}
It is easy to construct $A$-homology markings not
satisfying this property; for instance, let $A = \RH_0(\partial \Sigma)$ and
let $\mu\colon \HH_1^{\cP}(\Sigma,\partial \Sigma) \rightarrow A$ be the restriction
to $\HH_1^{\cP}(\Sigma,\partial \Sigma)$ of the boundary map
$\HH_1(\Sigma,\partial \Sigma) \rightarrow \RH_0(\partial \Sigma)$.
We will later show that this is the only obstruction; see
Lemma \ref{lemma:identifysupport} below.
\end{remark}

\p{Partition bijectivity}
Consider a $\PSurf$-morphism $(\Sigma,\cP) \rightarrow (\Sigma',\cP')$.  Identify
$\Sigma$ with its image in $\Sigma'$.  We will call this morphism {\em partition-bijective} if
the following holds for all $p \in \cP$:
\setlength{\parskip}{0pt}
\begin{compactitem}
\item Let $S$ be the union of the components of $\Sigma' \setminus \Interior(\Sigma)$ that
contain a boundary component in $p$.  Then there exists a unique $p' \in \cP'$ such that
$p'$ consists of the components of $S \cap \partial \Sigma'$.
\end{compactitem}
This condition implies in particular that $S$ contains components of $\partial \Sigma'$.
It rules out two kinds of morphisms:
\begin{compactitem}
\item Ones where for some $p \in \cP$ the union of the components of 
$\Sigma' \setminus \Interior(\Sigma)$ that contain a boundary component in $p$
contains no components of $\partial \Sigma'$.  See here:
\end{compactitem}
\centerline{\psfig{file=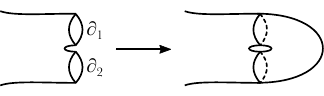,scale=100}}
\begin{compactitem}
\item[] Here $p = \{\partial_1,\partial_2\}$.
\item Ones where a single $p \in \cP$ ``splits'' into multiple elements of $\cP'$ like this:
\end{compactitem}
\centerline{\psfig{file=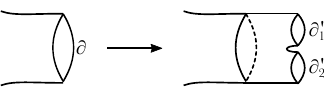,scale=100}}
\begin{compactitem}
\item[] Here $p = \{\partial\}$ and $\cP'$ contains both $\{\partial'_1\}$ and $\{\partial'_2\}$.
\end{compactitem}
\setlength{\parskip}{\baselineskip}

\p{Main theorem}
With this definition, we have the following theorem.

\begin{maintheorem}
\label{theorem:stableboundary}
Let $\mu$ be an $A$-homology marking on $(\Sigma,\cP) \in \PSurf$ 
that is supported on a symplectic subsurface.  Let
$(\Sigma,\cP) \rightarrow (\Sigma',\cP')$
be a partition-bijective $\PSurf$-morphism and let $\mu'$ be the stabilization
of $\mu$ to $(\Sigma',\cP')$.
Then the induced map $\HH_k(\Torelli(\Sigma,\cP,\mu)) \rightarrow \HH_k(\Torelli(\Sigma',\cP',\mu'))$
is an isomorphism if the genus of $\Sigma$ is at least
$(\rank(A)+2)k + (2\rank(A)+2)$.
\end{maintheorem}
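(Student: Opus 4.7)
The plan is to reduce Theorem \ref{theorem:stableboundary} to a small number of elementary stabilization moves, each of which can be dispatched using Theorem \ref{maintheorem:stable} or a separate ``double boundary stabilization'' result which I would prove via the homological stability machine of Theorem \ref{theorem:stabilitymachine}.

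First I would analyze the structure of partition-bijective $\PSurf$-morphisms. The partition-bijective hypothesis forbids both ``orphaned'' components of $\Sigma' \setminus \Interior(\Sigma)$ (those containing no component of $\partial \Sigma'$) and the splitting of a partition element of $\cP$ across multiple elements of $\cP'$. Consequently, any such morphism factors as a composition of three elementary types: (a) a \emph{capping} move that fills in a boundary component of $\Sigma$ by a subsurface absorbed entirely into one partition element; (b) a \emph{single-boundary genus stabilization}, attaching a genus $1$ handle at one component of $\partial \Sigma$; and (c) a \emph{double boundary stabilization}, attaching a handle whose two feet lie on two distinct $\cP$-adjacent components of $\partial \Sigma$. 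The support-on-a-symplectic-subsurface hypothesis is preserved by each of these moves, because the supporting symplectic subsurface can be isotoped to be disjoint from the newly attached region, so that the stabilized marking $\mu'$ factors through the corresponding collapse map.

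Second I would dispatch the three elementary moves in turn. For the capping move, a Birman-type exact sequence combined with the support hypothesis (which forces loops around the capped boundary to lie in $\ker(\mu)$) shows that the induced map on partial Torelli groups is an $\HH_k$-isomorphism in the stable range. For single-boundary genus stabilization, I would first apply the capping move to dispose of all auxiliary boundary components and then appeal directly to Theorem \ref{maintheorem:stable}; the support hypothesis ensures that the resulting marking on the one-boundary surface is precisely an $A$-homology marking in the sense of \S\ref{section:introduction}. The remaining double boundary stabilization is the genuinely new ingredient: I would invoke the result of \S\ref{section:doubleboundarystabilization}, which applies Theorem \ref{theorem:stabilitymachine} to a semisimplicial complex of vanishing tethered genus-$1$ subsurfaces modeled on $\TS_1$ but with tethers anchored on \emph{two} distinct intervals, one on each of the two $\cP$-adjacent boundary components involved in the stabilization. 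Its high connectivity follows from Theorem \ref{maintheorem:vanishtetheredconone}, and the transitivity and stabilizer conditions are verified by variants of Lemma \ref{lemma:vanishtetheredtran} and Lemma \ref{lemma:vanishtetheredstab}.

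The main obstacle, and the step to which I would devote the most care, is the double boundary stabilization. Unlike in the one-boundary case there is no natural increasing tower of groups to feed into the machine; one instead builds an auxiliary tower of partitioned surfaces whose genera grow while the partition structure is held fixed, and checks that each stabilizer subsurface inherits a partition in $\PSurf$ compatible with the inductive framework. The partition-bijective hypothesis is precisely what guarantees that these inherited partitions are well-defined and that the support hypothesis descends to the stabilizer subsurfaces, so that the induction on genus closes up. Careful bookkeeping in the $\HH_1^{\cP}$-functor, via Lemma \ref{lemma:pushpull} and Lemma \ref{lemma:partialtorellifun}, will be needed to match the statement of Theorem \ref{theorem:stableboundary} to the hypotheses of Theorem \ref{theorem:stabilitymachine}.
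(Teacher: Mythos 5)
Your overall architecture — factor a partition-bijective morphism into elementary moves and isolate a ``double boundary stabilization'' to be handled by the machine of Theorem \ref{theorem:stabilitymachine} — matches the paper's strategy, but the step you call the main obstacle contains a genuine gap. You propose to run the machine on a complex of vanishing tethered genus-$1$ subsurfaces with tethers anchored on two intervals and assert that its high connectivity ``follows from Theorem \ref{maintheorem:vanishtetheredconone}.'' It does not. For the machine to work, the vertex stabilizers must be the smaller partial Torelli groups $\Torelli(\Sigma'',\cP'',\mu'')$ with $\mu''$ again supported on a symplectic subsurface, and the group must act transitively on simplices; this forces an extra condition on the \emph{double-tether arc} itself, namely that its class in $\HH_1^{\cP}(\Sigma,\partial\Sigma)$ lies in $\ker(\mu)$ (together with an intersection-theoretic condition of the form $\hisect_{\Gamma}(\ker(\hmu))=\Z[\Gamma]$ needed so the destabilized marking on the stabilizer subsurface is still supported on a symplectic subsurface, cf.\ Lemmas \ref{lemma:identifysupport} and \ref{lemma:destabilizesymplectic}). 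Without the arc condition your stabilizers are too big and transitivity fails; with it, the high connectivity of the resulting complex is exactly the hard new content of the paper (\S\ref{section:tetheredloops}--\S\ref{section:orderdoubletetheredloops}): one must pass through tethered, mixed-tethered, double-tethered, and order-preserving double-tethered vanishing loop complexes, use the symplectic-support hypothesis to complete tethered loops to double-tethered ones with $\mu$-trivial arcs (Lemma \ref{lemma:completetl}), and run a local-injectivity/surgery argument (Lemma \ref{lemma:localinjectivity} and the combing in Theorem \ref{theorem:vanishdoubletetheredloopcon}). Theorem \ref{maintheorem:vanishtetheredconone} concerns only $\TS_h(\Sigma_g^1,I,\mu)$ on a one-boundary surface and says nothing about arcs or partitioned markings, so it cannot supply this connectivity, and ``variants of Lemma \ref{lemma:vanishtetheredtran} and Lemma \ref{lemma:vanishtetheredstab}'' do not address the arc-vanishing issue.

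A second, smaller problem is your move (a): handling a capping by ``a Birman-type exact sequence combined with the support hypothesis.'' Capping off a boundary component that is a singleton of $\cP$ is precisely the situation where stability \emph{fails} (the disc-pushing subgroup maps nontrivially to $\HH_3(A)$ via the partial Johnson homomorphism; see Theorem \ref{theorem:counterexample} and Lemma \ref{lemma:partialjohnson}), so any argument of this shape must be set up so that it never caps a partition singleton, and the support hypothesis alone does not make the Birman kernel act trivially on the relevant $\HH_k$. The paper avoids this entirely: it reduces partition-bijective morphisms to open cappings, factors open cappings into increasing and decreasing boundary stabilizations (each attaching a pair of pants), and then reduces both of these to double boundary stabilizations via a splitting trick (the increasing stabilization is split injective, and pre-/post-composing with the other type produces a double boundary stabilization), so that the machine is only ever applied to the double boundary move. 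Your factorization could likely be repaired along these lines, but as written both the capping step and the connectivity input for the double boundary step are unjustified.
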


\begin{remark}
Theorem \ref{theorem:stableboundary} does not assert that the map is a surjection if the genus of $\Sigma$ is at least
$(\rank(A)+2)k + (2\rank(A)+1)$.  We do not know if this is true -- while an appropriate surjectivity statement will follow
from our invocation of the homological stability machine, this will only cover certain special kinds of morphisms $(\Sigma,\cP) \rightarrow (\Sigma',\cP')$ (the
``double boundary stabilizations''), and the general case will be reduced to these special morphisms in a fairly involved
way.
\end{remark}

\p{Counterexamples}
We do not know whether or not the condition in Theorem \ref{theorem:stableboundary} 
that $\mu$ be supported on a symplectic subsurface is necessary.  However, the condition
that the morphism be partition-bijective is necessary.
Indeed, in \S \ref{section:closed} we will prove the following theorem.  The
condition of being {\em symplectically nondegenerate} in it will be defined in that
section; it is satisfied by most interesting homology markings.

\begin{theorem}
\label{theorem:counterexample}
Let $\mu$ be a symplectically nondegenerate $A$-homology marking on
$(\Sigma,\cP) \in \PSurf$ 
that is supported on a symplectic subsurface.  Let
$(\Sigma,\cP) \rightarrow (\Sigma',\cP')$
be a non-partition-bijective $\PSurf$-morphism and let $\mu'$ be the stabilization
of $\mu$ to $(\Sigma',\cP')$.  Assume that the genus of $\Sigma$
is at least $3\rank(A)+4$.
Then the induced map 
$\HH_1(\Torelli(\Sigma,\cP,\mu)) \rightarrow \HH_1(\Torelli(\Sigma',\cP',\mu'))$
is not an isomorphism.
\end{theorem}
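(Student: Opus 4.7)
The plan is to find homology classes in $\HH_1(\Torelli(\Sigma,\cP,\mu))$ that die under the stabilization map, using an extension of the Johnson homomorphism to partial Torelli groups in the spirit of Broaddus--Farb--Putman (which was already used to prove Theorem \ref{maintheorem:closed}).

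First I would factor the given non-partition-bijective $\PSurf$-morphism $\iota \colon (\Sigma,\cP) \to (\Sigma',\cP')$ as a composition $(\Sigma,\cP) \to (\widetilde{\Sigma},\widetilde{\cP}) \to (\Sigma',\cP')$, where the first arrow is partition-bijective (and of large enough genus for Theorem \ref{theorem:stableboundary} to apply, giving an isomorphism on $\HH_1$) and the second is a minimal instance of one of the two model failures described before the theorem: (i) an entire class $p \in \widetilde{\cP}$ is capped off, or (ii) a single class $p \in \widetilde{\cP}$ splits across multiple classes of $\cP'$. By Lemma \ref{lemma:partialtorellifun} together with functoriality of $\HH_1$, it suffices to obstruct the stabilization map in each of these minimal cases.

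In each minimal case, I would build an extended Johnson-type homomorphism $\tau_\mu \colon \Torelli(\Sigma,\cP,\mu) \to J(\Sigma,\cP,\mu)$ following the Broaddus--Farb--Putman template, with target a quotient of an appropriate exterior power of $\HH_1^{\cP}(\Sigma,\partial\Sigma)$ cut down by the relations imposed by $\mu$ and by the partition $\cP$. Symplectic nondegeneracy should be exactly the hypothesis ensuring that the induced map on $\HH_1$ has a nontrivial image in the partitioned-homology analogue of Johnson's classical ``interior'' summand. Placing $\tau_\mu$ and its analogue $\tau_{\mu'}$ for $(\Sigma',\cP',\mu')$ in a commuting square with the stabilization map, I would then show by direct comparison that capping off or splitting a $\cP$-class forces $\tau_{\mu'}$ to factor through a strictly smaller quotient, so that the stabilization map must annihilate the summand just identified; this mirrors Johnson's classical observation that $\HH_1(\Torelli(\Sigma_g))$ is strictly smaller than $\HH_1(\Torelli(\Sigma_g^1))$, which is exactly the mechanism behind Theorem \ref{maintheorem:closed}.

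The main obstacle will be pinning down the correct target $J(\Sigma,\cP,\mu)$ in the partitioned marked setting: the target must be natural enough that the comparison across $\iota$ is a genuine quotient rather than merely a change of presentation, while still being rich enough that symplectic nondegeneracy yields a nonzero obstruction class. A secondary subtlety is the partition-splitting case, which does not immediately reduce to Theorem \ref{maintheorem:closed}; for this case I expect that a Mayer--Vietoris--style decomposition along the subsurface through which the split occurs, combined with the explicit description of $\HH_1^{\cP}$ in terms of loops and arcs between $\cP$-adjacent components, will be the cleanest way to expose the missing summand and verify that it persists on $\HH_1$ of the partial Torelli group under nondegeneracy.
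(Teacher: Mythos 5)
Your overall strategy -- detect the failure with a Johnson-type homomorphism extended to partial Torelli groups -- is in the right spirit, but as written it has two genuine gaps. First, the obstruction you propose is left unconstructed: you ask for a homomorphism $\tau_\mu\colon \Torelli(\Sigma,\cP,\mu)\to J(\Sigma,\cP,\mu)$ with an ``appropriate'' target built from $\HH_1^{\cP}(\Sigma,\partial\Sigma)$, and you acknowledge that pinning down $J$ and the naturality of the comparison square is the main obstacle; that is precisely the content that has to be supplied, and the paper sidesteps it entirely. Instead of building a new partitioned Johnson homomorphism, the paper caps off all boundary components except a single one $\partial$ with $\{\partial\}\in\cP$, invokes the already-existing Broaddus--Farb--Putman homomorphism $\Torelli(\Sigma',\cP',\mu')\to \HH_3(A)$, and uses only one feature of it: its restriction to the disc-pushing subgroup $\DP(\partial)$ is nontrivial exactly when $\mu$ is symplectically nondegenerate (Lemmas \ref{lemma:dptorelli} and \ref{lemma:partialjohnson}). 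Since $\DP(\partial)$ dies in $\Mod$ of any surface in which $\partial$ bounds a disc, this immediately gives non-injectivity on $\HH_1$ whenever some $\partial$ with $\{\partial\}\in\cP$ bounds a disc in the target (Theorem \ref{theorem:noninjective}). Your plan needs either this reduction or a genuinely new construction, and the latter is not routine.

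Second, and more seriously, your guiding premise -- ``find homology classes in $\HH_1(\Torelli(\Sigma,\cP,\mu))$ that die under the stabilization map'' -- only ever proves non-injectivity, and in the partition-splitting case that is the wrong expectation: after the paper's reductions (discrete partitions, equal genus, no disc or annulus complementary pieces, all arranged via Theorem \ref{theorem:stableboundary}), no boundary component bounds a disc in $\Sigma'$, no disc-pushing subgroup is killed, and the paper does not exhibit any dying classes. What it proves there is failure of \emph{surjectivity}, by a composition trick: cap off all but one new boundary component of each complementary piece to get $(\Sigma'',\cP'')$ so that $\overline{\Sigma''\setminus\Sigma}$ consists of annuli, whence the composite $\Torelli(\Sigma,\cP,\mu)\to\Torelli(\Sigma',\cP',\mu')\to\Torelli(\Sigma'',\cP'',\mu'')$ is an isomorphism, while the second map is non-injective on $\HH_1$ by the disc-bounding case; an isomorphic composite with a non-injective second factor forces the first map to be non-surjective. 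Your proposed Mayer--Vietoris analysis of $\HH_1^{\cP}$ in the splitting case does not engage with this, and your claimed reduction to ``minimal model failures'' via a partition-bijective factorization, while plausible, is also not justified as stated -- the paper's reductions instead pre- and post-compose with open cappings and equal-genus partition-bijective morphisms, each known to induce isomorphisms by Theorem \ref{theorem:stableboundary}, which is where the genus hypothesis is actually consumed.
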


\begin{remark}
The map is frequently not an isomorphism even when the genus of $\Sigma$ is smaller.
We use the genus assumption in Theorem \ref{theorem:counterexample} so we
can apply Theorem \ref{theorem:stableboundary} to change $\Sigma$ and $\Sigma'$ so
as to put ourselves in a situation where the phenomenon underlying
Theorem \ref{maintheorem:closed} occurs.
\end{remark}

\subsection{Reduction I: open cappings}
\label{section:reductioncap}

In this section, we reduce Theorem \ref{theorem:stableboundary} to certain
kinds of $\PSurf$-morphisms called open cappings, whose definition is below.

\p{Open cappings}
An {\em open capping} is a $\PSurf$-morphism 
$(\Sigma,\cP) \rightarrow (\Sigma',\cP')$ such that the following holds
for all $p \in \cP$:
\setlength{\parskip}{0pt}
\begin{compactitem}
\item Let $S$ be the union of the components of $\Sigma' \setminus \Interior(\Sigma)$ that
contain a boundary component in $p$.  Then $S$ is connected and $S \cap \partial \Sigma'$
consists of a single component.
\end{compactitem}
Unraveling the definition of a $\PSurf$-morphism, this implies that $\cP'$ is
the {\em discrete partition}, that is, the partition \setlength{\parskip}{\baselineskip}
$\cP' = \Set{$\{\partial'\}$}{$\partial'$ a component of $\partial \Sigma'$}$.
See the following example, where $\cP = \{\{\partial_1,\partial_2\}, \{\partial_3,\partial_4\}\}$:\\
\centerline{\psfig{file=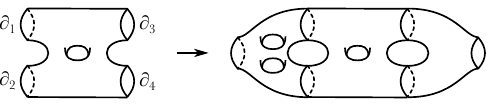,scale=100}}
By definition, an open capping is partition-bijective.

\begin{remark}
In \cite{PutmanCutPaste}, a capping is defined similarly to an open capping, but
where $\Sigma'$ is closed and $\partial S$ is simply an element of $\cP$.
\end{remark}

\p{Reduction}
The following is a special case of Theorem \ref{theorem:stableboundary}.

\begin{proposition}
\label{proposition:opencapping}
Let $\mu$ be an $A$-homology marking on $(\Sigma,\cP) \in \PSurf$ that is supported on a symplectic subsurface.
Let $(\Sigma,\cP) \rightarrow (\Sigma',\cP')$
be an open capping and let $\mu'$ be the stabilization
of $\mu$ to $(\Sigma',\cP')$.
Then the 
induced map $\HH_k(\Torelli(\Sigma,\cP,\mu)) \rightarrow \HH_k(\Torelli(\Sigma',\cP',\mu'))$
is an isomorphism if the genus of $\Sigma$ is at least $(\rank(A)+2)k + (2\rank(A)+2)$.
\end{proposition}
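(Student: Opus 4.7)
The plan is to factor the open capping $\iota\colon (\Sigma,\cP) \rightarrow (\Sigma',\cP')$ into a sequence of elementary $\PSurf$-morphisms and prove homological stability for each one, then compose. For each partition element $p \in \cP$, the open capping hypothesis forces the component $S_p$ of $\Sigma' \setminus \Interior(\Sigma)$ meeting the boundaries in $p$ to be connected with exactly one boundary on $\partial \Sigma'$, so $S_p \cong \Sigma_{h_p}^{|p|+1}$. I would factor the attachment of each $S_p$ as (a) successive pair-of-pants attachments that merge two boundary components in $p$ into a single boundary, reducing $|p|$ by one at each step, followed by (b) attachment of $\Sigma_{h_p}^1$ to the remaining merged boundary to build up the genus.

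For step (b), genus addition at a single boundary component of the current surface, the argument mirrors the proof of Theorem \ref{maintheorem:stable} in Section \ref{section:stableone}. Choosing an open interval $I$ in the boundary being stabilized, the group acts on the tethered vanishing subsurface complex $\TS_1(\Sigma,I,\mu)$; by Theorem \ref{theorem:vanishsurfacescon} this is highly connected, by Lemma \ref{lemma:vanishtetheredstab} simplex-stabilizers are the expected smaller partial Torelli groups, and by Lemma \ref{lemma:vanishtetheredtran} the action is transitive on simplices. Applying Theorem \ref{theorem:stabilitymachine} then gives the required isomorphism. The support-on-symplectic-subsurface hypothesis propagates through this stabilization via the push-pull formula (Lemma \ref{lemma:pushpull}), so it remains valid at each intermediate surface.

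For step (a), attaching a pair of pants that merges two boundary components in the same element of $\cP$, the stability result is the "double boundary stabilization" theorem reserved for Section \ref{section:doubleboundarystabilization}. I expect the strategy there to build a new semisimplicial complex out of genus-$1$ tethered subsurfaces whose tethers bridge the two boundary components in question (rather than starting and ending in the same interval), run the corresponding connectivity argument in the spirit of Theorem \ref{maintheorem:subsurfacescon} and its vanishing refinement in Theorem \ref{theorem:vanishsurfacescon}, and verify the four conditions of Theorem \ref{theorem:stabilitymachine}. The intermediate partial Torelli groups on surfaces where the two boundary components are $\cP$-adjacent do form an increasing sequence (indexed by genus) once we fix the boundary configuration, so the machine applies in principle.

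The main obstacle will be step (a): the pair-of-pants attachment genuinely changes the partition structure and the relative homology $\HH_1^{\cP}$, rather than simply adding a handle, so the connectivity and stabilizer analyses cannot be copied verbatim from Section \ref{section:vanishsurfaces}. In particular one must carefully track how $\mu$ behaves on the arc-classes joining the two merged boundary components and verify that the support-on-symplectic-subsurface hypothesis persists after the merge. Once steps (a) and (b) are in hand, the proposition follows by composing the resulting homology isomorphisms; the genus bound $g \geq (\rank(A)+2)k+(2\rank(A)+2)$ is preserved because each elementary step only weakly increases the genus of the current surface.
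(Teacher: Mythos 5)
Your overall plan---factor the open capping into elementary attachments and run the stability machine on each piece---matches the architecture of the paper, and you correctly identify the boundary-merging pair-of-pants step as the crux needing a new complex of double-tethered objects; the paper proves exactly such a result (Proposition \ref{proposition:doubleboundarystab}, via the order-preserving double-tethered vanishing loop complex) and deduces Proposition \ref{proposition:opencapping} by factoring the open capping into increasing and decreasing boundary stabilizations (Propositions \ref{proposition:increasingboundarystab} and \ref{proposition:decreasingboundarystab}), i.e.\ by a pants decomposition of each attached piece. However, your factorization has a genuine problem at step (b). As written, you glue $\Sigma_{h_p}^1$ along its entire boundary to the merged boundary component; this caps that boundary off, so the composite of your steps is not the open capping (which must leave exactly one free boundary component of $\Sigma'$ for each partition element), and such a capping is precisely the non-partition-bijective operation that Theorem \ref{theorem:noninjective} shows already fails to be injective on $\HH_1$ for symplectically nondegenerate markings. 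Presumably you intend to glue $\Sigma_{h_p}^2$ along one of its two boundary circles.

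Even with that correction, the claim that step (b) ``mirrors the proof of Theorem \ref{maintheorem:stable}'' by citing Theorem \ref{theorem:vanishsurfacescon} and Lemmas \ref{lemma:vanishtetheredstab} and \ref{lemma:vanishtetheredtran} is a real gap: those results are stated and proved only for $\Sigma_g^1$ with markings on $\HH_1(\Sigma_g^1)$. Here the groups are $\Torelli(\Sigma,\cP,\mu)$ with $\mu$ defined on $\HH_1^{\cP}(\Sigma,\partial \Sigma)$, which contains arc classes between $\cP$-adjacent boundary components; the stabilizer and transitivity arguments must control how mapping classes supported off the stabilizer subsurface act on these arc classes, and the change-of-coordinates element used for transitivity need not lie in $\Torelli(\Sigma,\cP,\mu)$ merely because $\mu$ kills the loops on the vanishing subsurfaces. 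This is exactly why the paper does not handle any single-boundary attachment by the argument of \S \ref{section:stableone}: it reduces even the increasing boundary stabilization to the double boundary stabilization (split injectivity from capping a disc, surjectivity by factoring a double stabilization through it), and all the substantive work is done with double-tethered loops whose tether arcs are required to die under $\mu$, using the symplectic-support hypothesis through Lemmas \ref{lemma:identifysupport}, \ref{lemma:destabilizesymplectic}, and \ref{lemma:completetl}. (Also, forward propagation of the support hypothesis is simply composition of $\PSurf$-morphisms, not the push-pull formula; the nontrivial propagation is the destabilization to stabilizer subsurfaces inside the connectivity and transitivity proofs, which needs Lemma \ref{lemma:destabilizesymplectic}.) By contrast, the paper's own proof of Proposition \ref{proposition:opencapping} is only the short geometric observation that an open capping factors into increasing and decreasing boundary stabilizations, with all the work deferred to those propositions.
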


The proof of Proposition \ref{proposition:opencapping} begins in
\S \ref{section:boundarystab}.  First, we will use it to deduce Theorem
\ref{theorem:stableboundary}.

\begin{proof}[Proof of Theorem \ref{theorem:stableboundary}, assuming
Proposition \ref{proposition:opencapping}]
We start by recalling the statement of the theorem.
Let $\mu$ be an $A$-homology marking on $(\Sigma,\cP) \in \PSurf$ that is supported on a symplectic subsurface.
Let $(\Sigma,\cP) \rightarrow (\Sigma',\cP')$
be a partition-bijective $\PSurf$-morphism and let $\mu'$ be the stabilization
of $\mu$ to $(\Sigma',\cP')$.  Assume that the genus of
$\Sigma$ is at least $(\rank(A)+2)k + (2\rank(A)+2)$.
Our goal is to prove that
the induced map 
$\HH_k(\Torelli(\Sigma,\cP,\mu)) \rightarrow \HH_k(\Torelli(\Sigma',\cP',\mu'))$
is an isomorphism.

Identify $\Sigma$ with its image in $\Sigma'$.
The proof has two cases.  Recall that the discrete partition of the boundary
components of a surface $S$ is the partition $\Set{$\{\partial\}$}{$\partial$ a component of $\partial S$}$.

\begin{casea}
\label{casea:1}
$\cP$ is the discrete partition of $\Sigma$.
\end{casea}

Let $S_1,\ldots,S_b$ be the components of $\Sigma' \setminus \Interior(\Sigma)$.  For each
$1 \leq i \leq b$, let $B_{S_i}$ be the components of $\partial \Sigma$ that are contained
in $S_i$ and let $B'_{S_i}$ be the components of $\partial \Sigma'$ that are contained
in $S_i$.  The following hold:
\setlength{\parskip}{0pt}
\begin{compactitem}
\item Since $\cP$ is the discrete partition, each $B_{S_i}$ is a one-element set containing a single boundary component of 
$\Sigma$, and $\cP = \{B_{S_1},\ldots,B_{S_b}\}$.
\item Since the morphism $(\Sigma,\cP) \rightarrow (\Sigma',\cP')$ is partition-bijective,
each $B'_{S_i}$ is a non-empty set of boundary components of $\Sigma'$, and $\cP' = \{B'_{S_1},\ldots,B'_{S_b}\}$.
\end{compactitem}
See the following figure, where $\Sigma \cong \Sigma_1^3$ with the discrete partition $\cP = \{\{\partial_1\},\{\partial_2\},\{\partial_3\}\}$ and 
$\Sigma' \cong \Sigma_1^6$ with the partition $\cP' = \{\{\partial'_1\}, \{\partial'_2,\partial'_3\},\{\partial'_4,\partial'_5,\partial'_6\}\}$:\setlength{\parskip}{\baselineskip}\\
\centerline{\psfig{file=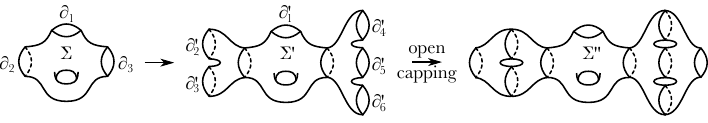,scale=100}}
As in that figure, let $(\Sigma',\cP') \rightarrow (\Sigma'',\cP'')$ be an open capping and
let $\mu''$ be the stabilization of $\mu'$ to $(\Sigma'',\cP'')$.
It follows from the above that the composition
\[(\Sigma,\cP) \longrightarrow (\Sigma',\cP') \longrightarrow (\Sigma'',\cP'')\]
is also an open capping.  We remark that this can fail if $\cP$ is not the discrete partition.  For instance,
consider the morphisms $(\Sigma,\cP) \rightarrow (\Sigma',\cP')$ and $(\Sigma',\cP') \rightarrow (\Sigma'',\cP'')$
in the following figure, where $\cP = \{\{\partial_1,\partial_2\}\}$ and $\cP' = \{\{\partial'_1\}\}$ and $\cP'' = \{\{\partial''_1\}\}$:\\
\centerline{\psfig{file=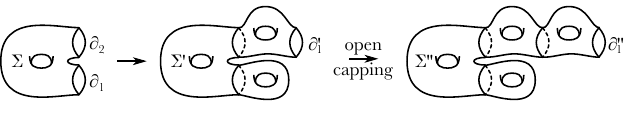,scale=100}}
We have maps
\[\HH_k(\Torelli(\Sigma,\cP,\mu)) \longrightarrow \HH_k(\Torelli(\Sigma',\cP',\mu')) \longrightarrow \HH_k(\Torelli(\Sigma'',\cP'',\mu'')).\]
Proposition \ref{proposition:opencapping} implies that
\[\HH_k(\Torelli(\Sigma,\cP,\mu)) \longrightarrow \HH_k(\Torelli(\Sigma'',\cP'',\mu''))
\quad \text{and} \quad
\HH_k(\Torelli(\Sigma',\cP',\mu')) \longrightarrow \HH_k(\Torelli(\Sigma'',\cP'',\mu''))\]
are isomorphisms.  We conclude that the map
\[\HH_k(\Torelli(\Sigma,\cP,\mu)) \longrightarrow \HH_k(\Torelli(\Sigma',\cP',\mu'))\]
is an isomorphism, as desired.

\begin{casea}
\label{casea:2}
$\cP$ is not the discrete partition of $\partial \Sigma$.
\end{casea}

Since $\mu$ is supported on a symplectic subsurface, we can find a 
$\PSurf$-morphism $(\Sigma'',\cP'') \rightarrow (\Sigma,\cP)$ with $\Sigma'' \cong \Sigma_h^1$
and an $A$-homology marking $\mu''$ on $(\Sigma'',\cP'')$ such that $\mu$ is the stabilization
of $\mu''$ to $(\Sigma,\cP)$.  We can factor $(\Sigma'',\cP'') \rightarrow (\Sigma,\cP)$
as 
\[(\Sigma'',\cP'') \rightarrow (\Sigma''',\cP''') \rightarrow (\Sigma,\cP)\]
such that $\Sigma'''$ has the same genus as $\Sigma$, such that $\cP'''$ is the discrete partition of $\partial \Sigma'''$, and such
that $(\Sigma''',\cP''') \rightarrow (\Sigma,\cP)$ is partition-bijective; see here:\\
\centerline{\psfig{file=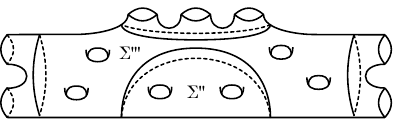,scale=100}}
In this example, $\cP$ consists of three sets of boundary components (the ones on the
left, right, and top).  Let $\mu'''$ be the stabilization of $\mu''$ to 
$(\Sigma''',\cP''')$.  We have maps
\[\HH_k(\Torelli(\Sigma''',\cP''',\mu''')) \longrightarrow \HH_k(\Torelli(\Sigma,\cP,\mu)) \longrightarrow \HH_k(\Torelli(\Sigma',\cP',\mu')).\]
Case \ref{casea:1} implies that the maps
\[\HH_k(\Torelli(\Sigma''',\cP''',\mu''')) \longrightarrow \HH_k(\Torelli(\Sigma,\cP,\mu))
\quad \text{and} \quad
\HH_k(\Torelli(\Sigma''',\cP''',\mu''')) \longrightarrow \HH_k(\Torelli(\Sigma',\cP',\mu'))\]
are isomorphisms.  We conclude that the map
\[\HH_k(\Torelli(\Sigma,\cP,\mu)) \longrightarrow \HH_k(\Torelli(\Sigma',\cP',\mu'))\]
is an isomorphism, as desired.
\end{proof}

\subsection{Reduction II: boundary stabilizations}
\label{section:boundarystab}

In this section, we reduce Proposition \ref{proposition:opencapping} to showing
that certain kinds of $\PSurf$-morphisms called increasing boundary stabilizations
and decreasing boundary stabilizations induce isomorphisms on homology.

\p{Increasing boundary stabilization}
Consider $(\Sigma,\cP) \in \PSurf$.  An {\em increasing boundary stabilization}
of $(\Sigma,\cP)$ is a $\PSurf$-morphism
$(\Sigma,\cP) \rightarrow (\Sigma',\cP')$ constructed as follows.
Let $\partial$ be a component of $\partial \Sigma$ and let $p \in \cP$ be the partition
element with $\partial \in p$.  Also, let
$\partial \Sigma_0^3 = \{\partial_1', \partial_2',\partial_3'\}$.
\setlength{\parskip}{0pt}
\begin{compactitem}
\item $\Sigma'$ is obtained by attaching $\Sigma_0^3$ to $\Sigma$ by gluing $\partial_1' \subset \Sigma_0^3$ to $\partial \subset \Sigma$.
\item $\cP'$ is obtained from $\cP$ by replacing $p$ with
$p' = \left(p \setminus \{\partial\}\right) \cup \{\partial_2', \partial_3'\}$.
\end{compactitem}
See here:\setlength{\parskip}{\baselineskip} \\
\centerline{\psfig{file=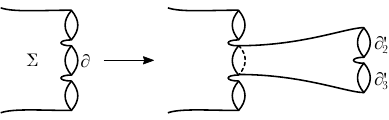,scale=100}}
In \S \ref{section:doubleboundarystab}, we will prove the following.

\begin{proposition}
\label{proposition:increasingboundarystab}
Let $\mu$ be an $A$-homology marking on $(\Sigma,\cP) \in \PSurf$ that is supported on a symplectic subsurface.
Let $(\Sigma,\cP) \rightarrow (\Sigma',\cP')$
be an increasing boundary stabilization and let $\mu'$ be the stabilization
of $\mu$ to $(\Sigma',\cP')$.
Then the induced map $\HH_k(\Torelli(\Sigma,\cP,\mu)) \rightarrow \HH_k(\Torelli(\Sigma',\cP',\mu'))$
is an isomorphism if the genus of $\Sigma$ is at least $(\rank(A)+2)k + (2\rank(A)+2)$.
\end{proposition}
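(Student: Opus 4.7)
The plan is to apply the homological stability machine (Theorem \ref{theorem:stabilitymachine}) along a sequence of iterated increasing boundary stabilizations, analogously to the proof of Theorem \ref{maintheorem:stable} in \S \ref{section:proofone}. Since an increasing boundary stabilization does not change the genus, the stabilization here is happening in the boundary/partition direction: I will fit $\Torelli(\Sigma,\cP,\mu) \hookrightarrow \Torelli(\Sigma',\cP',\mu')$ into an infinite increasing sequence $G_0 \subset G_1 \subset G_2 \subset \cdots$ obtained by repeatedly attaching pairs of pants to the boundary component $\partial$ (and successively to the resulting new components in the same partition block), with stabilized markings at each step.

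For the corresponding semisimplicial complexes $X_n$, the natural candidate is a complex of ``vanishing tethered tori'' where the tethers are anchored in an open interval $I$ lying on one of the new boundary components created by the stabilization (say $\partial_2'$ for $G_1$, and correspondingly for later levels). The point is that this interval $I$ sits inside the attached pair of pants, so a vanishing tethered torus based at $I$ naturally lives in the ``new'' part of the surface and its complement recovers the previous object up to an extra pair of pants. Using Lemma \ref{lemma:vanishtetheredstab}, the stabilizer of a $k$-simplex should be identifiable with $G_{n-k-1}$ together with a matching stabilized marking. High connectivity of this complex in the required range follows from Theorem \ref{theorem:vanishsurfacescon}, using that $\mu$ is supported on a symplectic subsurface to guarantee that the support remains inside the stabilizer surface.

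The hypothesis of Theorem \ref{theorem:stabilitymachine} that requires most care is transitivity on $i$-simplices by $G_n$: this is an analogue of Lemma \ref{lemma:vanishtetheredtran}, and its proof should follow the same inductive strategy, using the change of coordinates principle together with the fact that mapping classes supported in a subsurface on which $\mu$ vanishes automatically lie in $\Torelli(\Sigma_n,\cP_n,\mu_n)$. The swap condition is again supplied by the change of coordinates principle applied to the complement of a $1$-simplex's stabilizer subsurface, which has genus $\geq 2$ in the stable range.

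The main obstacle is bookkeeping: one has to check that the partition structures $\cP_n$ induced along the sequence behave correctly under the stabilizer operation, so that cutting out a vanishing tethered torus genuinely produces a $\PSurf$-object whose partitioned homology is governed by the stabilized marking $\mu_{n-k-1}$ in a way compatible with the functorial setup of \S \ref{section:mpsurf}. In particular, the invariance of $\mu$ under the $G_n$-action requires confirming that the relevant arcs between $\cP$-adjacent boundary components which survive the stabilization continue to be sent to zero; this is where the assumption that $\mu$ is supported on a symplectic subsurface (rather than just any $A$-homology marking) does real work, since it guarantees that one can choose the subsurface structure of $\Sigma'$ so that the markings on the stabilizer subsurfaces stay of the form ``stabilization of the support''.
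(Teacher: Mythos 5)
There is a genuine gap: the machine cannot be run along the sequence you propose. Your groups $G_0 \subset G_1 \subset \cdots$ are obtained by iterated increasing boundary stabilizations, which never change the genus of the surface. But the connectivity you invoke for your complexes $X_n$ of vanishing tethered tori, namely Theorem \ref{theorem:vanishsurfacescon}, is $\frac{g-(2\rank(A)+2h+1)}{\rank(A)+h+1}$ with $g$ the genus; since $g$ is constant along your sequence, the connectivity of $X_n$ is bounded and cannot satisfy the hypothesis of Theorem \ref{theorem:stabilitymachine} that $X_n$ be $\frac{n-1}{c}$-connected for all $n$. The stabilizer condition fails for the same structural reason: by Lemma \ref{lemma:vanishtetheredstab}, the stabilizer subsurface of a $k$-simplex of a tethered-torus complex has genus $g-k-1$, whereas $G_{n-k-1}$ in your sequence still has genus $g$, so the stabilizer of a $k$-simplex of $X_n$ can never be identified with $G_{n-k-1}$. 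The machine fundamentally needs a genus-increasing stabilization, and an increasing boundary stabilization is not one.

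The paper's proof of Proposition \ref{proposition:increasingboundarystab} instead reduces to the genus-increasing move. First, the map $\Torelli(\Sigma,\cP,\mu) \rightarrow \Torelli(\Sigma',\cP',\mu')$ is split injective, the splitting being induced by gluing a disc to one of the two new boundary components; hence the map on $\HH_k$ is injective and only surjectivity remains. Second, since $\mu$ is supported on a symplectic subsurface of genus at most $\rank(A)$ (Proposition \ref{proposition:destabilizeone}) and the genus of $\Sigma$ exceeds $\rank(A)$, one can find a decreasing boundary stabilization $(\Sigma'',\cP'') \rightarrow (\Sigma,\cP)$ carrying a marking $\mu''$ supported on a symplectic subsurface, such that the composite $(\Sigma'',\cP'') \rightarrow (\Sigma,\cP) \rightarrow (\Sigma',\cP')$ is a double boundary stabilization, which does raise the genus by one. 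Proposition \ref{proposition:doubleboundarystab} (whose proof is where the machine is actually applied, to the order-preserving double-tethered vanishing loop complexes, not to tethered-torus complexes) then shows the composite on $\HH_k$ is surjective, since the genus of $\Sigma''$ is at least $(\rank(A)+2)k+(2\rank(A)+1)$; surjectivity of the second map follows. If you want to salvage your write-up, this two-step reduction (split injectivity plus factoring through a double boundary stabilization) is the missing idea, and no direct application of the machine to the increasing stabilization itself is made anywhere in the paper.
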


\p{Decreasing boundary stabilization}
Consider $(\Sigma,\cP) \in \PSurf$.  A {\em decreasing boundary stabilization}
of $(\Sigma,\cP)$ is a $\PSurf$-morphism
$(\Sigma,\cP) \rightarrow (\Sigma',\cP')$ constructed as follows.
Let $\partial_1$ and $\partial_2$ be distinct components of $\partial \Sigma$ that
both lie in some $p \in \cP$, and let
$\partial \Sigma_0^3 = \{\partial_1', \partial_2',\partial_3'\}$.
\setlength{\parskip}{0pt}
\begin{compactitem}
\item $\Sigma'$ is obtained by attaching $\Sigma_0^3$ to $\Sigma$ by gluing
$\partial_1'$ and $\partial_2'$ to $\partial_1$ and $\partial_2$, respectively.
\item $\cP'$ is obtained from $\cP$ by replacing $p$ with
$p' = \left(p \setminus \{\partial_1,\partial_2\}\right) \cup \{\partial_3'\}$.
\end{compactitem}
See here:\setlength{\parskip}{\baselineskip} \\
\centerline{\psfig{file=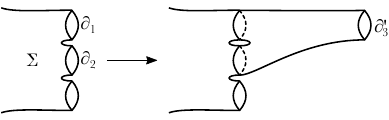,scale=100}}
In \S \ref{section:doubleboundarystab}, we will prove the following.

\begin{proposition}
\label{proposition:decreasingboundarystab}
Let $\mu$ be an $A$-homology marking on $(\Sigma,\cP) \in \PSurf$ that is supported on a symplectic subsurface.
Let $(\Sigma,\cP) \rightarrow (\Sigma',\cP')$
be a decreasing boundary stabilization and let $\mu'$ be the stabilization
of $\mu$ to $(\Sigma',\cP')$.
Then the induced map $\HH_k(\Torelli(\Sigma,\cP,\mu)) \rightarrow \HH_k(\Torelli(\Sigma',\cP',\mu'))$
is an isomorphism if the genus of $\Sigma$ is at least $(\rank(A)+2)k + (2\rank(A)+2)$.
\end{proposition}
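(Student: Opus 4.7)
The plan is to fit the decreasing boundary stabilization into the homological stability machine of Theorem \ref{theorem:stabilitymachine}, in close parallel with the argument of \S\ref{section:proofone} for Theorems \ref{maintheorem:stable} and \ref{maintheorem:nonabelianstable}. A single decreasing stabilization shrinks the partition element $p$ containing $\partial_1,\partial_2$ by one, so to iterate and build an increasing sequence of groups I would package the decreasing stabilization together with an increasing one into a \emph{double boundary stabilization}: a decreasing stabilization (attaching a pair of pants to $\partial_1,\partial_2\in p$ producing a new boundary $\partial_3'$) immediately followed by an increasing stabilization at $\partial_3'$ (replacing $\partial_3'$ by two new boundaries $\partial_2'',\partial_3''\in p$). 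The net effect raises the genus by one while preserving the combinatorial type of $\cP$, so double stabilizations can be iterated indefinitely.

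Using Proposition \ref{proposition:destabilizeone} together with the hypothesis that $\mu$ is supported on a symplectic subsurface, I would destabilize $(\Sigma,\cP,\mu)$ back to a small base of genus at most $\rank(A)$ and build an increasing chain
\[G_0\subset G_1\subset G_2\subset\cdots,\]
with $G_n=\Torelli(\Sigma_n,\cP_n,\mu_n)$ obtained from $G_{n-1}$ by a double boundary stabilization. For each $n$ I would construct a semisimplicial complex $X_n$ whose $k$-simplices are ordered tuples of $k+1$ disjoint ``vanishing tethered handles'' in $(\Sigma_n,\cP_n)$ — essentially genus-one subsurfaces tethered between the two distinguished boundary components of the partition element $p_n$, together with the condition that the induced map to $A$ via $\mu_n$ on the partitioned homology is zero — set up so that cutting out a regular neighborhood of a $k$-simplex recovers $(\Sigma_{n-k-1},\cP_{n-k-1},\mu_{n-k-1})$ with the correct stabilized marking. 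This is the partitioned analogue of the stabilizer picture encoded in Lemma \ref{lemma:vanishtetheredstab}.

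The four hypotheses of Theorem \ref{theorem:stabilitymachine} would then be verified by direct analogues of the arguments used in \S\ref{section:proofone}: high connectivity of $X_n$ via a variant of Theorem \ref{theorem:vanishsurfacescon} (proved by a fiber-lemma argument using Corollary \ref{corollary:fibers} against the ordinary tethered subsurface complex); transitivity of the $G_n$-action on $k$-simplices via Proposition \ref{proposition:destabilizeone} and the change of coordinates principle; stabilizer identification via the partitioned analogue of Lemma \ref{lemma:vanishtetheredstab}; and the commuting-element condition from change of coordinates on the complement of the stabilizer subsurface. Once this yields homological stability along the chain of double stabilizations, I would factor a single double stabilization as a decreasing stabilization followed by an increasing one; since the double stabilization (just established) and the increasing stabilization (Proposition \ref{proposition:increasingboundarystab}) both induce isomorphisms on $\HH_k$ in the stated range, a two-out-of-three argument forces the decreasing stabilization to do so as well.

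The main obstacle I expect is the high-connectivity statement for $X_n$. The natural objects parametrizing a double stabilization — handles tethered simultaneously between two prescribed boundary components of $\Sigma_n$ and vanishing with respect to $\mu_n$ on partitioned homology — are more constrained than the tethered tori studied in \S\ref{section:vanishsurfaces}, and correctly reducing their connectivity to Theorem \ref{theorem:vanishsurfacescon} requires carefully tracking how cutting along a simplex interacts with both the partition $\cP_n$ and the destabilization of the marking guaranteed by the symplectic-support hypothesis. A secondary subtlety is verifying that the vanishing condition set up at the level of $\HH_1^{\cP_n}(\Sigma_n,\partial\Sigma_n)$ genuinely restricts to the correct condition on the cut surface, so that the stabilizer is $\Torelli(\Sigma_{n-k-1},\cP_{n-k-1},\mu_{n-k-1})$ rather than some larger group.
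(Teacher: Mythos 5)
Your closing step coincides with the paper's actual proof of this proposition: given the decreasing stabilization $(\Sigma,\cP)\rightarrow(\Sigma',\cP')$, one follows it with an increasing boundary stabilization at the new boundary component $\partial'$ so that the composite $(\Sigma,\cP)\rightarrow(\Sigma'',\cP'')$ is a double boundary stabilization; then Proposition \ref{proposition:doubleboundarystab} applied to the composite (the genus of $\Sigma$ meets the bound) and Proposition \ref{proposition:increasingboundarystab} applied to the second map (whose source has genus one larger, and whose marking is still supported on a symplectic subsurface) force the first map to be an isomorphism. If you simply cite Proposition \ref{proposition:doubleboundarystab} -- which is stated before this proposition and is exactly what the paper's proof assumes -- your argument is complete and identical to the paper's.

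The gap lies in the portion of your proposal that undertakes to prove the double-stabilization input itself. Your chain-building and machine setup do match \S\ref{section:doubleboundarystabproof}, but the high connectivity of the relevant complex is not obtainable as ``a variant of Theorem \ref{theorem:vanishsurfacescon}'' by a fiber or retraction comparison with the ordinary tethered subsurface complex. That style of comparison only handles the condition that the \emph{loop} vanishes under $\hmu$; the genuinely hard condition is that the connecting arc between the two distinguished boundary components vanishes under $\mu$, and this is where the paper needs new ideas: the symplectic-support hypothesis enters through Lemma \ref{lemma:identifysupport} and Lemma \ref{lemma:completetl} (completing a tethered loop to a double-tethered one with vanishing arc via an arc-combing argument), the intermediate mixed complex $\MTL(\Sigma,I,J,\cP,\mu)$, the locally-injective-simplicial-map machinery of Lemma \ref{lemma:localinjectivity} together with the sliding argument in the proof of Theorem \ref{theorem:vanishdoubletetheredloopcon}, and finally the order-preserving reduction of Theorem \ref{theorem:vanishorderdoubletetheredloopcon}. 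In addition, the vanishing condition on partitioned homology alone is too weak to run the induction: the paper's complexes also impose the auxiliary condition $\hisect_{\Gamma}(\ker(\hmu)) = \Z[\Gamma]$, which is what guarantees (via Lemma \ref{lemma:destabilizesymplectic}) that destabilized markings on stabilizer subsurfaces remain supported on symplectic subsurfaces -- the hypothesis needed both for transitivity (Lemma \ref{lemma:doubletetheredtran}) and to keep the connectivity induction going. You correctly flagged the connectivity statement as the main obstacle, but the specific reduction you propose for it would not deliver it.
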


\p{Deriving Proposition \ref{proposition:opencapping}}
As we said above, we will prove Propositions 
\ref{proposition:increasingboundarystab} and \ref{proposition:decreasingboundarystab} 
in \S \ref{section:doubleboundarystab}.
Here we will explain how to use them to prove Proposition \ref{proposition:opencapping}.

\begin{proof}[Proof of Proposition \ref{proposition:opencapping}, assuming
Propositions \ref{proposition:increasingboundarystab} and \ref{proposition:decreasingboundarystab}]
It is geometrically clear that an open capping
$(\Sigma,\cP) \rightarrow (\Sigma',\cP')$ can be factored as a composition
of increasing boundary stabilizations and decreasing boundary stabilizations.
For instance,\\
\centerline{\psfig{file=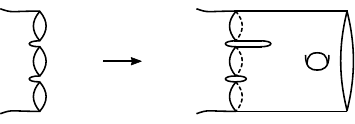,scale=100}}
can be factored as\\
\centerline{\psfig{file=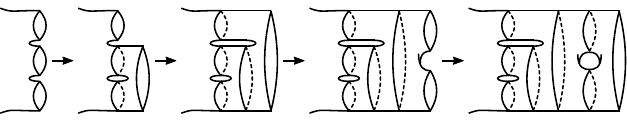,scale=100}}
The proposition follows.
\end{proof}

\subsection{Reduction III: double boundary stabilizations}
\label{section:doubleboundarystab}

In this section, we adapt a beautiful idea of Hatcher--Vogtmann \cite{HatcherVogtmannTethers}
to show how to reduce our two different boundary stabilizations
(increasing and decreasing) to a single kind of stabilization called a double
boundary stabilization.

\p{Double boundary stabilization}
Consider $(\Sigma,\cP) \in \PSurf$.  A {\em double boundary stabilization}
of $(\Sigma,\cP)$ is a $\PSurf$-morphism
$(\Sigma,\cP) \rightarrow (\Sigma',\cP')$ constructed as follows.
Let $\partial_1$ and $\partial_2$ be components of $\partial \Sigma$ that
lie in a single element $p \in \cP$.  Also, let
$\partial \Sigma_0^4 = \{\partial_1', \partial_2',\partial_3',\partial_4'\}$.
\setlength{\parskip}{0pt}
\begin{compactitem}
\item $\Sigma'$ is obtained by attaching $\Sigma_{0,4}$ to $\Sigma$ by gluing
$\partial_1'$ and $\partial_2'$ to $\partial_1$ and $\partial_2$, respectively.
\item $\cP'$ is obtained from $\cP$ by replacing $p$ with
$p' = \left(p \setminus \{\partial_1,\partial_2\}\right) \cup \{\partial_3', \partial_4'\}$.
\end{compactitem}
See here:\setlength{\parskip}{\baselineskip} \\
\centerline{\psfig{file=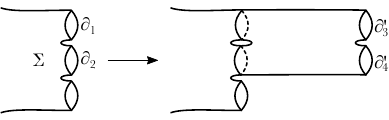,scale=100}}
In \S \ref{section:doubleboundarystabilization}, we will use the homological stability machine to prove the following.

\begin{proposition}
\label{proposition:doubleboundarystab}
Let $\mu$ be an $A$-homology marking on $(\Sigma,\cP) \in \PSurf$ that is supported on a symplectic subsurface.
Let $(\Sigma,\cP) \rightarrow (\Sigma',\cP')$
be a double boundary stabilization and let $\mu'$ be the stabilization
of $\mu$ to $(\Sigma',\cP')$.
Then the induced map $\HH_k(\Torelli(\Sigma,\cP,\mu)) \rightarrow \HH_k(\Torelli(\Sigma',\cP',\mu'))$
is an isomorphism if the genus of $\Sigma$ is at least $(\rank(A)+2)k + (2\rank(A)+2)$ and a surjection
if the genus of $\Sigma$ is $(\rank(A)+2)k + (2\rank(A)+1)$.
\end{proposition}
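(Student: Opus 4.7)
The plan is to apply the homological stability machine of Theorem \ref{theorem:stabilitymachine}. A single double boundary stabilization increases the genus by one while preserving the number of boundary components (after relabeling), so iterating along the same pair of boundary components in $p \in \cP$ yields an ascending chain $G_0 \subset G_1 \subset G_2 \subset \cdots$ with $G_n = \Torelli(\Sigma_n, \cP_n, \mu_n)$, where $\mu_n$ is the stabilization of a seed marking. Using the support-on-symplectic-subsurface hypothesis on $\mu$ together with Proposition \ref{proposition:destabilizeone}, the seed is chosen on a small surface of genus at most $\rank(A)$; every $\mu_n$ in the chain inherits the support property. After shifting the indexing so that the seed genus equals $2\rank(A)+2$, one arranges that $\mu$ and $\mu'$ appear as consecutive terms $\mu_{n-1}$ and $\mu_n$, and the genus hypothesis on $\Sigma$ translates into $n \geq (\rank(A)+2)k+1$ for the iso range and $n = (\rank(A)+2)k$ for the surjection range.

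I would take $X_n$ to be the semisimplicial complex of \emph{double-tethered vanishing handles} in $\Sigma_n$: vertices are isotopy classes of embedded genus-$1$ subsurfaces together with two disjoint tethers, one from a point on the subsurface boundary to a fixed interval $I_1 \subset \partial_{1,n}$ and another to $I_2 \subset \partial_{2,n}$, such that the resulting class in $\HH_1^{\cP_n}(\Sigma_n, \partial \Sigma_n)$ has trivial image under $\mu_n$. Higher simplices are collections realizable disjointly, with a natural ordering coming from the order in which $I_1$-tethers leave $I_1$. Cutting $\Sigma_n$ along a $k$-simplex and removing the associated handles exactly reverses $k+1$ double boundary stabilizations, so the stabilizer subsurface of a $k$-simplex is diffeomorphic to $\Sigma_{n-k-1}$; an analogue of Lemma \ref{lemma:vanishtetheredstab} identifies the $G_n$-stabilizer of the simplex with $G_{n-k-1}$. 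Transitivity on $i$-simplices follows by induction modelled on Lemma \ref{lemma:vanishtetheredtran}, and the commuting-flip hypothesis is witnessed by a mapping class supported in the complement of the stabilizer subsurface (which lies in $G_n$ automatically, since $\mu_n$ vanishes there).

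The main obstacle is verifying that $X_n$ is $(n-1)/(\rank(A)+2)$-connected. The plan is a two-step argument. First, let $Y_n$ be the analogous complex of all double-tethered handles without the vanishing condition; an analogue of Theorem \ref{maintheorem:subsurfacescon} for double-tethered handles, proved by the same link-argument strategy of \S\ref{section:surfacescon}, supplies the required connectivity for $Y_n$. Second, adapt the retraction argument from the proof of Theorem \ref{theorem:vanishsurfacescon} to pass from $Y_n$ to its vanishing subcomplex $X_n$: any double-tethered handle sitting inside a sufficiently high-genus piece produced via Theorem \ref{maintheorem:subsurfacescon} can be replaced by a vanishing one via Proposition \ref{proposition:destabilizeone}. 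The support-on-symplectic-subsurface hypothesis is essential here (cf.\ Remark \ref{remark:nonsupport}): without it, there need not be any vanishing arc from $\partial_{1,n}$ to $\partial_{2,n}$, and so $X_n$ could be empty. Once the connectivity estimate is in hand, Theorem \ref{theorem:stabilitymachine} delivers the claimed isomorphism and surjection ranges.
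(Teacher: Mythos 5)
Your overall strategy---run Theorem \ref{theorem:stabilitymachine} on a chain built from a small symplectic support via Proposition \ref{proposition:destabilizeone}, with the genus bookkeeping you describe---is the right one, but the complex you propose to act on fails at the stabilizer step for a concrete topological reason. A vertex of your complex is a genus-$1$ subsurface joined by two tethers to the boundary components $\partial_{1,n}$ and $\partial_{2,n}$. A regular neighborhood of $\partial_{1,n}\cup\partial_{2,n}$ together with such a configuration is a copy of $\Sigma_1^3$ meeting its complement in a \emph{single} circle, so the stabilizer subsurface has genus one less than $\Sigma_n$ but also one \emph{fewer} boundary component. Reversing a double boundary stabilization instead removes a genus-$0$ piece $\Sigma_0^4$ glued along \emph{two} circles and leaves the number of boundary components unchanged. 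So cutting along your vertices does not reverse a double boundary stabilization, and the $G_n$-stabilizer of an $i$-simplex is not $G_{n-i-1}$ in your chain: hypothesis (2) of Theorem \ref{theorem:stabilitymachine} fails as stated. This is precisely why the paper's complex $\ODTL(S_r,I_r,J_r,\cP_r,\mu_r)$ has as vertices double-tethered \emph{loops}, i.e.\ embeddings of $\tau^2(S^1)$ (a circle with an arc running from $I$ through it to $J$): the neighborhood of $\partial_I\cup\partial_J\cup{}$the configuration is exactly $\Sigma_0^4$ attached along two circles, which undoes one double boundary stabilization and makes Lemma \ref{lemma:vanishdoubleloopstab} available.

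The connectivity step also has a genuine gap. The retraction trick of Theorem \ref{theorem:vanishsurfacescon} works because replacing a large-genus subsurface by a genus-$1$ piece inside it on which the closed marking vanishes only modifies the vertex \emph{inside} that subsurface; it cannot change the $\mu$-value of the tether arc running from $\partial_{1,n}$ to $\partial_{2,n}$, which is part of your vanishing condition. Fixing a non-vanishing arc requires moving the tether through the surface, and such a modification is neither canonical nor compatible with disjointness from the other vertices of a simplex, so it does not give a retraction onto the vanishing subcomplex. This is exactly the difficulty the paper isolates (``getting the double-tether to vanish is harder'') and it is handled by a different mechanism: the chain $\TL \subset \MTL \supset \DTL$, Lemma \ref{lemma:completetl} for completing single tethers to double tethers by combing arcs over the boundary, and the locally-injective surgery argument of Lemma \ref{lemma:localinjectivity} and Claim \ref{claim:disjoint} inside an induction on genus, followed by the Hatcher--Vogtmann passage to the order-preserving subcomplex. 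Note also that your vanishing condition omits the requirement $\hisect_{\Gamma}(\ker(\hmu)) = \Z[\Gamma]$, which the paper builds into these complexes so that (via Lemmas \ref{lemma:destabilizesymplectic} and \ref{lemma:mtllinks}) the destabilized markings on stabilizer subsurfaces remain supported on symplectic subsurfaces; your transitivity and commuting-element steps implicitly use that property, so it cannot be dropped.
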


\p{Deriving Propositions \ref{proposition:increasingboundarystab} and \ref{proposition:decreasingboundarystab}}
As we said above, we will prove Proposition \ref{proposition:doubleboundarystab} in
\S \ref{section:doubleboundarystabilization}.  Here we will explain how to use
it to prove Propositions \ref{proposition:increasingboundarystab} and \ref{proposition:decreasingboundarystab}.

\begin{proof}[Proof of Proposition \ref{proposition:increasingboundarystab}, assuming Proposition \ref{proposition:doubleboundarystab}]
We start by recalling the statement.  Consider an increasing boundary stabilization
$(\Sigma,\cP) \rightarrow (\Sigma',\cP')$.  Let $\mu$ be an $A$-homology marking
on $(\Sigma,\cP)$ that is supported on a symplectic subsurface and let $\mu'$
be the stabilization of $\mu$ to $(\Sigma',\cP')$.
Assume that the genus of $\Sigma$ is at least $(\rank(A)+2)k + (2\rank(A)+2)$.
We must prove that the induced map $\HH_k(\Torelli(\Sigma,\cP,\mu)) \rightarrow \HH_k(\Torelli(\Sigma',\cP',\mu'))$
is an isomorphism.

The first observation is that the map $\Torelli(\Sigma,\cP,\mu) \rightarrow \Torelli(\Sigma',\cP',\mu')$ is split
injective via a splitting map $\Torelli(\Sigma',\cP',\mu') \rightarrow \Torelli(\Sigma,\cP,\mu)$ induced by gluing a disc
to one of the two components of $\partial \Sigma' \setminus \partial \Sigma$:\\
\centerline{\psfig{file=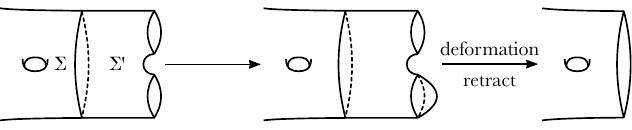,scale=100}}
The map $\HH_k(\Torelli(\Sigma,\cP,\mu)) \rightarrow \HH_k(\Torelli(\Sigma',\cP',\mu'))$ is thus injective, so
it is enough to prove that it is surjective.

Combining the fact that $\mu$ is supported on a symplectic subsurface with Corollary \ref{corollary:destabilizeone},
we see that $\mu$ is in fact supported on a symplectic subsurface of genus at most $\rank(A)$.  Since
the genus of $\Sigma$ is greater than $\rank(A)$, this implies that we can find a decreasing boundary stabilization
$(\Sigma'',\cP'') \rightarrow (\Sigma,\cP)$ and an $A$-homology marking $\mu''$
on $(\Sigma'',\cP'')$ that is supported on a symplectic subsurface such that
$\mu$ is the stabilization of $\mu''$ to $(\Sigma,\cP)$ and such that
the composition
\[(\Sigma'',\cP'') \rightarrow (\Sigma,\cP) \rightarrow (\Sigma',\cP')\]
is a double boundary stabilization; see here:\\
\centerline{\psfig{file=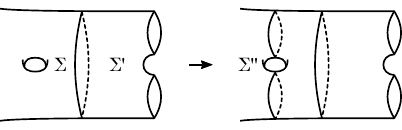,scale=100}}
The genus of $\Sigma''$ is one less than the genus of $\Sigma$, and thus at least $(\rank(A)+2)k + (2\rank(A)+1)$.
We can thus apply Proposition \ref{proposition:doubleboundarystab} to deduce that
the composition
\[\HH_k(\Torelli(\Sigma'',\cP'',\mu'')) \rightarrow \HH_k(\Torelli(\Sigma,\cP,\mu)) \rightarrow \HH_k(\Torelli(\Sigma',\cP',\mu'))\]
is surjective, and thus that the map $\HH_k(\Torelli(\Sigma,\cP,\mu)) \rightarrow \HH_k(\Torelli(\Sigma',\cP',\mu'))$
is surjective, as desired.
\end{proof}

\begin{proof}[Proof of Proposition \ref{proposition:decreasingboundarystab}, assuming Proposition \ref{proposition:doubleboundarystab}]
We start by recalling the statement.  Consider a decreasing boundary stabilization
$(\Sigma,\cP) \rightarrow (\Sigma',\cP')$.  Let $\mu$ be an $A$-homology marking
on $(\Sigma,\cP)$ that is supported on a symplectic subsurface and let $\mu'$
be the stabilization of $\mu$ to $(\Sigma',\cP')$.
Assume that the genus of $\Sigma$ is at least $(\rank(A)+2)k + (2\rank(A)+2)$.
We must prove that the induced map $\HH_k(\Torelli(\Sigma,\cP,\mu)) \rightarrow \HH_k(\Torelli(\Sigma',\cP',\mu'))$
is an isomorphism.

Let $\partial'$ be the component of $\partial \Sigma'$ that is not a component of $\partial \Sigma$.  As in the
following picture, we can construct an increasing boundary stabilization
$(\Sigma',\cP') \rightarrow (\Sigma'',\cP'')$ that attaches a $3$-holed torus to $\partial'$ such that
the composition
\[(\Sigma,\cP) \rightarrow (\Sigma',\cP') \rightarrow (\Sigma'',\cP'')\]
is a double boundary stabilization:\\
\centerline{\psfig{file=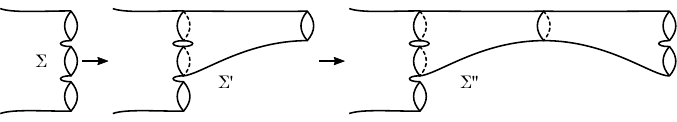,scale=100}}
Let $\mu''$ be the stabilization of $\mu'$ to $(\Sigma'',\cP'')$.  We then have maps
\begin{equation}
\label{eqn:thedouble}
\HH_k(\Torelli(\Sigma,\cP,\mu)) \rightarrow \HH_k(\Torelli(\Sigma',\cP',\mu')) \rightarrow \HH_k(\Torelli(\Sigma'',\cP'',\mu'')).
\end{equation}
Proposition \ref{proposition:doubleboundarystab} implies that the composition \eqref{eqn:thedouble} is an isomorphism,
and Proposition \ref{proposition:increasingboundarystab} implies that the map 
$\HH_k(\Torelli(\Sigma',\cP',\mu')) \rightarrow \HH_k(\Torelli(\Sigma'',\cP'',\mu''))$ is an isomorphism.  We conclude
that the map
$\HH_k(\Torelli(\Sigma,\cP,\mu)) \rightarrow \HH_k(\Torelli(\Sigma',\cP',\mu'))$ is an isomorphism, as desired.
\end{proof}

\section{Double boundary stabilization}
\label{section:doubleboundarystabilization}

Adapting an argument due to Hatcher--Vogtmann \cite{HatcherVogtmannTethers}, we will prove
Proposition \ref{proposition:doubleboundarystab} by studying a complex of ``order-preserving
double-tethered loops'' whose vertex-stabilizers yield double boundary stabilizations:\\
\centerline{\psfig{file=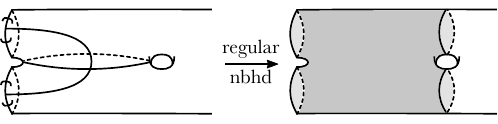,scale=100}}
We will require that the homology classes of both the loop and the ``double-tether''
arc vanish under the homology marking.  Getting the loop to vanish will be an easy
variant on the argument we used for vanishing surfaces in \S 
\ref{section:vanishsubsurfacescon}, but getting the double-tether to vanish
is harder and will require new ideas.  We will build up the complex in three
stages (tethered vanishing loops, then double-tethered vanishing loops, and then
finally order-preserving double-tethered vanishing loops)
in \S \ref{section:tetheredloops}--\ref{section:orderdoubletetheredloops}.
These five sections are preceded by two technical sections: \S \ref{section:symplecticsupport}
gives a necessary and sufficient condition for an $A$-homology marking to be supported
on a symplectic subsurface, and \S \ref{section:destabilize} is
about destabilizing $A$-homology markings.  After all this is complete,
we prove Proposition \ref{proposition:doubleboundarystab} in \S \ref{section:doubleboundarystabproof}.

\subsection{Identifying markings supported on a symplectic subsurface}
\label{section:symplecticsupport}

Consider some $(\Sigma,\cP) \in \PSurf$.  In this section, we give a necessary and sufficient
condition for an $A$-homology marking on $(\Sigma,\cP)$ to be supported on a symplectic subsurface.
This requires some preliminary definitions (which will also be used in later sections).

\p{Intersection map}
Let $q$ be a finite set of oriented simple closed curves on $\Sigma$ and let
$\Z[q]$ be the set of formal $\Z$-linear combinations of elements of $q$.  Define
the {\em $q$-intersection map} to be the map
$\isect_q\colon \HH_1^{\cP}(\Sigma,\partial \Sigma) \rightarrow \Z[q]$ defined as follows.
Let
\[\omega_{\Sigma} \colon \HH_1(\Sigma,\partial \Sigma) \times \HH_1(\Sigma) \rightarrow \Z.\]
be the algebraic intersection pairing.
For $x \in \HH_1^{\cP}(\Sigma,\partial \Sigma)$, we then set
\[\isect_q(x) = \sum_{\gamma \in q} \omega_{\Sigma}(x,[\gamma]) \cdot \gamma.\]

\p{Total boundary map}
For a set $q$ as above, define
\[\RZ[q] = \Set{$\sum_{\gamma \in q} c_{\gamma} \cdot \gamma \in \Z[q]$}{$\sum_{\gamma \in q} c_{\gamma} = 0$}.\]
Consider $p \in \cP$.  Each boundary component $\partial \in p$ is a simple closed curve on $\Sigma$, and
the orientation on $\Sigma$ induces an orientation on $\partial$ such that $\Interior(\Sigma)$ lies to the left
of $\partial$.  We thus have the map $\isect_p \colon \HH_1^{\cP}(\Sigma,\partial \Sigma) \rightarrow \Z[p]$.
Since $\HH_1^{\cP}(\Sigma,\partial \Sigma)$ is generated
by the homology classes of oriented loops and arcs connecting $\cP$-adjacent boundary
components, the image of $\isect_p$ is $\RZ[p]$.  Define
\[\RZ_{\cP} = \bigoplus_{p \in \cP} \RZ[p].\]
The {\em total boundary map} of $(\Sigma,\cP)$ is the map
$\isect_{\cP}\colon \HH_1^{\cP}(\Sigma,\partial \Sigma) \rightarrow \RZ_{\cP}$
obtained by taking the direct sum of all the $\isect_p$ for $p \in \cP$.

\begin{remark}
Each $\RZ[p]$ naturally lies in $\RH_0(\partial \Sigma)$, and the total boundary
map can be identified with the restriction to $\HH_1^{\cP}(\Sigma,\partial \Sigma)$
of the usual boundary map $\HH_1(\Sigma,\partial \Sigma) \rightarrow \RH_0(\partial \Sigma)$.
\end{remark}

\p{Symplectic support}
Now consider an $A$-homology marking $\mu$ on $(\Sigma,\cP)$.
Back in Remark \ref{remark:nonsupport}, we observed that a necessary condition
for $\mu$ to be supported on a symplectic subsurface is that
$\isect_{\cP}(\ker(\mu)) = \RZ_{\cP}$.  The following lemma
says that this condition is also sufficient:

\begin{lemma}
\label{lemma:identifysupport}
Let $\mu$ be an $A$-homology marking on $(\Sigma,\cP) \in \PSurf$.  Then
$\mu$ is supported on a symplectic subsurface if and only if
$\isect_{\cP}(\ker(\mu)) = \RZ_{\cP}$.
\end{lemma}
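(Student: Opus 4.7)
The forward direction follows Remark~\ref{remark:nonsupport}. If $\mu = \mu' \circ \iota^*$ for a $\PSurf$-morphism $\iota\colon \Sigma_h^1 \to \Sigma$, then $\iota(\Sigma_h^1)$ has a single boundary circle, so the complement $C := \Sigma \setminus \Interior(\iota(\Sigma_h^1))$ is connected and contains all of $\partial\Sigma$. Every generator of $\RZ_\cP$ is realized as $\isect_\cP([\alpha])$ for some arc $\alpha$ joining two $\cP$-adjacent boundary components of $\Sigma$, and such an arc can be chosen to lie entirely in $C$; by excision, $[\alpha]$ then maps to zero in $\HH_1(\Sigma_h^1,\partial\Sigma_h^1)$, so $[\alpha] \in \ker(\iota^*) \subset \ker(\mu)$. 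This gives $\isect_\cP(\ker\mu) \supset \RZ_\cP$, and the reverse containment is automatic.

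For the backward direction, the hypothesis lets me choose, for each generator $r$ of $\RZ_\cP$, an element $\tilde r \in \ker\mu$ with $\isect_\cP(\tilde r) = r$, producing a splitting $\HH_1^\cP(\Sigma,\partial\Sigma) = U \oplus K$ where $U := \ker(\isect_\cP) = \Image(\HH_1(\Sigma) \to \HH_1(\Sigma,\partial\Sigma))$ and $K \subset \ker\mu$ is spanned by the chosen $\tilde r$. Consequently $\mu = \bar\mu \circ \pi_U$ for $\bar\mu := \mu|_U$. The radical of the intersection form on $\HH_1(\Sigma)$ is the span of the boundary loops, so the form descends to a non-degenerate symplectic form on $U$. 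Applying Lemma~\ref{lemma:destabilizeabelianalgebra} to $(U,\bar\mu)$ produces a symplectic subspace $W \subset U$ of rank $2\rank(A)$ with $\bar\mu$ vanishing on the orthogonal complement $W^\perp$ in $U$, and lifting a symplectic basis of $W$ through $\HH_1(\Sigma) \twoheadrightarrow U$ yields a symplectic subspace $\tilde W \subset \HH_1(\Sigma)$ of the same rank projecting isomorphically onto $W$. A standard realization result in the vein of \cite[Lemma~9]{JohnsonConjugacy}, reduced to the single-boundary case by capping off all but one boundary component of $\Sigma$ with disks and isotoping the resulting subsurface off the caps, then realizes $\tilde W = \iota_*(\HH_1(\Sigma_h^1))$ for some embedded $\iota\colon \Sigma_h^1 \hookrightarrow \Sigma$ with $h = \rank(A)$.

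It remains to verify $\ker(\iota^*) \subset \ker(\mu)$: by nondegeneracy of the intersection pairing, $\ker(\iota^*) = \tilde W^\perp \cap \HH_1^\cP$, where $\tilde W^\perp$ is the annihilator of $\tilde W$ in $\HH_1(\Sigma,\partial\Sigma)$, and this group sits in a short exact sequence with $W^\perp$ as its intersection with $U$ and $\RZ_\cP$ as its image under $\isect_\cP$. Since $W^\perp \subset \ker\bar\mu$ by construction, the remaining content concerns the $\RZ_\cP$-part: the lifts $\tilde r \in K$ must additionally lie in $\tilde W^\perp$. The main obstacle is this compatibility between the splitting and the subsurface, which I plan to arrange by appealing to the change-of-coordinates principle \cite[\S 1.3.2]{FarbMargalitPrimer} to choose the embedding $\iota$ so that its complement contains arcs realizing each $\tilde r$, making $K \subset \tilde W^\perp$ automatic. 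Once this holds, $\mu$ descends to a well-defined homomorphism on $\Image(\iota^*)$, extended arbitrarily to a marking $\mu'$ on $\HH_1(\Sigma_h^1)$ with $\mu = \mu' \circ \iota^*$, exhibiting $\mu$ as supported on the symplectic subsurface $\iota(\Sigma_h^1)$.
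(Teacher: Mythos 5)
Your forward direction is fine, and the algebraic first half of the backward direction (the splitting $\HH_1^{\cP}(\Sigma,\partial\Sigma)=U\oplus K$ with $K\subset\ker\mu$, the nondegenerate form on $U$, and the application of Lemma~\ref{lemma:destabilizeabelianalgebra} to get $W$) is correct. The genuine gap is exactly where you flag it: the compatibility $K\subset\tilde W^{\perp}$. You propose to arrange it ``by appealing to the change-of-coordinates principle to choose the embedding $\iota$ so that its complement contains arcs realizing each $\tilde r$.'' The change-of-coordinates principle only gives transitivity of $\Mod(\Sigma)$ on configurations of a fixed topological type; it cannot produce a subsurface whose complement contains arcs representing \emph{prescribed} relative homology classes, because moving a configuration by a mapping class changes the classes of its arcs, and a general mapping class does not preserve $\ker(\mu)$ --- ``$[\alpha]\in\ker\mu$'' is not a topological-type property of the configuration. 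What you actually need is that classes in $\ker(\mu)$ hitting a basis of $\RZ_{\cP}$ can be represented by pairwise disjoint \emph{embedded} arcs (kept off a realization of $W$). Extracting such embedded, disjoint representatives from the purely algebraic hypothesis $\isect_{\cP}(\ker\mu)=\RZ_{\cP}$ is the real content of the lemma, and your proposal contains no argument for it; the paper supplies it by choosing arcs realizing such classes, putting them in minimal position, and repeatedly sliding intersections over boundary components --- a move that changes relative classes only by boundary classes, which vanish in $\HH_1(\Sigma,\partial\Sigma)$, so the $\mu$-values are unchanged.

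Two further remarks. First, if you did have those disjoint embedded arcs, the machinery of $W$, $\tilde W$, Lemma~\ref{lemma:destabilizeabelianalgebra}, and the Johnson realization would be unnecessary: one can take a full-genus subsurface $\Sigma'\cong\Sigma_g^1$ disjoint from $\partial\Sigma$ and the arcs, observe that $\mu$ kills every class supported on $\Sigma\setminus\Sigma'$ (loops there are sums of boundary classes, and arcs there are, up to such loops, concatenations of the chosen arcs), and conclude by Lemma~\ref{lemma:destabilizemarking}; this is the paper's proof, and the definition of ``supported on a symplectic subsurface'' does not require the genus to be $\rank(A)$. Second, your capping-off realization only determines $\iota_{\ast}(\HH_1(\Sigma_h^1))$ modulo boundary classes of $\Sigma$, and boundary classes pair nontrivially with relative classes, so the identification $\ker(\iota^{\ast})=\tilde W^{\perp}\cap\HH_1^{\cP}(\Sigma,\partial\Sigma)$ for your chosen lift $\tilde W$ also needs more care; but this is minor compared with the main gap above.
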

\begin{proof}
The nontrivial direction is that if $\isect_{\cP}(\ker(\mu)) = \RZ_{\cP}$, then
$\mu$ is supported on a symplectic subsurface, so that is what we prove.  Write
\[\cP = \{\{\partial_1^1,\ldots,\partial_{k_1}^1\},\{\partial_1^2,\ldots,\partial_{k_2}^2\},\ldots,\{\partial_1^n,\ldots,\partial_{k_n}^n\}\}.\]
Below we will prove that for all $1 \leq i \leq n$ and $1 \leq j < k_i$, we can find embedded arcs $\alpha_{ij}$ with
the following properties:
\setlength{\parskip}{0pt}
\begin{compactitem}
\item $\alpha_{ij}$ connects $\partial^i_j$ to $\partial^i_{j+1}$, and
\item the $\alpha_{ij}$ are pairwise disjoint, and
\item $\mu([\alpha_{ij}]) = 0$ for all $i$ and $j$.
\end{compactitem}
Letting $g$ be the genus of $\Sigma$, we can then find a subsurface $\Sigma'$ of $\Sigma$ that is homeomorphic
to $\Sigma_g^1$ such that $\Sigma'$ is disjoint from $\partial \Sigma$ and the $\alpha_{ij}$; see here:\\
\centerline{\psfig{file=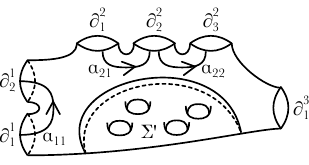,scale=100}}
Let $\cP' = \{\partial \Sigma'\}$, so $(\Sigma',\cP') \rightarrow (\Sigma,\cP)$
is a $\PSurf$-morphism.  It is easy to see that we can find an $A$-homology
marking $\mu'$ on $(\Sigma',\cP')$ such that $\mu$ is the stabilization of
$\mu'$ to $(\Sigma,\cP)$ (see Lemma \ref{lemma:destabilizemarking} below for a
more general result that implies this).
The lemma follows.\setlength{\parskip}{\baselineskip}

It remains to find the $\alpha_{ij}$.  The assumptions in the lemma imply that for
$1 \leq i \leq n$ and $1 \leq j < k_i$ we can find arcs $\alpha_{ij}$
(not necessarily embedded or pairwise disjoint) with the following properties:
\setlength{\parskip}{0pt}
\begin{compactitem}
\item $\alpha_{ij}$ connects $\partial^i_j$ to $\partial^i_{j+1}$, and
\item $\mu([\alpha_{ij}]) = 0$ for all $i$ and $j$.
\end{compactitem}
Homotoping the $\alpha_{ij}$, we can assume that their endpoints are disjoint from each other, their interiors
lie in the interior of $\Sigma$, and all intersections and self-intersections are transverse.  Choose these
$\alpha_{ij}$ so as to minimize the number of intersections and self-intersections.  We claim that the $\alpha_{ij}$
are then all embedded and pairwise disjoint from each other.  Assume otherwise.  Let $\alpha_{i_0,j_0}$ be
the first element of the ordered list
\[\alpha_{11},\alpha_{12},\ldots,\alpha_{1,k_1-1},\alpha_{21},\ldots,\alpha_{2,k_2-1},\alpha_{31},\ldots,\alpha_{n,k_n-1}\]
that intersects either itself or one of the other $\alpha_{ij}$.  As in the following picture, we can then
``slide'' the first intersection of $\alpha_{i_0,j_0}$ off of the union of the $\partial^{i_0}_j$ and
$\alpha_{i_0,j}$ with $j \leq j_0$:\\
\centerline{\psfig{file=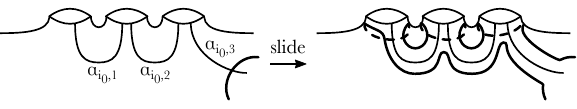,scale=100}}
Since the homology classes of all the $\partial^i_j$ are in the kernel of $\mu$, this does not change the value
of any of the $\mu([\alpha_{ij}])$, but it does eliminate one of the intersections, contradicting the minimality
of this number.\setlength{\parskip}{\baselineskip}
\end{proof}

\subsection{Destabilizing homology-marked partitioned surfaces}
\label{section:destabilize}

Consider $(\Sigma,\cP) \in \PSurf$.  This section is devoted to ``destabilizing'' $A$-homology
markings on $(\Sigma,\cP)$ to subsurfaces.

\p{Existence}
Let $\mu$ be an $A$-homology marking on $(\Sigma,\cP)$ and let
$(\Sigma',\cP') \rightarrow (\Sigma,\cP)$ be a $\PSurf$-morphism.
One obvious necessary condition for there to exist an $A$-homology marking $\mu'$
on $(\Sigma',\cP')$ whose stabilization to $(\Sigma,\cP)$ is $\mu$ is that
$\mu$ must vanish on elements of $\HH_1^{\cP}(\Sigma,\partial \Sigma)$ supported
on $\Sigma \setminus \Sigma'$.  This condition is also sufficient:

\begin{lemma}
\label{lemma:destabilizemarking}
Let $\mu$ be an $A$-homology marking on $(\Sigma,\cP) \in \PSurf$ and let
$(\Sigma',\cP') \rightarrow (\Sigma,\cP)$ be a $\PSurf$-morphism.  Then 
there exists an $A$-homology marking $\mu'$ on $(\Sigma',\cP')$ whose stabilization
to $(\Sigma,\cP)$ is $\mu$ if and only if 
$\mu(x)=0$ for all $x \in \HH_1^{\cP}(\Sigma,\partial \Sigma)$
supported on $\Sigma \setminus \Sigma'$.  
\end{lemma}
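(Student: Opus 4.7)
The plan is to handle the forward direction directly, and for the converse to identify $\HH_1^{\cP'}(\Sigma',\partial\Sigma')$ with the quotient of $\HH_1^{\cP}(\Sigma,\partial\Sigma)$ by the subgroup of classes supported on $\Sigma\setminus\Sigma'$, so that $\mu$ descends to the desired $\mu'$.

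The forward direction is immediate from the construction of $\iota^{\ast}$. Recall that $\iota^{\ast}$ factors as $\HH_1(\Sigma,\partial\Sigma)\to \HH_1(\Sigma,\Sigma\setminus\Interior(\Sigma'))\cong \HH_1(\Sigma',\partial\Sigma')$. A class $x\in \HH_1^{\cP}(\Sigma,\partial\Sigma)$ supported on $\Sigma\setminus\Sigma'$ is by definition represented by a relative cycle living in $\Sigma\setminus\Interior(\Sigma')$, so it dies in $\HH_1(\Sigma,\Sigma\setminus\Interior(\Sigma'))$. Thus $\iota^{\ast}(x)=0$, and if $\mu = \mu'\circ\iota^{\ast}$ then $\mu(x) = \mu'(\iota^{\ast}(x)) = 0$.

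For the converse, the strategy rests on two claims about the restricted map $\iota^{\ast}\colon \HH_1^{\cP}(\Sigma,\partial\Sigma)\to \HH_1^{\cP'}(\Sigma',\partial\Sigma')$. First, this map is surjective. Second, its kernel is exactly the subgroup $K\subseteq \HH_1^{\cP}(\Sigma,\partial\Sigma)$ of classes supported on $\Sigma\setminus\Sigma'$. Granting both, the first isomorphism theorem identifies $\HH_1^{\cP'}(\Sigma',\partial\Sigma')$ with $\HH_1^{\cP}(\Sigma,\partial\Sigma)/K$, and since the hypothesis is precisely that $\mu$ vanishes on $K$, the map $\mu$ descends to the required $\mu'$ with $\mu'\circ\iota^{\ast} = \mu$. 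The kernel identification comes for free from the long exact sequence of the triple $(\Sigma,\Sigma\setminus\Interior(\Sigma'),\partial\Sigma)$ together with excision: the kernel of $\iota^{\ast}$ on all of $\HH_1(\Sigma,\partial\Sigma)$ is the image of $\HH_1(\Sigma\setminus\Interior(\Sigma'),\partial\Sigma)$, which is exactly the subgroup of classes supported on $\Sigma\setminus\Sigma'$; intersecting with $\HH_1^{\cP}$ preserves the description.

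The main obstacle is the surjectivity claim. The group $\HH_1^{\cP'}(\Sigma',\partial\Sigma')$ is generated by closed curves in $\Sigma'$ and by arcs between $\cP'$-adjacent boundary components of $\Sigma'$. Closed curves lift to themselves, so the real content is in lifting an arc $\alpha\subset\Sigma'$ connecting $\partial'_1,\partial'_2\in p'\in\cP'$ to a class in $\HH_1^{\cP}(\Sigma,\partial\Sigma)$. The plan is to extend $\alpha$ through the complement at each end: letting $T_i$ be a component of $\Sigma\setminus\Interior(\Sigma')$ meeting $\partial'_i$, if $T_i$ contains some component of $\partial\Sigma$ (that is, $B'_{T_i}\neq\emptyset$) we push $\alpha$ at that end out to $\partial\Sigma$ through $T_i$; otherwise we close $\alpha$ at that end to another component of $B_{T_i}\subset p'$ via an arc in $T_i$ and iterate, in the extreme case producing a closed loop in $\Sigma$ rather than an arc. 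Condition (ii) of the $\PSurf$-morphism definition, which ties the $B'_{T_i}$ together via the partition $\cP$, is exactly what guarantees the resulting endpoints on $\partial\Sigma$ (if any) lie in a common $p\in\cP$, so the lift belongs to $\HH_1^{\cP}(\Sigma,\partial\Sigma)$. The hard part here is the case analysis: $\partial'_1$ and $\partial'_2$ may lie on the same or different components of $\Sigma\setminus\Interior(\Sigma')$, and the various $B'_{T_i}$ may or may not be empty, so one must verify that in every configuration the construction yields a generator of $\HH_1^{\cP}(\Sigma,\partial\Sigma)$ whose image is $[\alpha]$.
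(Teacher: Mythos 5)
Your forward direction and your kernel computation are fine and match the paper's use of the factorization of $\iota^{\ast}$ through $\HH_1(\Sigma,\Sigma\setminus\Interior(\Sigma'))$ and the long exact sequence of the triple $(\Sigma,\Sigma\setminus\Interior(\Sigma'),\partial\Sigma)$. The genuine gap is your surjectivity claim: for a general $\PSurf$-morphism the map $\iota^{\ast}\colon \HH_1^{\cP}(\Sigma,\partial\Sigma)\rightarrow \HH_1^{\cP'}(\Sigma',\partial\Sigma')$ is \emph{not} surjective, so there is no identification of $\HH_1^{\cP'}(\Sigma',\partial\Sigma')$ with $\HH_1^{\cP}(\Sigma,\partial\Sigma)/K$. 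The paper itself records this failure: in the functoriality example of \S \ref{section:psurf} there is an element of the subsurface's partitioned homology not in the image of the restriction map. The obstruction is visible in exactly the ``otherwise'' branch of your case analysis: if some component $S$ of $\Sigma\setminus\Interior(\Sigma')$ is disjoint from $\partial\Sigma$, then a $\cP'$-allowed arc $\alpha$ in $\Sigma'$ with exactly one endpoint on a component of $\partial S$ has nonzero image under the connecting map $\HH_1(\Sigma',\partial\Sigma')\cong\HH_1(\Sigma,\Sigma\setminus\Interior(\Sigma'))\rightarrow\HH_0(\Sigma\setminus\Interior(\Sigma'),\partial\Sigma)$, whereas everything in the image of $\iota^{\ast}$ dies under that map; hence $[\alpha]$ is not hit. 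Your proposed repair of closing $\alpha$ up through $S$ to another component of $\partial S$ and iterating does not produce a lift: the resulting loop or arc in $\Sigma$ restricts under $\iota^{\ast}$ to $[\alpha]$ plus the classes of the extra arcs it traverses in $\Sigma'$, not to $[\alpha]$ itself.

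The paper's proof avoids surjectivity entirely. Since $\mu$ vanishes on $\ker(\iota^{\ast})$ (your kernel step together with the hypothesis), $\mu$ induces a homomorphism $\Image(\iota^{\ast})\rightarrow A$; the paper then observes that $\Coker(\iota^{\ast})$ is free abelian, so the exact sequence $0\rightarrow\Image(\iota^{\ast})\rightarrow\HH_1^{\cP'}(\Sigma',\partial\Sigma')\rightarrow\Coker(\iota^{\ast})\rightarrow 0$ splits and the induced map extends (non-uniquely) to the desired marking $\mu'$ with $\mu'\circ\iota^{\ast}=\mu$. Note the lemma only asserts the \emph{existence} of some $\mu'$ stabilizing to $\mu$; your quotient description would make $\mu'$ unique, which is more than is true. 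So the fix is to replace the surjectivity-plus-first-isomorphism step by the vanishing-on-the-kernel argument combined with freeness of the cokernel.
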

\begin{proof}
The nontrivial assertion here is that if $\mu(x)=0$ for all $x \in \HH_1^{\cP}(\Sigma,\partial \Sigma)$
supported on $\Sigma \setminus \Sigma'$, then $\mu'$ exists, so this is what we prove.
Let $\iota\colon (\Sigma',\cP') \rightarrow (\Sigma,\cP)$ be the inclusion.
We want to show that $\mu\colon \HH_1^{\cP}(\Sigma,\partial \Sigma) \rightarrow A$ factors
through
\[\iota^{\ast}\colon \HH_1^{\cP}(\Sigma,\partial \Sigma) \rightarrow \HH_1^{\cP'}(\Sigma',\partial \Sigma').\]
The cokernel of $\iota^{\ast}$ is obviously free abelian.  It is thus enough to prove that
$\mu$ vanishes on $\ker(\iota^{\ast})$.  To do this, we will show that
$\ker(\iota^{\ast})$ is generated by elements supported on $\Sigma \setminus \Sigma'$.
The map $\iota^{\ast}$ is the restriction to $\HH_1^{\cP}(\Sigma,\partial \Sigma)$ of
the composition
\[\HH_1\left(\Sigma,\partial \Sigma\right) 
\stackrel{f}{\longrightarrow} \HH_1\left(\Sigma,\Sigma \setminus \Interior\left(\Sigma'\right)\right)
\stackrel{\cong}{\longrightarrow} \HH_1(\Sigma',\partial \Sigma').\]
It is thus enough to show that all elements of $\ker(f)$ are supported on $\Sigma \setminus \Sigma'$.
The long exact sequence in homology for the triple $(\Sigma,\Sigma \setminus \Interior(\Sigma'), \partial \Sigma)$
implies that $\ker(f)$ is generated by the image of
\[\HH_1\left(\Sigma \setminus \Interior\left(\Sigma'\right), \partial \Sigma\right) \longrightarrow \HH_1\left(\Sigma,\partial \Sigma\right).\]
The desired result follows.
\end{proof}

\p{$\cP$-simple subsurfaces}
We now study when destabilizations of markings supported on symplectic subsurfaces are supported on symplectic
subsurfaces.  Rather than prove the most general result possible, we will focus on the case of
{\em $\cP$-simple subsurfaces} of $\Sigma$, which
are subsurfaces $\Sigma'$ satisfying the following conditions (see Example \ref{example:psimple} below):
\setlength{\parskip}{0pt}
\begin{compactitem}
\item $\Sigma'$ is connected.
\item The closure $S$ of $\Sigma \setminus \Sigma'$ is connected.
\item The set of components of $\partial S$ can be partitioned into two disjoint
nonempty subsets $q$ and $q'$ as follows:
\begin{compactitem}
\item The elements of $q'$ all lie in the interior of $\Sigma$, and are thus components of
$\partial \Sigma'$.  These will be called
the {\em interior boundary components}. 
\item The elements of $q$ are components of $\partial \Sigma \setminus \partial \Sigma'$ lying in a single $p \in \cP$.  These will be called the {\em exterior boundary components}.
\end{compactitem}
\end{compactitem}
Given a $\cP$-simple subsurface $\Sigma'$ of $\Sigma$, the {\em induced
partition} $\cP'$ of the components of $\partial \Sigma'$ is obtained
from $\cP$ by replacing $p$ with $(p \setminus q) \cup q'$, where $p$ and
$q$ and $q'$ are as above.  The map $(\Sigma',\cP') \rightarrow (\Sigma,\cP)$
is clearly a $\PSurf$-morphism. \setlength{\parskip}{\baselineskip}

\begin{example}
\label{example:psimple}
Let $\Sigma = \Sigma_8^5$ and 
$\cP = \{\{\partial_1,\partial_2,\partial_3\},\{\partial_4,\partial_5\}\}$.
Consider the following shaded subsurface $\Sigma'$ of $\Sigma$:\\
\centerline{\psfig{file=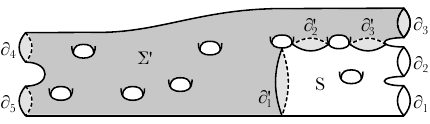,scale=100}}
The subsurface $\Sigma'$ is a $\cP$-simple subsurface with 
interior boundary components $q'=\{\partial'_1,\partial'_2,\partial'_3\}$,
exterior boundary components
$q=\{\partial_1,\partial_2\}$, and induced
partition $\cP' = \{\{\partial_1',\partial_2',\partial_3',\partial_3\},\{\partial_4,\partial_5\}\}$.
\end{example}

\p{Closed markings and intersection maps}
We now introduce some notation needed to state our result.
Let $(\Sigma,\cP) \in \PSurf$ and let $\mu$ be an $A$-homology marking on $(\Sigma,\cP)$.  
The associated {\em closed marking}
on $(\Sigma,\cP)$ is the map $\hmu\colon \HH_1(\Sigma) \rightarrow A$ defined via the composition
\[\HH_1(\Sigma) \longrightarrow \HH_1^{\cP}(\Sigma,\partial \Sigma) \stackrel{\mu}{\longrightarrow} A.\]
Also, for a finite set $q$ of oriented simple closed curves on $\Sigma$, define
the {\em closed $q$-intersection map} to be the map 
$\hisect_q\colon \HH_1(\Sigma) \rightarrow \Z[q]$ defined via the composition
\[\HH_1(\Sigma) \longrightarrow \HH_1^{\cP}(\Sigma,\partial \Sigma) \stackrel{\isect_q}{\longrightarrow} \Z[q].\]
If the elements of $q$ are disjoint and their union bounds a subsurface on one side
(with respect to the orientations on the curves of $q$), then the image of
$\hisect_q$ lies in $\RZ[q]$.

\p{Destabilizing and symplectic support}
With the above notation, we have the following lemma.

\begin{lemma}
\label{lemma:destabilizesymplectic}
Let $\mu$ be an $A$-homology marking on $(\Sigma,\cP)$ that is supported 
on a symplectic subsurface.  Let $\Sigma'$ be a $\cP$-simple subsurface of $\Sigma$
with induced partition $\cP'$ and let $\mu'$ be an $A$-homology marking on $(\Sigma',\cP')$
whose stabilization to $(\Sigma,\cP)$ is $\mu$.  Assume the following:
\setlength{\parskip}{0pt}
\begin{compactitem}
\item Let $q'$ be the interior boundary components of $\Sigma'$ and let
$\hmu\colon \HH_1(\Sigma) \rightarrow A$ be the closed marking associated to $\mu$.
Orient each $\partial' \in q'$ such 
that $\Sigma'$ lies to its left.  Then $\hisect_{q'}(\ker(\hmu)) = \RZ[q']$.
\end{compactitem}
Then $\mu'$ is supported on a symplectic subsurface.
\end{lemma}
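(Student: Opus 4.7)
The plan is to verify via Lemma \ref{lemma:identifysupport} that $\mu'$ is supported on a symplectic subsurface, i.e.\ that $\isect_{\cP'}(\ker(\mu')) = \RZ_{\cP'}$. Let $\iota\colon (\Sigma',\cP') \hookrightarrow (\Sigma,\cP)$ be the inclusion; by the definition of stabilization $\mu = \mu' \circ \iota^{\ast}$, so $\iota^{\ast}(\ker(\mu)) \subset \ker(\mu')$. Writing $p \in \cP$ for the distinguished partition element containing $q'$, the partition $\cP'$ is obtained from $\cP$ by replacing $p$ by $(p \setminus q') \cup q$, giving
\[\RZ_{\cP'} = \bigoplus_{p'' \in \cP \setminus \{p\}} \RZ[p''] \;\oplus\; \RZ[(p \setminus q') \cup q].\]
The workhorse is the other half of Lemma \ref{lemma:identifysupport}, applied to $\mu$: the hypothesis gives $\isect_{\cP}(\ker(\mu)) = \RZ_{\cP}$. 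I will exhibit each summand of $\RZ_{\cP'}$ inside $\isect_{\cP'}(\ker(\mu'))$ in turn.

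For an unchanged summand $\RZ[p'']$ with $p'' \in \cP \setminus \{p\}$: given $\alpha \in \RZ[p'']$ choose $x \in \ker(\mu)$ with $\isect_{\cP}(x)$ equal to $\alpha$ in the $p''$-component and zero elsewhere. The components of $p''$ lie in $\partial \Sigma \cap \partial \Sigma'$ and are disjoint from the interior boundary $q$ of $\Sigma'$, so the intersection numbers of $x$ with curves in $p''$ are the same whether computed in $\Sigma$ or in $\Sigma'$. Hence $\iota^{\ast}(x) \in \ker(\mu')$ realizes $\alpha$ in its $\isect_{\cP'}$-image.

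For the new summand $\RZ[(p \setminus q') \cup q]$, split the work. First, the closed-marking hypothesis $\hisect_q(\ker(\hmu)) = \RZ[q]$ gives, for each $\beta \in \RZ[q]$, a closed cycle $y \in \HH_1(\Sigma)$ with $\hmu(y) = 0$ and $\hisect_q(y) = \beta$; viewing $y$ inside $\ker(\mu)$, the class $\iota^{\ast}(y) \in \ker(\mu')$ has boundary only on $q$ (since $y$ avoids $\partial \Sigma$), equal to $\beta$ by locality of intersection. Thus $\RZ[q] \subset \isect_{\cP'}(\ker(\mu'))$. Second, assuming $p \setminus q' \neq \emptyset$, fix $\partial_0 \in p \setminus q'$. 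For any $\partial_1 \in p \setminus q'$, choose $x \in \ker(\mu)$ with $\isect_{\cP}(x) = [\partial_1] - [\partial_0]$; the arcs representing $x$ start and end in $\Sigma'$, so they cross $q$ an even number of times in total and $\iota^{\ast}(x)$ has boundary $[\partial_1] - [\partial_0] + \eta$ with $\eta \in \RZ[q]$, and subtracting the already-obtained $\eta$ isolates $[\partial_1] - [\partial_0]$. For $\gamma \in q$, pick $\partial_2 \in q' \neq \emptyset$ and $x \in \ker(\mu)$ with $\isect_{\cP}(x) = [\partial_2] - [\partial_0]$; the arc now exits $\Sigma'$ toward $\partial_2 \notin \partial \Sigma'$, so $\iota^{\ast}(x)$ has boundary $-[\partial_0] + \nu$ with $\nu$ supported on $q$ of total weight $+1$, and peeling off an element of $\RZ[q]$ yields $[\gamma_{\ast}] - [\partial_0]$ for some $\gamma_{\ast} \in q$. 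Combined with $\RZ[q]$, differences of these two forms span $\RZ[(p \setminus q') \cup q]$. In the degenerate case $p \setminus q' = \emptyset$, the first step alone finishes the new summand.

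The main technical difficulty is the uncontrolled behavior of boundary when a chain in $\Sigma$ representing a class in $\ker(\mu)$ is cut by $\partial \Sigma'$: each crossing of the interior boundary $q$ creates extra endpoints that are invisible inside $\Sigma$, so $\isect_{\cP'} \circ \iota^{\ast}$ cannot be computed from $\isect_{\cP}$ alone. The hypothesis $\hisect_q(\ker(\hmu)) = \RZ[q]$ is designed precisely to compensate for this: it lets us independently realize every class in $\RZ[q]$ inside $\isect_{\cP'}(\ker(\mu'))$ using closed cycles, and then subtract to strip away the spurious $q$-boundary introduced by the restriction.
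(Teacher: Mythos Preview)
Your proof is correct and follows essentially the same strategy as the paper: use Lemma~\ref{lemma:identifysupport} twice, establish $\RZ[q]\subset\isect_{\cP'}(\ker(\mu'))$ directly from the hypothesis, and then obtain the rest of $\RZ_{\cP'}$ from $\isect_{\cP}(\ker(\mu))=\RZ_{\cP}$ modulo $\RZ[q]$-corrections. The only sloppy spot is your treatment of the unchanged summands $\RZ[p'']$: the element $\iota^{\ast}(x)$ can pick up extra boundary in $\RZ[q]$ (your own final paragraph explains exactly why), so ``realizes $\alpha$'' should read ``realizes $\alpha$ modulo $\RZ[q]$'' there as well; the paper avoids this by packaging everything into a single commutative square with a surjection $\beta\colon \RZ_{\cP}\twoheadrightarrow \RZ_{\cP'}/\RZ[q]$, which handles all summands uniformly.
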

\begin{proof}
By Lemma \ref{lemma:identifysupport}, we must prove that the map
\[\isect_{\cP'}\colon \HH_1^{\cP'}(\Sigma',\partial \Sigma') \longrightarrow \RZ_{\cP'}\]
takes $\ker(\mu')$ onto $\RZ_{\cP'}$.  Below we will prove two facts:
\setlength{\parskip}{0pt}
\begin{compactitem}
\item $\RZ[q'] \subset \isect_{\cP'}(\ker(\mu'))$.
\item Letting $\iota\colon (\Sigma',\cP') \rightarrow (\Sigma,\cP)$ be
the inclusion and
$\iota^{\ast}\colon \HH_1^{\cP}(\Sigma,\partial \Sigma) \rightarrow \HH_1^{\cP'}(\Sigma',\partial \Sigma')$
be the induced map, there exists a surjection 
$\beta\colon \RZ_{\cP} \twoheadrightarrow \RZ_{\cP'} / \RZ[q']$
such that the diagram
\begin{equation}
\label{eqn:commutativediagram}
\begin{gathered}
\xymatrix{
\HH_1^{\cP}(\Sigma,\partial \Sigma) \ar[rr]^-{\isect_{\cP}} \ar[d]^-{\iota^{\ast}} & & \RZ_{\cP} \ar@{->>}[d]^-{\beta} \\
\HH_1^{\cP'}(\Sigma',\partial \Sigma') \ar[r]^-{\isect_{\cP'}} & \RZ_{\cP'} \ar@{->>}[r]^-{\pi} & \RZ_{\cP'}/\RZ[q']}
\end{gathered}
\end{equation}
commutes.
\end{compactitem}
Assume these facts for the moment.  Since $\RZ[q'] \subset \isect_{\cP'}(\ker(\mu'))$,
to prove that $\isect_{\cP'}(\ker(\mu')) = \RZ_{\cP'}$ 
it is enough to prove that $\pi(\isect_{\cP'}(\ker(\mu'))) = \RZ_{\cP'}/\RZ[q']$.  
Since $\mu$ is supported on a symplectic subsurface, Lemma \ref{lemma:identifysupport}
says that $\isect_{\cP}(\ker(\mu)) = \RZ_{\cP}$, so \setlength{\parskip}{\baselineskip}
\begin{equation}
\label{eqn:bigequality}
\pi(\isect_{\cP'}(\iota^{\ast}(\ker(\mu)))) = \beta(\isect_{\cP}(\ker(\mu))) = \beta(\RZ_{\cP}) = \RZ_{\cP'}/\RZ[q'].
\end{equation}
Since $\mu$ is the stabilization of $\mu'$ to $(\Sigma,\cP)$, by definition we
have $\mu = \mu' \circ \iota^{\ast}$, so $\iota^{\ast}(\ker(\mu)) \subset \ker(\mu')$.
Plugging this into \eqref{eqn:bigequality}, we get that
\[\pi(\isect_{\cP'}(\ker(\mu')) = \RZ_{\cP'}/\RZ[q'],\]
as desired.

It remains to prove the above two facts.  We start with the first.  
Since elements of $\HH_1(\Sigma)$ can be represented by cycles that are
disjoint from all components of $\partial \Sigma$, the image of the composition
\[\HH_1(\Sigma) \rightarrow \HH_1^{\cP}(\Sigma,\partial \Sigma) \stackrel{\iota^{\ast}}{\longrightarrow} \HH_1^{\cP'}(\Sigma',\partial \Sigma') \stackrel{\isect_{\cP'}}{\longrightarrow} \RZ_{\cP'}\]
must lie in $\RZ[q'] \subset \RZ_{\cP'}$.  From its definition, it is clear that this
composition in fact equals $\hisect_{q'}$.  Our hypothesis about $\hisect_{q'}$ 
thus implies that 
\[\RZ[q'] \subset \isect_{\cP'}(\iota^{\ast}(\ker(\mu))) \subset \isect_{\cP'}(\ker(\mu')),\]
as desired.  Here we are using the fact (already observed in the previous paragraph) that
$\iota^{\ast}(\ker(\mu)) \subset \ker(\mu')$.

We now construct $\beta\colon \RZ_{\cP} \twoheadrightarrow \RZ_{\cP'} / \RZ[q']$.
Let $q$ be the exterior boundary 
components of $\Sigma'$.  Write $\cP = \{p_1,\ldots,p_k\}$ with $q \subset p_1$.  
Setting $p_1' = (p_1 \setminus q) \cup q'$, we then have $\cP' = \{p_1',p_2,\ldots,p_k\}$.
Thus
\[\RZ_{\cP} = \RZ[p_1] \oplus \bigoplus_{i=2}^k \RZ[p_i] \quad \text{and} \quad
\RZ_{\cP'}/\RZ[q'] = \RZ[p_1']/\RZ[q'] \oplus \bigoplus_{i=2}^k \RZ[p_i].\]
On the $\RZ[p_i]$ summand for $2 \leq i \leq k$, the map $\beta$ is the identity.
On the $\RZ[p_1]$ summand, the map $\beta$ is the restriction to $\RZ[p_1]$ of
the map
\[\Z[p_1] =\Z[p_1 \setminus q] \oplus \Z[q] \rightarrow \Z[p_1 \setminus q] \oplus \Z[q']/\RZ[q'] = \RZ[p_1']/\RZ[q']\]
that is the identity on $\Z[p_1 \setminus q]$ and takes every element of $q$ to the
generator of $\Z[q']/\RZ[q'] \cong \Z$.  This map $\beta$ is clearly a surjection.  The fact
that \eqref{eqn:commutativediagram} commutes follows from the fact that an arc in
$\Sigma$ from a component $\partial_1$ of $\partial \Sigma$ to a component $\partial_2$ of
$\partial \Sigma$ with $\partial_1$ and $\partial_2$ both lying in some $p_i$ has
the following algebraic intersection number with the union of the components of
$q$:
\setlength{\parskip}{0pt}
\begin{compactitem}
\item $0$ if $i \geq 2$ or if $i=1$ and $\partial_1,\partial_2 \in p_1 \setminus q$ or if
$i=1$ and $\partial_1,\partial_2 \in q$.
\item $1$ if $i=1$ and $\partial_1 \in p_1 \setminus q$ and $\partial_2 \in q$.
\item $-1$ if $i=1$ and $\partial_2 \in q$ and $\partial_1 \in p_1 \setminus q$.
\end{compactitem}
The reason for this is that each time the arc crosses from $\Sigma'$ to $S$, it adds $+1$ to its
total intersection with $q$, while each time it crosses from $S$ to $\Sigma'$ it adds $-1$ to its total
intersection with $q$.  See the following figure,
where $\Sigma$ is shaded and $S$ is unshaded:\setlength{\parskip}{\baselineskip}\\
\centerline{\psfig{file=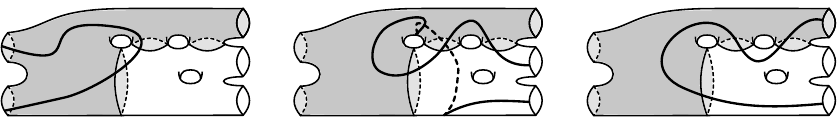,scale=100}}
The lemma follows.
\end{proof}

\subsection{The complex of tethered vanishing loops}
\label{section:tetheredloops}

We now begin our long trek to the complex of order-preserving double-tethered vanishing loops, starting with
the complex of tethered vanishing loops.  The definition takes several steps. 

\p{Tethered loops}
Define $\tau(S^1)$ to be the result of gluing $1 \in [0,1]$ to
a point of $S^1$.  The subset $[0,1] \in \tau(S^1)$
is the {\em tether} and $0 \in [0,1] \subset \tau(S^1)$ is the {\em initial point}
of the tether.  For a surface $\Sigma \in \Surf$ and a finite disjoint union
of open intervals $I \subset \partial \Sigma$, an {\em $I$-tethered loop} in
$\Sigma$ is an embedding $\iota\colon \tau(S^1) \rightarrow \Sigma$ with
the following two properties:
\setlength{\parskip}{0pt}
\begin{compactitem}
\item $\iota$ takes the initial point of the tether to a point of $I$, and
\item orienting $\iota(S^1)$ using the natural orientation of $S^1$, the
image $\iota([0,1])$ of the tether approaches $\iota(S^1)$ from its right.
\end{compactitem}
\setlength{\parskip}{\baselineskip}

\p{Complex of tethered loops}
For a surface $\Sigma \in \Surf$ and a finite disjoint union of open intervals
$I \subset \partial \Sigma$, the {\em complex of $I$-tethered loops}
on $\Sigma$, denoted $\TL(\Sigma,I)$, is the simplicial complex
whose $k$-simplices are collections $\{\iota_0,\ldots,\iota_k\}$
of isotopy classes of $I$-tethered loops on $\Sigma$ that can be
realized so as to be disjoint and not separate $\Sigma$:\\
\centerline{\psfig{file=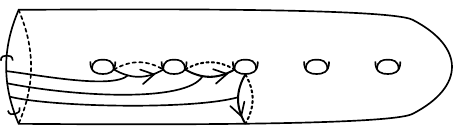,scale=100}}
This complex was introduced by
Hatcher--Vogtmann \cite{HatcherVogtmannTethers}, who proved that if $\Sigma$
has genus $g$ then $\TL(\Sigma,I)$ is $\frac{g-3}{2}$-connected (see \cite[Proposition 5.1]{HatcherVogtmannTethers}).  

\p{Complex of tethered vanishing loops}
Let $\mu$ be an $A$-homology marking on $(\Sigma,\cP) \in \PSurf$ and let $I \subset \partial \Sigma$
be a finite disjoint union of open intervals.  Define $\TL(\Sigma,I,\cP,\mu)$ to be the
subcomplex of $\TL(\Sigma,I)$ consisting of $k$-simplices
$\{\iota_0,\ldots,\iota_k\}$ satisfying the following conditions.
For $0 \leq i \leq k$, let $\gamma_i$ be the oriented loop
$(\iota_i)|_{S^1}$.  Set $\Gamma = \{\gamma_0,\ldots,\gamma_k\}$.  As 
in \S \ref{section:destabilize}, let $\hmu\colon \HH_1(\Sigma) \rightarrow A$
be the closed marking associated to $\mu$ and let
$\hisect_{\Gamma}\colon \HH_1(\Sigma) \rightarrow \Z[\Gamma]$ be the closed
$\Gamma$-intersection map.  We then require that $\hmu([\gamma_i])=0$ for all
$0 \leq i \leq k$ and that $\hisect_{\Gamma}(\ker(\hmu)) = \Z[\Gamma]$. 

\begin{remark}
This last condition might seem a little unmotivated, but is needed
to ensure that the stabilizer of our simplex is supported on a symplectic
subsurface (at least in favorable situations).  It 
clearly always holds when $\mu$ is supported on
a symplectic subsurface that is disjoint from the images of all
the $\iota_i$.  This is best illustrated by an example:\\
\centerline{\psfig{file=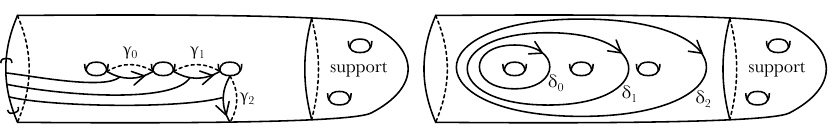,scale=100}}
If $\Gamma = \{\gamma_0,\gamma_1,\gamma_2\}$ and $\delta_0,\delta_1,\delta_2$ are as shown,
then $\hmu([\delta_i]) = 0$ and $\hisect_{\Gamma}([\delta_i]) = \gamma_i$ for $0 \leq i \leq 2$,
which implies that $\hisect_{\Gamma}(\ker(\hmu)) = \Z[\Gamma]$.
\end{remark}

\p{High connectivity}
Our main topological theorem about $\TL(\Sigma,I,\cP,\mu)$ is as follows.

\begin{theorem}
\label{theorem:vanishtetheredloopcon}
Let $\mu$ be an $A$-homology marking on $(\Sigma,\cP) \in \PSurf$, let
$I \subset \partial \Sigma$ be a finite disjoint union of open intervals, and
let $g$ be the genus of $\Sigma$.  Then
$\TL(\Sigma,I,\cP,\mu)$ is $\frac{g-(2\rank(A)+3)}{\rank(A)+2}$-connected.
\end{theorem}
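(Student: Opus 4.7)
The plan is to adapt the strategy from the proof of Theorem \ref{theorem:vanishsurfacescon}: build an auxiliary complex $X$ that combines vanishing tethered loops with tethered high-genus subsurfaces, establish connectivity of $X$ via Theorem \ref{maintheorem:subsurfacescon}, then retract $X$ onto $\TL(\Sigma,I,\cP,\mu)$. Set $n = \frac{g-(2\rank(A)+3)}{\rank(A)+2}$; the genus $\rank(A)+1$ is chosen so that Theorem \ref{maintheorem:subsurfacescon} gives that $\TS_{\rank(A)+1}(\Sigma,I)$ is $n$-connected. Let $X$ be the simplicial complex with vertex set $V(\TL(\Sigma,I,\cP,\mu)) \sqcup V(\TS_{\rank(A)+1}(\Sigma,I))$, whose simplices are $\sigma_L \cup \sigma_S$ for $\sigma_L$ a simplex of $\TL(\Sigma,I,\cP,\mu)$ and $\sigma_S$ a simplex of $\TS_{\rank(A)+1}(\Sigma,I)$ admitting pairwise-disjoint realizations. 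Both $\TL(\Sigma,I,\cP,\mu)$ and $\TS_{\rank(A)+1}(\Sigma,I)$ are then full subcomplexes of $X$.

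First I would prove $X$ is $n$-connected by applying Corollary \ref{corollary:avoidsubcomplex} to $Y = \TS_{\rank(A)+1}(\Sigma,I)$ and $Y' = \TL(\Sigma,I,\cP,\mu)$. The link condition requires that for each $k$-simplex $\sigma$ of $Y'$, the intersection of $Y$ with the link of $\sigma$ in $X$ is $(n-k-1)$-connected. This intersection is $\TS_{\rank(A)+1}(\Sigma'',I'')$ where $\Sigma''$ is obtained by cutting $\Sigma$ along the $k+1$ disjoint non-separating tethered loops of $\sigma$ (so the genus of $\Sigma''$ is $g-k-1$), and the required bound is then an immediate consequence of Theorem \ref{maintheorem:subsurfacescon}.

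Next I would build a retraction $r\colon X \to \TL(\Sigma,I,\cP,\mu)$. It is the identity on $\TL(\Sigma,I,\cP,\mu)$. For a subsurface vertex $\iota\colon \tau(\Sigma_{\rank(A)+1}^1) \to \Sigma$, the composition of $\iota_{\ast}$ with $\hmu$ is an $A$-homology marking on $\Sigma_{\rank(A)+1}^1$, and by Proposition \ref{proposition:destabilizeone} (built on Lemma \ref{lemma:destabilizeabelianalgebra}) there is a genus-$1$ symplectic subsurface $T \subset \iota(\Sigma_{\rank(A)+1}^1)$ on which the restricted marking vanishes. Choose a non-separating simple closed curve $\gamma$ in $T$ together with a simple closed curve $\delta$ in $T$ meeting $\gamma$ transversely in a single point; both represent classes in $\ker(\hmu)$. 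Define $r(\iota)$ by attaching $\gamma$ to the initial point of $\iota$'s tether via an arc inside $\iota(\tau(\Sigma_{\rank(A)+1}^1))$ chosen disjoint from $\delta$.

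The main obstacle is verifying that $r$ is simplicial, specifically that condition (b), $\hisect_\Gamma(\ker(\hmu)) = \Z[\Gamma]$, survives for the image $r(\sigma)$ of every simplex $\sigma = \sigma_L \cup \sigma_S$ of $X$. Write $\Gamma = \Gamma_L \cup \Gamma_S$ for the resulting oriented loops. Each new $\gamma_i' \in \Gamma_S$ comes equipped with its vanishing dual $\delta_i$ supported inside the corresponding subsurface, hence disjoint from every other loop in $\Gamma$, giving $\hisect_\Gamma(\delta_i) = \gamma_i'$. For each $\gamma_j \in \Gamma_L$, condition (b) applied to $\sigma_L$ furnishes $x_j \in \ker(\hmu)$ with $\hisect_{\Gamma_L}(x_j) = \gamma_j$; then in $\Z[\Gamma]$ the element $x_j - \sum_i \omega_{\Sigma}(x_j,\gamma_i')\, \delta_i$ lies in $\ker(\hmu)$ and maps to $\gamma_j$ under $\hisect_\Gamma$, establishing surjectivity. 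Hence $r(\sigma)$ is a simplex of $\TL(\Sigma,I,\cP,\mu)$, so $\TL(\Sigma,I,\cP,\mu)$ is a retract of $X$ and inherits $n$-connectivity.
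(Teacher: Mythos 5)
Your proposal is correct and is essentially the paper's own proof: the same auxiliary complex built from $\TL(\Sigma,I,\cP,\mu)$ and $\TS_{\rank(A)+1}(\Sigma,I)$, the same connectivity argument via Theorem \ref{maintheorem:subsurfacescon} and Corollary \ref{corollary:avoidsubcomplex}, and the same retraction using Proposition \ref{proposition:destabilizeone} to locate a genus-$1$ vanishing subsurface and a dual-class/correction-term argument to verify the condition $\hisect_{\Gamma}(\ker(\hmu)) = \Z[\Gamma]$ on image simplices. No substantive differences or gaps.
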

\begin{proof}
The proof is very similar to that of Theorem \ref{theorem:vanishsurfacescon}.
We start by defining an auxiliary space.  Let $X$ be the simplicial complex
whose vertices are the union of the vertices of the spaces
$\TL(\Sigma,I,\cP,\mu)$ and $\TS_{\rank(A)+1}(\Sigma,I)$ and whose 
simplices are collections $\sigma$ of vertices with the following properties:
\setlength{\parskip}{0pt}
\begin{compactitem}
\item The vertices in $\sigma$ (which are embeddings of either $\tau(S^1)$
or $\tau(\Sigma_{\rank(A)+1}^1)$ into $\Sigma$) can be homotoped such that their
images are disjoint and do not separate $\Sigma$.
\item Let $\sigma' \subset \sigma$ be the subset consisting of vertices
of $\TL(\Sigma,I,\cP,\mu)$.  Then $\sigma'$ is a simplex of $\TL(\Sigma,I,\cP,\mu)$.
\end{compactitem}
Both $\TL(\Sigma,I,\cP,\mu)$ and $\TS_{\rank(A)+1}(\Sigma,I)$ are subcomplexes
of $X$.\setlength{\parskip}{\baselineskip}

By Theorem \ref{maintheorem:subsurfacescon}, the subcomplex
$\TS_{\rank(A)+1}(\Sigma,I)$ of $X$ is $\frac{g-(2\rank(A)+3)}{\rank(A)+2}$-connected.
An argument using Corollary \ref{corollary:avoidsubcomplex}
identical to the one in the proof of Theorem \ref{theorem:vanishsurfacescon}
shows that this implies that $X$ is $\frac{g-(2\rank(A)+3)}{\rank(A)+2}$-connected.
As in the proof of Theorem \ref{theorem:vanishsurfacescon}, this implies
that it is enough to construct a retraction $r\colon X \rightarrow \TL(\Sigma,I,\cP,\mu)$.

For a vertex $\iota$ of $X$, we define $r(\iota)$ as follows.  If $\iota$ is a vertex
of $\TL(\Sigma,I,\cP,\mu)$, then $r(\iota) = \iota$.  If instead
$\iota$ is a vertex of $\TS_{\rank(A)+1}(\Sigma,I)$, then we do the following.  
Let $\hmu\colon \HH_1(\Sigma) \rightarrow A$ be the closed marking associated
to $\mu$.  Define $\mu'\colon \HH_1(\Sigma_{\rank(A)+1}^1) \rightarrow A$ to be
the composition
\[\HH_1(\Sigma_{\rank(A)+1}^1) \cong \HH_1(\tau(\Sigma_{\rank(A)+1}^1)) \stackrel{\iota_{\ast}}{\longrightarrow} \HH_1(\Sigma) \stackrel{\hmu}{\longrightarrow} A.\]
Proposition \ref{proposition:findvanish} implies that there exists
a subsurface $S \subset \Sigma_{\rank(A)+1}^1$ with $S \cong \Sigma_1^1$ and $\mu'|_{\HH_1(S)} = 0$.
Let $\alpha$ be a nonseparating oriented simple closed curves in $S$.  Define $r(\iota)$ to be the vertex of
$\TL(\Sigma,I,\cP,\mu)$ obtained by adjoining the tether of $\iota$ and an
arbitrary arc in $\iota(\Sigma_{\rank(A)+1}^1)$ to $\iota(\alpha)$; see here:\\
\centerline{\psfig{file=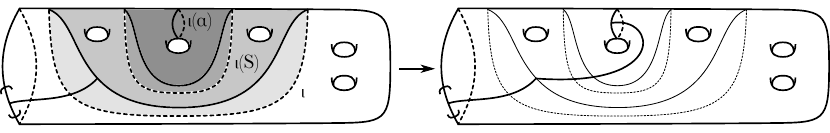,scale=100}}
To see that this is actually a vertex of $\TL(\Sigma,I,\cP,\mu)$, observe that
by construction we have 
\[\hmu([\iota(\alpha)]) = 0 \quad \text{and} \quad \iota_{\ast}(\HH_1(S)) \subset \ker(\hmu) \quad \text{and} \quad \hisect_{\{\iota(\alpha)\}}(\iota_{\ast}(\HH_1(S))) = \iota(\alpha).\]
Of course, $r(\iota)$ depends on various choices, but we simply make an arbitrary choice.

To complete the proof, we must prove that $r$ extends over the simplices of $X$.  Let
$\sigma$ be a simplex of $X$.  Enumerate the vertices of $\sigma$ as
$\{\iota_0,\ldots,\iota_k,\iota'_0,\ldots,\iota'_{\ell}\}$, where
the $\iota_i$ are vertices of $\TL(\Sigma,I,\cP,\mu)$ and the $\iota'_j$ are vertices of $\TS_{\rank(A)+1}(\Sigma,I)$.
We must prove that
\[r(\sigma) = \{\iota_0,\ldots,\iota_k,r(\iota'_0),\ldots,r(\iota'_{\ell})\}\]
is a simplex of $\TL(\Sigma,I,\cP,\mu)$.  The images of the vertices in $r(\sigma)$
can clearly be homotoped so as to be disjoint, so the only thing we must prove is the following.
For $0 \leq i \leq k$ and $0 \leq j \leq \ell$, let
$\gamma_i = \iota_i|_{S^1}$ and $\gamma'_j = \iota'_j|_{S^1}$.  Setting 
$\Gamma = \{\gamma_0,\ldots,\gamma_k,\gamma'_0,\ldots,\gamma'_{\ell}\}$, we have
to show that $\hisect_{\Gamma}(\ker(\hmu)) = \Z[\Gamma]$.  Setting
$\Gamma_1 = \{\gamma_0,\ldots,\gamma_k\}$ and $\Gamma_2 = \{\gamma'_0,\ldots,\gamma'_{\ell}\}$,
we will show that $\Gamma_1$ and $\Gamma_2$ are both contained in $\hisect_{\Gamma}(\ker(\hmu))$.

We start with $\Gamma_2$.  By construction, for $0 \leq j \leq \ell$ there exists a subsurface $S_j$ of $\Sigma$ with
$S_j \cong \Sigma_1^1$ such that the following hold:
\setlength{\parskip}{0pt}
\begin{compactitem}
\item $\gamma'_j \subset S_j$, and
\item the $S_j$ are disjoint from each other and from all the $\gamma_i$, and
\item regarding $\HH_1(S_j)$ as a subgroup of $\HH_1(\Sigma)$, we have $\HH_1(S_j) \subset \ker(\hmu)$.
\end{compactitem}
Since $\hisect_{\Gamma}(\HH_1(S_j)) = \gamma'_j$, we have $\gamma'_j \in \hisect_{\Gamma}(\ker(\hmu))$, as
desired.\setlength{\parskip}{\baselineskip}

It remains to show that $\Gamma_1 \subset \hisect_{\Gamma}(\ker(\hmu))$.  Since $\{\iota_0,\ldots,\iota_k\}$
is a simplex of $\TL(\Sigma,I,\cP,\mu)$, by definition we have $\hisect_{\Gamma_1}(\ker(\hmu)) = \Z[\Gamma_1]$.
For some $0 \leq i \leq k$, let $x \in \ker(\hmu)$ be such that $\hisect_{\Gamma_1}(x)  = \gamma_i$.
We then have $\hisect_{\Gamma}(x) = \gamma_i + z$ with $z \in \Z[\Gamma_2]$.  Since we already showed that
$\Z[\Gamma_2] \subset \hisect_{\Gamma}(\ker(\hmu))$, we conclude that $\gamma_i \in \hisect_{\Gamma}(\ker(\hmu))$, 
as desired.
\end{proof}

\subsection{The complex of double-tethered vanishing loops}
\label{section:doubletetheredloops}

The definition of the complex of double-tethered vanishing loops takes several steps.

\p{Double-tethered loops}
Define $\tau^2(S^1)$ to be the result of gluing $1 \in [0,2]$ to $S^1$.
We will call $[0,2] \subset \tau^2(S^1)$ the {\em double tether}; the point
$0 \in [0,2]$ is the double tether's {\em initial point} and $2 \in [0,2]$ is its
{\em terminal point}.  For a surface $\Sigma \in \Surf$
and finite disjoint unions of open intervals $I,J \subset \partial \Sigma$
with $I \cap J = \emptyset$, an {\em $(I,J)$-double-tethered loop} in
$\Sigma$ is an embedding $\iota\colon \tau^2(S^1) \rightarrow \Sigma$ with the following two
properties:
\setlength{\parskip}{0pt}
\begin{compactitem}
\item $\iota$ takes the initial point of the double tether to a point of $I$
and the terminal point of the double tether to a point of $J$, and
\item orienting $\iota(S^1)$ using the natural orientation of $S^1$, the
image $\iota([0,1])$ approaches $\iota(S^1)$ from its right and
the image $\iota([1,2])$ leaves $\iota(S^1)$ from its left.
\end{compactitem}
See here:\setlength{\parskip}{\baselineskip}\\
\centerline{\psfig{file=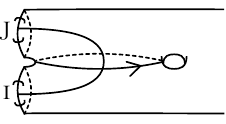,scale=100}}
We remark that right now we allow boundary components that contain components of
both $I$ and $J$, but later when we discuss double-tethered vanishing loops
our hypotheses will exclude this possibility.

\p{Complex of double-tethered loops}
For a surface $\Sigma \in \Surf$ and finite disjoint unions of open intervals
$I,J \subset \partial \Sigma$ with $I \cap J = \emptyset$, 
the {\em complex of $(I,J)$-double-tethered loops}
on $\Sigma$, denoted $\DTL(\Sigma,I,J)$, is the simplicial complex
whose $k$-simplices are collections $\{\iota_0,\ldots,\iota_k\}$
of isotopy classes of $(I,J)$-double-tethered loops on $\Sigma$ that can be
realized so as to be disjoint and not separate $\Sigma$.  See here:\\
\centerline{\psfig{file=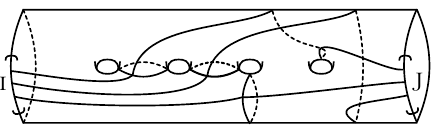,scale=100}}
This complex was introduced by
Hatcher--Vogtmann \cite{HatcherVogtmannTethers}, who proved that if $\Sigma$
has genus $g$ then like $\TL(\Sigma,I)$ it is $\frac{g-3}{2}$-connected (see
\cite[Proposition 5.2]{HatcherVogtmannTethers}).

\p{$\cP$-adjacency}
Consider $(\Sigma,\cP) \in \PSurf$ and let $I,J \subset \partial \Sigma$
be finite disjoint unions of open intervals with $I \cap J = \emptyset$.  Recall
that components $\partial$ and $\partial'$ of $\partial \Sigma$ are said to be
$\cP$-adjacent if there exists some $p \in \cP$ such that $\partial,\partial' \in p$.
We will say that $I$ and $J$ are {\em $\cP$-adjacent} if for all components
$\partial_I$ and $\partial_J$ of $\partial \Sigma$ such that $\partial_I$ contains
a component of $I$ and $\partial_J$ contains a component of $J$, the components
$\partial_I$ and $\partial_J$ are distinct and $\cP$-adjacent.  

\p{Complex of double-tethered vanishing loops}
Let $\mu$ be an $A$-homology marking on $(\Sigma,\cP) \in \PSurf$ and let $I,J \subset \partial \Sigma$
be $\cP$-adjacent disjoint unions of open intervals with $I \cap J = \emptyset$.  In particular,
there are no boundary components of $\Sigma$ containing components of both $I$ and $J$.
Define $\DTL(\Sigma,I,J,\cP,\mu)$ to be the subcomplex of $\DTL(\Sigma,I,J)$ consisting
of $k$-simplices $\{\iota_0,\ldots,\iota_k\}$ satisfying the following conditions.
Let $\hmu \colon \HH_1(\Sigma) \rightarrow A$ be the closed marking associated to $\mu$.
\setlength{\parskip}{0pt}
\begin{compactitem}
\item For $0 \leq i \leq k$, let $\gamma_i$ be the oriented loop
$(\iota_i)|_{S^1}$ and $\alpha_i$ be the oriented arc $(\iota_i)|_{[0,2]}$.  We
then require that $\hmu([\iota_i])=0$ and $\mu([\alpha_i])=0$.  This second condition
makes sense since $I$ and $J$ are $\cP$-adjacent.
\item Set $\Gamma = \{\gamma_0,\ldots,\gamma_k\}$.  We then require that
$\hisect_{\Gamma}(\ker(\hmu)) = \Z[\Gamma]$.
\end{compactitem}
Identifying $\tau(S^1)$ with the union of $[0,1]$ and $S^1$ in $\tau^2(S^1)$, these
conditions imply that $\{(\iota_0)|_{\tau(S^1)},\ldots,(\iota_k)|_{\tau(S^1)}\}$ is
a simplex of $\TL(\Sigma,I,\cP,\mu)$.
\setlength{\parskip}{\baselineskip}

\subsection{The complex of mixed-tethered vanishing loops}
\label{section:mixedtetheredloops}

Our main theorem about the complex of double-tethered vanishing loops says that it
is highly-connected.  We will prove this in \S \ref{section:doubletetheredloopscon} below.
This section is devoted to an intermediate complex that will play a technical role in that proof.

\p{Complex of mixed-tethered vanishing loops}
Let $\mu$ be an $A$-homology marking on $(\Sigma,\cP) \in \PSurf$ and let $I,J \subset \partial \Sigma$
be $\cP$-adjacent disjoint unions of open intervals with $I \cap J = \emptyset$.
Let $\hmu \colon \HH_1(\Sigma) \rightarrow A$ be the closed marking associated to $\mu$.
Define $\MTL(\Sigma,I,J,\cP,\mu)$ to 
be the simplicial complex whose $k$-simplices are sets $\{\iota_0,\ldots,\iota_k\}$, where
each $\iota_i$ is the isotopy class of either an $I$-tethered loop or an
$(I,J)$-double-tethered loop and where the following conditions are satisfied.
\setlength{\parskip}{0pt}
\begin{compactitem}
\item The $\iota_i$ can be realized such that their images are disjoint and do not separate $\Sigma$.
\item For $0 \leq i \leq k$, let $\gamma_i$ be the oriented loop
$(\iota_i)|_{S^1}$.  We then require that $\hmu([\gamma_i])=0$.
\item For $0 \leq i \leq k$ such that $\iota_i$ is an $(I,J)$-double-tethered
loop, let $\alpha_i$ be the oriented arc $(\iota_i)|_{[0,2]}$.  We then
require that $\mu([\alpha_i])=0$.
\item Set $\Gamma = \{\gamma_0,\ldots,\gamma_k\}$.  We then require that
$\hisect_{\Gamma}(\ker(\hmu)) = \Z[\Gamma]$.
\end{compactitem}
These conditions ensure that both $\DTL(\Sigma,I,J,\cP,\mu)$ and \setlength{\parskip}{\baselineskip}
$\TL(\Sigma,I,\cP,\mu)$ are full subcomplexes of $\MTL(\Sigma,I,J,\cP,\mu)$.

\p{Links}
Our first task will be to identify links in $\MTL(\Sigma,I,J,\cP,\mu)$.

\begin{lemma}
\label{lemma:mtllinks}
Let $\mu$ be an $A$-homology marking on $(\Sigma,\cP) \in \PSurf$.  Let $I,J \subset \partial \Sigma$
be $\cP$-adjacent finite disjoint unions of open intervals with $I \cap J = \emptyset$.
Finally, let $\sigma$ be a $k$-simplex of $\MTL(\Sigma,I,J,\cP,\mu)$.  Then there exists
some $(\Sigma',\cP') \in \PSurf$, an $A$-homology marking $\mu'$ on $(\Sigma',\cP')$, and
$\cP'$-adjacent finite disjoint unions of open intervals $I',J' \subset \partial \Sigma'$ with 
$I' \cap J' = \emptyset$ such that the following hold.
\setlength{\parskip}{0pt}
\begin{compactitem}
\item The link of $\sigma$ is isomorphic to $\MTL(\Sigma',I',J',\cP',\mu')$.  Moreover, the intersections
of the link of $\sigma$ with $\TL(\Sigma,I,\cP,\mu)$ and $\DTL(\Sigma,I,J,\cP,\mu)$ are
$\TL(\Sigma',I',\cP',\mu')$ and $\DTL(\Sigma',I',J',\cP',\mu')$, respectively.
\item If $\Sigma$ is a genus $g$ surface, then $\Sigma'$ is a genus $(g-k-1)$-surface.
\item If $\mu$ is supported on a symplectic subsurface, then so is $\mu'$. \setlength{\parskip}{\baselineskip}
\end{compactitem}
\end{lemma}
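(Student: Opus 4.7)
The plan is to construct $\Sigma'$ by cutting $\Sigma$ along the tethered objects of $\sigma$, to equip it with an induced partition, intervals, and marking, and then to verify both the link isomorphism and the symplectic-support inheritance. First I would define $\Sigma'$ to be the complement in $\Sigma$ of an open regular neighborhood of $\iota_0(\tau^{?}(S^1)) \cup \cdots \cup \iota_k(\tau^{?}(S^1)) \cup \partial \Sigma$, where $\tau^{?}$ is $\tau$ or $\tau^2$ depending on the type of $\iota_i$. Because each $\iota_i$ contains a nonseparating loop $\gamma_i$ and the simplex condition in $\MTL$ forces the union of the $\gamma_i$ not to separate $\Sigma$, the surface $\Sigma'$ is connected of genus $g-(k+1)=g-k-1$. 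Then I would take $I' = I \cap \partial \Sigma'$ and $J' = J \cap \partial \Sigma'$, so each tether cut subdivides an interval of $I$ (or of both $I$ and $J$, for a double tether) into two new intervals of $I'$ (or $I'$ and $J'$). The partition $\cP'$ is obtained from $\cP$ by placing each new boundary component created by cutting along $\gamma_i$ into the partition element containing the boundary component that carried the initial point of $\iota_i$'s tether; the $\cP$-adjacency of $I$ and $J$ makes this assignment well-defined for double tethers, and a direct check shows $I',J'$ are $\cP'$-adjacent.

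Next I would define $\mu'$. The hypotheses $\hmu([\gamma_i]) = 0$ for all $i$ and $\mu([\alpha_i])=0$ for each double-tethered $\iota_i$ together imply that $\mu$ annihilates every class in $\HH_1^{\cP}(\Sigma,\partial \Sigma)$ supported on the regular neighborhood $\Sigma \setminus \Interior(\Sigma')$, so Lemma \ref{lemma:destabilizemarking} produces an $A$-homology marking $\mu'$ on $(\Sigma',\cP')$ whose stabilization to $(\Sigma,\cP)$ is $\mu$. The map sending a vertex in the link of $\sigma$ to its realization inside $\Sigma'$ is then a simplicial bijection onto the vertices of $\MTL(\Sigma',I',J',\cP',\mu')$. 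The loop-vanishing and arc-vanishing conditions transfer verbatim via the inclusion $\HH_1(\Sigma') \hookrightarrow \HH_1(\Sigma)$. The closed intersection condition for a sub-simplex $\{\iota_{k+1},\ldots,\iota_{k+\ell}\}$ in the link requires $\hisect^{\Sigma'}_{\Gamma''}(\ker(\hmu')) = \Z[\Gamma'']$ with $\Gamma''=\{\gamma_{k+1},\ldots,\gamma_{k+\ell}\}$, while the link-in-$\Sigma$ condition requires $\hisect^{\Sigma}_{\Gamma\cup\Gamma''}(\ker(\hmu))=\Z[\Gamma\cup\Gamma'']$; these are equivalent because classes supported on $\Sigma'$ have zero algebraic intersection with the $\gamma_i\in\Gamma$, and the simplex hypothesis $\hisect_\Gamma(\ker(\hmu))=\Z[\Gamma]$ for $\sigma$ itself provides preimages for the generators $\gamma_i$ which can be corrected by such $\Sigma'$-supported classes. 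Since this bijection respects whether each $\iota_j$ is a single or double tether, it restricts to the asserted identifications of $\TL(\Sigma',I',\cP',\mu')$ and $\DTL(\Sigma',I',J',\cP',\mu')$ with the intersections of the link with $\TL(\Sigma,I,\cP,\mu)$ and $\DTL(\Sigma,I,J,\cP,\mu)$.

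Finally, to inherit symplectic support, I would induct on $k$, peeling off one $\iota_i$ at a time. Removing a regular neighborhood of a single $\iota_i$ from $\Sigma$ produces a $\cP$-simple subsurface in the sense of \S\ref{section:destabilize}: the complement is a single connected neighborhood of $\iota_i$ together with the boundary pieces it is tethered to, whose interior boundary components $q$ consist of the new cut arising from $\gamma_i$, and whose exterior boundary components are the pieces of $\partial \Sigma$ absorbed into the neighborhood. The precise hypothesis needed by Lemma \ref{lemma:destabilizesymplectic}, namely $\hisect_q(\ker(\hmu))=\RZ[q]$, is exactly the one-loop consequence $\hisect_{\{\gamma_i\}}(\ker(\hmu))=\Z[\gamma_i]$ extracted from $\hisect_{\Gamma}(\ker(\hmu))=\Z[\Gamma]$. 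Applying Lemma \ref{lemma:destabilizesymplectic} at each step then yields symplectic support for the destabilized marking, and iterating $k+1$ times gives the conclusion for $\mu'$. I expect the main obstacle to be verifying the $\cP$-simple structure and tracking the resulting partition carefully in the double-tethered case, especially when the two endpoints of a double tether lie on distinct components of $\partial \Sigma$ and cutting merges partition elements; this is a bookkeeping issue rather than a conceptual one, but it is where the interplay between the combinatorics of $\cP$ and the topology of the cuts has to be done most carefully.
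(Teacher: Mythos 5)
Your plan is essentially the paper's proof: cut along the simplex, regard the cut surface as a $\cP$-simple subsurface, obtain $\mu'$ from Lemma \ref{lemma:destabilizemarking}, identify the link with the mixed complex of the cut surface with the subdivided intervals and induced partition, and feed the condition $\hisect_{\Gamma}(\ker(\hmu))=\Z[\Gamma]$ into Lemma \ref{lemma:destabilizesymplectic}; the paper does the cut one vertex at a time and iterates, but that difference is cosmetic. Two points, however, need repair. First, your $\Sigma'$ must not delete a regular neighborhood of all of $\partial \Sigma$: that contradicts your own definitions $I' = I \cap \partial \Sigma'$ and $J' = J \cap \partial \Sigma'$ and would leave the link vertices nowhere to attach their tethers. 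You should cut only along the $\iota_i$ (equivalently, remove a neighborhood of the tethered objects together with just the boundary components they touch, keeping push-offs of the remaining pieces of $I$ and $J$).

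Second, and this is where the real content of the third bullet lies: for a single cut the interior boundary set $q$ is not ``the new cut arising from $\gamma_i$''. It consists of \emph{two} circles --- for a double tether, the band sums of $\gamma_i$ with the components carrying the initial and terminal points; for a single tether, one such band sum together with a parallel copy of $\gamma_i$. Hence the hypothesis of Lemma \ref{lemma:destabilizesymplectic} is $\hisect_q(\ker(\hmu))=\RZ[q]$ with $\RZ[q]\cong \Z$ generated by $\gamma_1-\gamma_2$, and it is not literally the one-loop condition: one must check that the two circles of $q$ are homologous in $\HH_1(\Sigma,\partial\Sigma)$ to $+\gamma_i$ and $-\gamma_i$ respectively, so that a class $x \in \ker(\hmu)$ with $\hisect_{\{\gamma_i\}}(x)=\gamma_i$ has $\hisect_q(x)=\gamma_1-\gamma_2$. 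This is a short computation (it is exactly what the paper does), but your ``is exactly'' claim skips it, and as stated it would be false if $q$ were a single curve, since then $\RZ[q]=0$. Relatedly, your equivalence of the two intersection conditions in the link identification is argued only in the direction that manufactures classes of $\ker(\hmu)$ from data in $\Sigma'$; for the converse you should decompose a class of $\ker(\hmu)$ into pieces supported on $\Sigma'$ and on the cut-off piece, which is possible precisely because its intersection numbers with the curves of $q$ vanish (the paper leaves this verification to a figure, so this is a request for care rather than a flaw in the approach).
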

\begin{proof}
It is enough to deal with the case where $\sigma$ has dimension $0$; the general case will
then follow by applying the dimension $0$ case repeatedly.  We thus can assume
that $\sigma = \{\iota\}$, where $\iota$ is either an $I$-tethered loop or
an $(I,J)$-double-tethered loop.  The two cases are similar, so we will give the
details for when $\iota$ is an $(I,J)$-double-tethered loop.  Let
$\Sigma'$ be the result of cutting $\Sigma$ open along the image of $\iota$:\\
\centerline{\psfig{file=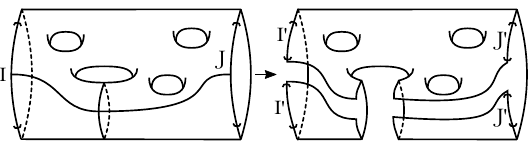,scale=100}}
We remark that the fact that $I$ and $J$ are $\cP$-adjacent implies that the
initial and terminal points of the double tether are on distinct boundary components. 

By isotoping $\Sigma'$ into the interior of $\Sigma$, we can regard $\Sigma'$ as a $\cP$-simple
subsurface of $\Sigma$; see here:\\
\centerline{\psfig{file=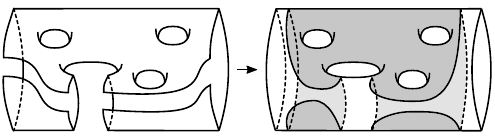,scale=100}}
Let $\cP'$ be the induced partition of the components of $\partial \Sigma'$.
By Lemma \ref{lemma:destabilizemarking}, there exists
an $A$-homology marking $\mu'$ on $(\Sigma',\cP')$ such that $\mu$ is the stabilization
of $\mu'$ to $(\Sigma,\cP)$.

As is clear from the above figure, when forming $\Sigma'$ the sets $I$ and $J$
are divided into finer collections $I'$ and $J'$ of open intervals in
$\partial \Sigma'$ such that the link of $\sigma$ is isomorphic to
$\MTL(\Sigma',I',J',\cP',\mu')$.  By construction, $\Sigma'$ has genus
$g-1$.  The only thing that remains to be proved is that if $\mu$ is
supported on a symplectic subsurface, then so is $\mu'$.  Letting
$\hmu\colon \HH_1(\Sigma) \rightarrow A$ be the closed marking associated
to $\mu$ and $q$ be the interior boundary components of $\Sigma'$ (as
in the definition of a $\cP$-simple subsurface in \S \ref{section:destabilize}),
Lemma \ref{lemma:destabilizesymplectic} says that it is enough to
prove that $\hisect_{q}(\ker(\hmu)) = \RZ[q]$.

Let $\gamma = \iota|_{S^1}$.  Since $\iota$ is a vertex of $\MTL(\Sigma,I,J,\cP,\mu)$,
there exists some $x \in \ker(\hmu)$ such that $\hisect_{\gamma}(x) = \gamma$.  By
construction, we have $q = \{\gamma_1,\gamma_2\}$, where $\gamma_1$ (resp.\ $\gamma_2$)
is obtained by band-summing $\gamma$ with a component of $\partial \Sigma$ containing
a component of $I$ (resp.\ $J$).  The orientations on the $\gamma_i$ are such
that $\gamma_1$ is homologous in $\HH_1(\Sigma,\partial \Sigma)$ to $\gamma$
and $\gamma_2$ is homologous to $-\gamma$.  It follows that
$\hisect_q(x) = \gamma_1 - \gamma_2$, which generates $\RZ[q]$.  The lemma follows.
\end{proof}

\p{Completing a tethered loop to a double-tethered loop}
As a first application of Lemma \ref{lemma:mtllinks} (or, rather, its proof), we prove the following.

\begin{lemma}
\label{lemma:completetl}
Let $\mu$ be an $A$-homology marking on $(\Sigma,\cP) \in \PSurf$ that
is supported on a symplectic subsurface.  Let $I,J \subset \partial \Sigma$
be $\cP$-adjacent finite disjoint unions of open intervals with $I \cap J = \emptyset$.
Then for all vertices $\iota$ of $\TL(\Sigma,I,\cP,\mu)$, there exists a vertex
$\hiota$ of $\DTL(\Sigma,I,J,\cP,\mu)$ such that $\hiota|_{\tau(S^1)} = \iota$.
\end{lemma}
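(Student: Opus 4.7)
The plan is to construct the second tether $\alpha_1$ (the arc from $\iota(1) \in \gamma$ to $J$, leaving $\gamma$ from the left) in two stages: first pick any embedded arc satisfying the geometric constraints, then correct its $\mu$-value by a band-sum modification.

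Let $\gamma = \iota|_{S^1}$, $\alpha_0 = \iota|_{[0,1]}$, and let $\partial_I, \partial_J$ be the components of $\partial \Sigma$ meeting $I$ and $J$; by $\cP$-adjacency they lie in a common $p \in \cP$. Because $\iota$ is a vertex of $\TL$, the loop $\gamma$ is nonseparating, so the result $\Sigma''$ of cutting $\Sigma$ along $\iota(\tau(S^1))$ is a connected surface of genus $g-1$. Hence one can choose an embedded arc $\alpha_1^{(0)}$ from $\iota(1)$ (leaving from the left) to a point of $J$, disjoint from $\iota(\tau(S^1))$ outside its initial point. Write $a := \mu([\alpha_0 \ast \alpha_1^{(0)}]) \in A$; if $a = 0$ then $\hiota$ is already obtained by assembling $\gamma$, $\alpha_0$, and $\alpha_1^{(0)}$ into a map $\tau^2(S^1) \to \Sigma$.

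If $a \neq 0$, I would combine the two hypotheses to cancel it. Lemma \ref{lemma:identifysupport} applied to the symplectic-support assumption gives $\isect_{\cP}(\ker \mu) = \RZ_{\cP}$, and the construction in its proof produces an embedded arc $\alpha^*$ in $\Sigma$ from $\partial_I$ to $\partial_J$ with $\mu([\alpha^*]) = 0$. Then $[\alpha_0 \ast \alpha_1^{(0)}] - [\alpha^*]$ has trivial boundary in $\RH_0(\partial \Sigma)$, so it is the image of some $z \in \HH_1(\Sigma)$ with $\hmu(z) = a$. The $\TL$-vertex condition $\hisect_{\{\gamma\}}(\ker \hmu) = \Z[\gamma]$ yields $y \in \ker \hmu$ with $\omega_{\Sigma}(y, [\gamma]) = \omega_{\Sigma}(z, [\gamma])$, so $z' := z - y \in [\gamma]^{\perp} \subset \HH_1(\Sigma)$ still satisfies $\hmu(z') = a$. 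Standard surface topology identifies $[\gamma]^{\perp}$ with the image of $\HH_1(\Sigma'') \to \HH_1(\Sigma)$, so $-z'$ is represented by an embedded oriented simple closed multi-curve $\ell \subset \Sigma''$; band-summing $\alpha_1^{(0)}$ with $\ell$ along an arc in $\Sigma''$ (resolving any forced intersections by the slide trick from the proof of Lemma \ref{lemma:identifysupport}) produces an embedded arc $\alpha_1$ from $\iota(1)$ to $J$ with $[\alpha_0 \ast \alpha_1] = [\alpha_0 \ast \alpha_1^{(0)}] + [\ell]$, and hence $\mu([\alpha_0 \ast \alpha_1]) = a - a = 0$. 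Assembling $\gamma$, $\alpha_0$, $\alpha_1$ into a map $\tau^2(S^1) \to \Sigma$ gives the desired $\hiota$, with the remaining vertex conditions $\hmu([\gamma]) = 0$ and $\hisect_{\{\gamma\}}(\ker \hmu) = \Z[\gamma]$ inherited verbatim from $\iota$.

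The main obstacle is isolating which hypothesis does what: the symplectic-support assumption supplies the reference arc $\alpha^*$, whereas the $\TL$-transversality condition is precisely what allows the passage from $z$ to $z' \in [\gamma]^{\perp}$ — and hence into $\Sigma''$ — without changing $\hmu$. The remaining technical step, realizing a prescribed homology class by an embedded multi-curve and then performing an embedded band-sum, is routine surface topology.
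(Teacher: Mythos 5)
Your argument is correct, but it is organized differently from the paper's. The paper's proof is short and modular: it applies Lemma \ref{lemma:mtllinks} to the $0$-simplex $\{\iota\}$, which cuts $\Sigma$ open along the tethered loop, destabilizes $\mu$ to a marking $\mu'$ on the cut surface, and (crucially, via Lemma \ref{lemma:destabilizesymplectic}, where the vertex condition $\hisect_{\{\gamma\}}(\ker\hmu)=\Z[\gamma]$ enters) shows $\mu'$ is still supported on a symplectic subsurface; then Lemma \ref{lemma:identifysupport} applied downstairs directly yields a possibly immersed arc from the tether endpoint to $J$ with $\mu'$-value zero, which is made embedded by the same combing-over-the-boundary trick you invoke. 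You instead stay upstairs in $\Sigma$: you take an arbitrary embedded arc, measure its $\mu$-defect $a$, use the symplectic-support hypothesis only to produce a reference class with the correct boundary (so the defect lifts to $z\in\HH_1(\Sigma)$ with $\hmu(z)=a$), use the $\TL$-vertex condition to replace $z$ by $z'\in[\gamma]^{\perp}$ without changing $\hmu$, realize $-z'$ in the cut surface, and band-sum. This re-derives by hand, in this special case, exactly the content of Lemmas \ref{lemma:mtllinks}/\ref{lemma:destabilizesymplectic}; what it buys is a self-contained argument that makes transparent which hypothesis does what, at the cost of the explicit homological bookkeeping (image of $\HH_1(\Sigma'')\to\HH_1(\Sigma)$ equals $[\gamma]^{\perp}$, embedded multicurve representatives, band-sums) that the paper's destabilization machinery packages away. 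One small point to make explicit: when you comb/slide to restore embeddedness inside the cut surface $\Sigma''$, the boundary components you slide over include the two new ones created by the cut, whose classes in $\HH_1^{\cP}(\Sigma,\partial\Sigma)$ are $\pm[\gamma]$ (the old components of $\partial\Sigma$ contribute $0$ there); so the $\mu$-invariance of these slides uses $\hmu([\gamma])=0$, which holds because $\iota$ is a vertex of the vanishing complex — this is the upstairs analogue of the paper's remark that the destabilized marking vanishes on all components of the cut surface's boundary.
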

\begin{proof}
Let $(\Sigma',\cP')$ and $I',J'$ and $\mu'$ be the output of applying Lemma \ref{lemma:mtllinks}
to the $0$-simplex $\{\iota\}$ of $\TL(\Sigma,I,\cP,\mu) \subset \MTL(\Sigma,I,J,\cP,\mu)$.  The
$A$-homology marking $\mu'$ on $(\Sigma',\cP')$ is thus supported on a symplectic subsurface.  As
in the following figure, it is enough to find an embedded arc $\alpha$ in $\Sigma'$ connecting
the endpoint $p_0$ of the tether of $\iota$ to a point of $J$ such that
$\mu'([\alpha]) = 0$:\\
\centerline{\psfig{file=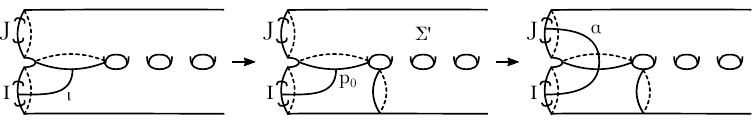,scale=100}}
Since $\mu'$ is supported on a symplectic subsurface, Lemma \ref{lemma:identifysupport} implies that
there exists an immersed arc $\alpha$ (not necessarily embedded) connecting $p_0$ to a point of $J$ such that
$\mu'([\alpha])=0$.  Choose $\alpha$ so as to have the fewest possible self-intersections.  Then $\alpha$
is embedded; indeed, if it has a self-intersection, then as in the following figure we can ``comb'' its
first self-intersection over the component of $\partial \Sigma'$ containing $p_0$:\\
\centerline{\psfig{file=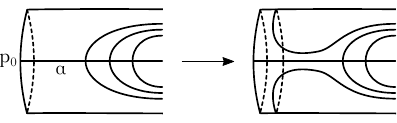,scale=100}}
This has the effect of removing a self-intersection from $\alpha$, but since $\mu'$ vanishes on
all components of $\partial \Sigma'$ it does not change the fact that $\mu'([\alpha])=0$.  The lemma
follows.
\end{proof}

\p{High connectivity}
We close this section by proving that $\MTL(\Sigma,I,J,\cP,\mu)$ is highly connected.

\begin{theorem}
\label{theorem:vanishmixedtetheredloopscon}
Let $\mu$ be an $A$-homology marking on $(\Sigma,\cP) \in \PSurf$.  Let $I,J \subset \partial \Sigma$
be $\cP$-adjacent finite disjoint unions of open intervals with $I \cap J = \emptyset$ and let
$g$ be the genus of $\Sigma$.  Then
$\MTL(\Sigma,I,J,\cP,\mu)$ is $\frac{g-(2\rank(A)+3)}{\rank(A)+2}$-connected.
\end{theorem}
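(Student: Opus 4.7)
The plan is to apply Corollary \ref{corollary:avoidsubcomplex} with $X = \MTL(\Sigma,I,J,\cP,\mu)$, $Y = \TL(\Sigma,I,\cP,\mu)$, and $Y' = \DTL(\Sigma,I,J,\cP,\mu)$. By the definition of $\MTL$, every vertex is either an $I$-tethered loop or an $(I,J)$-double-tethered loop, so $Y$ and $Y'$ have disjoint vertex sets whose union exhausts the vertices of $X$; the fact that $Y$ and $Y'$ are full subcomplexes of $X$ was already noted when $\MTL$ was introduced. Setting $n = \frac{g-(2\rank(A)+3)}{\rank(A)+2}$, the remaining hypothesis to verify is that for every $k$-dimensional simplex $\sigma$ of $\DTL$, the intersection of $\TL$ with the link of $\sigma$ in $\MTL$ is $(n-k-1)$-connected.

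The identification of this link is exactly the content of Lemma \ref{lemma:mtllinks}: the link of $\sigma$ in $\MTL$ is isomorphic to $\MTL(\Sigma',I',J',\cP',\mu')$ for some $(\Sigma',\cP') \in \PSurf$ with $\Sigma'$ of genus $g-(k+1)$, and under this isomorphism the intersection with $\TL$ corresponds to $\TL(\Sigma',I',\cP',\mu')$. Theorem \ref{theorem:vanishtetheredloopcon} then says that $\TL(\Sigma',I',\cP',\mu')$ is $\frac{(g-k-1)-(2\rank(A)+3)}{\rank(A)+2}$-connected, and the required comparison
\[\frac{(g-k-1)-(2\rank(A)+3)}{\rank(A)+2} \;\geq\; n-k-1\]
reduces after clearing denominators to $(k+1)(\rank(A)+2) \geq k+1$, i.e.\ to $\rank(A)+2 \geq 1$, which is automatic. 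Corollary \ref{corollary:avoidsubcomplex} therefore applies and yields that the pair $(\MTL,\TL)$ is $n$-connected.

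To finish, Theorem \ref{theorem:vanishtetheredloopcon} gives that $\TL(\Sigma,I,\cP,\mu)$ itself is $n$-connected; combining this with the $n$-connectivity of the pair via the long exact sequence of $(\MTL,\TL)$ shows that $\MTL(\Sigma,I,J,\cP,\mu)$ is $n$-connected, as desired. The main obstacle in this argument is not the link-avoidance step itself, which is formal, but rather the already-established Lemma \ref{lemma:mtllinks}: its proof requires cutting $\Sigma$ along the arcs and loops of $\sigma$, constructing a destabilized marking via Lemma \ref{lemma:destabilizemarking}, and verifying via Lemma \ref{lemma:destabilizesymplectic} that the symplectic support is preserved on the interior boundary components created by the cut. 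Once that geometric/homological bookkeeping is in place, the connectivity estimate above is a clean arithmetic consequence of the previously proved connectivity of $\TL$.
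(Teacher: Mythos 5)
Your proposal is correct and follows essentially the same route as the paper: apply Corollary \ref{corollary:avoidsubcomplex} with the simplices of $\DTL(\Sigma,I,J,\cP,\mu)$ as the ones to avoid, identify the relevant link intersections via Lemma \ref{lemma:mtllinks} as $\TL(\Sigma',I',\cP',\mu')$ on a genus $g-k-1$ surface, and invoke Theorem \ref{theorem:vanishtetheredloopcon} twice together with the same arithmetic estimate. No gaps to report.
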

\begin{proof}
Set $n = \frac{g-(2\rank(A)+3)}{\rank(A)+2}$ and
\[X = \MTL(\Sigma,I,J,\cP,\mu) \quad \text{and} \quad Y = \TL(\Sigma,I,J,\cP,\mu) \quad \text{and} \quad Y' = \DTL(\Sigma,I,J,\cP,\mu).\]
Theorem \ref{theorem:vanishtetheredloopcon} says that $Y$ is $n$-connected, so it is enough
to prove that the pair $(X,Y)$ is $n$-connected.  To do this, we will apply
Corollary \ref{corollary:avoidsubcomplex}.  This requires showing the following.
Let $\sigma$ be a $k$-dimensional simplex of $Y'$ and
let $L$ be the link of $\sigma$ in $X$.  Then we must show that
$L \cap Y$ is $(n-k-1)$-connected.

Lemma \ref{lemma:mtllinks} says that $L \cap Y \cong \TL(\Sigma',I',J',\cP',\mu')$, where
$\Sigma',I',J',\cP',\mu'$ are as follows:
\setlength{\parskip}{0pt}
\begin{compactitem}
\item $(\Sigma',\cP') \in \PSurf$ with $\Sigma'$ a genus $(g-k-1)$ surface.
\item $\mu'$ is an $A$-homology marking on $(\Sigma',\cP')$.
\item $I',J' \subset \partial \Sigma'$ are $\cP'$-adjacent finite disjoint unions of open intervals
with $I' \cap J' = \emptyset$.
\end{compactitem}
Theorem \ref{theorem:vanishtetheredloopcon} thus says that
$L \cap Y$ is $n'$-connected for
\[n' = \frac{g'-(2\rank(A)+3)}{\rank(A)+2}
= \frac{g-(2\rank(A)+3)}{\rank(A)+2} - \frac{k+1}{\rank(A)+2} \geq n-k-1. \qedhere\]
\end{proof}

\subsection{High connectivity of the complex of double-tethered vanishing loops}
\label{section:doubletetheredloopscon}

In this section, we finally prove that the complex of double-tethered vanishing loops
is highly connected:

\begin{theorem}
\label{theorem:vanishdoubletetheredloopcon}
Let $\mu$ be an $A$-homology marking on $(\Sigma,\cP) \in \PSurf$ that
is supported on a symplectic subsurface.  Let $I,J \subset \partial \Sigma$
be $\cP$-adjacent finite disjoint unions of open intervals with $I \cap J = \emptyset$
and let $g$ be the genus of $\Sigma$.  Then
$\DTL(\Sigma,I,J,\cP,\mu)$ is $\frac{g-(2\rank(A)+3)}{\rank(A)+2}$-connected.
\end{theorem}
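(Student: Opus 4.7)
The plan is to realize $\DTL(\Sigma,I,J,\cP,\mu)$ as a retract of the mixed-tether complex $\MTL(\Sigma,I,J,\cP,\mu)$, which by Theorem \ref{theorem:vanishmixedtetheredloopscon} is $n$-connected for $n = \frac{g-(2\rank(A)+3)}{\rank(A)+2}$, and which already contains $\DTL$ and $\TL$ as disjoint full subcomplexes. Since any continuous retraction $r\colon|\MTL|\to|\DTL|$ forces $|\DTL|$ to inherit the $n$-connectivity of $|\MTL|$, constructing such an $r$ will finish the proof; the setup is directly analogous to the proof of Theorem \ref{theorem:vanishtetheredloopcon}, except that the auxiliary highly-connected ambient complex here is $\MTL$ itself rather than a complex of tethered subsurfaces.

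The retraction $r$ is the identity on $\DTL$, and on a vertex $\iota\in\TL$ it sends $\iota$ to a completion $\hat\iota\in\DTL$ produced by Lemma \ref{lemma:completetl}: because $\mu$ is supported on a symplectic subsurface, there exists a $\mu$-vanishing embedded arc from the endpoint of $\iota$'s tether to a point of $J$, and adjoining this arc to $\iota$ yields a double-tethered vanishing loop. Extending $r$ consistently over a general simplex $\sigma=\{\iota_0,\ldots,\iota_k\}$ of $\MTL$ requires the completions of the $\TL$-vertices of $\sigma$ to be pairwise disjoint and disjoint from the $\DTL$-vertices of $\sigma$; the plan is to process the $\TL$-vertices sequentially, and at each stage cut $\Sigma$ along all the already-fixed curves (the $\DTL$-vertices of $\sigma$, the $\TL$-vertices not yet processed, and the completion arcs selected so far) to obtain a cut surface on which Lemma \ref{lemma:mtllinks} guarantees the restricted marking is still supported on a symplectic subsurface, so that Lemma \ref{lemma:completetl} furnishes the next completion arc in the complement of everything prior.

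The main obstacle is turning these simplex-by-simplex choices into a globally defined retraction: completions chosen independently at different simplices need not agree on common faces, and vertex-wise choices generally fail to be pairwise disjoint across all simplices. One resolution is to work on the barycentric subdivision $\MTL'$ and build a simplicial map $r'\colon\MTL'\to\DTL'$ by induction on the dimension of $\sigma\in\MTL$, defining the vertex $r'(\sigma)\in\DTL$ so that $r'(\tau)\subseteq r'(\sigma)$ whenever $\tau\subsetneq\sigma$; at the inductive step the previously chosen $r'(\tau)$'s are incorporated as additional fixed data to be respected by the completion procedure above. An alternative resolution introduces a separate auxiliary complex $\mathcal{DTS}_{\rank(A)+1}(\Sigma,I,J)$ of double-tethered genus-$(\rank(A)+1)$ subsurfaces whose vertices contain $\DTL$-vertices in their own interior (via Proposition \ref{proposition:destabilizeone}), bypassing the coherence issue altogether at the cost of proving an analogue of Theorem \ref{maintheorem:subsurfacescon} for such objects by an inductive link-argument in the style of \S \ref{section:surfacescon}. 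In either approach, Lemmas \ref{lemma:completetl} and \ref{lemma:mtllinks} together propagate the symplectic-support hypothesis through each successive cut and keep the completion procedure available at every stage, yielding the retraction and hence the stated connectivity bound for $\DTL$.
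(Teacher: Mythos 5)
Your overall strategy is not the paper's: the paper never constructs a retraction $\MTL(\Sigma,I,J,\cP,\mu)\rightarrow\DTL(\Sigma,I,J,\cP,\mu)$, and the reason is exactly the obstacle you flag but do not overcome. In the proof of Theorem \ref{theorem:vanishtetheredloopcon} the retraction works because the new vanishing loop is chosen \emph{inside} the genus-$(\rank(A)+1)$ subsurface that is part of the vertex's own data, so disjointness from the other vertices of any simplex is automatic. Here the completion of a $\TL$-vertex $\iota$ requires a second tether running from the endpoint of $\iota$'s tether all the way to $J$, far outside any neighborhood of $\iota$, and no single choice can be disjoint from all vertices in the $\MTL$-link of $\iota$. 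This breaks both of your proposed fixes. For the barycentric-subdivision scheme: the assignment $r'$ on proper faces is fixed before you reach a simplex $\sigma$, so if the completion arc chosen for the vertex $\iota$ essentially intersects a $\DTL$-vertex $\kappa$ with $\{\iota,\kappa\}$ a simplex of $\MTL$, then no simplex of $\DTL$ contains both $r'(\{\iota\})$ and $r'(\{\kappa\})=\kappa$, and your inductive step (which only constrains \emph{new} choices) cannot repair the already-made incompatible ones. For the auxiliary complex $\mathcal{DTS}_{\rank(A)+1}(\Sigma,I,J)$: if you impose no $\mu$-condition on the double tether, a vertex whose subsurface lies in $\ker(\hmu)$ but whose tether arc has $\mu\neq 0$ admits no $\DTL(\Sigma,I,J,\cP,\mu)$-vertex inside its own data, since rerouting the arc through the subsurface only changes $\mu([\alpha])$ by elements of $\hmu(\HH_1(\text{subsurface}))$; if you do impose the arc-vanishing condition, then proving high connectivity of that complex is essentially the original theorem, and the link arguments of \S \ref{section:surfacescon} (which involve no marking) do not supply it.

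The paper's actual proof is structured around this failure: it is an induction on the connectivity degree $n$ (and implicitly on genus, via cut surfaces), using the intermediate complex $Y\subset\MTL(\Sigma,I,J,\cP,\mu)$ of simplices with at most one $\TL$-vertex, shown $n$-connected by Proposition \ref{proposition:avoidbad} together with the inductive hypothesis; then spheres in $\DTL$ and filling discs in $Y$ are made locally injective via Lemma \ref{lemma:localinjectivity}; finally each disc vertex landing in $\TL(\Sigma,I,\cP,\mu)$ is pushed into $\DTL$ one at a time using Lemma \ref{lemma:completetl}, with the crucial disjointness achieved not by choice but by a minimal-position surgery argument (the ``slide off'' move of Claim \ref{claim:disjoint}), whose justification again invokes the inductive hypothesis for $\MTL$ of a doubly-cut surface. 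Your proposal contains no induction on $n$ and no mechanism replacing this surgery step, so as written it has a genuine gap at its central claim, the existence of the retraction.
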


The proof of Theorem \ref{theorem:vanishdoubletetheredloopcon} requires the following lemma.
Say that a simplicial map $f\colon M \rightarrow X$ between
simplicial complexes is {\em locally injective} if $f|_{\sigma}$ is injective for all
simplices $\sigma$ of $M$.  

\begin{lemma}
\label{lemma:localinjectivity}
Let $M$ be a compact $n$-dimensional manifold (possibly with boundary) equipped with a combinatorial
triangulation, let $X$ be a simplicial complex, and let $f\colon M \rightarrow X$ be a simplicial
map.  Assume the following hold:
\setlength{\parskip}{0pt}
\begin{compactitem}
\item $f|_{\partial M}$ is locally injective.
\item For all simplices $\sigma$ of $X$, the link of $\sigma$ in $X$ is $(n-\dim(\sigma)-2)$-connected.
\end{compactitem}
Then after possibly subdividing simplices
of $M$ lying in its interior, $f$ is homotopic through maps fixing $\partial M$ to a simplicial map
$f'\colon M \rightarrow X$ that is locally injective. \setlength{\parskip}{\baselineskip}
\end{lemma}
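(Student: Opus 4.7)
The plan is an induction on the pair $(n,(D,N))$ in lexicographic order, where $n=\dim M$, $D$ is the maximum size of a \emph{collision clique} — a set of at least two vertices of $M$ that spans a simplex of $M$ and that $f$ sends to a single vertex of $X$ — and $N$ is the number of collision cliques of size $D$ in $M$. Local injectivity of $f$ is equivalent to $D\leq 1$, which is the base case; the inductive step produces a subdivision of $M$ in its interior and a simplicial map $f'$ homotopic to $f$ rel $\partial M$ for which $(D,N)$ strictly drops.

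For the inductive step, pick a collision clique $\sigma\subset\Interior(M)$ of size $D\geq 2$ that is maximal among cliques in $V(M)\cap f^{-1}(w)$ for some vertex $w$ of $X$; local injectivity of $f|_{\partial M}$ forces $\sigma$ into the interior. Maximality implies that no vertex in the open star of $\sigma$ outside $V(\sigma)$ maps to $w$, so for every simplex $\eta$ of $L_M:=\mathrm{Link}(\sigma,M)$ the image $f(\eta)$ is vertex-disjoint from $f(\sigma)=\{w\}$, and $f$ restricts to a simplicial map $L_M\to L_X:=\mathrm{Link}(w,X)$. Since $\sigma$ is interior of dimension $D-1$, $L_M$ is a combinatorial $(n-D)$-sphere; by hypothesis $L_X$ is $(n-2)$-connected, so for $D\geq 2$ this restriction is null-homotopic and extends after subdivision to a simplicial map $g\colon\Delta\to L_X$ from a triangulated $(n-D+1)$-disk $\Delta$ with $\partial\Delta=L_M$.

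Next I would apply the lemma recursively to $g$, viewed as a simplicial map from the manifold $\Delta$ of dimension $n-D+1<n$ to $L_X$, in order to make $g$ locally injective while keeping it fixed on $\partial\Delta$. The connectivity hypothesis transfers: for a simplex $\tau'$ of $L_X$, $\mathrm{Link}(\tau',L_X)=\mathrm{Link}(\{w\}*\tau',X)$ is $(n-\dim\tau'-3)$-connected, exceeding the required $((n-D+1)-\dim\tau'-2)$-connectedness by $D-2\geq 0$. Then surger: the closed star $\overline{\mathrm{St}}(\sigma,M)=\sigma*L_M$ and the join $\partial\sigma*\Delta$ are both $n$-disks sharing the boundary $\partial\sigma*L_M$, so replacing the former by the latter gives a subdivision of $M$ in its interior. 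Define $f'$ to agree with $f$ outside this region and with $g$ on $\Delta$; since both $f$ and $f'$ map the replaced disk into the contractible closed star of $w$ in $X$, a homotopy from $f$ to $f'$ rel $\partial M$ exists.

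The main obstacle is verifying that $(D,N)$ strictly decreases. Every new simplex has the form $\tau*\zeta$ with $\tau$ a proper face of $\sigma$ and $\zeta$ a simplex of $\Delta$; a collision clique inside $\tau*\zeta$ at some $w'\in V(X)$ must lie entirely in $V(\tau)$ when $w'=w$ (because $g(\zeta)\subset L_X$ avoids $w$), or entirely in $V(\zeta)$ when $w'\neq w$ (because $V(\tau)\subset f^{-1}(w)$). In the second case local injectivity of $g$ forbids any collision; in the first case the size is at most $|V(\tau)|\leq D-1$. Hence no size-$D$ collision clique is introduced, whereas $\sigma$ is deleted as a simplex and thereby removed from the roster of size-$D$ collision cliques; every other collision clique is unaffected by the surgery. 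Thus $N$ drops by one (and $D$ may drop too), closing the induction.
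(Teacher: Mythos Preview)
There is a genuine gap in the recursive step.  To apply the lemma to $g\colon \Delta \to L_X$ in dimension $n-D+1<n$, you need the boundary restriction $g|_{\partial\Delta}=f|_{L_M}$ to be locally injective.  Your maximality assumption on $\sigma$ only guarantees that no vertex of $L_M$ maps to the single vertex $w=f(\sigma)$; it says nothing about two adjacent vertices of $L_M$ both mapping to some other vertex $w'\neq w$.  Concretely, if $D=2$ and $\sigma=\{v_1,v_2\}$ with $f(v_1)=f(v_2)=w$, the link $L_M$ may well contain an edge $\{u_1,u_2\}$ with $f(u_1)=f(u_2)=w'$; the simplices $\{v_1,v_2,u_i\}$ and $\{v_i,u_1,u_2\}$ are not collision cliques (they hit two colors), so nothing in your setup forbids this.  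Then $f|_{L_M}$ is not locally injective and the inductive hypothesis does not apply to $g$.

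The paper's proof avoids exactly this issue by working not with collision cliques but with \emph{noninjective simplices}: simplices in which \emph{every} vertex has a partner with the same image, without requiring a single common image.  Taking such a simplex $\sigma$ of \emph{maximal dimension} forces $f|_{\mathrm{Link}(\sigma)}$ to be locally injective, since if $u_1,u_2$ in the link had $f(u_1)=f(u_2)$ then $\sigma\cup\{u_1,u_2\}$ would be a strictly larger noninjective simplex.  The rest of the argument (filling the link into a disk in $\mathrm{Link}(f(\sigma),X)$, making the filling locally injective by induction on $n$, and replacing the star by $\partial\sigma * \Delta$) is essentially what you wrote.  Your surgery and counting are fine; the fix is to change the object you surger on.
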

\begin{proof}
We remark that the proof of this is very similar to Hatcher--Vogtmann's proof of
Proposition \ref{proposition:avoidbad} above, though it seems hard to deduce it from
that proposition.  This result is also related to \cite[Theorem 2.4]{GalatiusRandalWilliams}.

The proof will be by induction on $n$.  The base case $n=0$ is trivial, so
assume that $n>0$ and that the result is true for all smaller dimensions.
Call a simplex $\sigma$ of $M$ a {\em noninjective simplex} if for all vertices $v$ of $\sigma$, there exists a vertex
$v'$ of $\sigma$ with $v \neq v'$ but $f(v) = f(v')$.
If $M$ has no noninjective simplices, then we are done.  Assume, therefore, that $M$ has noninjective simplices,
and let $\sigma$ be a noninjective simplex of $M$ whose dimension is as large as possible.
Since no simplices of $\partial M$ are noninjective, the simplex $\sigma$ does not lie in $\partial M$.  Letting
$L \subset M$ be the link of $\sigma$, this implies that $L \cong S^{n-\dim(\sigma)-1}$.  Letting
$L'$ be the link of $f(\sigma)$ in $X$, the maximality of the dimension of $\sigma$ implies
two things:
\setlength{\parskip}{0pt}
\begin{compactitem}
\item $f(L) \subset L'$
\item The restriction of $f$ to $L$ is locally injective.
\end{compactitem}
Our assumptions imply that $L'$ has connectivity at least
\[n-\dim(f(\sigma)) - 2 \geq n-(\dim(\sigma)-1) - 2 = n-\dim(\sigma)-1;\]
here we are using the fact that $f|_{\sigma}$ is not injective.  We can thus extend $f|_L$ to a map
\[F\colon D^{n-\dim(\sigma)} \rightarrow L'\]
that is simplicial with respect to some combinatorial triangulation of $D^{n-\dim(\sigma)}$
that restricts to $L \cong S^{n-\dim(\sigma)-1}$ on $\partial D^{n-\dim(\sigma)}$.  Since $\dim(\sigma) \geq 1$ and $F|_{\partial D^{n-\dim(\sigma)}} = f|_L$
is locally injective, we can apply our inductive hypothesis to $F$ and ensure that $F$ is locally injective.
The star $S$ of $\sigma$ is isomorphic to the join $\sigma \ast L$.  Subdividing
$M$ and homotoping $f$, we can replace $S \subset D^{n-\dim(\sigma)}$
with $\partial \sigma \ast D^{n-\dim(\sigma)}$ and $f|_S$ with $f|_{\partial \sigma} \ast F$.
Here are pictures of this operation for $n=2$ and $\dim(\sigma) \in \{0,1,2\}$; on the left
hand side is $S$, and on the right hand side is $\partial \sigma \ast D^{n-\dim(\sigma)}$:\\
\centerline{\psfig{file=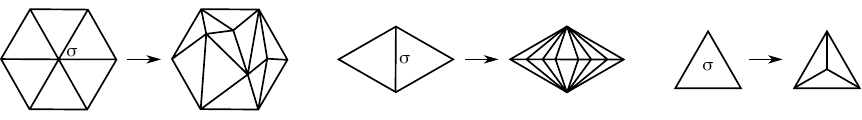,scale=100}}
In doing this, we have eliminated the noninjective simplex $\sigma$ without introducing any new
noninjective simplices.  Repeating this over and over again, we can eliminate all noninjective simplices,
and we are done.
\end{proof}

\begin{proof}[Proof of Theorem \ref{theorem:vanishdoubletetheredloopcon}]
We will prove by induction on $n$ that $\DTL(\Sigma,I,J,\cP,\mu)$ is $n$-connected
for $-1 \leq n \leq \frac{g-(2\rank(A)+3)}{\rank(A)+2}$.  The base case $n=-1$ simply
asserts that $\DTL(\Sigma,I,J,\cP,\mu)$ is nonempty when $\frac{g-(2\rank(A)+3)}{\rank(A)+2} \geq -1$.
In this case, Theorem \ref{theorem:vanishtetheredloopcon}
asserts that $\TL(\Sigma,I,\cP,\mu) \neq \emptyset$, and thus Lemma \ref{lemma:completetl} implies that
$\DTL(\Sigma,I,J,\cP,\mu) \neq \emptyset$, as desired.

Assume now that $0 \leq n \leq \frac{g-(2\rank(A)+3)}{\rank(A)+2}$ and that
all complexes $\DTL(\Sigma',I',J',\cP',\mu')$ as in the theorem are $n'$-connected
for $n' = \min\{n-1,\frac{g'-(2\rank(A)+3)}{\rank(A)+2}\}$, where $g'$ is the genus of $\Sigma'$.
We must prove that $\DTL(\Sigma,I,J,\cP,\mu)$ is $n$-connected.  

Set $X = \MTL(\Sigma,I,J,\cP,\mu)$.  The complex $\DTL(\Sigma,I,J,\cP,\mu)$ 
that we want to show is $n$-connected is a subcomplex of $X$, and 
Theorem \ref{theorem:vanishmixedtetheredloopscon}
says that the connectivity of $X$ is at least
\[\frac{g-(2\rank(A)+3)}{\rank(A)+2} \geq n.\]
Define
$Y$ to be the subcomplex of $X$ consisting of simplices containing at most
one vertex of $\TL(\Sigma,I,\cP,\mu)$, so
\[\DTL(\Sigma,I,J,\cP,\mu) \subsetneq Y \subsetneq X.\]
The first step is as follows.

\begin{claims}
\label{claim:ycon}
The complex $Y$ is $n$-connected.
\end{claims}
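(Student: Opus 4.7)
The plan is to deduce the $n$-connectivity of $Y$ from the $n$-connectivity of $X$, which is given by Theorem \ref{theorem:vanishmixedtetheredloopscon}, by showing that the pair $(X, Y)$ is $(n+1)$-connected. Once this is established, the long exact sequence of homotopy groups of the pair, together with $\pi_k(X) = 0$ for $k \leq n$, immediately forces $\pi_k(Y) = 0$ for $k \leq n$.

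To produce the $(n+1)$-connectivity of the pair I will apply Proposition \ref{proposition:avoidbad} with the set of bad simplices $\cB$ taken to be those simplices $\sigma$ of $X$ such that every vertex of $\sigma$ lies in $\TL(\Sigma, I, \cP, \mu)$ and $\dim(\sigma) \geq 1$. Conditions (i) and (ii) of the proposition are immediate from the definitions: a simplex of $X$ lies in $Y$ precisely when it contains at most one vertex from $\TL(\Sigma, I, \cP, \mu)$, equivalently when no face of it lies in $\cB$; and the join of two $\cB$-simplices again has all vertices in $\TL(\Sigma, I, \cP, \mu)$ and positive dimension.

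The substantive step is verifying condition (iii): for every $k$-simplex $\sigma \in \cB$, the complex $G(X, \sigma, \cB)$ must be $(n - k)$-connected. Unwinding the definition, a simplex $\sigma'$ in the link of $\sigma$ in $X$ lies in $G(X, \sigma, \cB)$ if and only if no vertex of $\sigma'$ lies in $\TL(\Sigma, I, \cP, \mu)$, since any such vertex, together with any vertex of $\sigma$, would span an edge of $\cB$ not contained in $\sigma$. Consequently $G(X, \sigma, \cB)$ is exactly the link of $\sigma$ in the full subcomplex $\DTL(\Sigma, I, J, \cP, \mu)$. By Lemma \ref{lemma:mtllinks}, this link is isomorphic to $\DTL(\Sigma', I', J', \cP', \mu')$ for a surface $\Sigma'$ of genus $g' = g - (k+1)$ with $\mu'$ again supported on a symplectic subsurface, so the outer induction on $n$ applies and gives that this complex is $\min\{n-1, \tfrac{g' - (2\rank(A)+3)}{\rank(A)+2}\}$-connected. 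Since $k \geq 1$ we have $n-1 \geq n-k$, and
\[
\frac{g' - (2\rank(A)+3)}{\rank(A)+2} = \frac{g - (2\rank(A)+3)}{\rank(A)+2} - \frac{k+1}{\rank(A)+2} \geq n - \frac{k+1}{\rank(A)+2} \geq n - k,
\]
with the last inequality reducing to $k(\rank(A)+1) \geq 1$, which always holds for $k \geq 1$. Thus the minimum is at least $n-k$, as required.

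The step I expect to require the most care is the identification of $G(X, \sigma, \cB)$ with a $\DTL$-complex on a cut-open surface and the check that the symplectic-subsurface support of $\mu$ descends to $\mu'$ after successively cutting along all $k+1$ vertices of $\sigma$. Both of these are precisely what Lemma \ref{lemma:mtllinks} is designed to supply, so the remaining work is essentially arithmetic bookkeeping against the inductive bound.
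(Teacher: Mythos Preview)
Your proposal is correct and follows essentially the same approach as the paper: the same choice of bad simplices $\cB$, the same identification of $G(X,\sigma,\cB)$ with $L \cap \DTL(\Sigma,I,J,\cP,\mu)$ via Lemma \ref{lemma:mtllinks}, and the same appeal to the outer inductive hypothesis on $n$. The only cosmetic difference is in the final arithmetic bound, where the paper passes through the cruder estimate $\frac{k+1}{\rank(A)+2} \leq \frac{k+1}{2}$ while you go directly to $k(\rank(A)+1) \geq 1$; both are fine.
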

\begin{proof}[Proof of claim]
We know that $X$ is $n$-connected, so to prove that its subcomplex $Y$ is $n$-connected
it is enough to prove that the pair $(X,Y)$ is $(n+1)$-connected.
We will do this using Proposition \ref{proposition:avoidbad}.  For this, we
must identify a set $\cB$ of ``bad simplices'' of $X$ and verify the three
hypotheses of the proposition.  Define $\cB$ to be the set of all simplices 
of $\TL(\Sigma,I,\cP,\mu) \subset X$ whose dimension is at least $1$. 

We now verify the hypotheses of Proposition \ref{proposition:avoidbad}.  The first two are
easy:
\setlength{\parskip}{0pt}
\begin{compactitem}
\item (i) says that a simplex of $X$ lies in $Y$ if and only if none of its faces
lie in $\cB$, which is obvious.
\item (ii) says that if $\sigma_1,\sigma_2 \in \cB$ are such that $\sigma_1 \cup \sigma_2$ is a simplex
of $X$, then $\sigma_1 \cup \sigma_2 \in \cB$, which again is obvious.
\end{compactitem}
The only thing left to check is (iii), which says that for all $k$-dimensional $\sigma \in \cB$, the
complex $G(X,\sigma,\cB)$ has connectivity at least \setlength{\parskip}{\baselineskip}
$(n+1) - k - 1 = n - k$.

Let $L$ be the link of $\sigma$ in $X$.
Examining its definition in \S \ref{section:linkarguments}, we see that
\[G(X,\sigma,\cB) \cong L \cap \DTL(\Sigma,I,J,\cP,\mu).\]
Lemma \ref{lemma:mtllinks} says that 
$L \cap \DTL(\Sigma,I,J,\cP,\mu) \cong \DTL(\Sigma',I',J',\cP',\mu')$, where
$\Sigma',I',J',\cP',\mu'$ are as follows:
\setlength{\parskip}{0pt}
\begin{compactitem}
\item $(\Sigma',\cP') \in \PSurf$ with $\Sigma'$ a genus $g'=g-k-1$ surface.
\item $\mu'$ is an $A$-homology marking on $(\Sigma',\cP')$ that is supported on a symplectic subsurface.
\item $I',J' \subset \partial \Sigma'$ are $\cP'$-adjacent finite disjoint unions of open intervals
with $I' \cap J' = \emptyset$.
\end{compactitem}
Our goal is thus to show that $\DTL(\Sigma',I',J',\cP',\mu')$ is $(n-k)$-connected.
Our inductive hypothesis shows that $\DTL(\Sigma',I',J',\cP',\mu')$ is $n'$-connected
for
\begin{align*}
n' &= \min\{n-1,\frac{g'-(2\rank(A)+3)}{\rank(A)+2}\} \\
&= \min\{n-1,\frac{g-(2\rank(A)+3)}{\rank(A)+2} - \frac{k+1}{\rank(A)+2}\} \\
&\geq \min\{n-1,n-\frac{k+1}{2}\} \geq n-k.
\end{align*}
Here we are using the fact that by the definition of $\cB$, we have $k \geq 1$, and
thus $k \geq \frac{k+1}{2}$.
\end{proof}

This allows us to fill $n$-spheres in $\DTL(\Sigma,I,J,\cP,\mu)$ with $(n+1)$-discs in $Y$.  We will
modify these $(n+1)$-discs such that they lie in $\DTL(\Sigma,I,J,\cP,\mu)$.  For technical reasons,
we will need our spheres and discs to be locally injective.  That this is possible is the content
of the following two steps.

\begin{claims}
\label{claim:localinjectivespheres}
Equip the $n$-sphere $S^n$ with a combinatorial triangulation and let 
$f\colon S^n \rightarrow \DTL(\Sigma,I,J,\cP,\mu)$ be a simplicial map.
Then after possibly subdividing $S^n$, the map $f$ is homotopic to a locally
injective simplicial map.
\end{claims}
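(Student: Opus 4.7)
The plan is to apply Lemma \ref{lemma:localinjectivity} directly, taking $M = S^n$ equipped with the given combinatorial triangulation and $X = \DTL(\Sigma,I,J,\cP,\mu)$. Since $\partial S^n = \emptyset$, the hypothesis that $f|_{\partial M}$ be locally injective holds vacuously. Thus the only thing to verify is the link-connectivity condition: for every $d$-simplex $\sigma$ of $\DTL(\Sigma,I,J,\cP,\mu)$, the link of $\sigma$ in this complex must be $(n - d - 2)$-connected.

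To check this, I would invoke Lemma \ref{lemma:mtllinks}, which identifies the link of a $d$-simplex $\sigma$ of $\DTL(\Sigma,I,J,\cP,\mu)$ inside $\DTL(\Sigma,I,J,\cP,\mu)$ with $\DTL(\Sigma',I',J',\cP',\mu')$, where $\Sigma'$ has genus $g' = g - d - 1$ and where, crucially, $\mu'$ is still supported on a symplectic subsurface. The inductive hypothesis of Theorem \ref{theorem:vanishdoubletetheredloopcon} (inside whose proof this claim sits) then guarantees that $\DTL(\Sigma',I',J',\cP',\mu')$ is $n''$-connected for
\[
n'' \;=\; \min\bigl\{n-1,\; \tfrac{g' - (2\rank(A)+3)}{\rank(A)+2}\bigr\}.
\]
A routine computation
\[
\frac{g' - (2\rank(A)+3)}{\rank(A)+2} \;=\; \frac{g - (2\rank(A)+3)}{\rank(A)+2} \;-\; \frac{d+1}{\rank(A)+2} \;\geq\; n - (d+1),
\]
together with the trivial bound $n - 1 \geq n - d - 2$ for $d \geq 0$, then gives $n'' \geq n - d - 2$, which is exactly what Lemma \ref{lemma:localinjectivity} demands.

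With both hypotheses in place, Lemma \ref{lemma:localinjectivity} provides a subdivision of $S^n$ and a homotopy of $f$ to a locally injective simplicial map, completing the proof. There is no genuine obstacle here: all of the hard work has been absorbed into Lemma \ref{lemma:localinjectivity} on one side and Lemma \ref{lemma:mtllinks} on the other. The only substantive input beyond arithmetic is the fact that the symplectic-subsurface support condition on $\mu$ is preserved under passing to links in $\DTL(\Sigma,I,J,\cP,\mu)$, which is precisely the content of the third bullet of Lemma \ref{lemma:mtllinks} and is what allows the inductive hypothesis to be applied at all.
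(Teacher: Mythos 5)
Your proposal is correct and follows essentially the same route as the paper: apply Lemma \ref{lemma:localinjectivity} with the boundary condition vacuous, identify links via Lemma \ref{lemma:mtllinks} (including the preservation of symplectic-subsurface support), and invoke the inductive hypothesis of Theorem \ref{theorem:vanishdoubletetheredloopcon} to get the required $(n-d-2)$-connectivity. Your slightly coarser bound $n-(d+1)$ still suffices, so there is nothing to correct.
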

\begin{proof}[Proof of claim]
By Lemma \ref{lemma:localinjectivity}, this will follow if we can show that for all $k$-simplices
$\sigma$ of $\DTL(\Sigma,I,J,\cP,\mu)$, the link $L$ of $\sigma$ is $(n-k-2)$-connected.  Applying
Lemma \ref{lemma:mtllinks}, we see that $L \cong \DTL(\Sigma',I',J',\cP',\mu')$, where
$\Sigma',I',J',\cP',\mu'$ are as follows:
\setlength{\parskip}{0pt}
\begin{compactitem}
\item $(\Sigma',\cP') \in \PSurf$ with $\Sigma'$ a genus $g'=g-k-1$ surface.
\item $\mu'$ is an $A$-homology marking on $(\Sigma',\cP')$ that is supported on a symplectic
subsurface.
\item $I',J' \subset \partial \Sigma'$ are $\cP'$-adjacent finite disjoint unions of open intervals
with $I' \cap J' = \emptyset$.
\end{compactitem}
Our inductive hypothesis thus says that $L \cong \DTL(\Sigma',I',J',\cP',\mu')$ is $n'$-connected
for
\begin{align*}
n' &= \min\{n-1,\frac{g'-(2\rank(A)+3)}{\rank(A)+2}\} \\
&= \min\{n-1,\frac{g-(2\rank(A)+3)}{\rank(A)+2} - \frac{k+1}{\rank(A)+2}\} \\
&\geq \min\{n-1,n-\frac{k+1}{\rank(A)+2}\} \geq n-k-2,
\end{align*}
as desired.
\end{proof}

\begin{claims}
\label{claim:yconbetter}
Equip the $n$-sphere $S^n$ with a combinatorial triangulation and let
$f\colon S^n \rightarrow Y$ be a locally injective simplicial map that extends to a simplicial
map of a combinatorial triangulation of $D^{n+1}$.
Then there exists a combinatorial triangulation of $D^{n+1}$ that restricts to our
given one on $\partial D^{n+1} = S^n$ and a locally injective simplicial
map $F\colon D^{n+1} \rightarrow Y$ such that $F|_{\partial D^{n+1}} = f$.
\end{claims}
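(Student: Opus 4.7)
The plan is to apply Lemma \ref{lemma:localinjectivity} directly to the given simplicial extension $\tilde f\colon D^{n+1}\to Y$ of $f$, with $M=D^{n+1}$ and with $Y$ playing the role of $X$ in that lemma. The boundary hypothesis is satisfied by assumption ($\tilde f|_{\partial D^{n+1}}=f$ is already locally injective), so the only substantive thing left to check is the link condition: for every $k$-simplex $\sigma$ of $Y$, the link $L$ of $\sigma$ in $Y$ must be $((n+1)-k-2)=(n-k-1)$-connected.

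To compute $L$, I would first invoke Lemma \ref{lemma:mtllinks} to identify the link of $\sigma$ inside the ambient complex $X=\MTL(\Sigma,I,J,\cP,\mu)$ with $\MTL(\Sigma',I',J',\cP',\mu')$, where $\Sigma'$ has genus $g-k-1$, $\mu'$ is still supported on a symplectic subsurface, and the subcomplexes $\TL$ and $\DTL$ of $X$ correspond to $\TL(\Sigma',I',\cP',\mu')$ and $\DTL(\Sigma',I',J',\cP',\mu')$. Since every simplex of $Y$ has at most one vertex in $\TL(\Sigma,I,\cP,\mu)$, there are exactly two possibilities for $\sigma$.

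In the first case $\sigma$ contains no vertex of $\TL(\Sigma,I,\cP,\mu)$, and under the identification above $L$ corresponds to the subcomplex $Y'$ of $\MTL(\Sigma',I',J',\cP',\mu')$ consisting of simplices with at most one vertex in $\TL(\Sigma',I',\cP',\mu')$. I would then re-run the argument of Claim \ref{claim:ycon} verbatim with $(\Sigma',I',J',\cP',\mu')$ in place of $(\Sigma,I,J,\cP,\mu)$; this is legitimate because $\mu'$ is still supported on a symplectic subsurface and the connectivity inputs used in that argument (Theorem \ref{theorem:vanishmixedtetheredloopscon} and the outer inductive hypothesis for $\DTL$ in the induction on $n$) are available equally for $\Sigma'$. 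The output is that $Y'$ is at least as connected as $\MTL(\Sigma',\ldots)$, which suffices. In the second case $\sigma$ contains exactly one vertex of $\TL(\Sigma,I,\cP,\mu)$, and then $L$ corresponds directly to $\DTL(\Sigma',I',J',\cP',\mu')$, whose connectivity comes from the outer inductive hypothesis.

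The main obstacle to watch for is that both connectivity estimates close up cleanly. The drop in genus from $g$ to $g-k-1$ upon passing to the link of a $k$-simplex contributes $\tfrac{k+1}{\rank(A)+2}$ to the connectivity thresholds of $\MTL(\Sigma',\ldots)$ and $\DTL(\Sigma',\ldots)$; since $\rank(A)+2\geq 2$ and $k\geq 0$, this is bounded above by $k+1$, so in both cases the resulting connectivity is at least $\min\{n-1,\,n-(k+1)\}\geq n-k-1$, which is exactly what the hypothesis of Lemma \ref{lemma:localinjectivity} requires. Feeding this back into that lemma produces the desired locally injective extension $F$ after subdividing the interior of $D^{n+1}$.
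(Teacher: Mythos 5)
Your proposal is correct and follows essentially the same route as the paper: apply Lemma \ref{lemma:localinjectivity} with target $Y$, identify links of $k$-simplices via Lemma \ref{lemma:mtllinks}, and treat the two cases (link $\cong \DTL(\Sigma',I',J',\cP',\mu')$ via the outer induction on $n$, or link $\cong Y(\Sigma',I',J',\cP',\mu')$ via re-running Claim \ref{claim:ycon} on the smaller surface), closing with the same genus-drop estimate. The only small caution is that the re-run of Claim \ref{claim:ycon} gives $Y'$ only $\min\{n-1,\frac{g'-(2\rank(A)+3)}{\rank(A)+2}\}$-connectivity rather than the full connectivity of $\MTL(\Sigma',I',J',\cP',\mu')$ (since it leans on the induction on $n$), but your final bound already takes the minimum with $n-1$, so the argument goes through exactly as in the paper.
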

\begin{proof}[Proof of claim]
By Lemma \ref{lemma:localinjectivity}, this will follow if we can show that for all $k$-simplices
$\sigma$ of $Y$, the link $L$ of $\sigma$ is $(n-k-1)$-connected.  As temporary notation,
write $Y(\Sigma,I,J,\cP,\mu)$ for $Y$.  By Lemma \ref{lemma:mtllinks}, we have
either
\[L \cong \DTL(\Sigma',I',J',\cP',\mu') \quad \text{or} \quad L \cong Y(\Sigma',I',J',\cP',\mu')\]
depending on whether or not $\sigma$ contains a vertex of $\TL(\Sigma,I,\cP,\mu)$.
Here $\Sigma',I',J',\cP',\mu'$ are as follows:
\setlength{\parskip}{0pt}
\begin{compactitem}
\item $(\Sigma',\cP') \in \PSurf$ with $\Sigma'$ a genus $g'=g-k-1$ surface.
\item $\mu'$ is an $A$-homology marking on $(\Sigma',\cP')$ that is supported on a symplectic
subsurface.
\item $I',J' \subset \partial \Sigma'$ are $\cP'$-adjacent finite disjoint unions of open intervals
with $I' \cap J' = \emptyset$.
\end{compactitem}
Applying either our inductive hypothesis or Claim \ref{claim:ycon}, we see
that $L$ is $n'$-connected for
\begin{align*}
n' &= \min\{n-1,\frac{g'-(2\rank(A)+3)}{\rank(A)+2}\} \\
&= \min\{n-1,\frac{g-(2\rank(A)+3)}{\rank(A)+2} - \frac{k+1}{\rank(A)+2}\} \\
&\geq \min\{n-1,n-\frac{k+1}{\rank(A)+2}\} \geq n-k-1,
\end{align*}
as desired.
\end{proof}

We now finally turn to proving that $\DTL(\Sigma,I,J,\cP,\mu)$ is $n$-connected.
Our inductive hypothesis says that it is $(n-1)$-connected, so it is enough
to prove that every continuous map $f\colon S^n \rightarrow \DTL(\Sigma,I,J,\cP,\mu)$
can be extended to a continuous map $F\colon D^{n+1} \rightarrow \DTL(\Sigma,I,J,\cP,\mu)$.
Using simplicial approximation, we can assume that $f$ is simplicial with respect
to a combinatorial triangulation of $S^n$.  Next, using 
Claim \ref{claim:localinjectivespheres} we can ensure that $f$ is locally injective.
The complex $\DTL(\Sigma,I,J,\cP,\mu)$ is a subcomplex of $Y$ and
Claim \ref{claim:ycon} says that $Y$ is $n$-connected, so we can extend
$f$ to a continuous map $F\colon D^{n+1} \rightarrow Y$, which by the relative
version of simplicial approximation we can ensure is simplicial with respect
to a combinatorial triangulation of $D^{n+1}$ that restricts to our given one on
$S^n$.  Finally, applying Claim \ref{claim:yconbetter} we can ensure that
$F$ is locally injective.

If $F$ does not map any vertices of $D^{n+1}$ to $\TL(\Sigma,I,\cP,\mu)$,
then the image of $F$ lies in $\DTL(\Sigma,I,J,\cP,\mu)$ and we are done.  Assume,
therefore, that $x$ is a vertex of $D^{n+1}$ such that
$F(x)$ is a vertex $\iota\colon \tau(S^1) \rightarrow \Sigma$
of $\TL(\Sigma,I,\cP,\mu)$.  Let $L \subset D^{n+1}$ be the link of $x$
and let $\cL \subset Y$ be the link of $\iota = F(x)$.  Since $F$ is
locally injective, we have $F(L) \subset \cL$.  Also, since simplices
of $Y$ can contain at most one vertex of $\TL(\Sigma,I,\cP,\mu)$,
we have $\cL \subset \DTL(\Sigma,I,J,\cP,\mu)$.

By Lemma \ref{lemma:completetl}, we can find a vertex $\hiota\colon \tau^2(S^1) \rightarrow \Sigma$ of
$\DTL(\Sigma,I,J,\cP,\mu)$ such that $\hiota|_{\tau(S^1)} = \iota$.  Let $\hcL$
be the link of $\hiota$ in $\DTL(\Sigma,I,J,\cP,\mu)$, so $\hcL \subset \cL$.  As we said
above, we have $F(L) \subset \cL$.  If $F(L) \subset \hcL$,
then we could redefine $F$ to take $x$ to $\hiota$ instead of $\iota$.  Repeating this process
would modify $F$ such that its image lies in $\DTL(\Sigma,I,J,\cP,\mu)$,
and we would be done.

Unfortunately, it might not be the case that $F(L) \subset \hcL$.  We will therefore have to
perform a more complicated modification to $F$.
Since $L \subset D^{n+}$ is the link of the vertex $x$ and $x$ does not
lie in $\partial D^{n+1}$, we have $L \cong S^n$.  Recall that
\[F(L) \subset \cL \subset \DTL(\Sigma,I,J,\cP,\mu).\]
Among all simplicial maps
\[G\colon L \rightarrow \cL \subset \DTL(\Sigma,I,J,\cP,\mu)\]
that are homotopic to $F|_L$ through maps $S^n \rightarrow \DTL(\Sigma,I,J,\cP,\mu)$,
pick the one that minimizes the total number of intersections between the
image of $\hiota\colon \tau^2(S^1) \rightarrow \Sigma$ and the images
of $G(y)\colon \tau^2(S^1) \rightarrow \Sigma$ as $y$ ranges over the vertices
of $L$.  

Below in Claim \ref{claim:disjoint} we will prove that with this
choice there are in fact no such intersections, and thus the image $G(L)$
lies in the link $\hcL$ of $\hiota$ in $\DTL(\Sigma,I,J,\cP,\mu)$.  
Letting $\ast$ denote the join, we can then replace the restriction of $F$ to the subset
\[x \ast L \cong \text{pt} \ast S^n \cong D^{n+1}\]
of $D^{n+1}$ with the following two pieces:\setlength{\parskip}{0pt}
\begin{compactitem}
\item An annular region that is a combinatorial triangulation of $S^n \times [0,1]$ both
of whose boundary components are $L$.  On this region, $F$ maps
to a homotopy from $F|_L$ to $G$.
\item The cone $x \ast L \cong \text{pt} \ast S^n \cong D^{n+1}$, on which $F$ is defined
to equal $G$ on $L$ and to take $x$ to $\hiota$.
\end{compactitem}
See the following figure, where the shaded region is the homotopy from $F|_L$ to $G$:\setlength{\parskip}{\baselineskip}\\
\centerline{\psfig{file=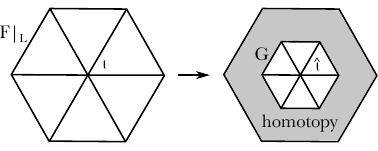,scale=100}}
This redefines $F$ such that $F(x) = \hiota$ without introducing any other vertices mapping
to vertices of $\TL(\Sigma,I,J,\cP,\mu)$, completing the proof.

It remains to prove the aforementioned claim about $G\colon L \rightarrow \cL \subset \DTL(\Sigma,I,J,\cP,\mu)$. 

\begin{claims}
\label{claim:disjoint}
For all vertices $y$ of $L$, we can choose a representative of 
$G(y)\colon \tau^2(S^1) \rightarrow \Sigma$ whose image
is disjoint from the image of $\hiota\colon \tau^2(S^1) \rightarrow \Sigma$.
\end{claims}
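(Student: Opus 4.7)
The plan is to derive a contradiction with the minimality of $G$ via a surgery argument. Since $G(y_0) \in \cL$, its representative may be chosen disjoint from $\iota(\tau(S^1))$, and consequently any intersection of (the image of) $G(y_0)$ with $\hiota$ must lie on the ``new'' arc $\beta := \hiota([1,2])$, which runs from $\gamma = \iota(S^1)$ to $q_\beta \in J$. Suppose some such intersection exists. Parameterize $\beta$ from $\gamma$ to $q_\beta$ and let $p$ denote the intersection point closest to $q_\beta$ along $\beta$; then the sub-arc $\beta_0 \subset \beta$ from $p$ to $q_\beta$ meets $G(y_0)$ only at $p$ and sits in a half-disk neighborhood of $\partial \Sigma$ near $q_\beta$.

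Using this half-disk, the local piece of $G(y_0)$ through $p$ can be pushed across $\beta_0$ and slid along $\partial \Sigma$, producing a new representative $G'(y_0)$ of the same isotopy class with strictly fewer intersections with $\hiota$. Because $I$ and $J$ are $\cP$-adjacent, the slide along $\partial \Sigma$ can be taken inside a single element $p \in \cP$, so that the relevant arcs of $\partial \Sigma$ contribute trivially in $\HH_1^{\cP}(\Sigma,\partial \Sigma)$; thus the homological data defining $\TL$ and $\DTL$ membership (the closed marking vanishing on the loop, the $\mu$-vanishing on the double tether, and the $\Gamma$-intersection condition) is preserved. A further small isotopy restores disjointness from the chosen representatives of the other vertices $G(y')$ sharing a simplex with $y_0$. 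The resulting map $G'$ (with $G(y_0)$'s representative replaced by $G'(y_0)$, and all other representatives unchanged) is still simplicial, still maps into $\cL \cap \DTL(\Sigma,I,J,\cP,\mu)$, is still homotopic to $F|_L$ through simplicial maps into $\DTL$, and has strictly fewer total intersections with $\hiota$ than $G$, contradicting the choice of $G$.

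The main obstacle is verifying that the push-across-and-slide really is an ambient isotopy of $G(y_0)$ in $\Sigma$, rather than a genuine change of isotopy class. This requires a case analysis depending on whether $p$ lies on the loop $\gamma' := G(y_0)|_{S^1}$, on the first tether, or on the second tether of $G(y_0)$; the most delicate case is when $p$ lies on $\gamma'$, where one must produce an innermost half-bigon (bounded by $\beta_0$, a sub-arc of $\gamma'$, and an arc in $\partial \Sigma$) whose interior is also disjoint from $\iota(\tau(S^1))$, so that the isotopy does not destroy the disjointness from $\iota$ guaranteed by $G(y_0) \in \cL$. This disk is extracted by an innermost argument on the complement of $G(y_0) \cup \hiota$ in $\Sigma$, using that $\beta_0$ meets $G(y_0)$ only at $p$ and does not meet $\iota$ except at its initial endpoint.
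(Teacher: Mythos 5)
There is a genuine gap here, and it is located exactly at the point you flag as "the main obstacle." Your plan is to remove the intersection point $p$ by an ambient isotopy of $G(y_0)$, pushing the offending strand along the subarc $\beta_0$ toward the endpoint $q_\beta\in J$ and "sliding along $\partial\Sigma$." But a strand of $G(y_0)$ cannot be combed past a point of $\beta$ that is anchored on $\partial\Sigma$: near $q_\beta$ the surface is a half-disk with $\beta_0$ a radius meeting the boundary, and the two sides of $\beta_0$ cannot be connected without crossing $\beta$ again, crossing $\partial\Sigma$, or travelling around the rest of the boundary component (which would create new intersections with the other tethers ending on $J$ and in any case changes the isotopy class). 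More fundamentally, the intersections of $G(y_0)$ with $\hiota([1,2])$ can be \emph{essential} for the fixed isotopy class $G(y_0)$: $\hiota([1,2])$ is an arc running from $\iota(S^1)$ to $\partial\Sigma$, and a loop or tether disjoint from $\iota(\tau(S^1))$ can intersect it in a way that no isotopy avoiding $\iota$ removes (already a core curve crossing a spanning arc of an annulus shows the local phenomenon). So the innermost half-bigon you hope to extract need not exist, and the minimality of $G$ does not produce it, because the minimum in the paper is taken over the homotopy class of the map $G$ through $\DTL(\Sigma,I,J,\cP,\mu)$, not over representatives of fixed vertices.

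This is why the paper's argument works at the \emph{other} end of the arc: it takes the intersection point closest to $\hiota(1)$ and slides the strand across the loop $\iota(S^1)$, producing a double-tethered loop $\eta'$ that is in general \emph{not} isotopic to $\eta=G(y)$ (its membership in the complex is preserved because $\hmu([\iota(S^1)])=0$ and the $\hisect_\Gamma$ condition survives). Having changed the vertex, one must then justify that the modified map $G'$ is still homotopic to $G$ through maps into $\DTL(\Sigma,I,J,\cP,\mu)$ — otherwise there is no contradiction with the minimality as defined. This is the substantial part of the paper's proof: it reduces to showing $G|_{L'}\colon L'\cong S^{n-1}\to \cL_\eta\cap\cL_{\eta'}$ is null-homotopic, and proves $\cL_\eta\cap\cL_{\eta'}$ is $(n-1)$-connected by identifying it with a complex $\MTL(\Sigma',I',J',\cP',\mu')$ on the genus $g-2$ surface obtained by cutting along $\zeta$, with $\mu'$ still supported on a symplectic subsurface. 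Your proposal asserts the analogous statement ("still homotopic to $F|_L$ through simplicial maps into $\DTL$") without argument, but once the move changes the isotopy class of a vertex — which it must, since a pure isotopy is impossible in general — this step cannot be skipped, and it is where the induction on genus and the mixed-tether connectivity theorem are genuinely used.
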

\begin{proof}[Proof of claim]
Assume otherwise.
Since the image of $G$ lies in the link $\cL$ of $\iota$, 
we can choose representatives of the $G(y)$ for $y \in L$ that
are disjoint from the image of $\iota\colon \tau(S^1) \rightarrow \Sigma$.  Pick
these representatives such that their intersections with the image of
$\hiota|_{[1,2]}\colon [1,2] \rightarrow \Sigma$ are transverse and all distinct.
Let $y$ be the vertex of $L$ such that the image of 
$\eta:=G(y)\colon \tau^2(S^1) \rightarrow \Sigma$ intersects the image of
$\hiota|_{[1,2]}\colon [1,2] \rightarrow \Sigma$ in the first of these intersection points
(enumerated from $\hiota(1)$ to $\hiota(2)$).

The argument is slightly different depending on whether this intersection point is
contained in the image under $\eta\colon \tau^2(S^1) \rightarrow \Sigma$ of
$[0,1]$ or $S^1$ or $[1,2]$.  We will give the details for when this intersection
point is contained in $\eta(S^1)$; the other cases are similar.

As in the following figure, let $\eta'\colon \tau^2(S^1) \rightarrow \Sigma$ be
the result of ``sliding'' the intersection point of $\eta$ in question
across $\iota(S^1)$ via the initial segment of $\iota([1,2])$:\\
\centerline{\psfig{file=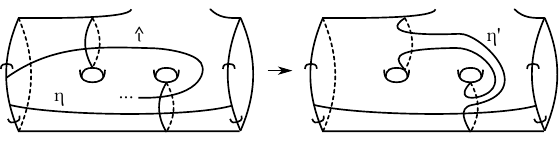,scale=100}}
The image of $\eta'$ intersects the image of $\hiota$ in one fewer place
than the image of $\eta$.  Define 
\[G'\colon L \rightarrow \cL \subset \DTL(\Sigma,I,J,\cP,\mu)\]
to be the map which equals $G$ except at the vertex $y$, where $G'(y) = \eta'$
instead of $\eta$.  It is easy to see that $G'$ is indeed a simplicial
map.  Since the image of $\eta'$ intersects the image of $\hiota$ in one fewer place than 
the image of $\eta$, to derive a contradiction to the minimality of the total
number of these intersections it is enough to prove that $G$ and $G'$ are
homotopic through maps landing in $\DTL(\Sigma,I,J,\cP,\mu)$.

Define $L' \cong S^{n-1}$ to be the link of $y$ in $S^n$, define $\cL_{\eta}$
to be the link of $\eta$ in $\DTL(\Sigma,I,J,\cP,\mu)$, and define
$\cL_{\eta'}$ to be the link of $\eta'$ in $\DTL(\Sigma,I,J,\cP,\mu)$.
We have $G|_{L'} = G'|_{L'}$, and the image $G(L') = G'(L')$ lies in
$\cL_{\eta} \cap \cL_{\eta'}$.  Below we will prove that the map
$G|_{L'}\colon L' \rightarrow \cL_{\eta} \cap \cL_{\eta'}$ can be homotoped
to a constant map.  This will imply that $G$ and $G'$ are homotopic through
maps lying in $\DTL(\Sigma,I,J,\cP,\mu)$ via a homotopy like the one
in this figure:\\
\centerline{\psfig{file=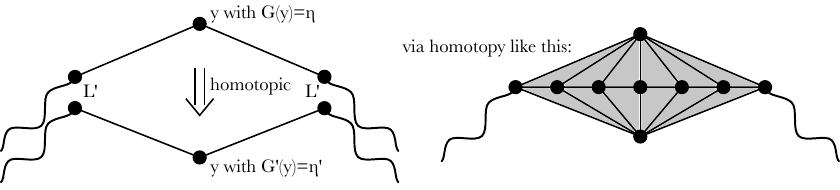,scale=100}}
This figure depicts the case $n=1$: pictured is a fragment of $L \cong S^1$, along
with the vertex $y$ and $L' \cong S^0$.

Since $L' \cong S^{n-1}$, to prove that the map
$G|_{L'}\colon L' \rightarrow \cL_{\eta} \cap \cL_{\eta'}$ can be homotoped
to a constant map, it is enough to prove that $\cL_{\eta} \cap \cL_{\eta'}$ is
$(n-1)$-connected.  Define $\zeta$ to be the union of $\eta(\tau^2(S^1))$, of
$\iota(S^1)$, and of the portion of the arc of $\iota([1,2])$ connecting
$\iota(0) \in \iota(S^1)$ to a point of $\eta(S^1)$; see here:\\
\centerline{\psfig{file=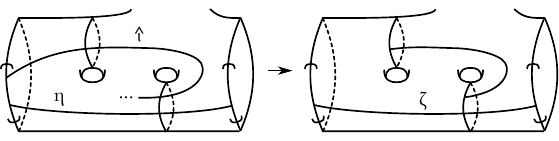,scale=100}}
The images of both
$\eta$ and $\eta'$ are contained in a regular neighborhood of $\zeta$.
Let $\Sigma'$ be the surface obtained by cutting open $\Sigma$ along $\zeta$.
The surface $\Sigma'$ thus has genus $g'=g-2$.  Moreover, an argument
identical to that in the proof of Lemma \ref{lemma:mtllinks} shows that
there exist a partition $\cP'$ of the components of $\partial \Sigma'$,
an $A$-homology marking $\mu'$ on $(\Sigma',\cP')$, and
$\cP'$-adjacent finite disjoint unions of open intervals $I',J' \subset \partial \Sigma'$ with
$I' \cap J' = \emptyset$ such that the following hold:
\setlength{\parskip}{0pt}
\begin{compactitem}
\item $\cL_{\eta} \cap \cL_{\eta'} \cong \MTL(\Sigma',I',J',\cP',\mu')$.  
\item $\mu'$ is supported on a symplectic subsurface. \setlength{\parskip}{\baselineskip}
\end{compactitem}
Our inductive hypothesis thus says that $\cL_{\eta} \cap \cL_{\eta'} \cong \MTL(\Sigma',I',J',\cP',\mu')$ is $n'$-connected for
\begin{align*}
n' &= \min\{n-1,\frac{g'-(2\rank(A)+3)}{\rank(A)+2}\} \\
&= \min\{n-1,\frac{g-(2\rank(A)+3)}{\rank(A)+2} - \frac{2}{\rank(A)+2}\} \\
&\geq \min\{n-1,n-\frac{2}{\rank(A)+2}\} = n-1,
\end{align*}
as desired.
\end{proof}

This completes the proof of Theorem \ref{theorem:vanishdoubletetheredloopcon}.
\end{proof}

\subsection{The complex of order-preserving double-tethered vanishing loops}
\label{section:orderdoubletetheredloops}

We finally come to the complex of order-preserving double-tethered vanishing loops.

\p{Complex of order-preserving double-tethered loops}
Let $\Sigma \in \Surf$ be a surface and let $I,J \subset \partial \Sigma$
be disjoint open intervals.  Orient $I$ such that $\Sigma$ lies on its right
and $J$ such that $\Sigma$ lies on its left.  These two orientations induces
two natural orderings on simplices of $\DTL(\Sigma,I,J)$.  The {\em complex
of order-preserving $(I,J)$-double-tethered loops}, denoted $\ODTL(\Sigma,I,J)$,
is the subcomplex of $\DTL(\Sigma,I,J)$ consisting of simplices such that these
two orderings agree.  Here is an example of such a simplex:\\
\centerline{\psfig{file=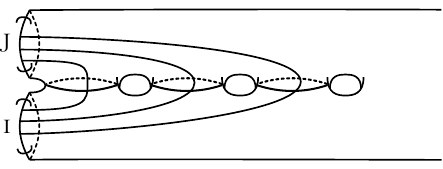,scale=100}}
The complex $\ODTL(\Sigma,I,J)$ was introduced by Hatcher--Vogtmann 
\cite{HatcherVogtmannTethers}, who proved that if $\Sigma$ has genus $g$ then
(like $\TL(\Sigma,I)$ and $\DTL(\Sigma,I,J)$) it is $(g-3)/2$-connected (see
\cite[Proposition 5.3]{HatcherVogtmannTethers}).

\p{Complex of order-preserving double-tethered vanishing loops}
Let $\mu$ be an $A$-homology marking on $(\Sigma,\cP) \in \PSurf$ and let $I,J \subset \partial \Sigma$
be disjoint $\cP$-adjacent open intervals in $\partial \Sigma$.  Define
the complex $\ODTL(\Sigma,I,J,\cP,\mu)$ to be the intersection of
$\DTL(\Sigma,I,J,\cP,\mu)$ with $\ODTL(\Sigma,I,J)$.  The orientations on $I$ and
$J$ endow $\ODTL(\Sigma,I,J,\cP,\mu)$ with a natural ordering on its simplices, and
thus with the structure of a semisimplicial set.

\p{High connectivity}
The following theorem asserts that $\ODTL(\Sigma,I,J,\cP,\mu)$ has the same
connectivity that Theorem \ref{theorem:vanishdoubletetheredloopcon} says
that $\DTL(\Sigma,I,J,\cP,\mu)$ enjoys.

\begin{theorem}
\label{theorem:vanishorderdoubletetheredloopcon}
Let $\mu$ be an $A$-homology marking on $(\Sigma,\cP) \in \PSurf$ that
is supported on a symplectic subsurface.  Let $I,J \subset \partial \Sigma$
be $\cP$-adjacent disjoint open intervals 
and let $g$ be the genus of $\Sigma$.  Then
$\ODTL(\Sigma,I,J,\cP,\mu)$ is $\frac{g-(2\rank(A)+3)}{\rank(A)+2}$-connected.
\end{theorem}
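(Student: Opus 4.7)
The plan is to combine Theorem \ref{theorem:vanishdoubletetheredloopcon} with Proposition \ref{proposition:avoidbad} and induct on the genus of $\Sigma$. Set $n = \frac{g-(2\rank(A)+3)}{\rank(A)+2}$, $X := \DTL(\Sigma,I,J,\cP,\mu)$, and $Y := \ODTL(\Sigma,I,J,\cP,\mu)$. Theorem \ref{theorem:vanishdoubletetheredloopcon} gives that $X$ is $n$-connected, so it suffices to show that the pair $(X,Y)$ is $(n+1)$-connected.

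To apply Proposition \ref{proposition:avoidbad}, define $\cB$ to be the set of simplices $\sigma$ of $X$ for which every vertex of $\sigma$ is an endpoint of some order-reversing edge of $\sigma$ --- that is, an edge whose $I$-ordering and $J$-ordering disagree. Conditions (i) and (ii) are immediate, and this forces every $\sigma \in \cB$ to have dimension $k \geq 1$. The substantive work is condition (iii): for each $k$-dimensional $\sigma \in \cB$, the local complex $G(X,\sigma,\cB)$ must be $(n-k)$-connected. Unwinding definitions, $G(X,\sigma,\cB)$ consists of the simplices in the link of $\sigma$ in $X$ that lie in $\ODTL$ and whose every edge to $\sigma$ is order-preserving.

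Cutting $\Sigma$ open along the double-tethered loops of $\sigma$, exactly as in the proof of Lemma \ref{lemma:mtllinks}, yields $(\Sigma',\cP') \in \PSurf$ of genus $g' = g-(k+1)$ equipped with an $A$-homology marking $\mu'$ supported on a symplectic subsurface and refined $\cP'$-adjacent finite disjoint unions of open intervals $I',J' \subset \partial\Sigma'$. The vertices of $\sigma$ partition $I$ into intervals $I_0,\ldots,I_{k+1}$ and $J$ into intervals $J_0,\ldots,J_{k+1}$; letting $\pi$ be the permutation comparing the $I$- and $J$-orderings of $\sigma$, a short combinatorial calculation shows that the order-preservation constraint with $\sigma$ forces each vertex of $\sigma'$ to have initial point in some $I_a$ and terminal point in the matched $J_a$ for $a$ in the non-empty proper subset $S := \{a : \pi \text{ preserves } \{a,\ldots,k\}\} \subsetneq \{0,\ldots,k+1\}$ (properness using that $\sigma \in \cB$ forces $\pi \neq \mathrm{id}$). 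The hard part will be to identify $G(X,\sigma,\cB)$ up to homotopy with an order-preserving double-tethered vanishing loop complex on $\Sigma'$ with tethering data $\tilde I := \bigsqcup_{a\in S} I_a$ and $\tilde J := \bigsqcup_{a\in S} J_a$; the obstruction is that a generic vertex of $\ODTL(\Sigma',\tilde I,\tilde J,\cP',\mu')$ can have its initial point in $I_a$ and terminal point in $J_b$ with $a \neq b$, which $G$ forbids. This can be handled by a tether-sliding retraction using arcs in $\partial \Sigma'$ on which $\mu'$ vanishes, whose existence is guaranteed by $\mu'$ being supported on a symplectic subsurface (cf.\ the combing argument in Lemma \ref{lemma:completetl}).

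With $G(X,\sigma,\cB)$ so identified, the inductive hypothesis --- applied to a statement of the theorem extended to allow $I,J$ to be finite disjoint unions of open intervals, which the induction simultaneously proves --- gives connectivity at least
\[\frac{g'-(2\rank(A)+3)}{\rank(A)+2} = n - \frac{k+1}{\rank(A)+2} \geq n-k,\]
where the last inequality uses $k \geq 1$. This verifies condition (iii) and completes the inductive step. The base case (small $g$) reduces to showing $\ODTL(\Sigma,I,J,\cP,\mu)$ is nonempty whenever $\DTL(\Sigma,I,J,\cP,\mu)$ is, which follows from adapting Lemma \ref{lemma:completetl} to produce a double-tethered vanishing loop whose initial and terminal points sit in order-preserving position on $I$ and $J$.
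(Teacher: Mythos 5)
Your overall strategy (deduce the ordered case from Theorem \ref{theorem:vanishdoubletetheredloopcon} via Proposition \ref{proposition:avoidbad}, cutting along bad simplices and inducting on genus) is reasonable; the paper itself gives no details here, simply asserting that Hatcher--Vogtmann's derivation of the connectivity of $\ODTL(\Sigma,I,J)$ from that of $\DTL(\Sigma,I,J)$ in \cite[Proposition 5.3]{HatcherVogtmannTethers} goes through word-for-word with the vanishing conditions carried along. The genuine gap in your proposal is the verification of hypothesis (iii). With your choice of $\cB$ (simplices all of whose vertices lie in some order-reversing edge), the good link $G(X,\sigma,\cB)$ is, as you correctly compute, the ``matched-gap'' complex: vertices must start in $I_a$ and end in $J_a$ for $a$ in your set $S$, with order-preservation imposed only within each gap. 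This is not $\ODTL(\Sigma',\tilde I,\tilde J,\cP',\mu')$, and your proposed bridge --- a ``tether-sliding retraction'' moving a terminal endpoint from $J_b$ to the matched $J_a$ --- does not exist as a simplicial map: sliding an endpoint along $\partial \Sigma'$ must pass the tether endpoints of the other vertices of a simplex lying in the intervening gaps, so disjointness is destroyed and simplices are not sent to simplices; there is no canonical choice compatible across all simplices. (If tether endpoints could be slid freely along the boundary, the ordered and unordered complexes would coincide up to homotopy for trivial reasons and no argument would be needed at all; the vanishing of $\mu'$ on boundary arcs only controls homology classes, not embeddedness or disjointness.) Moreover, even granting such a retraction, you would be applying an inductive hypothesis for a multi-interval, matched-pair ordered complex that you never define and whose inductive step you never check; it is not the naive extension of the theorem to disjoint unions of intervals, since a vertex starting in $I_a$ must end in $J_a$.

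The standard repair is to change $\cB$ rather than enlarge the induction. Let $\cB$ consist of the simplices of $X=\DTL(\Sigma,I,J,\cP,\mu)$ whose $I$-minimal vertex is not their $J$-minimal vertex. Hypotheses (i) and (ii) still hold, bad simplices still have dimension $k\geq 1$, and now a short check shows that $\sigma'$ lies in $G(X,\sigma,\cB)$ exactly when $\sigma'$ is order-preserving and every vertex of $\sigma'$ precedes every vertex of $\sigma$ in both orders, i.e.\ all initial points lie in the initial gap $I_0$ and all terminal points in the initial gap $J_0$. Thus $G(X,\sigma,\cB)\cong\ODTL(\Sigma',I_0,J_0,\cP',\mu')$ with $I_0,J_0$ single $\cP'$-adjacent intervals on the cut surface $\Sigma'$ of genus $g-k-1$, and with $\mu'$ still supported on a symplectic subsurface (as in Lemmas \ref{lemma:mtllinks} and \ref{lemma:vanishdoubleloopstab}); your genus count then closes the induction exactly as you wrote it, with no multi-interval generalization needed.
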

\begin{proof}
In \cite[Proposition 5.3]{HatcherVogtmannTethers}, Hatcher--Vogtmann show
how to derive the fact that $\ODTL(\Sigma,I,J)$ is $\frac{g-3}{2}$-connected
from the fact that $\DTL(\Sigma,I,J)$ is $\frac{g-3}{2}$-connected.  Their argument works
word-for-word to prove this theorem.
\end{proof}

\p{Stabilizers}
In the remainder of this section, we will be interested in the case where $I$ and $J$
are open intervals in distinct components $\partial_I$ and $\partial_J$ 
of $\partial \Sigma$ (much of what we say will also hold if $\partial_I = \partial_J$,
but the pictures would be a bit different).  The $\Mod(\Sigma)$-stabilizer of a simplex 
$\sigma = \{\iota_0,\ldots,\iota_k\}$ of $\ODTL(\Sigma,I,J)$ is the mapping class group 
of the complement $\Sigma'$ of an open regular neighborhood of
\[\partial_I \cup \partial_J \cup \iota_0\left(\tau^2\left(S^1\right)\right) \cup \cdots \cup \iota_k\left(\tau^2\left(S^1\right)\right).\]
We will call this the {\em stabilizer subsurface} of $\sigma$.  See here:\\
\centerline{\psfig{file=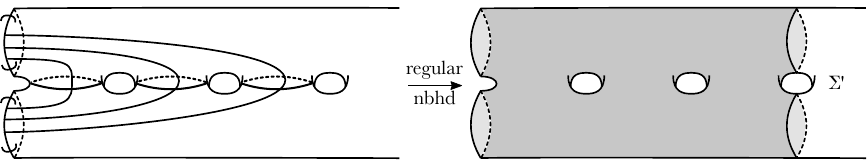,scale=100}}
If $\partial_I$ and $\partial_J$ are $\cP$-adjacent, then the 
surface $\Sigma'$ is a $\cP$-simple subsurface of $\Sigma$, and thus has
an induced partition $\cP'$.  The following lemma
records some of its properties if $\sigma$ is a simplex of $\ODTL(\Sigma,I,\cP,\mu)$ for
an $A$-homology marking $\mu$ on $(\Sigma,\cP)$.

\begin{lemma}
\label{lemma:vanishdoubleloopstab}
Let $\mu$ be an $A$-homology marking on $(\Sigma,\cP)$ and let $I,J$ be open intervals
in distinct $\cP$-adjacent components of $\partial \Sigma$.  Let $\sigma$ be a simplex
of $\ODTL(\Sigma,I,J,\cP,\mu)$, let $\Sigma'$ be its stabilizer subsurface, and let $\cP'$
be the induced partition of $\partial \Sigma'$.
Then there exists an $A$-homology marking $\mu'$ on $(\Sigma',\cP')$
such that $\mu$ is the stabilization of $\mu'$.  Moreover, if $\mu$ is supported
on a symplectic subsurface then so is $\mu'$.
\end{lemma}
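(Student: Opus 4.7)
The plan is to verify the two claims in turn, using Lemma~\ref{lemma:destabilizemarking} for the existence of $\mu'$ and Lemma~\ref{lemma:destabilizesymplectic} for the symplectic-support statement.

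For the existence of $\mu'$: by Lemma~\ref{lemma:destabilizemarking} it suffices to show that $\mu$ vanishes on every class of $\HH_1^{\cP}(\Sigma,\partial\Sigma)$ supported on $N := \overline{\Sigma \setminus \Sigma'}$. The subsurface $N$ deformation retracts onto the $1$-complex $\partial_I \cup \partial_J \cup \bigcup_i \iota_i(\tau^2(S^1))$, and since the boundary classes $[\partial_I], [\partial_J]$ vanish in relative homology, every such class is a $\Z$-linear combination of the loop classes $[\gamma_i]$ (with $\gamma_i := \iota_i|_{S^1}$) and the arc classes $[\alpha_i]$ (with $\alpha_i := \iota_i|_{[0,2]}$ connecting the $\cP$-adjacent components $\partial_I$ and $\partial_J$). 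The definition of $\DTL(\Sigma,I,J,\cP,\mu)$ gives $\hmu([\gamma_i]) = 0$ and $\mu([\alpha_i]) = 0$, and Lemma~\ref{lemma:destabilizemarking} then produces $\mu'$.

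For the symplectic-support claim: by Lemma~\ref{lemma:destabilizesymplectic} it suffices to verify $\hisect_q(\ker(\hmu)) = \RZ[q]$, where $q$ is the set of interior boundary components of $\Sigma'$. The order-preserving structure of $\ODTL$, together with an Euler-characteristic count (giving $\chi(N) = -2(k+1)$ for $k = \dim \sigma$), identifies $\partial N = \partial_I \cup \partial_J \cup c_1 \cup c_2$ with $q = \{c_1, c_2\}$. Since $[\partial N] = 0$ in $\HH_1(N)$ while $[\partial_I], [\partial_J]$ are trivial in $\HH_1(\Sigma,\partial\Sigma)$, we obtain $[c_1] + [c_2] = 0$ in $\HH_1(\Sigma,\partial\Sigma)$. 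Thus $\RZ[q] \cong \Z$ is generated by $c_1 - c_2$, and $\hisect_q(x) = \omega(x, c_1)(c_1 - c_2)$ for every $x \in \HH_1(\Sigma)$.

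It therefore remains to exhibit $x \in \ker(\hmu)$ with $\omega(x, c_1) \neq 0$. The class $[c_1]$, being represented inside $N$, lies in the image of $\HH_1(N, \partial_I \cup \partial_J) \to \HH_1(\Sigma,\partial\Sigma)$, so it is a $\Z$-linear combination of the generators $[\gamma_j]$ and $[\alpha_j]$; the fact that each $\gamma_j$ is non-separating (built into the definition of $\DTL$) forces $[c_1] \neq 0$. To realize this pairing on $\ker(\hmu)$, we combine the elements $x_j \in \ker(\hmu)$ with $\omega(x_j, \gamma_i) = \delta_{ij}$ supplied by the defining property $\hisect_{\Gamma}(\ker(\hmu)) = \Z[\Gamma]$ of $\DTL$ with classes in $\ker(\mu)$ pairing appropriately against the arc generators $[\alpha_j]$, which exist because the hypothesis that $\mu$ is supported on a symplectic subsurface yields $\isect_{\cP}(\ker(\mu)) = \RZ_{\cP}$ by Lemma~\ref{lemma:identifysupport}. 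The principal obstacle is the topological identification of $N$ under the order-preserving hypothesis and the explicit expression of $[c_1]$ in the basis $\{[\gamma_j], [\alpha_j]\}$; once these are in hand, the construction of $x$ reduces to bookkeeping.
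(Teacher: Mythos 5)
Your skeleton is the same as the paper's (the paper proves this lemma by repeating the argument of Lemma \ref{lemma:mtllinks}): use Lemma \ref{lemma:destabilizemarking} for the existence of $\mu'$ and Lemma \ref{lemma:destabilizesymplectic} for the symplectic-support claim. Your first half is fine: classes of $\HH_1^{\cP}(\Sigma,\partial\Sigma)$ supported on the removed neighborhood $N$ are $\Z$-combinations of the $[\gamma_i]$, the $[\alpha_i]$, and boundary classes, all of which $\mu$ kills by the definition of $\DTL(\Sigma,I,J,\cP,\mu)$. The identification $q=\{c_1,c_2\}$ and the formula $\hisect_q(x)=\omega_{\Sigma}(x,[c_1])\,(c_1-c_2)$ are also correct (though note that an Euler characteristic count alone does not determine the number of boundary components of $N$; you need to track how the tether bands merge boundary circles, or iterate the codimension-one case as in Lemma \ref{lemma:mtllinks}).

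The genuine gap is the final step, which you leave open and then propose to patch in a way that does not work. First, the ``explicit expression of $[c_1]$'' you defer is exactly the easy point the paper uses: $c_1$ is the band sum of the boundary component containing $I$ with the loops $\gamma_0,\dots,\gamma_k$ (and $c_2$ the band sum of the $J$-side component with the same loops, oppositely oriented), so in $\HH_1(\Sigma,\partial\Sigma)$ one has $[c_1]=\sum_j[\gamma_j]=-[c_2]$; since boundary classes pair trivially with absolute classes, $\omega_{\Sigma}(x,[c_1])=\sum_j\omega_{\Sigma}(x,[\gamma_j])$ for $x\in\HH_1(\Sigma)$, and the arc classes $[\alpha_j]$ never enter. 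The element $x\in\ker(\hmu)$ with $\hisect_{\Gamma}(x)=\gamma_0$, supplied by the condition $\hisect_{\Gamma}(\ker(\hmu))=\Z[\Gamma]$ built into $\DTL(\Sigma,I,J,\cP,\mu)$, then gives $\hisect_q(x)=c_1-c_2$, which generates $\RZ[q]$, finishing the verification (you need to hit a generator, so ``$\omega(x,c_1)\neq 0$'' is not by itself enough). Second, your proposed supplement --- classes in $\ker(\mu)$ pairing against the arc generators, obtained from Lemma \ref{lemma:identifysupport} --- is not admissible for this purpose: the hypothesis of Lemma \ref{lemma:destabilizesymplectic} concerns $\hisect_q(\ker(\hmu))$ with $\ker(\hmu)\subset\HH_1(\Sigma)$, so only absolute classes may be fed into it, whereas the elements of $\ker(\mu)\subset\HH_1^{\cP}(\Sigma,\partial\Sigma)$ you invoke are relative (arc) classes. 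Thus the support hypothesis on $\mu$ is not needed at this point at all; it is needed only where the paper uses it (e.g.\ Lemma \ref{lemma:doubletetheredtran} and Lemma \ref{lemma:completetl}), while here the band-sum computation and the defining condition of the complex already close the argument.
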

\begin{proof}
Identical to that of Lemma \ref{lemma:mtllinks}.
\end{proof}

\p{Transitivity}
The final fact we need about these complexes is as follows.

\begin{lemma}
\label{lemma:doubletetheredtran}
Let $\mu$ be an $A$-homology marking on $(\Sigma,\cP) \in \PSurf$ that is supported on a symplectic subsurface and
let $I,J$ be open intervals in distinct $\cP$-adjacent components of
$\partial \Sigma$.  The group
$\Torelli(\Sigma,\cP,\mu)$ acts transitively on the $k$-simplices
of $\ODTL(\Sigma,I,J,\cP,\mu)$ if the genus of $\Sigma$ is at least $2\rank(A)+3+k$.
\end{lemma}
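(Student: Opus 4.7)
The plan is to induct on $k$, closely paralleling the proof of Lemma \ref{lemma:vanishtetheredtran}. For the base case $k=0$, Theorem \ref{theorem:vanishorderdoubletetheredloopcon} gives that $\ODTL(\Sigma,I,J,\cP,\mu)$ is connected when $g \geq 2\rank(A)+3$, so it suffices to produce an element of $\Torelli(\Sigma,\cP,\mu)$ carrying $\iota$ to $\iota'$ whenever $\{\iota,\iota'\}$ is an edge. For such an edge, let $\Sigma'$ be its stabilizer subsurface, so that $\Sigma\setminus\Interior(\Sigma')$ is a regular neighborhood of $\partial_I\cup\partial_J\cup\iota(\tau^2(S^1))\cup\iota'(\tau^2(S^1))$, a surface of genus $2$ with a controlled number of boundary arcs. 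The change of coordinates principle from \cite[\S 1.3.2]{FarbMargalitPrimer} then yields some $f\in\Mod(\Sigma)$ supported on $\Sigma\setminus\Interior(\Sigma')$ taking $\iota$ to something isotopic to $\iota'$ (where the isotopy slides tether endpoints along $I$ and $J$; the order-preserving hypothesis guarantees that such an $f$ exists).

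The main verification is that this $f$ lies in $\Torelli(\Sigma,\cP,\mu)$. Since $f$ is supported on $\Sigma\setminus\Interior(\Sigma')$, for any $x\in\HH_1^{\cP}(\Sigma,\partial\Sigma)$ the class $f(x)-x$ is represented by the image in $\HH_1^{\cP}(\Sigma,\partial\Sigma)$ of a class supported on $\Sigma\setminus\Interior(\Sigma')$. This image is generated by the loops $[\gamma_\iota],[\gamma_{\iota'}]$, the arcs $[\alpha_\iota],[\alpha_{\iota'}]$ from $\partial_I$ to $\partial_J$, and the (null-homologous) boundary components $[\partial_I],[\partial_J]$. By the definition of $\DTL(\Sigma,I,J,\cP,\mu)$ we have $\hmu([\gamma_\iota])=\hmu([\gamma_{\iota'}])=0$ and $\mu([\alpha_\iota])=\mu([\alpha_{\iota'}])=0$; so $\mu$ vanishes on every such class, hence $\mu(f(x))=\mu(x)$ and $f\in\Torelli(\Sigma,\cP,\mu)$.

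For the inductive step, suppose $g\geq 2\rank(A)+3+k$ with $k\geq 1$, and let $\sigma=\{\iota_0,\ldots,\iota_k\}$, $\sigma'=\{\iota'_0,\ldots,\iota'_k\}$ be two $k$-simplices, enumerated using the common order along $I$ and $J$. By the base case we may assume $\iota_0=\iota'_0$. Then $\sigma_1=\sigma\setminus\{\iota_0\}$ and $\sigma'_1=\sigma'\setminus\{\iota_0\}$ are $(k-1)$-simplices in the link of $\iota_0$. By Lemma \ref{lemma:vanishdoubleloopstab}, this link is isomorphic to $\ODTL(\Sigma'',I'',J'',\cP'',\mu'')$ where $\Sigma''$ is the stabilizer subsurface of $\iota_0$ (of genus $g-1$), $\mu''$ is an $A$-homology marking on $(\Sigma'',\cP'')$ supported on a symplectic subsurface whose stabilization is $\mu$, and the $\Torelli(\Sigma,\cP,\mu)$-stabilizer of $\iota_0$ is $\Torelli(\Sigma'',\cP'',\mu'')$. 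Cutting along $\iota_0$ splits $I$ (resp.\ $J$) into two subintervals, and the order-preserving condition together with the enumeration of $\sigma,\sigma'$ ensures that $\sigma_1,\sigma'_1$ both lie in the $\ODTL$ built from the ``after $\iota_0$'' subintervals $I'',J''$. Since $g-1\geq 2\rank(A)+3+(k-1)$, the inductive hypothesis applied to $(\Sigma'',\cP'',\mu'')$ produces the required element of $\Torelli(\Sigma'',\cP'',\mu'')\subset\Torelli(\Sigma,\cP,\mu)$.

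The main obstacle is the Torelli verification in the base case: one must confirm that the complement $\Sigma\setminus\Interior(\Sigma')$ supports no partitioned homology classes on which $\mu$ is nonzero. The key point is that the $\cP$-adjacency of $\partial_I$ and $\partial_J$ and the ordering of tether endpoints (which is forced by the ODTL structure) leave no room for ``stray'' arc classes beyond combinations of $[\alpha_\iota],[\alpha_{\iota'}]$ modulo $[\gamma_\iota],[\gamma_{\iota'}]$, all of which are already in $\ker\mu$ by the vanishing conditions built into $\DTL(\Sigma,I,J,\cP,\mu)$.
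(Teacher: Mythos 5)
Your argument is correct, and its skeleton (induction on $k$, base case from connectedness of $\ODTL(\Sigma,I,J,\cP,\mu)$ via Theorem \ref{theorem:vanishorderdoubletetheredloopcon} plus transitivity on adjacent vertices, inductive step via links and destabilized markings) is the same as the paper's; the difference is in how the base case gets Torelli membership. The paper destabilizes $\mu$ to the stabilizer subsurface $\Sigma'$ of the edge, uses Lemma \ref{lemma:vanishdoubleloopstab} to get that the destabilized marking is supported on a small symplectic piece $S\subset\Sigma'$, and then takes $f$ supported on $\Sigma\setminus\Interior(S)$; membership in $\Torelli(\Sigma,\cP,\mu)$ is then automatic (any class moved by $f$ differs from itself by something supported off $S$, which dies under the destabilization), and change of coordinates is applied in a large-genus subsurface where it is painless. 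You instead insist that $f$ be supported on the small complement $N=\Sigma\setminus\Interior(\Sigma')$ and check by hand that $\mu$ kills every class of $\HH_1^{\cP}(\Sigma,\partial\Sigma)$ supported there; your list of generators ($[\gamma_\iota],[\gamma_{\iota'}],[\alpha_\iota],[\alpha_{\iota'}]$ and the boundary classes, which vanish in relative homology) is right, and this is in effect a hands-on proof of the first assertion of Lemma \ref{lemma:vanishdoubleloopstab}, in the spirit of the paper's own proof of Lemma \ref{lemma:vanishtetheredtran}. What your route demands extra is the change-of-coordinates statement inside the small piece: a mapping class supported on $N$ carrying $\iota$ to (an isotope of) $\iota'$. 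This is more delicate than the paper's base case needs, but it is exactly what the paper itself asserts when verifying the fourth condition of the stability machine in the proof of Proposition \ref{proposition:doubleboundarystab} (where support on $N$ is forced by the commutation requirement), so it is legitimate at the paper's level of rigor; the order-preserving hypothesis is indeed what makes the two configurations equivalent rel the untouched boundary. One small correction: $N$ is not a genus $2$ surface; it is diffeomorphic to $\Sigma_1^4$ (genus $1$ with four boundary components, two of them $\partial_I$ and $\partial_J$), as the paper notes in the proof of Proposition \ref{proposition:doubleboundarystab}. Also, Lemma \ref{lemma:vanishdoubleloopstab} as stated only provides the destabilized marking and its symplectic support, not literally the identification of the $\Torelli(\Sigma,\cP,\mu)$-stabilizer with $\Torelli(\Sigma'',\cP'',\mu'')$; but for your inductive step you only need that $\Torelli(\Sigma'',\cP'',\mu'')$ maps into the stabilizer of $\iota_0$, which follows from Lemma \ref{lemma:partialtorellifun}, so this is harmless.
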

\begin{proof}
Just like in the proof of Lemma \ref{lemma:vanishtetheredtran}, this will be by induction on $k$.
In fact, once we prove the base case $k=0$ the inductive step is handled
exactly like Lemma \ref{lemma:vanishtetheredtran}, so we will only give the details for $k=0$.

So assume that the genus of $\Sigma$ is at least $2 \rank(A)+3$.  Theorem \ref{theorem:vanishorderdoubletetheredloopcon}
then implies that $\ODTL(\Sigma,I,J,\cP,\mu)$ is connected, so to prove that $\Torelli(\Sigma,\cP,\mu)$ acts
transitively on its vertices it is enough to prove that if $\iota_0,\iota_1\colon \tau^2(S^1) \rightarrow \Sigma$
are vertices that are joined by an edge, then there exists some $f \in \Torelli(\Sigma,\cP,\mu)$ such that
$f(\iota_0) = \iota_1$.  Let $\Sigma'$ be the stabilizer subsurface of $\{\iota_0,\iota_1\}$ and let
$\cP'$ be the induced partition of $\partial \Sigma'$.  By Lemma \ref{lemma:vanishdoubleloopstab}, there
exists an $A$-homology marking $\mu'$ on $(\Sigma',\cP')$ that is supported on a symplectic subsurface such that
$\mu$ is the stabilization of $\mu'$ to $(\Sigma,\cP)$.  Let $S \cong \Sigma_h^1$ be a subsurface of $\Sigma'$
on which $\mu'$ is supported.  

The ``change of coordinates principle'' from 
\cite[\S 1.3.2]{FarbMargalitPrimer} implies that there is a mapping class $f'$ on $\Sigma' \setminus \Interior(S)$
with $f'(\iota_0) = \iota_1$.  Let $f \in \Mod(\Sigma)$ be the result of extending $f'$ over $S$ by the identity.
Since $\mu$ is supported on $S$, we have $f \in \Torelli(\Sigma,\cP,\mu)$ and $f(\iota_0) = \iota_1$, as desired. 
\end{proof}

\subsection{The double boundary stabilization proof}
\label{section:doubleboundarystabproof}

We now prove Proposition \ref{proposition:doubleboundarystab}.

\begin{proof}[Proof of Proposition \ref{proposition:doubleboundarystab}]
We start by recalling the statement and introducing some notation.
Let $\mu$ be an $A$-homology marking on $(\Sigma,\cP) \in \PSurf$ that is 
supported on a symplectic subsurface.  Let $(\Sigma,\cP) \rightarrow (\Sigma',\cP')$
be a double boundary stabilization and let $\mu'$ be the stabilization
of $\mu$ to $(\Sigma',\cP')$.
Setting
\[c = \rank(A)+2 \quad \text{and} \quad d = 2\rank(A)+2,\]
we want to prove that the induced map $\HH_k(\Torelli(\Sigma,\cP,\mu)) \rightarrow \HH_k(\Torelli(\Sigma',\cP',\mu'))$
is an isomorphism if the genus of $\Sigma$ is at least $ck + d$ and a surjection
if the genus of $\Sigma$ is $ck + d-1$.
We will prove this using Theorem \ref{theorem:stabilitymachine}.  This requires fitting
$\Torelli(\Sigma,\cP,\mu) \rightarrow \Torelli(\Sigma',\cP',\mu')$ into an increasing
sequence of group $\{G_n\}$ and constructing appropriate simplicial complexes.

As notation, let $(S_g,\cP_g) = (\Sigma,\cP)$ and $\mu_g = \mu$ and $(S_{g+1},\cP_{g+1}) = (\Sigma',\cP')$ and $\mu_{g+1} = \mu'$.  In a double boundary stabilization like
$(S_g,\cP_g) \rightarrow (S_{g+1},\cP_{g+1})$, two boundary components of 
$\Sigma_0^4$ are glued to two boundary components of $S_g$ to form $S_{g+1}$.  We will
call the two boundary components of $S_g$ to which $\Sigma_0^4$ is glued the
{\em attaching components} and the two components of 
$\partial \Sigma_0^4 \cap \partial S_{g+1}$ the {\em new components}.

By assumption, $\mu_g$ is supported on a genus $h$ symplectic subsurface for some $h$, i.e., 
there exists a $\PSurf$-morphism $(T,\cP_T) \rightarrow (S_g,\cP_g)$ with 
$T \cong \Sigma_h^1$ and an $A$-homology marking $\mu_T$ on $(T,\cP_T)$ such that 
$\mu_g$ is the stabilization of $\mu_T$ to $(S_g,\cP_g)$.  Applying Corollary 
\ref{corollary:destabilizeone} to $\mu_T$, we can assume without loss of generality 
that $h \leq \rank(A)$.  We can then factor 
$(T,\cP_T) \rightarrow (S_g,\cP_g)$ into an increasing sequence of subsurfaces
\[(T,\cP_T) \rightarrow (S_h,\cP_h) \rightarrow (S_{h+1},\cP_{h+1}) \rightarrow \cdots \rightarrow (S_g,\cP_g)\]
with the following properties:
\setlength{\parskip}{0pt}
\begin{compactitem}
\item[(i)] each $S_r$ has genus $r$, and 
\item[(ii)] each $(S_r,\cP_r) \rightarrow (S_{r+1},\cP_{r+1})$ is a double boundary
stabilization, and
\item[(iii)] for $r>h$, the attaching components of $(S_r,\cP_r) \rightarrow (S_{r+1},\cP_{r+1})$ equal the new components of $(S_{r-1},\cP_{r-1}) \rightarrow (S_r,\cP_r)$.
\end{compactitem}
See here:\setlength{\parskip}{\baselineskip}\\
\centerline{\psfig{file=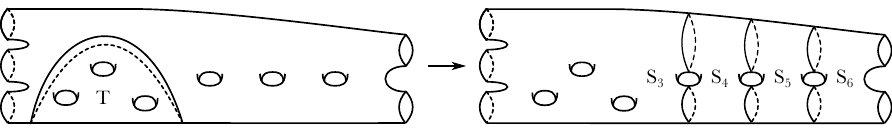,scale=100}}
This can then be continued indefinitely to form an increasing sequence of subsurfaces
\[(T,\cP_T) \rightarrow (S_h,\cP_h) \rightarrow \cdots \rightarrow (S_g,\cP_g) \rightarrow (S_{g+1},\cP_{g+1}) \rightarrow (S_{g+2},\cP_{g+2}) \rightarrow \cdots\]
satisfying (i)-(iii).  Here $(S_{g+1},\cP_{g+1})$ is as defined above.  For $r \geq h$, let
$\mu_r$ be the stabilization of $\mu_T$ to $(S_r,\mu_r)$.  This agrees with our previous
definitions of $\mu_g$ and $\mu_{g+1}$.

We thus have an increasing sequence of groups
\[\Torelli(S_h,\cP_h,\mu_h) \subset \Torelli(S_{h+1},\cP_{h+1},\mu_{h+1}) \subset \Torelli(S_{h+2}^1,\cP_{h+2},\mu_{h+2}) \subset \cdots.\]
For $r \geq h$, let $I_r, J_r \subset \partial S_r$ 
be open intervals in the two attaching components 
for $(S_r,\cP_r) \rightarrow (S_{r+1},\cP_{r+1})$.
Theorem \ref{theorem:vanishorderdoubletetheredloopcon} 
says that $\ODTL(S_r,I_r,J_r,\cP_r,\mu_r)$ is
$\frac{r-(d+1)}{c}$-connected (where $c$ and $d$ are as defined in the first paragraph).

For $n \geq 0$, let
\[G_n = \Torelli(S_{d+n},\cP_{d+n},\mu_{d+n}) \quad \text{and} \quad
X_n = \ODTL(S_{d+n},I_{d+n},J_{d+n},\cP_{d+n},\mu_{d+n}).\]
For this to make sense, we must have $d+n \geq h$, which follows from
\[d+n = 2\rank(A)+2+n \geq \rank(A) \geq h.\]
We thus have an increasing sequence of groups
\[G_0 \subset G_1 \subset G_2 \subset \cdots\]
with $G_n$ acting on $X_n$.  The indexing convention here is chosen such that $X_1$ is $0$-connected and
more generally such that $X_n$ is $\frac{n-1}{c}$-connected, as in Theorem \ref{theorem:stabilitymachine}.
Our goal is to prove that the map $\HH_k(G_{n-1}) \rightarrow \HH_k(G_{n})$ is an isomorphism
for $n \geq ck+1$ and a surjection for $n = ck$, which will follow from Theorem \ref{theorem:stabilitymachine}
once we check its conditions:
\setlength{\parskip}{0pt}
\begin{compactitem}
\item The first is that $X_n$ is $\frac{n-1}{c}$-connected, which follows
from Theorem \ref{theorem:vanishorderdoubletetheredloopcon}.
\item The second is that for $0 \leq i < n$, the group $G_{n-i-1}$ is the $G_n$-stabilizer
of some $i$-simplex of $X_n$, which follows from Lemma \ref{lemma:vanishdoubleloopstab}
via the following picture:
\end{compactitem}
\centerline{\psfig{file=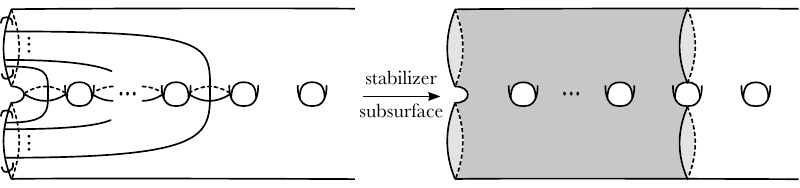,scale=100}}
\begin{compactitem}
\item The third is that for all $0 \leq i < n$, the group $G_n$ acts
transitively on the $i$-simplices of $X_n$, which follows from
Lemma \ref{lemma:doubletetheredtran}.
\item The fourth is that for all $n \geq c+1$ and all $1$-simplices $e$ of $X_n$
whose boundary consists of vertices $v$ and $v'$, there exists some $\lambda \in G_n$
such that $\lambda(v) = v'$ and such that $\lambda$ commutes with all elements of $(G_n)_e$.
Let $S'$ be the stabilizer subsurface of $e$, so by Lemma \ref{lemma:vanishdoubleloopstab} the
stabilizer $(G_n)_e$ consists of mapping classes supported on $S'$.
The surface $S_{d+n} \setminus \Interior(S')$ is diffeomorphic
to $\Sigma_1^4$ (as in the picture above), and in particular is connected.
The ``change of coordinates principle'' from \cite[\S 1.3.2]{FarbMargalitPrimer} implies that
we can find a mapping class $\lambda$ supported on
on $S_{d+n} \setminus \Interior(S')$ taking the double-tethered loop $v$ to $v'$.
Lemma \ref{lemma:vanishdoubleloopstab} implies that $\mu_{d+n}$ can be destabilized
to an $A$-homology marking on $S'$ (with respect to an appropriate partition) that
is supported on a symplectic subsurface.  This implies that
$\lambda$ lies in $G_n = \Torelli(S_{d+n},\cP_{d+n},\mu_{d+n})$ and commutes with $(G_n)_e$.\qedhere
\end{compactitem}
\setlength{\parskip}{\baselineskip}
\end{proof}

\section{Non-stability}
\label{section:closed}

This section concerns situations where homological stability does {\em not} occur.
The highlights are the proofs of Theorems \ref{maintheorem:closed} and
\ref{theorem:counterexample}.

\p{Disc-pushing subgroup}
Let $\Sigma \in \Surf$ be a surface and let $\partial$ be a component of $\partial \Sigma$.
Let $\hSigma$ be the result of gluing a disc to $\partial$.  The embedding
$\Sigma \hookrightarrow \hSigma$ induces a homomorphism
$\Mod(\Sigma) \rightarrow \Mod(\hSigma)$, which is easily seen to 
be surjective.  Its kernel, denoted $\DP(\partial)$, is the {\em disc-pushing subgroup}, 
and is isomorphic
to the fundamental group of the unit tangent bundle $U\hSigma$ of $\hSigma$;
see \cite[\S 4.2.5]{FarbMargalitPrimer}.  Elements of $\DP(\partial)$ ``push'' $\partial$ 
around paths in $\hSigma$ while allowing it to rotate.

\p{Disc-pushing and partial Torelli}
If $\partial$ is the single component of $\partial \Sigma_g^1$, then
$\DP(\partial) \subset \Mod(\Sigma_g^1)$ is contained in the Torelli group
$\Torelli(\Sigma_g^1)$, and thus is also contained in $\Torelli(\Sigma_g^1,\mu)$
for any $A$-homology marking $\mu$ on $\Sigma_g^1$.  
The following lemma generalizes this to the
partial Torelli groups on surfaces with multiple boundary components.

\begin{lemma}
\label{lemma:dptorelli}
Let $\mu$ be an $A$-homology marking on $(\Sigma,\cP) \in \PSurf$ and let
$\partial$ be a component of $\partial \Sigma$ such that $\{\partial\} \in \cP$.
Then $\DP(\partial) \subset \Torelli(\Sigma,\cP,\mu)$.
\end{lemma}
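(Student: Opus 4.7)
The plan is to apply the push-pull formula of Lemma \ref{lemma:pushpull} to a well-chosen $\PSurf$-morphism. Since $\{\partial\} \in \cP$, the set $\hat\cP := \cP \setminus \{\{\partial\}\}$ is a partition of the components of $\partial \hSigma$, and the inclusion $\iota\colon \Sigma \hookrightarrow \hSigma$ defines a morphism $(\Sigma,\cP) \to (\hSigma, \hat\cP)$ in $\PSurf$: the only component of $\hSigma \setminus \Interior(\Sigma)$ is the capping disc $D$, for which $B_D = \{\partial\}$ is contained in $\{\partial\} \in \cP$, and $B_D' = \emptyset$ makes the second compatibility condition vacuous.

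By the very definition of the disc-pushing subgroup, every $f \in \DP(\partial)$ satisfies $\iota_*(f) = 1 \in \Mod(\hSigma)$. Applying Lemma \ref{lemma:pushpull} to such an $f$ and an arbitrary $\hat x \in \HH_1^{\hat\cP}(\hSigma, \partial \hSigma)$ yields
\[
f(\iota^*(\hat x)) \;=\; \iota^*(\iota_*(f)(\hat x)) \;=\; \iota^*(\hat x),
\]
so $f$ fixes every element in the image of $\iota^*\colon \HH_1^{\hat\cP}(\hSigma,\partial \hSigma) \to \HH_1^{\cP}(\Sigma,\partial \Sigma)$. The entire proof therefore reduces to the claim that this $\iota^*$ is surjective.

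To verify surjectivity I would examine the generators of the target. Any closed loop $\alpha \subset \Sigma$ also represents a class in $\HH_1^{\hat\cP}(\hSigma,\partial \hSigma)$, and since $\alpha$ is already disjoint from $\Interior(D)$ it maps under $\iota^*$ to $[\alpha] \in \HH_1^{\cP}(\Sigma,\partial \Sigma)$. An arc $\beta$ between two distinct $\cP$-adjacent boundary components cannot have $\partial$ as an endpoint (because $\{\partial\}$ is isolated in $\cP$), so $\beta$ likewise defines an element of $\HH_1^{\hat\cP}(\hSigma,\partial \hSigma)$ mapping to $[\beta]$. The only delicate case is an arc $\beta$ from $\partial$ back to $\partial$ through $\Interior(\Sigma)$: here one closes $\beta$ up by a chord $\gamma \subset D$, forming a closed loop $\tilde\beta \subset \hSigma$, and $\iota^*([\tilde\beta]) = [\beta]$ because $\gamma$ lies in $D$ and is killed under the excision $\HH_1(\hSigma, D \cup \partial \hSigma) \cong \HH_1(\Sigma, \partial \Sigma)$ that defines $\iota^*$. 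This exhausts all generators, so $\iota^*$ is surjective, $f$ acts as the identity on all of $\HH_1^{\cP}(\Sigma,\partial \Sigma)$, and in particular $\mu \circ f = \mu$. The only real insight needed is recognizing the right $\PSurf$-morphism to feed into the push-pull formula; everything else is routine unpacking of definitions.
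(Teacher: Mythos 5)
Your proposal is correct and follows essentially the same route as the paper: cap off $\partial$ to get the $\PSurf$-morphism $(\Sigma,\cP)\to(\hSigma,\cP\setminus\{\{\partial\}\})$, note $\DP(\partial)=\ker(\iota_*)$, apply the push-pull formula of Lemma \ref{lemma:pushpull}, and reduce to surjectivity of $\iota^*$, which the paper asserts in one sentence (using that arcs ending on $\partial$ contribute nothing new since $\{\partial\}\in\cP$) and you verify slightly more explicitly on generators.
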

\begin{proof}
Let $f \in \DP(\partial)$ and let $x \in \HH_1^{\cP}(\Sigma,\partial \Sigma)$.  It
is enough to prove that $f(x)=x$.  Let $\hSigma$ be the result of gluing a disc 
to $\partial$ and let $\hcP = \cP \setminus \{\{\partial\}\}$.  
We thus have a $\PSurf$-morphism $\iota\colon (\Sigma,\cP) \rightarrow (\hSigma,\hcP)$.  
Since the homology classes of arcs connecting $\partial$ to other components
of $\partial \Sigma$ do not contribute to $\HH_1^{\cP}(\Sigma,\partial \Sigma)$, the map
$\iota^{\ast}\colon \HH_1^{\hcP}(\hSigma,\partial \hSigma) \rightarrow \HH_1^{\cP}(\Sigma,\partial \Sigma)$
is a surjection (in fact, it is an isomorphism, but we will not need this).  We can
thus write $x = \iota^{\ast}(\hx)$ for some $\hx \in \HH_1^{\hcP}(\hSigma,\partial \hSigma)$.
Since
\[f \in \DP(\partial) = \ker(\Mod(\Sigma) \stackrel{\iota_{\ast}}{\longrightarrow} \Mod(\hSigma)),\]
we clearly have
$\iota_{\ast}(f)(\hx) = \hx$, so Lemma \ref{lemma:pushpull} implies that
\[x = \iota^{\ast}(\hx) = \iota^{\ast}(\iota_{\ast}(f)(\hx)) = f(\iota^{\ast}(\hx)) = f(x),\]
as desired.
\end{proof}

\p{Johnson homomorphism}
Fix some $g \geq 2$ and let $H = \HH_1(\Sigma_g^1)$.  The Johnson 
homomorphism \cite{JohnsonHomo} is an important homomorphism
$\tau\colon \Torelli(\Sigma_g^1) \rightarrow \wedge^3 H$.  Letting $\partial$
be the single component of $\partial \Sigma_g^1$, it interacts with
the disc-pushing subgroup $\DP(\partial) \cong \pi_1(U\Sigma_g)$ 
in the following way.  Let $\omega \in \wedge^2 H$ be the {\em symplectic element}, i.e.,
the element corresponding to the algebraic intersection pairing under
the isomorphism
\[(\wedge^2 H)^{\ast} \cong \wedge^2 H^{\ast} \cong \wedge^2 H,\]
where we identify $H$ with its dual $H^{\ast}$ via Poincar\'{e} duality.  We then
have an injection $H \hookrightarrow \wedge^3 H$ taking $h \in H$ to $h \wedge \omega$.
The restriction of $\tau$ to $\DP(\partial)$ is the composition
\[\DP(\partial) \cong \pi_1(U\Sigma_g) \longrightarrow \pi_1(\Sigma_g) \longrightarrow H \stackrel{- \wedge \omega}{\longrightarrow} \wedge^3 H.\]

\p{Symplectic nondegeneracy}
Let $\mu$ be an $A$-homology marking on $(\Sigma,\cP) \in \PSurf$.  The {\em $\mu$-symplectic
element} $\omega_{\mu} \in \wedge^2 A$ is as follows.  Let $H$ be the quotient of 
$\HH_1(\Sigma)$ by the subgroup generated by the loops around the boundary components.  
Since $H$ is the first homology group of the closed surface obtained by gluing discs
to all components of $\partial \Sigma$, there is a symplectic element
$\omega \in \wedge^2 H$.  The closed marking $\hmu\colon \HH_1(\Sigma) \rightarrow A$
factors through a homomorphism $H \rightarrow A$, and $\omega_{\mu}$ is the image
of $\omega \in \wedge^2 H$ under the induced map $\wedge^2 H \rightarrow \wedge^2 A$.
We then have a map $A \rightarrow \wedge^3 A$ taking $a \in A$ to $a \wedge \omega_{\mu}$.
We will say that $\mu$ is {\em symplectically nondegenerate} if this map is nonzero.

\begin{example}
\label{example:nondegenerate}
Let $V$ be a symplectic subspace of $\HH_1(\Sigma_g^1)$, so 
$\HH_1(\Sigma_g^1) = V \oplus V^{\perp}$, and let $\mu\colon \HH_1(\Sigma_g^1) \rightarrow V$
be the orthogonal projection.  We claim that $\mu$ is symplectically nondegenerate
if and only if $V$ has genus at least $2$.  Indeed,
$\omega_{\mu} \in \wedge^2 V$ equals the symplectic
element arising from the symplectic form on $V$, and the map
$V \rightarrow \wedge^3 V$ taking $v \in V$ to $v \wedge \omega_{\mu}$ is nonzero
precisely when $V$ has genus at least $2$.  We remark that if $V$ has genus $0$ or $1$
then $\wedge^3 V = 0$, so the map $V \rightarrow \wedge^3 V$ is automatically the zero map.
\end{example}

\p{Partial Johnson homomorphism}
The homomorphism given by the following lemma is a version of the Johnson homomorphism
for the partial Torelli groups.

\begin{lemma}
\label{lemma:partialjohnson}
Let $\mu$ be an symplectically nondegenerate $A$-homology marking on 
$(\Sigma,\cP) \in \PSurf$ and let
$\partial$ be a component of $\partial \Sigma$ such that
$\{\partial\} \in \cP$ (and thus by Lemma \ref{lemma:dptorelli} such that
$\DP(\partial) \subset \Torelli(\Sigma,\cP,\mu)$).  Then there exists
a homomorphism $\tau\colon \Torelli(\Sigma,\cP,\mu) \rightarrow \HH_3(A)$
whose restriction to $\DP(\partial)$ is nontrivial.
\end{lemma}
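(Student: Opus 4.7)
The plan is to use the Broaddus--Farb--Putman extension of the Johnson homomorphism as an input, compute its restriction to $\DP(\partial)$ via a direct mapping-torus calculation, and invoke symplectic nondegeneracy to exhibit a nontrivial image.

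First, I would invoke the construction of \cite{BroaddusFarbPutman} (or build it directly) to produce $\tau\colon \Torelli(\Sigma,\cP,\mu) \to \HH_3(A)$. The hypothesis $\{\partial\} \in \cP$ ensures via the proof of Lemma \ref{lemma:dptorelli} that $\iota^{\ast}\colon \HH_1^{\hcP}(\hSigma,\partial\hSigma) \to \HH_1^{\cP}(\Sigma,\partial\Sigma)$ is an isomorphism, and thus that $\mu$ corresponds to a marking $\hmu$ on $(\hSigma,\hcP)$. Concretely, $\hmu$ yields a classifying map $c\colon\hSigma \to K(A,1)$; an element $f \in \Torelli(\Sigma,\cP,\mu)$ descends to $\widehat{f} \in \Mod(\hSigma)$ preserving $c$ up to homotopy, so $c$ extends over the mapping torus $M_{\widehat{f}}$. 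Define $\tau(f) \in \HH_3(K(A,1)) = \HH_3(A)$ to be the pushforward of the fundamental class of $M_{\widehat{f}}$; this is a homomorphism by the usual additivity of mapping tori under composition.

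Second, I would compute $\tau$ on $\DP(\partial)$. An element $f \in \DP(\partial)$ satisfies $\widehat{f} = 1 \in \Mod(\hSigma)$, but the relevant ``twisting'' is recorded by the loop $\gamma \in \pi_1(\hSigma)$ corresponding to $f$ under the Birman isomorphism $\DP(\partial)\cong\pi_1(U\hSigma)$. The correct interpretation is that $\tau(f)$ is the pushforward of the fundamental class of the total space of the $\hSigma$-bundle over $S^1$ corresponding to $\gamma$ inside $K(A,1)$, which by the Pontryagin product structure equals $\hmu_{\ast}[\hSigma] \cdot \hmu_{\ast}([\gamma]) \in \HH_3(A)$. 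Identifying $\hmu_{\ast}[\hSigma] \in \HH_2(A)$ with the image of $\omega_\mu \in \wedge^2 A$ under $\wedge^2 A \to \HH_2(A)$, we conclude that $\tau(f)$ equals the image of $\hmu([\gamma])\wedge\omega_\mu$ under the antisymmetrization map $\wedge^3 A \to \HH_3(A)$.

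Third and finally, I would invoke symplectic nondegeneracy: by hypothesis there is some $a \in A$ with $a \wedge \omega_\mu \neq 0$ in $\wedge^3 A$. After replacing $A$ with the image of $\hmu$ (so that the marking is surjective; nondegeneracy persists because $\omega_\mu$ already lives in this image, and any witnessing $a$ may be assumed to as well), pick a loop $\gamma\subset\hSigma$ with $\hmu([\gamma])=a$ and let $f\in\DP(\partial)$ be the resulting point-push. The main obstacle I anticipate is verifying that the image of $a\wedge\omega_\mu$ in $\HH_3(A)$ is nonzero: the natural map $\wedge^3 A \to \HH_3(A)$ has a kernel when $A$ has torsion, so nonvanishing in $\wedge^3 A$ does not immediately imply nonvanishing in $\HH_3(A)$. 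I would address this by choosing a rank-$3$ free direct summand of the torsion-free quotient of $A$ containing the supports of both $a$ and $\omega_\mu$ (such a summand is supplied by the nondegeneracy hypothesis), restricting along the corresponding projection, and using $\HH_3 \cong \wedge^3$ for free abelian groups together with the naturality of $\tau$ in $A$.
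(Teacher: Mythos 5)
Your construction of $\tau$ has a genuine gap at exactly the crucial point. As written, $\tau(f)$ is the pushforward of the fundamental class of the mapping torus of $\widehat{f} \in \Mod(\hSigma)$. But when $\Sigma$ has boundary components other than $\partial$, the surface $\hSigma$ (only $\partial$ capped) still has boundary, so $M_{\widehat{f}}$ carries no fundamental class to push forward; and when $\hSigma$ is closed, extending $c$ over $M_{\widehat{f}}$ requires choosing a homotopy $c \simeq c \circ \widehat{f}$, and the set of such choices is a torsor over $A$, with the resulting class in $\HH_3(A)$ changing by exactly the elements $a \wedge \omega_{\mu}$ --- the very classes you need to detect. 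In particular, any recipe depending only on $\widehat{f}$ must vanish on $\DP(\partial)$, where $\widehat{f}=1$; this is precisely the ``closed surface trouble'' of Theorem \ref{maintheorem:closed} and Appendix \ref{appendix:closed}. Your sentence ``the correct interpretation is that $\tau(f)$ is the pushforward \ldots corresponding to $\gamma$'' asserts the needed refinement rather than constructing it: to record $\gamma$ the invariant must be built from $f$ acting on $\Sigma$ relative to its boundary/basepoint data (this is what makes the Broaddus--Farb--Putman construction nontrivial), and one must then still verify the homomorphism property and the stated formula on $\DP(\partial)$. The paper avoids all of this: it caps off every boundary component except $\partial$, checks that $\omega_{\mu'}=\omega_{\mu}$ so symplectic nondegeneracy persists, quotes \cite[Theorem 5.8]{BroaddusFarbPutman} for $\tau'$ together with its known restriction to the disc-pushing subgroup, and then composes with $\Torelli(\Sigma,\cP,\mu) \rightarrow \Torelli(\Sigma',\cP',\mu')$, using that $\DP(\partial)$ surjects onto $\DP'(\partial)$.

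Your final step is also flawed. Symplectic nondegeneracy gives some $a$ with $a \wedge \omega_{\mu} \neq 0$ in $\wedge^3 A$, but it supplies no free summand: $A$ may be entirely torsion (e.g.\ $A = \HH_1(\Sigma_g^1;\Z/\ell)$ for level structures, a central example in this paper), in which case the torsion-free quotient is zero and your reduction loses everything. What is actually needed is that the natural map $\wedge^3 A \rightarrow \HH_3(A)$ does not kill $a \wedge \omega_{\mu}$; the paper gets this from the Broaddus--Farb--Putman statement itself, whose homomorphism restricts on the disc-pushing subgroup to $\gamma \mapsto \hmu([\gamma]) \wedge \omega_{\mu}$ followed by the injection $\wedge^3 A \hookrightarrow \HH_3(A)$ (cf.\ the remark that $\HH_3(A)$ contains $\wedge^3 A$). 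So you should either invoke that injectivity for finitely generated abelian groups or argue nonvanishing in $\HH_3(A)$ by some other correct means; the rank-$3$ free summand argument does not do it.
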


\begin{remark}
The target group $\HH_3(A)$ contains $\wedge^3 A$, though sometimes it is a bit larger.
\end{remark}

\begin{proof}[Proof of Lemma \ref{lemma:partialjohnson}]
Let $\Sigma'$ be the result of gluing discs to all components of $\partial \Sigma$
except for $\partial$, let $\cP' = \{\{\partial\}\}$, and let $\mu'$ be the
stabilization of $\mu$ to $(\Sigma',\cP')$.  From their definitions, it follows that the
$\mu'$-symplectic element $\omega_{\mu'} \in \wedge^2 A$ is the same as the
$\mu$-symplectic element $\omega_{\mu} \in \wedge^2 A$, so $\mu'$ is symplectically
nondegenerate.
In \cite[Theorem 5.8]{BroaddusFarbPutman}, Broaddus--Farb--Putman construct a
homomorphism 
\[\tau'\colon \Torelli(\Sigma',\cP',\mu') \rightarrow \HH_3(A).\]
We remark that their notation is a little different from ours -- the
group $W$ in the statement of \cite[Theorem 5.8]{BroaddusFarbPutman} should
be taken to be $W = \ker(\mu')$.  Let $\DP'(\partial)$ be the disc-pushing
subgroup of $\Torelli(\Sigma',\cP',\mu')$, let $\hSigma'$ be the result of gluing
a disc to the component $\partial$ of $\partial \Sigma'$, and let
$\hmu'\colon \HH_1(\Sigma') \rightarrow A$ be the closed marking associated to $\mu'$.
One of the characteristic properties
of $\tau'$ is that its restriction to $\DP'(\partial)$ is
\[\DP'(\partial) = \pi_1(U\hSigma') \rightarrow \pi_1(\hSigma') \rightarrow \HH_1(\hSigma') = \HH_1(\Sigma') \stackrel{\hmu'}{\longrightarrow} A \stackrel{-\wedge \omega_{\mu'}}{\longrightarrow} \wedge^3 A \hookrightarrow \HH_3(A).\]
In particular, since $\mu'$ is symplectically nondegenerate 
the restriction of $\tau'$ to $\DP'(\partial)$
is nontrivial.  Let $\tau\colon \Torelli(\Sigma,\cP,\mu) \rightarrow \HH_3(A)$ be
the composition of $\tau'$ with the map 
$\Torelli(\Sigma,\cP,\mu) \rightarrow \Torelli(\Sigma',\cP',\mu')$.  The restriction of
this latter map to $\DP(\partial)$ is a surjection $\DP(\partial) \rightarrow \DP'(\partial)$,
so the restriction of $\tau$ to $\DP(\partial)$ is nontrivial, as desired.
\end{proof}

\p{Closing up surfaces and nonstability}
In light of Example \ref{example:nondegenerate} above, the following theorem
generalizes Theorem \ref{maintheorem:closed}.

\begin{theorem}
\label{theorem:noninjective}
Let $\mu$ be a symplectically nondegenerate $A$-homology marking on 
$(\Sigma,\cP)$, let $(\Sigma,\cP) \rightarrow (\Sigma',\cP')$ be a
$\PSurf$-morphism, and let $\mu'$ be the stabilization of $\mu$ to $(\Sigma',\cP')$.
Assume that there exists a component $\partial$ of $\partial \Sigma$ with
$\{\partial\} \in \cP$ whose image in $\Sigma'$ bounds a disc.  Then
the map $\HH_1(\Torelli(\Sigma,\cP,\mu)) \rightarrow \HH_1(\Torelli(\Sigma',\cP',\mu'))$
is not injective.
\end{theorem}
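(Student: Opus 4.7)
The plan is to exhibit an explicit element in the kernel of the map on $\HH_1$ by combining the partial Johnson homomorphism from Lemma~\ref{lemma:partialjohnson} with the observation that disc-pushing elements around $\partial$ die in $\Mod(\Sigma')$ under our hypothesis. Since $\{\partial\} \in \cP$, both Lemma~\ref{lemma:dptorelli} and Lemma~\ref{lemma:partialjohnson} apply, so $\DP(\partial) \subset \Torelli(\Sigma,\cP,\mu)$ and we get a homomorphism $\tau\colon \Torelli(\Sigma,\cP,\mu) \to \HH_3(A)$ whose restriction to $\DP(\partial)$ is nontrivial.

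First I would check that $\DP(\partial)$ lies in the kernel of the map $\Torelli(\Sigma,\cP,\mu) \to \Torelli(\Sigma',\cP',\mu')$. The hypothesis that the image of $\partial$ in $\Sigma'$ bounds a disc means that the embedding $\Sigma \hookrightarrow \Sigma'$ factors through the closed-up surface $\hSigma$ obtained by gluing a disc to $\partial \subset \Sigma$. On mapping class groups this factorization reads
\[\Mod(\Sigma) \longrightarrow \Mod(\hSigma) \longrightarrow \Mod(\Sigma'),\]
and by definition $\DP(\partial)$ is the kernel of the first arrow. Hence every element of $\DP(\partial)$ becomes trivial in $\Mod(\Sigma')$, and in particular in $\Torelli(\Sigma',\cP',\mu')$.

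Now I would choose $f \in \DP(\partial)$ with $\tau(f) \neq 0$; such an $f$ exists by Lemma~\ref{lemma:partialjohnson}, which used that $\mu$ is symplectically nondegenerate. Since $\HH_3(A)$ is abelian, $\tau$ factors through a homomorphism $\HH_1(\Torelli(\Sigma,\cP,\mu)) \to \HH_3(A)$, so the class $[f] \in \HH_1(\Torelli(\Sigma,\cP,\mu))$ is nonzero. On the other hand, by the previous paragraph $f$ maps to the identity in $\Torelli(\Sigma',\cP',\mu')$, so $[f]$ maps to zero in $\HH_1(\Torelli(\Sigma',\cP',\mu'))$. This exhibits a nonzero element of the kernel and completes the argument.

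There is really no major obstacle here once the framework is in place: the symplectic nondegeneracy assumption is exactly what guarantees nontriviality of $\tau|_{\DP(\partial)}$, the partition condition $\{\partial\} \in \cP$ is what lets us talk about $\DP(\partial)$ inside the partial Torelli group, and the disc-bounding hypothesis is what kills the chosen element in the target. The only point that deserves some care is the factorization $\Mod(\Sigma) \to \Mod(\hSigma) \to \Mod(\Sigma')$, which relies on interpreting ``the image of $\partial$ bounds a disc in $\Sigma'$'' as saying that the component of $\Sigma' \setminus \Interior(\Sigma)$ adjacent to $\partial$ is a disc, so that extending by the identity across this disc is unambiguous.
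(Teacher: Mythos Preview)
Your argument is correct and follows essentially the same route as the paper's proof: invoke Lemma~\ref{lemma:dptorelli} and Lemma~\ref{lemma:partialjohnson}, observe that $\DP(\partial)$ lies in the kernel of $\Torelli(\Sigma,\cP,\mu) \to \Torelli(\Sigma',\cP',\mu')$, and conclude that the map on abelianizations is not injective. You supply a bit more detail than the paper on the factorization through $\Mod(\hSigma)$ and make the witnessing element explicit, but the underlying idea is identical.
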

\begin{proof}
Lemma \ref{lemma:dptorelli} implies that $\DP(\partial) \subset \Torelli(\Sigma,\cP,\mu)$,
and Lemma \ref{lemma:partialjohnson} implies that there exists a homomorphism
from $\Torelli(\Sigma,\cP,\mu)$ to an abelian group whose restriction to
$\DP(\partial)$ is nontrivial.  Since 
\[\DP(\partial) \subset \ker(\Torelli(\Sigma,\cP,\mu) \rightarrow \Torelli(\Sigma',\cP',\mu')),\]
this implies that the induced map on abelianizations is not injective, as desired.
\end{proof}

\p{General nonstability}
We now prove Theorem \ref{theorem:counterexample}.

\begin{proof}[Proof of Theorem \ref{theorem:counterexample}]
We start by recalling what we must prove.
Let $\mu$ be a symplectically nondegenerate $A$-homology marking
on $(\Sigma,\cP) \in \PSurf$ that is supported on a symplectic subsurface.
Let $(\Sigma,\cP) \rightarrow (\Sigma',\cP')$
be a non-partition-bijective $\PSurf$-morphism and let $\mu'$ be the stabilization
of $\mu$ to $(\Sigma',\cP')$.  Assume that the genus of $\Sigma$
is at least $3\rank(A)+4$.  We must prove that the
induced map $\HH_1(\Torelli(\Sigma,\cP,\mu)) \rightarrow \HH_1(\Torelli(\Sigma',\cP',\mu'))$
is not an isomorphism.  
We will ultimately prove this by reducing it to Theorem \ref{theorem:noninjective} above.

Identify $\Sigma$ with its image in $\Sigma'$. 
We start with the following reduction.  Recall that
for a surface $S$, the discrete partition of
the components of $\partial S$ is 
$\Set{$\{\partial\}$}{$\partial$ a component of $\partial S$}$.

\begin{claim}
We can assume without loss of generality that $\cP$ and $\cP'$ are the discrete
partitions of the components of $\partial \Sigma$ and $\partial \Sigma'$ and 
that the genera of $\Sigma$ and $\Sigma'$ are the same.
\end{claim}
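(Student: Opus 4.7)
The plan is to use Theorem \ref{theorem:stableboundary} via two partition-bijective destabilizations to replace $(\Sigma,\cP)\to(\Sigma',\cP')$ by a non-partition-bijective $\PSurf$-morphism of the form required by the claim, while inducing isomorphisms on $H_1$ of the partial Torelli groups at both ends.

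I would build this as a commutative square: the original morphism on the bottom, the reduced one $(\widetilde\Sigma,\widetilde\cP)\to(\widetilde\Sigma',\widetilde\cP')$ on top, and two partition-bijective vertical arrows $(\widetilde\Sigma,\widetilde\cP)\to(\Sigma,\cP)$ and $(\widetilde\Sigma',\widetilde\cP')\to(\Sigma',\cP')$ with $\widetilde\cP,\widetilde\cP'$ discrete. The left vertical is produced by the procedure from Case \ref{casea:2} of the proof of Theorem \ref{theorem:stableboundary}: since $\mu$ is supported on a symplectic subsurface, one can find $\widetilde\Sigma\subset\Sigma$ of the same genus as $\Sigma$ with $\widetilde\cP$ discrete such that, for each $p\in\cP$, the component of $\Sigma\setminus\Interior(\widetilde\Sigma)$ containing $p$ is a genus-zero piece with exactly one new boundary in $\partial\widetilde\Sigma$. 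The right vertical is constructed analogously, but with the additional constraint $\widetilde\Sigma\subset\widetilde\Sigma'\subset\Sigma'$, and with the capping pieces of $\widetilde\Sigma'$ in $\Sigma'\setminus\Interior(\widetilde\Sigma')$ absorbing both the discretization of $\cP'$ and the excess genus of $\Sigma'$ over $\Sigma$. This is possible because a capping piece realizing a single partition element of $\cP'$ may be chosen to have arbitrary nonnegative genus, so the surplus genus can be distributed among these pieces; in particular, $\widetilde\Sigma'$ can be taken to have the same genus as $\widetilde\Sigma$.

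The destabilized markings $\widetilde\mu,\widetilde\mu'$ on the top row exist by Lemma \ref{lemma:destabilizemarking}, are supported on symplectic subsurfaces by Lemma \ref{lemma:destabilizesymplectic}, and remain symplectically nondegenerate because the $\mu$-symplectic element depends only on the closed marking of the closed-up surface, which is canonically identified for $\widetilde\Sigma$ and $\Sigma$ (same genus). Theorem \ref{theorem:stableboundary} then gives $H_1$-isomorphisms on both verticals, so it suffices to prove the theorem for the top row. Non-partition-bijectivity of the top morphism follows from that of the original: if $p\in\cP$ witnesses the failure of partition-bijectivity for $(\Sigma,\cP)\to(\Sigma',\cP')$, then the unique boundary $\tilde\partial\in\partial\widetilde\Sigma$ arising from $p$ lies in a component of $\widetilde\Sigma'\setminus\Interior(\widetilde\Sigma)$ assembled from the capping piece of $\widetilde\Sigma$ over $p$, the portion of $\Sigma'\setminus\Interior(\Sigma)$ bounding $p$, and the capping pieces of $\widetilde\Sigma'$ over the $\cP'$-elements that $p$ reaches. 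This component meets no boundary of $\widetilde\Sigma'$ in the capped-off case and meets multiple singletons of $\widetilde\cP'$ in the splitting case, so partition-bijectivity fails on top as well.

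The main obstacle is the geometric compatibility of the two destabilizations: one must perform both constructions simultaneously inside $\Sigma'$ so that the capping pieces for the source sit inside those for the target, the excess genus of $\Sigma'$ is absorbed to equalize the genera of $\widetilde\Sigma$ and $\widetilde\Sigma'$, and the non-partition-bijective structure of the original morphism survives to the top row. Once this arrangement is carried out, the claim is an immediate consequence of Theorem \ref{theorem:stableboundary} applied to the two verticals, noting that $\widetilde\Sigma$ has the same genus as $\Sigma$ and so satisfies the genus hypothesis in the statement of Theorem \ref{theorem:counterexample}.
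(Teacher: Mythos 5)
Your overall strategy (destabilize both sides along partition-bijective morphisms, apply Theorem \ref{theorem:stableboundary} to the two vertical maps, and check that non-partition-bijectivity survives to the top row) is in the spirit of the paper's reduction, and your left vertical is exactly the Case \ref{casea:2} construction. But the key assertion that the right vertical can be arranged so that $\widetilde\Sigma'$ has the same genus as $\widetilde\Sigma$ is false in general, and this is where the argument breaks. If $(\widetilde\Sigma',\widetilde\cP')\rightarrow(\Sigma',\cP')$ is a $\PSurf$-morphism with $\widetilde\cP'$ discrete, then each component $T$ of $\Sigma'\setminus\Interior(\widetilde\Sigma')$ meets $\partial\widetilde\Sigma'$ in exactly one circle, and partition-bijectivity forces the set of components of $T\cap\partial\Sigma'$ to be a (in particular nonempty) element of $\cP'$; moreover $T$ is connected and disjoint from $\widetilde\Sigma$, so it lies in a single component of $\Sigma'\setminus\Interior(\widetilde\Sigma)$. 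Consequently any component of $\Sigma'\setminus\Interior(\Sigma)$ containing no component of $\partial\Sigma'$ --- which is precisely the first way partition-bijectivity of the original morphism can fail --- must lie entirely inside $\widetilde\Sigma'$, and whatever genus it carries (including genus created by attaching a planar piece along several boundary circles of a single $p\in\cP$) cannot be pushed into any capping piece of $\widetilde\Sigma'$. Concretely, let $\Sigma\cong\Sigma_g^2$ with $\cP$ discrete and let $\Sigma'$ be obtained by gluing a copy of $\Sigma_2^1$ to $\partial_2$, so $\cP'=\{\{\partial_1\}\}$: every admissible $\widetilde\Sigma'$ for your right vertical has genus at least $g+2$, so the equal-genus part of the claim is not achieved by your square, and the later steps of the proof (which need all complementary pieces to be planar with one attaching circle) do not apply to your top row.

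The paper avoids this by equalizing genus in the opposite direction: after discretizing $\cP'$ by an open capping of the target and discretizing $\cP$ by the symplectic-support destabilization of the source, it factors the map as $(\Sigma,\cP)\rightarrow(\Sigma^{(4)},\cP^{(4)})\rightarrow(\Sigma',\cP')$, where $\Sigma^{(4)}\subset\Sigma'$ is obtained by deleting only discs and boundary collars, so that the genus of $\Sigma^{(4)}$ equals that of $\Sigma'$, the first map is partition-bijective (so Theorem \ref{theorem:stableboundary} applies to it), and the second map is still not partition-bijective; the trapped genus is thus absorbed into the enlarged source rather than into capping pieces of the target. A smaller point: Lemma \ref{lemma:destabilizesymplectic} does not literally apply to your verticals, since the complements of $\widetilde\Sigma$ and $\widetilde\Sigma'$ are in general disconnected and not $\cP$-simple; it is cleaner to define $\widetilde\mu$ and $\widetilde\mu'$ directly as stabilizations of the marking on the symplectic support $T\subset\widetilde\Sigma$, which makes support on a symplectic subsurface (and the compatibility with $\mu$ and $\mu'$) automatic.
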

\begin{proof}[Proof of claim]
We do this in three steps:
\setlength{\parskip}{0pt}
\begin{compactitem}
\item First, let $(\Sigma',\cP') \rightarrow (\Sigma'',\cP'')$ be an open capping (see
\S \ref{section:reductioncap}; this implies in particular that $\cP''$
is the discrete partition of $\partial \Sigma''$) 
and let $\mu''$ be the stabilization of $\mu'$
to $(\Sigma'',\cP'')$.  Since open cappings are partition-bijective, Theorem
\ref{theorem:stableboundary} implies that the map
$\HH_1(\Torelli(\Sigma',\cP',\mu')) \rightarrow \HH_1(\Torelli(\Sigma'',\cP'',\mu''))$
is an isomorphism.  The composition 
\[(\Sigma,\cP) \rightarrow (\Sigma',\cP') \rightarrow (\Sigma'',\cP'')\]
is still not partition-bijective, so replacing $(\Sigma',\cP')$ and $\mu'$
with $(\Sigma'',\cP'')$ and $\mu''$, we can assume without loss of generality
that $\cP'$ is the discrete partition of $\partial \Sigma'$.
\item Next, just like in Case \ref{casea:2} of the proof of 
Theorem \ref{theorem:stableboundary} in \S \ref{section:reductioncap},
we can use the fact that $\mu$ is supported
on a symplectic subsurface to find a partition-bijective $\PSurf$-morphism
$(\Sigma''',\cP''') \rightarrow (\Sigma,\cP)$ and an $A$-homology marking
$\mu'''$ on $(\Sigma''',\cP''')$ such that $\mu$ is the stabilization of
$\mu'''$ to $(\Sigma,\cP)$, such that $\cP'''$ is the discrete
partition of $\partial \Sigma'''$, and such that the genera of $\Sigma'''$
and $\Sigma$ are the same.  Theorem \ref{theorem:stableboundary} implies
that the map
$\HH_1(\Torelli(\Sigma''',\cP''',\mu''')) \rightarrow \HH_1(\Torelli(\Sigma,\cP,\mu))$
is an isomorphism.  The composition
\[(\Sigma''',\cP''') \rightarrow (\Sigma,\cP) \rightarrow (\Sigma',\cP')\]
is still not partition-bijective, so replacing $(\Sigma,\cP)$ and $\mu$
with $(\Sigma''',\cP''')$ and $\mu'''$, we can assume without loss of generality
that $\cP$ is the discrete partition of $\partial \Sigma$.
\item We have now ensured that $\cP$ and $\cP'$ are the discrete partitions, and
it remains to show that we can ensure that the genera of $\Sigma$ and $\Sigma'$ are
the same.  As in the following picture, we can factor
$(\Sigma,\cP) \rightarrow (\Sigma',\cP')$ into
\[(\Sigma,\cP) \rightarrow (\Sigma^{(4)},\cP^{(4)}) \rightarrow (\Sigma',\cP')\]
where $(\Sigma,\cP) \rightarrow (\Sigma^{(4)},\cP^{(4)})$ is partition-bijective,
where $\cP^{(4)}$ is the discrete partition of $\partial \Sigma^{(4)}$, and
where the genera of $\Sigma^{(4)}$ and $\Sigma'$ are the same:
\end{compactitem}
\centerline{\psfig{file=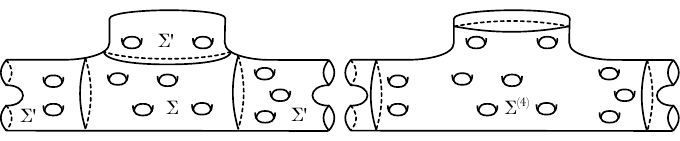,scale=100}}
\begin{compactitem}
\item[] Theorem \ref{theorem:stableboundary} implies that the map
$\HH_1(\Torelli(\Sigma,\cP)) \rightarrow \HH_1(\Torelli(\Sigma^{(4)},\cP^{(4)}))$
is an isomorphism.  Since the map $(\Sigma^{(4)},\cP^{(4)}) \rightarrow (\Sigma',\cP')$
is still not partition-bijective, we can replace $(\Sigma,\cP)$ with
$(\Sigma^{(4)},\cP^{(4)})$ and ensure that the genera of $\Sigma$ and $\Sigma'$ are the same.\qedhere
\end{compactitem}
\end{proof}

Since the genera of $\Sigma$ and $\Sigma'$ are the same, all components
of $\overline{\Sigma' \setminus \Sigma}$ are genus $0$ surfaces intersecting
$\Sigma$ in a single boundary component.  If any of these components are discs,
then Theorem \ref{theorem:noninjective} implies that the map
$\HH_1(\Torelli(\Sigma,\cP,\mu)) \rightarrow \HH_1(\Torelli(\Sigma',\cP',\mu'))$
is not injective, and we are done.  We can thus assume that no components
of $\overline{\Sigma' \setminus \Sigma}$ are discs.  Furthermore,
if any of these components are annuli, then we can 
deformation retract $\Sigma'$ over them without changing anything; doing this,
we can assume that none of them are annuli.

It follows that all the components of
$\overline{\Sigma' \setminus \Sigma}$ are genus $0$ surfaces with at least $3$
boundary components intersecting $\Sigma$ in a single boundary component.
Let $\{\partial_1,\ldots,\partial_k\}$ be a set of 
components of $\partial \Sigma'$ containing precisely one component in each
component of $\overline{\Sigma' \setminus \Sigma}$.  Let
$\Sigma''$ be the result of gluing discs to all components of $\Sigma'$
except for the $\partial_i$, let $\cP''$ be the discrete partition
of $\partial \Sigma''$ (so in particular $\{\partial_i\} \in \cP''$ for all $i$),
and let $\mu''$ be the stabilization of $\mu'$ to $(\Sigma'',\cP'')$.  All components of 
$\overline{\Sigma'' \setminus \Sigma}$ are annuli, so $\Sigma''$ 
deformation retracts to $\Sigma'$.

From this, we see that the composition
\[\Torelli(\Sigma,\cP,\mu)) \rightarrow \Torelli(\Sigma',\cP',\mu') \rightarrow \Torelli(\Sigma'',\cP'',\mu'')\]
is an isomorphism, and thus the composition
\begin{equation}
\label{eqn:bigcomp}
\HH_1(\Torelli(\Sigma,\cP,\mu))) \rightarrow \HH_1(\Torelli(\Sigma',\cP',\mu')) \rightarrow \HH_1(\Torelli(\Sigma'',\cP'',\mu''))
\end{equation}
is also an isomorphism.  Since $\cP'$ is the discrete partition and at least
one disc was glued to a component of $\partial \Sigma'$ when we formed
$\Sigma''$, Theorem \ref{theorem:noninjective} implies that the map
$\HH_1(\Torelli(\Sigma',\cP',\mu')) \rightarrow \HH_1(\Torelli(\Sigma'',\cP'',\mu''))$
is not injective.  Since the composition \eqref{eqn:bigcomp} is 
an isomorphism, we conclude that the map $\HH_1(\Torelli(\Sigma,\cP,\mu))) \rightarrow \HH_1(\Torelli(\Sigma',\cP',\mu'))$ is not surjective, and we are done.
\end{proof}

\begin{footnotesize}
\noindent
\begin{tabular*}{\linewidth}[t]{@{}p{\widthof{Department of Mathematics}+0.5in}@{}p{\linewidth - \widthof{Department of Mathematics} - 0.5in}@{}}
{\raggedright
Andrew Putman\\
Department of Mathematics\\
University of Notre Dame \\
255 Hurley Hall\\
Notre Dame, IN 46556\\
{\tt andyp@nd.edu}}
&
\end{tabular*}\hfill
\end{footnotesize}


\begin{thebibliography}{00}
\begin{footnotesize}
\setlength{\itemsep}{-1mm}

\bibitem{BestvinaBuxMargalitTorelli}
M. Bestvina, K.-U. Bux, and\ D. Margalit, The dimension of the Torelli group, J. Amer. Math. Soc. 23 (2010), no.~1, 61--105. 

\bibitem{BoldsenDollerup}
S. K. Boldsen\ and\ M. Hauge Dollerup, Towards representation stability for the second homology
 of the Torelli group, Geom. Topol. 16 (2012), no.~3, 1725--1765.

\bibitem{BroaddusFarbPutman}
N. Broaddus, B. Farb, and\ A. Putman, Irreducible Sp-representations and subgroup distortion in the mapping class group, Comment. Math. Helv. 86 (2011), no.~3, 537--556. 

\bibitem{ChurchFarbRepStability}
T. Church\ and\ B. Farb, Representation theory and homological stability, Adv. Math. 245 (2013), 250--314. 

\bibitem{DavisKirk}
J. F. Davis\ and\ P. Kirk, {\it Lecture notes in algebraic topology}, Graduate Studies in Mathematics, 35, American Mathematical Society, Providence, RI, 2001.

\bibitem{DunfieldThurston}
N. M. Dunfield\ and\ W. P. Thurston, Finite covers of random 3-manifolds, Invent. Math. 166 (2006), no.~3, 457--521. 

\bibitem{EllenbergVenkateshWesterland}
J. S. Ellenberg, A. Venkatesh, and\ C. Westerland, Homological stability for Hurwitz spaces and the Cohen-Lenstra conjecture over function fields, Ann. of Math. (2) 183 (2016), no.~3, 729--786.

\bibitem{FarbMargalitPrimer}
B. Farb\ and\ D. Margalit, {\it A primer on mapping class groups}, Princeton Mathematical Series, 49, Princeton University Press, Princeton, NJ, 2012.

\bibitem{FriedmanSimplicial}
G. Friedman, Survey article: An elementary illustrated introduction to simplicial sets, Rocky Mountain J. Math. 42 (2012), no.~2, 353--423.

\bibitem{GalatiusRandalWilliams}
S. Galatius\ and\ O. Randal-Williams, Homological stability for moduli spaces of high dimensional manifolds. I, J. Amer. Math. Soc. 31 (2018), no.~1, 215--264.

\bibitem{HarerStable}
J. L. Harer, Stability of the homology of the mapping class groups of orientable surfaces, Ann. of Math. (2) 121 (1985), no.~2, 215--249.

\bibitem{HatcherVogtmannTethers}
A. Hatcher\ and\ K. Vogtmann, Tethers and homology stability for surfaces, Algebr. Geom. Topol. 17 (2017), no.~3, 1871--1916. 

\bibitem{Ivanov}
N. V. Ivanov, Fifteen problems about the mapping class groups, in {\it Problems on mapping class groups and related topics}, 71--80, Proc. Sympos. Pure Math., 74, Amer. Math. Soc., Providence, RI. 

\bibitem{JohnsonConjugacy}
D. Johnson, Conjugacy relations in subgroups of the mapping class group and a group-theoretic description of the Rochlin invariant, Math. Ann. 249 (1980), no.~3, 243--263.

\bibitem{JohnsonHomo}
D. Johnson, An abelian quotient of the mapping class group ${\mathcal I}\sb{g}$, Math. Ann. 249 (1980), no.~3, 225--242. 

\bibitem{JohnsonAbel}
D. Johnson, The structure of the Torelli group. III. The abelianization of $\mathcal T$, Topology 24 (1985), no.~2, 127--144. 

\bibitem{KassabovPutman}
M. Kassabov\ and\ A. Putman, Equivariant group presentations and the second homology group of the Torelli group, Math. Ann. 376 (2020), no.~1-2, 227--241.

\bibitem{Krannich}
M. Krannich, Homological stability of topological moduli spaces, Geom. Topol. 23 (2019), no.~5, 2397--2474.

\bibitem{LooijengaSep}
E. Looijenga, Connectivity of complexes of separating curves, Groups Geom. Dyn. 7 (2013), no.~2, 443--450.

\bibitem{MillerPatztWilson}
J. Miller, P. Patzt, and\ J. C. H. Wilson, Central stability for the homology of congruence subgroups and the second homology of Torelli groups, Adv. Math. 354 (2019), 106740, 45 pp. 

\bibitem{PutmanCutPaste}
A. Putman, Cutting and pasting in the Torelli group, Geom. Topol. 11 (2007), 829--865.

\bibitem{PutmanStableLevel}
A. Putman, The stable cohomology of the moduli space of curves with level structures, in preparation.

\bibitem{PutmanSamLinear}
A. Putman\ and\ S. V. Sam, Representation stability and finite linear groups, Duke Math. J. 166 (2017), no.~13, 2521--2598.

\bibitem{RandalWilliamsWahl}
O. Randal-Williams\ and\ N. Wahl, Homological stability for automorphism groups, Adv. Math. 318 (2017), 534--626. 

\bibitem{QuillenPoset}
D. Quillen, Homotopy properties of the poset of nontrivial $p$-subgroups of a group, Adv. in Math. 28 (1978), no.~2, 101--128.

\bibitem{VanDerKallen}
W. van der Kallen, Homology stability for linear groups, Invent. Math. 60 (1980), no.~3, 269--295. 

\end{footnotesize}
\end{thebibliography}
\end{document}